\numberwithin{equation}{section}
\numberwithin{figure}{section}
\theoremstyle{plain}
\newtheorem{thm}{\protect\theoremname}
  \theoremstyle{plain}
	\numberwithin{thm}{subsection} 
\newtheorem{cor}[thm]{\protect\corollaryname}
  \theoremstyle{plain}
\newtheorem{lem}[thm]{\protect\lemmaname}
\newtheorem{defn}[thm]{Definition}
\providecommand{\corollaryname}{Corollary}
\providecommand{\lemmaname}{Lemma}
\providecommand{\theoremname}{Theorem}
\begin{document}

\global\long\def\R{\mathbb{R}}
\global\long\def\E{\mathbb{E}}
\newcommand{\cN}{{\mathcal{N}}}
\newcommand{\bS}{{\bf{S}}}
\newcommand{\bR}{{\bf{R}}}
\newcommand{\ellOneStar}{\ell_1^*}
\newcommand{\ellMStar}{\ell_m^*}
\newcommand{\lamOneStar}{\lambda_1^*}
\newcommand{\lamMStar}{\lambda_m^*}
\newcommand{\goto}{\rightarrow}
\newcommand{\DelAsy}{{\Delta^a}}
\newcommand{\KAsy}{{K^a}}
\newcommand{\KEmp}{{K^e}}
\newcommand{\DelEmp}{{\Delta^e}}

\newcommand{\etammnl}{ \eta_{\mbox{\footnotesize \sc mm}}}
\newcommand{\etammst}{ \eta_{\mbox{\footnotesize \sc mmst}}}
\newcommand{\etapnl}{ \eta_{\mbox{\footnotesize \sc pnl}}}
\newcommand{\etagst}{ \eta_{\mbox{\footnotesize \sc gst}}}
\newcommand{\etauasn}{\eta_{\mbox{\sc \footnotesize pf}}^*}
\newcommand{\elld}{\dot{\ell}}
\newcommand{\beq}{\begin{equation}}
\newcommand{\eeq}{\end{equation}}
\newcommand{\Dell}{\partial_{\ell}}
\newcommand{\Deta}{\partial_\eta}
\newcommand{\etaa}{\eta^a}
\newcommand{\bitem}{\begin{itemize}}
\newcommand{\eitem}{\end{itemize}}
\newcommand{\Sep}{\SEP}
\newcommand{\eps}{\epsilon}
\newcommand{\cI}{{\mathcal{I}}}
\newcommand{\cL}{{\mathcal{L}}}
\newcommand{\cS}{{\mathbb{S}}}
\newcommand{\semidefp}{{SD(p)}}
\newcommand{\HS}{\hat{\Sigma}}
\newcommand{\dell}{\dot{\ell}}
\newcommand{\elldot}{\dot{\ell}}
\newcommand{\hS}{\hat{\Sigma}}
\newcommand{\tD}{\tilde{\Delta}}
\newcommand{\tnu}{\tilde{\nu}}
\newcommand{\bW}{\textbf{W}}
\newcommand{\tu}{\tilde{u}}
\newcommand{\tA}{\tilde{A}}
\newcommand{\teta}{\tilde{\eta}}
\newcommand{\tv}{\tilde{v}}
\newcommand{\tc}{\tilde{c}}
\newcommand{\ts}{\tilde{s}}
\newcommand{\tb}{\tilde{b}}
\newcommand{\cW}{{\cal W}}
\newcommand{\cWnr}{\cW_{n,r}}
\newcommand{\SR}{{ \mbox{\sc sr}}}
\newcommand{\SEP}{{ \mbox{\sc sep}}}
\newcommand{\RSRG}{{ \mbox{\sc rsrg}}}
\newcommand{\RSEPG}{{ \mbox{\sc rsepg}}}
\newcommand{\LDA}{{ \mbox{\sc lda}}}
\newcommand{\MV}{{ \mbox{\sc mv}}}
\newcommand{\gammamval}{{ 0.618033...}}
\newcommand{\gammamvals}{{ (\sqrt{5}-1)/2}}
\newcommand{\EtaRNP}{{\boldsymbol{\eta}_{r,p_n}}}
\newcommand{\EtaRNPa}{{\boldsymbol{\eta}_{r,p}}}
\newcommand{\EtaRNPf}{\boldsymbol{\eta}_{r,p}}

\begin{frontmatter}
\title{Optimal Covariance Estimation for \\ 
 Condition Number Loss \\
 in the Spiked  model}
\runtitle{Optimal Eigenvalue Shrinkage For $\kappa-$Loss}

\begin{aug}
\author{\fnms{David} \snm{L. Donoho}\thanksref{t1}
\ead[label=e2]{donoho@stanford.edu}}
\and
\author{\fnms{Behrooz} \snm{Ghorbani} \thanksref{t2}
\ead[label=e1]{ghorbani@stanford.edu}}


\thankstext{t1}{Supported by NSF DMS 1418362 and 1407813.}
\thankstext{t2}{Supported by Stanford's Caroline and Fabian Pease Graduate Fellowship.}
\runauthor{D.L.Donoho and B.Ghorbani}

\affiliation{Stanford University}

\address{B.Ghorbani \\ 
Department of Electrical Engineering\\
David Packard Building\\
350 Serra Mall\\
Stanford, California 94305 \\
\printead{e1}\\
\phantom{E-mail:\ }}
\address{D.L.Donoho \\
Department of Statistics\\
Sequoia Hall\\
390 Serra Mall\\
Stanford, California 94305\\
\printead{e2}\\
}
\end{aug}

\begin{abstract}
We study estimation of the covariance matrix
under relative condition number loss $\kappa(\Sigma^{-1/2} \hat{\Sigma} \Sigma^{-1/2})$,
where $\kappa(\Delta)$ is the condition number of matrix $\Delta$, and $\hat{\Sigma}$
and $\Sigma$ are the estimated and theoretical covariance matrices.
Optimality in $\kappa$-loss provides optimal guarantees
in two stylized applications: Multi-User Covariance Estimation 
and Multi-Task Linear Discriminant Analysis.

We assume the  so-called {\it spiked covariance model} for  $\Sigma$, 
and exploit recent advances in understanding that model, to derive  a  nonlinear shrinker  
which is asymptotically optimal among orthogonally-equivariant procedures. 
In our asymptotic study, the number of variables $p$
is comparable to the number of observations $n$. 

The form of the optimal nonlinearity 
depends on the aspect ratio $\gamma=p/n$
of the data matrix and on the top eigenvalue of $\Sigma$. 
For $\gamma > \gammamval$, 
even dependence on the top eigenvalue can  be avoided. 

The optimal shrinker has two notable properties.
First, when $p/n \rightarrow \gamma \gg 1$ is large, 
it shrinks even very large eigenvalues substantially, by a factor $1/(1+\gamma)$. 
Second, even for moderate $\gamma$, 
certain highly statistically significant eigencomponents
will be completely suppressed. We show that when $\gamma \gg 1 $ is large, purely
diagonal covariance matrices can be optimal, despite the
top eigenvalues being large and the empirical eigenvalues being highly
statistically significant. This aligns with practitioner 
experience.

We identify intuitively reasonable 
procedures with small worst-case relative regret \-- the simplest
being generalized soft thresholding
having threshold at the bulk edge and  
slope $(1+\gamma)^{-1}$ above the bulk. For $\gamma < 2$
it has at most a few percent relative regret.

\end{abstract}

\begin{keyword}[class=MSC]
{\color{blue}
\kwd[Primary ]{62F12, 62H12}
\kwd[; secondary ]{62H25, 62C15, 62C20}
}
\end{keyword}

\begin{keyword}
\kwd{Covariance matrix}
\kwd{factor models}
\kwd{spiked covariance model}
\kwd{portfolio construction}
\kwd{discriminant analysis}
\kwd{Multi-User Covariance Estimation}
\kwd{Multi-Task Linear Classification}
\end{keyword}

\end{frontmatter}

\maketitle
\section{Introduction} \label{sec:intro}
Sixty years ago, Charles Stein \cite{stein1956} made the surprising observation
that, when estimating the covariance matrix underlying a dataset $X = (X_{i,j})$ 
with $p$ variables and $n$ observations, 
if $p$  and $n$ are both large, and comparable in size,
 one should shrink the eigenvalues of the empirical covariance matrix away from their raw empirical values.

In the decades since, shrinkage estimates of covariance have been studied by many researchers,
and Stein's insight has emerged as pervasive and profound. {Dozens of citations relevant through 2013 are given in \cite{donoho2013optimalShrinkage}; more recent work includes \cite{ledoit2014nonlinear, wang2015shrinkage, lam2016nonparametric, bodnar2016direct, kubokawa2014estimation}. 
Broadly speaking, the literature has found 
that the appropriate way to shrink the eigenvalues depends both on the use to be 
made of the estimated covariance matrix and on the properties of the underlying covariance matrix;
for procedures optimal under various assumptions,
see among others \cite{ledoit2014optimal,Ledoit2012nonlinear,donoho2013optimalShrinkage}.

\subsection{Our Focus}
This paper studies eigenvalue shrinkage under five specific assumptions:
\begin{description}
\item[{\bf [$\kappa$-Loss]}]  We consider {\it relative condition number loss}:
\[
L(\hat{\Sigma},\Sigma) = \kappa(\Sigma^{-1/2} \hat{\Sigma} \Sigma^{-1/2}),
\] 
where  $\hat{\Sigma}$ denotes the estimated
and $\Sigma$ the true underlying theoretical covariance, and $\kappa(\Delta)$ denotes the condition number of matrix $\Delta$.
\item[{\bf [Equivariance]}] We consider  
{\it orthogonally equivariant procedures,} 
defined as follows. Let $S = \frac{1}{n}X'X$ denote
the empirical covariance, and $\hat{\Sigma} = \hat{\Sigma}(S)$ denote
an estimator of interest. 
We say that $\hat{\Sigma}$ is orthogonally-equivariant (OE) 
if $\hat{\Sigma}(U' S U) = U' \hat{\Sigma}(S)U$ for all $U \in O(p)$.  
Such procedures are in a certain sense coordinate-free.


\item[{\bf [Spike]}] {\it Spiked covariance models}, where $\Sigma$ is the 
identity outside an $r$-dimensional subspace.  Here $r$ is fixed, 
and the  $r$ top eigenvalues of $\Sigma$, $\ell_1 , \ell_2, \dots \ell_r$, 
say, exceed $1$, with all later eigenvalues equalling $1$.
\item [{\bf [PGA]}] We focus on {\it proportional growth asymptotics},  
where the data matrix sizes grow proportionally large \-- $p, n \goto \infty$
with asymptotic aspect ratio $p/n \goto \gamma \in (0,\infty)$.
\item [{[Distribution]}] 
In this paper we assume the classical model, where the rows of $X$ are 
i.i.d $N(0,\Sigma)$\footnote{We conducted simulation studies
of $\kappa$-loss performance of this paper's shrinkers 
across a broader collection of situations 
with the same bulk spectrum. Our observed performance was consistent with
our theorems for the Normal case when we used a linear generative model $X = \Sigma^{1/2} Z$ with
i.i.d. $Z$ having variance $1$ and finite fourth moments. Accordingly, we believe that work
of \cite{bai2012sample, Benaych-Georges2011, dobriban2016pca} 
can extend our results unchanged across a range of non-normal assumptions.
We are mainly interested here in the structure of optimal procedures and the phenomena they exhibit.}. 
\end{description}

\subsection{Optimality}

Our assumptions allow us to evaluate the optimal 
asymptotic loss among orthogonally equivariant
procedures.  

\begin{thm} \label{thm:asymptotic_optimal_loss}
{\bf (Optimal Asymptotic Loss)}
The following limit exists almost surely:
\[
     \lim_{n \goto \infty} \inf_{\hat{\Sigma} \in OE} L(\hat{\Sigma},\Sigma) =_{a.s.}  L^*(\ell_1,\dots,\ell_r; \gamma);
\]
say. Here the infimum is over orthogonally equivariant procedures.
Definition \ref{def:kappa1star} in
(\ref{eq:kappa1defn}) below  
specifies a function $\kappa_1^*$  depending only on the
aspect ratio $\gamma$ and on the top spike eigenvalue $\ell_1$, for which:
\[
 L^*(\ell_1,\dots,\ell_r; \gamma) =  \kappa_1^*(\ell_1; \gamma).
\]
\end{thm}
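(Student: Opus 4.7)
My strategy is to (i) reduce the infimum over OE procedures to a finite-dimensional optimization, (ii) use spiked-model asymptotics to compute a.s.\ limits of the extremal eigenvalues of $\Sigma^{-1/2}\hat{\Sigma}\Sigma^{-1/2}$, and (iii) carry out the resulting min--max problem and show it depends only on $\ell_1$.

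\textbf{Step (i): reduction.} An OE procedure diagonalizes in the sample eigenbasis $\{v_i\}$ and assigns eigenvalues $\hat\eta_i$ that depend only on the sample spectrum. Under PGA, the classical BBP/Paul results imply that for each $\ell_i > 1+\sqrt{\gamma}$ the top sample eigenvalue $\lambda_i$ converges a.s.\ to $\lambda(\ell_i;\gamma)$, while $|\langle v_i,u_i\rangle|^2 \to c^2(\ell_i;\gamma)$ and $v_i$ has asymptotically negligible overlap with $u_j$ for $j\neq i$; sub-threshold spikes can be absorbed into the bulk. I would first argue that we may restrict attention to procedures of the form $\hat{\Sigma} = \eta_0 I + \sum_{i=1}^r (\eta_i-\eta_0)\,v_i v_i^T$ for finitely many parameters $(\eta_0,\eta_1,\dots,\eta_r)\in\R_+^{r+1}$. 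Any variation of bulk assignments across a set of dimension $o(p)$ perturbs $\hat{\Sigma}$ in directions whose asymptotic overlap with the spike subspace vanishes, and by Weyl's interlacing inequality, perturbations affecting an $o(p)$ fraction of the spectrum do not move either the top or bottom eigenvalue of $\Sigma^{-1/2}\hat{\Sigma}\Sigma^{-1/2}$ away from the common value $\eta_0$ in the limit.

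\textbf{Step (ii): limits of extremal eigenvalues.} Writing
\[
\Sigma^{-1/2}\hat{\Sigma}\Sigma^{-1/2} = \eta_0\,\Sigma^{-1} + \sum_{i=1}^r (\eta_i-\eta_0)\,w_i w_i^T, \qquad w_i=\Sigma^{-1/2}v_i,
\]
the matrix acts as $\eta_0\cdot I$ outside the $2r$-dimensional subspace spanned by $\{u_i\}\cup\{P^\perp v_i\}$. On this subspace, the a.s.\ decomposition $v_i = c_i u_i + s_i z_i + o(1)$ (with $z_i$ in the bulk eigenspace, mutually asymptotically orthogonal) yields that the restriction converges a.s.\ to a block-diagonal $2r\times 2r$ matrix whose $i$-th $2\times 2$ block $M_i(\ell_i,\eta_i,\eta_0,\gamma)$ has explicit trace and determinant (the determinant equals $\eta_i\eta_0/\ell_i$). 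The extremal eigenvalues of $\Sigma^{-1/2}\hat{\Sigma}\Sigma^{-1/2}$ then converge a.s.\ to $\Lambda^+ = \max(\eta_0,\max_i \mu_+(M_i))$ and $\Lambda^- = \min(\eta_0,\min_i \mu_-(M_i))$, giving a deterministic asymptotic condition number.

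\textbf{Step (iii): optimization and reduction to $\ell_1$.} To minimize $\Lambda^+/\Lambda^-$ over $(\eta_0,\eta_1,\dots,\eta_r)$, observe that each block $M_i$ depends only on $(\ell_i,\eta_i,\eta_0)$, so the $r$ blocks decouple given $\eta_0$. Define the $r=1$ minimum as $\kappa_1^*(\ell_1;\gamma)$: this is the optimum over $(\eta_0,\eta_1)$ of $\max(\eta_0,\mu_+(M_1))/\min(\eta_0,\mu_-(M_1))$. At the optimum, the interval $[\mu_-(M_1),\mu_+(M_1)]$ contains $\eta_0$ and is the binding one. For $i\geq 2$, I would verify (by analyzing $\mu_\pm(M_i)$ as functions of $\eta_i$ with $\eta_0$ fixed; they form a continuous one-parameter curve whose range covers an interval around $\eta_0$ that widens as $\eta_i$ varies) that $\eta_i$ can be chosen so that both eigenvalues of $M_i$ lie inside $[\mu_-(M_1),\mu_+(M_1)]$. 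Consequently the spikes $\ell_2,\dots,\ell_r$ impose no binding constraint, and the global infimum equals $\kappa_1^*(\ell_1;\gamma)$, which I would take as Definition \ref{def:kappa1star}.

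\textbf{Main obstacle and a.s.\ passage.} The hardest step is (iii): verifying that the $i\geq 2$ blocks are always non-binding requires a precise characterization of the range of $(\mu_-(M_i),\mu_+(M_i))$ as $\eta_i$ ranges over $\R_+$, and showing this range engulfs any target interval containing $\eta_0$. The secondary technical point is the uniform a.s.\ convergence needed to interchange $\inf$ and $\lim$; this follows from joint a.s.\ continuity of sample eigenvalues and eigenvector overlaps in $(\ell_i,\gamma)$ on compact sets, together with continuity of the max/min functions of the $2r\times 2r$ block.
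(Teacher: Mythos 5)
The overall architecture (diagonal representation of OE procedures, asymptotic block-diagonal pivot via BBP/Paul, spike-by-spike optimization reducing to $\ell_1$) matches the paper's, but there are two genuine gaps in the argument as sketched.

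\textbf{Gap in step (i): the reduction to constant bulk assignments is not justified by the stated argument.} You invoke Weyl interlacing to claim that ``perturbations affecting an $o(p)$ fraction of the spectrum'' do not move the extreme eigenvalues of the pivot. But (a) an OE procedure may vary $\eta_j$ across a fraction $\Theta(p)$ of bulk indices, and (b) even a single-direction perturbation can move an extreme eigenvalue arbitrarily: if some bulk $\eta_j$ is tiny, the pivot has a near-zero eigenvalue (because bulk $v_j$ are asymptotically fixed by $\Sigma^{-1/2}$), regardless of rank counting. The correct argument is different: one first uses scale invariance of $\kappa$ to normalize $\mathrm{Ave}_{i>r}\eta_i=1$; then observes that any nonconstancy of the bulk values directly \emph{worsens} $\kappa(\Delta)$, so an approximate minimizer must have nearly constant bulk; then one must show that replacing the (bounded) bulk values by their average perturbs the restricted pivot negligibly --- this is the content of the paper's Lemmas \ref{lem:boundsOnOptimalEta}, \ref{lem:MutilationIsBanal}, and \ref{lem:LocalLipschitzRestriction}, which together yield the lower bound $\liminf_n \inf_{OE}\kappa \geq val(K^a)$. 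Your plan only delivers the upper bound (that the restricted class achieves $\kappa_1^*$), not the matching lower bound over the full OE class.

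\textbf{Gap in step (iii): the absorption of subsidiary spikes is nontrivial and the stated intuition is off.} You propose that as $\eta_i$ varies, the interval $[\mu_-(M_i),\mu_+(M_i)]$ ``widens'' around $\eta_0$ and so can be made to engulf any target; but monotonicity works the other way --- increasing $\eta_i$ shifts \emph{both} endpoints upward (the paper's Lemma \ref{lem:increasingwrtEta}), so the relevant question is whether, at the unique $\eta_i$ making $\nu_-(\ell_i,\eta_i)=\nu_-^{1,*}$, one also has $\nu_+(\ell_i,\eta_i)\leq \nu_+^{1,*}$. This inclusion is exactly the ``no sticking out'' property and it is not automatic: the paper proves it via a chain of monotonicity results (Lemmas \ref{lem:increasingGap}, \ref{lem:EtaNuRatioNonincreasing}, \ref{lem:supportone}, \ref{solution}), and for $\gamma<\gamma_m^*$ it even fails if you use the single-spike optimizer $\eta_1^*$ on subsidiary spikes --- which is precisely why the paper has to define the distinct multi-spike nonlinearity $\eta_m^*$. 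Your plan recognizes this as ``the hardest step'' but the proposed resolution mechanism would not close it.
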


Let $\textbf{S}=\frac{1}{n}X'X$  be the usual empirical second-moment matrix and let $\textbf{S} = V \Lambda V'$ be its
usual eigendecomposition, where $V$ is orthogonal and $\Lambda$ is diagonal with the 
ordered eigenvalues $\lambda_1 \geq \lambda_2 \dots \geq \lambda_p$ along the diagonal.

We derive in Section \ref{sec:ShrinkMultiSpike} below, in 
Theorem \ref{thm:OptimalShrinkage} et seq., 
a closed-form expression for an  asymptotically optimal
nonlinearity $\eta^* : \bR^+ \mapsto \bR^+$.
That theorem defines
a nonlinearity $\eta^*(\cdot) \equiv \eta_m^*(\cdot; \lambda_{1},\gamma)$ 
having two tuning parameters: 
$\gamma = p/n$, the aspect ratio; and
$\lambda_1$, the limiting value of the top
empirical eigenvalue under our asymptotic model.
A fully data-driven nonlinearity $\eta^e$
is obtained by using for the (unknown)  
limit eigenvalue $\lambda_1$ tuning parameter 
simply the top empirical eigenvalue
$\lambda_{1,n}$, which is observable. 
This nonlinearity is applied separately
to each of the empirical eigenvalues $(\lambda_{i,n})$,
 producing the diagonal matrix $\eta^e(\Lambda) \equiv diag(\eta^e(\lambda_{i,n}))$
which sits at the core of the orthogonally-equivariant 
covariance estimator $\hat{\Sigma}^e = V \eta^e(\Lambda) V'$ .

\begin{thm} \label{thm:asymptotic_optimal_nonlinearity}
{\bf (Asymptotically Optimal Nonlinearity)}
The estimator $\hat{\Sigma}^e$ is asymptotically optimal 
among orthogonally equivariant procedures
under relative condition number loss: 
\[
     \lim_{n \goto \infty}L(  \hat{\Sigma}^e ,\Sigma) =_{a.s.}  \kappa_1^*(\ell_1; \gamma).
\]
\end{thm}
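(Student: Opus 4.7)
The plan is to derive Theorem \ref{thm:asymptotic_optimal_nonlinearity} from the oracle version (Theorem \ref{thm:OptimalShrinkage}, which already exhibits an asymptotically optimal shrinker $\eta^*(\cdot) = \eta_m^*(\cdot; \lambda_1, \gamma)$ using the deterministic asymptotic top eigenvalue $\lambda_1$ as a tuning parameter) together with a plug-in argument that replaces $\lambda_1$ by the observable $\lambda_{1,n}$. Modulo the oracle result, what must be shown is that this substitution leaves the limiting $\kappa$-loss undisturbed almost surely.

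\textbf{Step 1 (Consistent plug-in).} Under the spiked model with proportional growth asymptotics, the BBP-type convergence results (\cite{Benaych-Georges2011, bai2012sample}) give that $\lambda_{1,n} \to \lambda_1(\ell_1, \gamma)$ almost surely, where $\lambda_1$ is the deterministic limit above the Marchenko--Pastur edge when $\ell_1 > 1+\sqrt{\gamma}$, and $\lambda_1 = (1+\sqrt{\gamma})^2$ otherwise. This is precisely the tuning parameter used by the oracle shrinker $\eta^*$.

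\textbf{Step 2 (Continuity of the shrinker).} The explicit construction of $\eta_m^*(x;\lambda, \gamma)$ from Section \ref{sec:ShrinkMultiSpike} is jointly continuous in $(x, \lambda)$ on the relevant domain (the spike locations and the bulk support). Therefore, applying $\eta^e$ to each empirical eigenvalue, one obtains $\eta^e(\lambda_{i,n}) = \eta_m^*(\lambda_{i,n}; \lambda_{1,n}, \gamma) \to \eta_m^*(\lambda_i^\infty; \lambda_1, \gamma) = \eta^*(\lambda_i^\infty)$ almost surely, where $\lambda_i^\infty$ denotes the a.s.\ limit of $\lambda_{i,n}$ (a spike location for $i \leq r$, a point in the Marchenko--Pastur support for $i > r$).

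\textbf{Step 3 (Continuity of the loss).} Write $\hat{\Sigma}^* = V \, \mathrm{diag}(\eta^*(\lambda_{i,n})) \, V'$ for the ``infeasible'' estimator using $\lambda_1$ in the shrinker, which by Theorem \ref{thm:OptimalShrinkage} satisfies $L(\hat{\Sigma}^*, \Sigma) \to \kappa_1^*(\ell_1;\gamma)$ almost surely. Since $\hat{\Sigma}^e$ and $\hat{\Sigma}^*$ share the same eigenframe $V$ and differ only through their diagonal shrunk eigenvalues, Step 2 gives $\|\hat{\Sigma}^e - \hat{\Sigma}^*\|_{op} \to 0$ a.s. The condition number loss $L(\hat{\Sigma}, \Sigma) = \kappa(\Sigma^{-1/2} \hat{\Sigma} \Sigma^{-1/2})$ is continuous at any positive-definite matrix whose extreme eigenvalues are bounded and bounded away from zero, and the asymptotic spectral structure of $\Sigma^{-1/2} \hat{\Sigma}^* \Sigma^{-1/2}$ (spikes displaced to the deterministic values determined by $\eta^*$ and cosine alignments, with bulk converging to a fixed transformed Marchenko--Pastur law) ensures boundedness away from $0$ and $\infty$. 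Hence $L(\hat{\Sigma}^e, \Sigma)$ has the same almost sure limit as $L(\hat{\Sigma}^*, \Sigma)$, namely $\kappa_1^*(\ell_1; \gamma)$, completing the proof.

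The main obstacle is Step 3: tracking the extreme eigenvalues of the ratio matrix $\Sigma^{-1/2} \hat{\Sigma}^e \Sigma^{-1/2}$ requires care because $\Sigma^{-1/2}$ acts nontrivially on the spike subspace while $\hat{\Sigma}^e$ has eigenvectors that are only partially aligned with that subspace. One must therefore combine the known asymptotic cosines between empirical and population spike eigenvectors with the eigenvalue convergence from Steps 1--2 to see that the top and bottom eigenvalues of the ratio matrix converge jointly to their oracle limits. Once that joint convergence is in hand, the continuous mapping theorem applied to the quotient of extreme eigenvalues delivers the result.
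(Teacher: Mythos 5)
Your three-step outline coincides with the paper's approach, which relies on Corollary \ref{cor:AsympOracle} (oracle limit) and Lemma \ref{lem:AsympMShrink} (plug-in consistency). However, the proposal has a genuine gap in the transition from Step 2 to Step 3.

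In Step 2 you establish \emph{pointwise} convergence $\eta^e(\lambda_{i,n}) \to \eta^*(\lambda_i^\infty)$ for each fixed index $i$. But in Step 3 you need $\|\hat{\Sigma}^e - \hat{\Sigma}^*\|_{op} \to 0$, which is equivalent to $\max_{1 \leq i \leq \min(n,p_n)}|\eta^e(\lambda_{i,n}) - \eta^*(\lambda_{i,n})| \to 0$, a \emph{uniform} statement across $\sim p_n$ coordinates. For the $r$ spike coordinates, pointwise implies uniform because $r$ is fixed; but for the $\sim p_n$ bulk coordinates, pointwise convergence for each fixed $i$ does not by itself control the supremum. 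The paper resolves this by exploiting the dead-zone property of $\eta_m^*$: the collapse threshold $\lambda^+_{\eta_m^*}(\lambda_{1,n},p_n/n)$ converges to a limit strictly above the bulk edge $\lambda_+(\gamma)$, so for all large $n$ the empirical shrinker maps \emph{every} bulk eigenvalue \emph{exactly} to $1$, making the sup over bulk indices identically zero (not merely small). Without invoking this collapse property explicitly, your Step 3 inequality does not follow from your Step 2.

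A secondary issue: you cite Theorem \ref{thm:OptimalShrinkage} as giving $L(\hat{\Sigma}^*,\Sigma) \to \kappa_1^*(\ell_1;\gamma)$ a.s. That theorem concerns the deterministic \emph{asymptotic pivot}; the passage to the random \emph{empirical pivot} and its a.s.\ convergence is a nontrivial additional step, requiring Theorem \ref{lem:AsyShrinkOpt} (uniform convergence of the empirical pivot to the asymptotic one) and Corollary \ref{cor:AsympOracle}. Finally, you correctly flag the continuity-of-loss issue as the ``main obstacle'' but leave it unresolved; the paper handles it by proving local Lipschitz continuity of $\kappa$ in operator norm on matrices with eigenvalues bounded away from $0$ and $\infty$ (Lemma \ref{lem:LocalLipschitzCond}), and using the scaling property $\lambda_{\max}(\Sigma^{-1/2})=1$ to bound $\|\Sigma^{-1/2}\hat{\Sigma}(\eta^e-\eta^*)\Sigma^{-1/2}\|_{op}$ by $\max_i |\eta^e_i - \eta^*_i|$.
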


We also show -- see Theorem \ref{thm:MultiSingle} -- 
that in case $ \lim_n p_n/n = \gamma  > \gamma_m^* \equiv \gammamvals = \gammamval,$ 
there is even an asymptotically optimal nonlinearity  that does not
depend on the top eigenvalue $\lambda_{1}$, 
but instead only on $\gamma$ (which of course is known);
the nonlinearity is denoted
$\eta_1^* \equiv \eta_1^*(\cdot; \gamma)$ 
and derived in Theorem \ref{thm:spike_1}.
Moreover even for $\gamma < \gamma_m^*$,
$\eta_1^*(\cdot; \gamma)$ is asymptotically optimal in the single-spike situation, $r=1$; and is within
several percent of optimal for the multi-spike case $r > 1$.

A different option also avoids tuning by the top eigenvalue,
and offers theoretical performance guarantees (of a weaker sort)
even for $\gamma < \gamma_m^*$. This option intentionally mis-tunes the tuning parameter $\lambda_1$ to infinity, producing a still well-defined 
nonlinearity $\etammnl (\cdot) \equiv \eta^*_m( \cdot ; \infty, \gamma)$. Section \ref{sec:MinimaxInterp}
shows that this tuning parameter-free 
nonlinearity minimaxes the $\kappa$-loss across all spike models
of all orders $r$ and all configurations $(\ell_i)$.



\subsection{Insights}

Figure \ref{fig:NonlinMultiSpike} depicts the optimal nonlinearity
for various choices of parameters. Several properties seem surprising:

\begin{figure}
\centering
\includegraphics[height=3in]{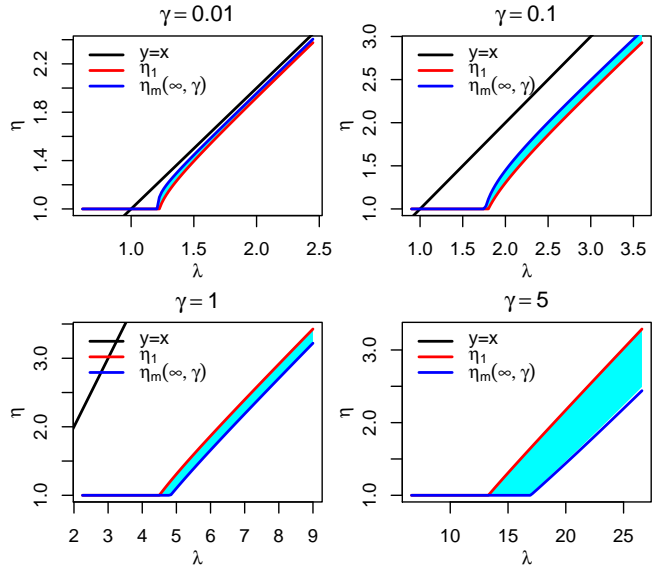}
\caption{{\bf Optimal shrinkage nonlinearities.} 
Different panels present different aspect ratios $\gamma=p/n$. 
Each panel's black curve gives the identity line $y=x$.
Each red curve depicts the {\it optimal single-spike nonlinearity} $\eta_1^*$. 
Each aqua-shaded area depicts the range of the {\it optimal multi-spike nonlinearity}
$\eta_m^*(\lambda; \lambda_1, \gamma)$ as $\lambda_1$ varies from $\lambda$ to $\infty$. 
The {\it minimax nonlinearity} $\etammnl$ is shown in each 
panel as the blue curve. In each panel, all depicted shrinkers lie significantly below the identity line $y=x$, in the case $\gamma= 5$, so much so that the identity line is no longer visible.
}
\label{fig:NonlinMultiSpike}
\end{figure}

\begin{enumerate}

\item {\it Asymptotic slope $1/(1+\gamma)$}. A `natural' psychological expectation for optimal shrinkage
is that it `ought to' minimally displace any very large empirical eigenvalues, 
producing `shrunken' outputs that are still
relatively close to inputs $\eta^*(\lambda) \approx \lambda$.  
Contrary to this belief, the optimal shrinker has asymptotic slope 
$1/(1+\gamma)$: $\eta^*(\lambda) \sim \lambda/(1+\gamma)$ as $\lambda \goto \infty$.
This entails very substantial displacement of very large eigenvalues \-- in fact shrinkage by more than 50\%
when $\gamma > 1$: $\eta^*(\lambda) < \lambda/2$ as $\lambda \goto \infty$.

\item {\it Dead-Zone 1.} 
The nonlinearity $\eta^*$  has a {\it dead zone}, 
an interval throughout which it collapses all empirical eigenvalues
 to output value $\eta=1$.
The {\it dead-zone threshold} $\lambda_{1}^+(\gamma)$ gives the upper edge of this interval for the largest spike: 
\[
  \eta^*(\lambda_1) = 1, \qquad \lambda_1 < \lambda_{1}^+(\gamma).
\]

A `natural' psychological expectation for the dead zone is that it `ought to'
agree with the so called {\it bulk edge} $\lambda_+(\gamma) = ( 1+ \sqrt{\gamma})^2$.
Indeed traditional statistical hypothesis tests for the number of eigencomponents in a spiked model 
consider a null hypothesis where $\Sigma =I$ (i.e. all eigenvalues are one), and
spiked alternatives where $\ell_1 > \dots > \ell_r > 1$. Such tests
declare $\lambda_1$ to be statistically significant evidence against the
null whenever it noticeably exceeds the bulk edge
\cite{johnstone2001distribution,kritchman2009non,perry2010minimax,onatski2013asymptotic,passemier2012determining}.
This leads to the `natural' presumption   that the `correct' place to terminate the dead zone is
at the bulk edge. 

Contradicting this, the dead-zone threshold $\lambda_1^+(\gamma)$
is noticeably larger than $\lambda_+(\gamma)$; see Figure \ref{fig:DeadZone}.
This gap implies that we can have $\eta^*(\lambda_1)=1$ even though $\lambda_1$
is large enough to provide conclusive statistical 
evidence for existence of a non-null spike.  
Optimal shrinkage does not at all agree with statistical
significance in such cases.

For an example, imagine that there are $\gamma=50$ times more variables than
observations (not unusual in some genomics applications). 
Then $\lambda_+(\gamma) = 65.14$ while $\lambda_1^+(\gamma) = 103.887$.
Unless the top theoretical eigenvalue 
$\ell_1 > { 52.92}$ \-- 
i.e. more than {50} times larger than the average eigenvalue \--
the eigenvalues {\it all} fall in the dead zone.

\begin{figure}
\centering
\includegraphics[height=3in]{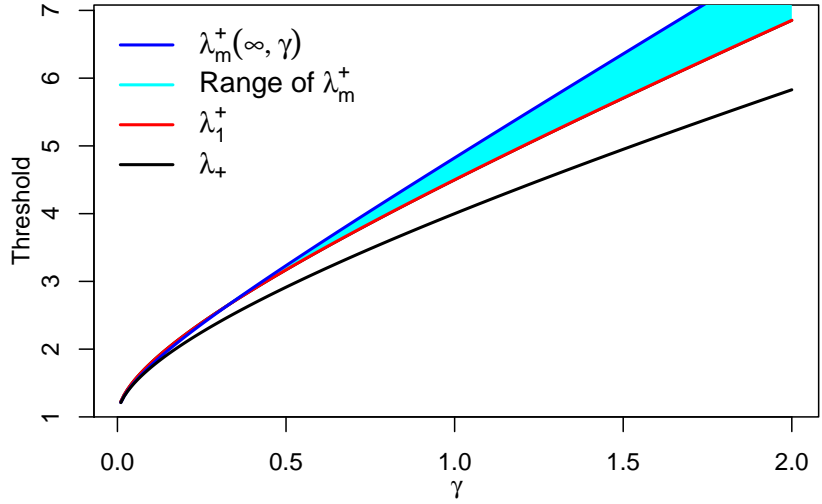}
\caption{{\bf Thresholding behavior of optimal shrinkers; comparison to bulk edge.} 
The dead zone thresholds $\lambda_1^+(\gamma)$ (red), $\lambda_{m}^+(\infty,\gamma)$ (blue) 
and (shaded aqua) the full range of $\lambda_{m}^+(\lambda_1,\gamma)$. All empirical eigenvalues
smaller than these thresholds are collapsed to $1$, under the single-spike nonlinearity
$\eta_1^*$, the multi-spike nonlinearity $\eta_m^*$, and under the minimax nonlinearity,
respectively.
For comparison,  the bulk edge
$\lambda_+(\gamma)$  is  also shown, in black.
Note the ordering $\lambda_+(\gamma) < \lambda_1^+(\gamma) \leq \lambda_{m}^+(\lambda_1;\gamma) < \lambda_{m}^+(\infty;\gamma)$.
}
\label{fig:DeadZone}
\end{figure}



\item {\it Dead-Zone 2.} When the top eigenvalue escapes the dead zone, it can happen that 
some noticeably large secondary eigenvalues  will 
still be collapsed to $1$.
For secondary eigenvalues, we denote by $\lambda^+_m \equiv \lambda^+_{m}(\lambda_1, \gamma)$
the upper edge of the set where $\eta^*=1$:
\[
  \forall i >1, \;\eta^*(\lambda_i) = 1, \qquad \lambda_i < \lambda_{m}^+.
\]
Figure \ref{fig:DeadZone} shows that for moderately large $\gamma$, $\lambda_m^+$ can be 
substantially larger than $\lambda_1^+(\gamma)$. It were as if the existence of at least one
included eigencomponent makes the optimal shrinker be even more demanding of subsidiary
eigenvalues.
Any eigencomponents between $\lambda_+$ and $\lambda^+_m$ are, for large $n$,
overwhelmingly statistically significant according to  well-designed tests  \cite{johnstone2001distribution}
\--  yet the optimal rule $\eta^*(\lambda_i) = 1$ effectively views them as useless,
and ignores them.
Figure \ref{fig:DeadZone} shows that,
for $\gamma$ fixed,
 $\lambda_m^+$ varies  with $\lambda_1$. 
\end{enumerate}

The severe shrinkage induced by $\eta^*(\cdot)$ has precedent: 
the asymptotic slope $1/(1+\gamma)$ of this paper's optimal shrinker $\eta_{m}^*$   
occurred previously in the  eigenvalue shrinkage literature \--
most immediately, \cite{donoho2013optimalShrinkage} 
showed\footnote{See Lemma 7.1, Table 3 and Table 4 in \cite{donoho2013optimalShrinkage}} that, under the spiked model,
the optimal nonlinearities
also have asymptotic slopes $1/(1+\gamma)$ in three important cases:
under Stein's loss; under Frobenius norm loss on the precision discrepancy $\hat{\Sigma}^{-1} - \Sigma^{-1}$;
and under operator norm loss on the relative discrepancy $\Sigma^{-1/2} \hat{\Sigma} \Sigma^{-1/2} - I$.

 \subsection{Other Performance Comparisons}

 In Section \ref{sec:PerformanceComparisons} we  compare and
 contrast  performance of the optimal shrinker 
 with some popular and well-known approaches. 
 In the process, we learn
 surprising and revealing things about the well-known approaches.

We first consider worst case analysis
across all spike models.
We identify two explicit  nonlinearities delivering the same worst-case guarantees as
 the optimal nonlinear shrinker:
 \bitem
 \item {\it Minimax soft thresholding}:  
The {\it generalized soft thresholding nonlinearity} 
with slope $b$ and soft threshold $\lambda_0$ has the form
\[
  \etagst(\lambda; b , \lambda_0) = 1 + b \cdot (\lambda-\lambda_0)_+.
\]
Tuning this rule  with slope $b = b^*(\gamma) = \frac{1}{1+\gamma}$, (i.e.  the 
asymptotic slope of the optimal shrinker),  and threshold at the bulk edge  $\lambda_0 = \lambda_+(\gamma)$, 
produces a minimax estimator\footnote{i.e. this rule minimaxes 
the asymptotic loss -- min across rules and max across spike configurations.
For space reasons we do not prove the minimaxity property in this paper.}. 
 \item {\it Precision Nonlinearity}: 
 the nonlinearity   $\etapnl$ derived in \cite{donoho2013optimalShrinkage}
 for asy. optimal estimation of the so-called precision matrix 
 $\Sigma^{-1}$ under Frobenius norm loss; 
 see (\ref{eq:optPrecision}) below.  $\etapnl$ also turns out to be asy. minimax for 
 relative condition number loss.
 \eitem
We next consider the relative regret, i.e. the percentage extra loss suffered by a given rule; and we
show that both of these rules suffer minimally \--
from a few to several percent additional loss, as compared to the optimal rule.

\subsection{Motivation for Relative Condition Number Loss}

Our interest in relative condition number loss estimation
has been driven both by the mathematical analysis 
and by the potential applications.

\subsubsection{Mathematical Structure}

Optimal shrinkers under the spiked model were derived  
in \cite{donoho2013optimalShrinkage} for 26 different 
loss functions, including Frobenius loss, Stein loss, and
Operator loss and Nuclear norm loss.
In that paper the discrepancy between the estimated and 
underlying covariance was shown to have a certain asymptotic {\it block structure}.
This could be exploited when the loss measure
exhibited a certain {\it separability} property. 
For such separable losses, the optimal shrinker
could be obtained by a simple spike-by-spike analysis.

In this paper, the block structure persists, as we explain in Section 2 below;
however separability is lost. Thus, the general perspective that was used 
successfully 26 times in \cite{donoho2013optimalShrinkage} is not applicable. 
Instead, the optimal shrinker is no longer separable; 
as depicted in Figure \ref{fig:NonlinMultiSpike} the shrinker's
output at a given sub-principal eigenvalue $\lambda_i$, $i > 2$,
changes according to the value of the top eigenvalue $\lambda_1$.
Also, study of properties using spike-by-spike analysis no longer works.
Our proofs of optimality show that the non-separable optimal rule 
$\eta_m^*$ manages a delicate cross-spike compromise \--
typically choosing a shrinker to balance distortions at the top spike and the bottom spike.   So, while we
build on the earlier ideas, several new ideas are needed and developed in this paper.

\subsubsection{Multi-User Covariance Estimation} 
Covariance matrices play an essential 
role in modern portfolio allocation in empirical finance \cite{grinold2000active}.
An extensive statistical literature discusses the benefits of shrunken
covariance estimates in that setting; 
we mention work by Bai and co-authors \cite{bai2009enhancement}, 
El Karoui \cite{el2010high},
Fan and co-authors \cite{fan2011high, fan2013large}, 
Lai, Xing and Chen \cite{lai2011mean},
Ledoit and Wolf \cite{ledoit2014nonlinear}, 
McKay and co-authors: \cite{yang2015minimum, couillet2014large}
and Onatski \cite{onatski2012asymptotics}.  

From the shrinkage literature across 6 decades
and also individual papers like \cite{donoho2013optimalShrinkage}, we know
that shrinkage is often beneficial, but optimal shrinkage is very task-dependent. 
{\it Multi-user Covariance Estimation} (MUCE) posits 
an interesting variation on the traditional portfolio allocation task,
in which optimal $\kappa$-loss shrinkage plays a key role.

Suppose a central authority 
supplies a covariance matrix  
$\hat{\Sigma}$  to many users, 
who  each privately use the supplied matrix $\hat{\Sigma}$ to perform mean-variance
portfolio optimization. The $u$-th user is assumed to have
a vector  $\mu_u$ of forecasted returns, and to allocate one unit of investor capital 
across a portfolio (vector) of holdings $h_u$ by solving the mean-variance portfolio allocation problem \cite{grinold2000active} 
\begin{equation}
\begin{array}{cc}
(\MV(\mu_u,\hat{\Sigma})) \qquad & \mbox{Minimize over }h: h'\hat{\Sigma}h\\
& \mbox{subject to } \qquad  h'\mu_u =1.
\end{array}\label{MRK_Prob}
\end{equation}
The  forecast $\mu_u$ is presumably private to user $u$, and is not known
to the suppliers of the covariance matrix (and presumably each user's forecast is
kept private from other users)\footnote{Problem (\ref{MRK_Prob}) suppresses 
the leverage and short sale constraints which often complicate practical allocation;
we prefer to focus on 
risk-return trade-offs in our discussion.}.

MUCE models a common situation
in empirical finance, where risk measurement enterprises
Axioma, MSCI-Barra, and RiskMetrics,
do, in fact, disseminate risk models (i.e covariance matrices)
to their customers on a periodic basis. 
Many of those customers then privately 
use the supplied covariance matrices in portfolio allocation.
 
MUCE is also rumored to arise in
large trading organizations composed of decentralized 
 {\it forecasting} teams working independently across the 
organization to each develop their own mean forecasts,  where 
 the organization has a central {\it risk-measurement}
 team that supplies a returns covariance matrix
 for all forecasting teams to use in {\it decentralized} allocation. Namely,
 each team in the organization 
mean-variance allocates  its own individual capital endowment
separately from other teams, using the organization's common covariance matrix
and that team's private forecast.

In the MUCE setting, the producer of the covariance matrix estimate does not
know the forecasts that the matrix' users are applying, and in particular cannot make use
of empirical cross-user compromises such as `produce the matrix that historically works best on average across
all existing users'. 

Relative condition number loss optimality, explored in this paper,
 offers instead mathematical guarantees that apply across
{\it all} users.

Suppose that the returns $X_i \sim_{iid} N(\mu,\Sigma)$, $i=1,\dots,n$, and 
let $\SR(\hat{\Sigma};\mu,\Sigma)$ denote the expected out-of-sample 
risk-adjusted return, where the estimated covariance matrix $\hat{\Sigma}$ and (true) 
forecast $\mu$ are supplied to (\ref{MRK_Prob});  and where, in computing the
risk-adjusted return, we evaluate the risk
of the portfolio using the correct risk model $\Sigma$.
Similarly, let $\SR({\Sigma};\mu,\Sigma)$ denote the Sharpe ratio when the 
true underlying covariance matrix is used both in Markowitz allocation
and in evaluation of the portfolio's risk-adjusted return.
We always have
\[
 {\SR(\hat{\Sigma};\mu,\Sigma)} \leq  {\SR(\Sigma;\mu,\Sigma)};
\]
but in the high-dimensional setting where $p \sim \gamma \cdot n$,
the shortfall is typically substantial. 
In words, using $\hat{\Sigma}$ rather than $\Sigma$ for portfolio allocation
causes a substantial shortfall of realized Sharpe ratio \cite{el2010high,ledoit2014nonlinear,lai2011mean,ledoitWolf2004AWell}.

Performance shortfalls are pervasive in real-world empirical finance
and one would like to limit them where possible.
A {\it Relative Sharpe Ratio Guarantee} (RSRG) $\rho$ for $\rho > 1$ is a statement of the form 
\[
  \SR({\Sigma};\mu,\Sigma)/\rho \leq  \SR(\hat{\Sigma};\mu,\Sigma), \qquad \forall \mu.
\]
It  guarantees that the Sharpe ratio one experiences using $\hat{\Sigma}$ never
suffers more than the given factor $\rho$ of deterioration,
no matter which underlying $\mu$ is in force.
In this way the performance shortfall is curtailed.

Let $\RSRG(\hat{\Sigma},\Sigma)$ denote the smallest positive $\rho$ 
which allows such a guarantee; this has an intimate connection with $\kappa$-loss.
\begin{lem} ({\bf RSRG in terms of Condition Number.}) \label{lem:ReduceConditionNumber}
Define the pivot  $\Delta = {\Sigma}^{-\frac{1}{2}}\hat{\Sigma}{\Sigma}^{-\frac{1}{2}}$.
\begin{equation}  \label{eq:KappaIsometry}
\sup_{\mu}  \frac{\SR({\Sigma};\mu, \Sigma)}{\SR(\hat{\Sigma};\mu, \Sigma)}= \frac{1}{2}\sqrt{\kappa(\Delta)+\frac{1}{\kappa(\Delta)}+2} 
\end{equation}
More specifically, let $\Delta(\eta) = {\Sigma}^{-\frac{1}{2}}\hat{\Sigma}(\eta){\Sigma}^{-\frac{1}{2}}$; then
\[
   \RSRG(\hat{\Sigma}(\eta)) =  \frac{1}{2}\sqrt{\kappa(\Delta(\eta))+\frac{1}{\kappa(\Delta(\eta))}+2}.
\]
\end{lem}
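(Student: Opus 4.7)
The plan is to solve the Markowitz problem in closed form, reduce the supremum over forecasts to a Kantorovich-type ratio, and apply the (sharp) classical Kantorovich inequality. Strict convexity of $h\mapsto h'\hat{\Sigma}h$ together with linearity of the constraint $h'\mu=1$ makes the Lagrange solution of $\MV(\mu,\hat{\Sigma})$ unique and explicit: $h^\star = \hat{\Sigma}^{-1}\mu/(\mu'\hat{\Sigma}^{-1}\mu)$. The realized expected return is $(h^\star)'\mu = 1$, and the realized variance under the \emph{true} covariance is $(h^\star)'\Sigma h^\star = (\mu'\hat{\Sigma}^{-1}\Sigma\hat{\Sigma}^{-1}\mu)/(\mu'\hat{\Sigma}^{-1}\mu)^2$, which yields
\begin{equation*}
\SR(\hat{\Sigma};\mu,\Sigma)=\frac{\mu'\hat{\Sigma}^{-1}\mu}{\sqrt{\mu'\hat{\Sigma}^{-1}\Sigma\hat{\Sigma}^{-1}\mu}},\qquad \SR(\Sigma;\mu,\Sigma)=\sqrt{\mu'\Sigma^{-1}\mu}.
\end{equation*}

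Next I would change variables, setting $\nu = \Sigma^{-1/2}\mu$ and $A = \Sigma^{1/2}\hat{\Sigma}^{-1}\Sigma^{1/2}$, so that $A = \Delta^{-1}$. A direct substitution gives
\begin{equation*}
\left(\frac{\SR(\Sigma;\mu,\Sigma)}{\SR(\hat{\Sigma};\mu,\Sigma)}\right)^{2} = \frac{(\nu'\nu)(\nu'A^2\nu)}{(\nu'A\nu)^2}.
\end{equation*}
As $\mu$ ranges over $\R^p\setminus\{0\}$, so does $\nu$. Writing $u = A^{1/2}\nu$ turns the ratio into $(u'A^{-1}u)(u'Au)/(u'u)^2$, which is the classical Kantorovich form for the positive definite matrix $A$. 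Kantorovich's inequality is sharp and gives
\begin{equation*}
\sup_{\nu \ne 0}\frac{(\nu'\nu)(\nu'A^2\nu)}{(\nu'A\nu)^2} = \frac{(\lambda_{\max}(A)+\lambda_{\min}(A))^2}{4\,\lambda_{\max}(A)\lambda_{\min}(A)} = \frac{\kappa(A) + \kappa(A)^{-1} + 2}{4},
\end{equation*}
with equality attained on a suitable vector in the span of the extreme eigenvectors of $A$ (hence by some $\mu_\star = \Sigma^{1/2}\nu_\star$).

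Finally, the condition number is invariant under inversion, $\kappa(A) = \kappa(\Delta^{-1}) = \kappa(\Delta)$, so taking square roots yields (\ref{eq:KappaIsometry}). The stated formula for $\RSRG(\hat{\Sigma}(\eta))$ is then a definitional restatement: $\RSRG$ is by construction the smallest $\rho$ for which $\SR(\Sigma;\mu,\Sigma)/\rho \leq \SR(\hat{\Sigma}(\eta);\mu,\Sigma)$ holds uniformly in $\mu$, i.e.\ the supremum just computed. The only nonroutine analytical ingredient is the Kantorovich inequality, and that is classical; the real content of the lemma is recognizing that after the natural change of coordinates $\mu \mapsto \Sigma^{-1/2}\mu$ the supremum over forecasts automatically takes exactly the Kantorovich form in the pivot $\Delta$.
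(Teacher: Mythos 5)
Your proof is correct and follows essentially the same route as the paper's: both reduce the supremum over $\mu$ to a Kantorovich ratio and apply the (sharp) Kantorovich inequality. The only difference is cosmetic: you change variables via $\nu=\Sigma^{-1/2}\mu$ (and then $u = A^{1/2}\nu$) so that the pivot $\Delta^{-1}$ appears directly, whereas the paper changes variables via $x=\hat{\Sigma}^{-1/2}\mu$, applies the matrix Kantorovich inequality to $\hat{\Sigma}^{-1/2}\Sigma\hat{\Sigma}^{-1/2}$, and then invokes $\kappa(AB)=\kappa(BA)$ and $\kappa(A)=\kappa(A^{-1})$ to convert to $\kappa(\Delta)$. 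Both reach the same place; your choice of variables is arguably a bit more streamlined since it avoids the final similarity-rearrangement step.
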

The correspondence  $\kappa \mapsto \kappa + 1/\kappa$ 
is one-to-one on $\{ \kappa : \kappa \geq 1 \}$.
So this lemma sets up an isomorphism 
between questions concerning Sharpe ratio guarantees
and questions concerning condition numbers.
Since $\kappa \mapsto \kappa + 1/\kappa$ is increasing in $\kappa$, 
minimizing relative condition-number loss is equivalent to minimizing $\RSRG$.

\subsubsection{Multi-Task Discriminant Analysis} 

Ever since RA Fisher's pioneering work in discriminant analysis \cite{fisher1936use},
the covariance matrix has been an essential component in
designing linear classifiers. Fisher of course showed how to use
the underlying theoretical covariance in LDA, 
but that covariance matrix would not truly be available to us in applications.
The dominant approach over the ensuing eight decades has certainly been to `plug-in'
the naive empirical covariance as if it were the best available approximation to the
theoretical covariance. Relatively little attention has been paid to the
idea that in high-dimensional cases the covariance might 
need to be estimated with application to LDA in mind.

Bickel and Levina \cite{bickel2004some} discuss
some of the difficulties of  traditional empirical covariance matrices
as plug-in inputs to  Fisher's discriminant analysis.
Methodological work aiming to surmount some of these difficulties
includes Jerome Friedman's regularized discriminant analysis (RDA) 
\cite{friedman1989regularized} and several methods developed by
Jianqing Fan and co-authors \cite{fan2008high,fan2012road},
including ROAD. 
Theoretical work documenting the difficulties of plug in rules under
high-dimensional asymptotics includes pioneering work of
Serdobolski \cite{serdobolskii2007multiparametric}.
Recent work on RDA by Dobriban and Wager 
\cite{dobriban2015high} studied regularized discriminant analysis
under Serdobolski-flavored asymptotics.  

This paper's study of
estimation under $\kappa$-loss is directly relevant to
Multi-Task discriminant analysis (MTDA), 
which envisions the use of LDA across many different 
classification tasks, but always with the same
underlying features driving the classification
and always the same covariance estimate defined on those features.

The setting for MTDA is increasingly important: it applies to large 
shared databases which many researchers can mine for different purposes.
There are conceptually two distinct databases: first, a database `X'
in which a sample of individuals has been measured
on $p$ `predictive', or easy-to-measure, feature values. Separately,
other 'outcome' or `hard-to-measure' 
properties about those same individuals are recorded in a
database `Y'. Researchers are interested in 
using the outcome `Y' data to define interesting dichotomies 
of individuals, and then in developing
rules to predict class membership 
in such dichotomies, based on the `X' data. 

In one stylized application of MTDA, 
`X' would be gene expression data 
and `Y' phenotypic data.  
Dichotomies based on `Y' split individuals into classes 
with and without certain phenotypes. The goal of one individual
researcher is to identify an interesting phenotypic split and
then use  gene expression data to predict those phenotypes.

Historically, researchers worked in an uncoordinated way on
data of this kind,  each one developing a
bespoke classifier. Such a practice \-- developing a different
workflow for every such project \-- 
is considered harmful by many serious scientists
\cite{carp2012secret}, as
the resulting analysis variability and method instability adds 
uncertainty to the interpretation of researcher claims. 
For this reason, and also because it's good research
hygiene to have a good baseline procedure, one might consider 
developing an LDA-based procedure appropriate for use across
many different dichotomies.

A common situation in such large shared databases
involves the number $p$ of `X' features being comparable to or larger than
the number $n$ of individuals in the database.
In that setting, high-dimensional asymptotics become relevant, and
we should carefully pay attention to the inaccuracies in the covariance
matrix.

Recall that in traditional single-task linear discriminant analysis (LDA) \cite{friedman2001elements}
the feature measurements $X$ in the two classes
of a dichotomy are assumed to have distributions $N(\xi_c,\Sigma)$, $c=0,1$
where the $\xi_c$ denote the class-conditional means. For
simplicity, we assume that
there are a priori the same number of individuals in each class, 
and define the mean feature value $\xi = (\xi_0+\xi_1)/2$, 
and the mean interclass contrast $\mu = (\xi_1-\xi_0)/2$.
Under traditional LDA,
we attempt to classify a future observed $X$ into one of the two groups based on
linear classifier scores:
\[
       w'(X-\xi)  <> 0.
\]
Fisher showed that, when $\Sigma$ is known,
the ideal weights $w^*$ are the solution to 
\begin{equation}
\begin{array}{cc}
(\LDA(\mu,\Sigma)) \qquad & \mbox{Maximize over }w: \dfrac{w'\mu}{\sqrt{w'\Sigma w}}\\
\end{array}\label{prob:SNR}
\end{equation}
so that $w^* \propto \Sigma^{-1} \mu$.
The misclassification rate of this ideal classifier is $\Phi(-\Sep)$, where $\Phi$ denotes the standard $N(0,1)$ CDF,
and $\Sep$ denotes Fisher's measure of {\it interclass separation}:
\[
\Sep(\Sigma; \mu,\Sigma) \equiv \sqrt{\mu'\Sigma ^{-1}\mu}.
\]
In practice, $\Sigma$ is not available, and $\Sep(\Sigma; \mu,\Sigma)$ represents 
an ideal performance achievable only with the aid of an oracle. Plugging  $\hat{\Sigma}$ into (\ref{prob:SNR}) gives 
the achievable interclass separation 
\[
\Sep(\hat{\Sigma};\mu,{\Sigma}) \equiv \frac{\mu' \hat{\Sigma}^{-1} \mu}{\sqrt{\mu' \hat{\Sigma}^{-1} \Sigma \hat{\Sigma}^{-1} \mu}}.
\]

When the number of individuals $n$ in the database is comparable to the number of `X' feature measurements $p$ per individual,
the concerns of this paper become relevant. 
{We always have $ \Sep(\hat{\Sigma};\mu,{\Sigma}) < \Sep(\Sigma; \mu,\Sigma)$,} but when $p/n \goto \gamma > 0$,
the achievable 
classifier performance can fall well short of ideal oracle performance. 
A {\it relative separation guarantee } is an inequality of the form
\[
 { \Sep(\Sigma;\mu,{\Sigma})}/\rho <  \Sep(\hat{\Sigma}; \mu,\Sigma), \quad \forall \mu .
\] 
Specifically, we are guaranteed that the asy. misclassification rate of any dichotomy will
not exceed $\Phi(- \Sep/\rho)$ where $\Sep$ here denotes the ideal separation $\Sep(\Sigma; \mu,\Sigma)$.
The best performance guarantee relative to  oracle performance involves 
\[
\RSEPG \equiv \sup_{\mu} { \frac{\Sep(\Sigma; \mu,\Sigma) }{\Sep(\hat{\Sigma};\mu,{\Sigma})}}.
\]

There is a formal resemblance of 
Fisher separation $\Sep$ and Sharpe Ratio $\SR$
which implies a formal isomorphism between the MTDA and MUCE settings.
Indeed, Lemma \ref{lem:ReduceConditionNumber} yields that
\[
\rho = \frac{1}{2}\sqrt{\kappa(\Delta)+\frac{1}{\kappa(\Delta)}+2} ,
\]
where, as earlier, $\Delta = {\Sigma}^{-\frac{1}{2}}\hat{\Sigma}(\eta){\Sigma}^{-\frac{1}{2}}$. 
When applied in the MTDA setting,
the optimal shrinker of this paper offers a covariance 
estimate which optimally limits the maximal cross-dichotomy performance shortfall relative to an oracle.

\subsubsection{Implications}

In these stylized applications, the optimal shrinker
behaves quite differently  than today's common practices. Consider first 
the {\it treatment of large eigenvalues}.

\bitem
\item Today, many practical applications of mean-variance optimization and Fisher LDA
undoubtedly involve no eigenvalue shrinkage whatever.
\item Where shrinkage has been applied  in practice \-- for example by Barra \cite{menchero2011eigen}--
the goal has been
to ensure that the variance in top eigendirections is accurately estimated.
Such {\it debiasing rules} shrink large eigenvalues relatively little.
In contrast this paper's  shrinker 
discounts the top eigenvalues quite severely, by a factor  $1/(1+\gamma)$.
\item Shrinkage of the top eigenvalue by nonlinearity $\eta^*$ combats
a marked deficiency of  the $(\MV(\mu,\hat{\Sigma})$ and $(\LDA(\mu,\hat{\Sigma}))$
optimization problems. Those optimization procedures are not aware
of the types of inaccuracy suffered by the covariance estimate $\hat{\Sigma}$  
in the high-dimensional case. They `clamp down' severely on exposures to the top 
empirical eigenvector. However, when the top eigenvector is not perfectly accurate,
there is a limited risk benefit from such clamping down.
 $\eta^*$ is aware of the inaccuracy of eigenvectors and guards 
 against clamping down too severely.
\eitem
Consider next the {\it treatment of small or moderate eigenvalues}. 
\bitem
\item It is common  in applied statistics to decide the rank of a factor model
by null hypothesis significance testing. This leads to accepting 
eigencomponents as part of the covariance model whenever the
corresponding empirical eigenvalue exceeds the bulk edge noticeably. The optimal
shrinker derived here sets a noticeably higher standard for eigencomponent inclusion, 
i.e a threshold well outside the bulk, thereby
demanding much more than mere `statistical significance'.
\item In particular, there are spike configurations 
where the empirical eigenvalues are unquestionably 
non-null, yet the optimally shrunken covariance model is simply the identity matrix.
In such situations, no-one would dispute the existence of correlations between variables, 
however, it could be understood that the corresponding eigencomponents are too inaccurate to
be of value.  

For an empirical finance scenario, suppose that we have $p=3000$ stocks (as in the Russell 3000)
and $n=252$ observations (1 year of daily returns). Then $\gamma=11.9$, and
 $\lambda_+(11.9) \approx { 19.8}$ while $\lambda_m^+({11.9}) \approx { 27.42}$.
The threshold for inclusion in the model by the optimal shrinker 
 is asymptotically {\sl {38\%} larger than the
usual  threshold based on null hypothesis significance testing.}
\eitem
Finally, consider the worst-case situation for MUCE and MTDA. Practitioners often suspect that
theoretical guarantees are misleading, because they may guard against unrealistically
pessimistic situations. And yet:
\bitem
\item  In Section \ref{ssec:LFUF} below, we show that the least-favorable forecasts (in MUCE) /
least-favorable dichotomies in (MTDA) involve 
$\mu$ a linear combination of the top empirical eigenvector and the top
theoretical eigenvector. 
\eitem
In empirical finance, these two directions translate into the
past market direction and the future market direction, respectively. 
Undoubtedly, many investors face heavy exposures of just this type.
In genomic analysis, those two directions concern the directions of strongest
underlying genetic relatedness in the population and in the sample, respectively. Undoubtedly,
some of the most provocative dichotomies are well approximated using just these two directions.

We briefly mention implications specific to Multi-Task Discriminant Analysis.
Because of R.A. Fisher's prestige, 
it might today be considered 
{\it de rigueur} to use the standard empirical covariance matrix in 
developing an empirical linear classifier 
({\it pace} Bickel and Levina \cite{bickel2004some}).
Nevertheless,  practitioners have long successfully 
used naive diagonal rules in linear classification and 
they have sometimes taken pains to document the fact that such
naive rules outperform covariance-aware rules; see
references in Hand and Yu \cite{hand2001idiot},
the empirical studies of Zhao and co-authors on 
the very simple Mas-O-Menos classifier \cite{zhao2014menos}, 
and of Donoho and Jin on the closely related
Higher-Criticism feature selector/classifier \cite{donoho2008higher}.

The theory developed here aligns with earlier research 
-- both empirical and theoretical -- affirming the use of
`naive' diagonal covariance estimates in LDA, but going much further
in showing that such naive rules can actually be optimal \-- in the sense discussed here.
For well-cited theoretical work considering diagonal covariances
in place of standard covariance estimates when $\gamma$ is large, 
see for example \cite{bickel2004some,dudoit2002comparison,fan2008high}. 
Under our spiked model assumption, where $\gamma = p/n$ is large 
and the top eigenvalue is not very large, the optimal equivariant shrinker 
literally reduces to the naive identity rule.  
In short, the {\it identity   covariance estimate
offers optimal $\kappa$-loss guarantees in a range of cases where there 
are relatively large and statistically significant eigencomponents.}

An example given earlier
considered $\gamma = 50$ as 
being very plausible for genomic data analysis.
In that setting, unless
 $\lambda_1$ exceeds 100, {\it the best achievable performance 
guarantee comes by using the identity covariance matrix.}

The theory developed here also provides an additional setting 
-- to add to the previously known ones \-- where
singular values and eigenvalues can sometimes profitably be ignored even when they
are noticeably outside the bulk; compare \cite{gavish2014optimal,donoho2013optimalShrinkage}.

\section{Basic Tools\label{sec:Basic Tools}}

\subsection{Spiked Model Asymptotics}
\label{ssec:SpikeModelAsymp}

We remind the reader of two central assumptions:
\begin{description}
\item[{[PGA]}] {\it Proportional-Growth Asymptotics.}
We consider a sequence of problem sizes $n,p\rightarrow\infty$ 
while $\frac{p}{n}\rightarrow\gamma>0$.
\item[{[SPIKE]}] {\it Spiked Covariance Model.}
We assume that the population covariance $\Sigma$ is spiked,
with $r$ fixed spikes and base variance $1$.
In other words, the eigenvalues of $\Sigma$ are $l_{1} > l_{2} >...> l_{r}> 1 = l_{r+1}=...=l_{p}$.
\end{description}

In this setting, the empirical eigenvalues $\lambda_{i,n}$  of $\bS = \frac{1}{n} X'X$
are random, but have a very simple asymptotic description. 

\begin{description}
\item [Inside the ``bulk''\,] All but at most $r$
lie inside a bulk distribution extending through the interval 
$[\lambda_-(\gamma), \lambda_+(\gamma)]$ where
\[
   \lambda_\pm(\gamma) = (1 \pm \sqrt{\gamma})^2.
\]
\item [Outside the ``bulk'' \,] 
Each theoretical eigenvalue $\ell_i$ among the first $r$
which exceeds $\ell_+(\gamma) = 1 + \sqrt{\gamma}$,
generates a corresponding empirical eigenvalue $\lambda_{i,n}$ of $\bS$
outside the bulk $[\lambda_-,\lambda_+]$, and converges to a limiting position $\lambda_i > \lambda_+$ (see (\ref{eq:eigendisp}) below)
as $n \goto \infty$. Asymptotically, at most $r$ eigenvalues lie outside the vicinity of the
bulk.
\end{description}

Moreover, in the spiked setting, the following fundamental result
tells us a great deal about the
behavior of the eigenvalues outside the bulk and their eigenvectors.
Variants and extensions have been developed by 
Baik, Ben Arous, and P\'{e}ch\'{e} \cite{baik2005phase},
Baik and Silverstein \cite{baik2006eigenvalues},
Paul \cite{paul2007asymptotics}, Nadler \cite{nadler2008finite}, Benaych-Georges and Rao-Nadakuditi
\cite{Benaych-Georges2011}.

\begin{thm} \label{lem:spiked} {\bf (Spiked Covariance Asymptotics, \cite{baik2005phase,baik2006eigenvalues,paul2007asymptotics,nadler2008finite, Benaych-Georges2011})}
In the spiked model [SPIKE] under the proportional growth asymptotic [PGA],
we have  {\bf eigenvalue displacement}, such that for each 
spike eigenvalue $\ell_i > \ell_+(\gamma)$, a corresponding empirical eigenvalue
obeys:
\[
   \lambda_{i,n} =  \lambda(\ell_i; \gamma) \cdot (1 + o_P(1)), \qquad n \goto \infty,
\]
where
\beq \label{eq:eigendisp}
 \lambda(\ell; \gamma)  \equiv  \left \{ \begin{array} {ll} 
      \ell \cdot ( 1 + \frac{\gamma}{\ell-1}) & \ell > \ell_+(\gamma)\\
      \lambda_+(\gamma) & \ell \leq \ell_+(\gamma) \\
      \end{array} \right . .
\eeq
Suppose that the spike values are distinct.
We also have {\bf  eigenvector rotation}, such that the theoretical eigenvector $U_i$ and
the corresponding empirical eigenvector $V_i$ obey:
\[
|\langle U_{i},V_{i}\rangle|\xrightarrow{a.s}c(\ell_{i}; \gamma) , \qquad n \goto \infty,
\]
where 
\[
c(\ell ; \gamma ) \equiv \left \{ 
\begin{array} {ll}  \sqrt{\frac{1-\frac{\gamma}{(\ell-1)^{2}}}{1+\frac{\gamma}{\ell-1}}} & \ell > \ell_+(\gamma) \\
      0  & \ell \leq \ell_+(\gamma) \\
      \end{array} \right . .
\]
In addition, for $\ell_i, \ell_j > 1 + \sqrt{\gamma}$, $\ell_i \not = \ell_j$ we have
\[
|\langle U_{i},V_{j}\rangle|\xrightarrow{a.s} 0  \mbox{ as } \qquad n \goto \infty.
\]
\end{thm}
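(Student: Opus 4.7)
The plan is to view the spike as a finite-rank perturbation of the null-model Wishart matrix and reduce the outlier analysis to a small deterministic equation driven by the Stieltjes transform of the Marchenko--Pastur law; the three conclusions (outlier locations, diagonal cosines, cross-orthogonality) will all fall out of the same $r \times r$ secular identity once the bulk resolvent is controlled along deterministic directions.

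Factor $X = Z \Sigma^{1/2}$ with $Z$ an $n \times p$ matrix of i.i.d.\ $N(0,1)$ entries, set $W = \tfrac{1}{n} Z^T Z$, and write $\Sigma = I + U D U^T$ with $U = [u_1,\ldots,u_r]$, $D = \mathrm{diag}(\ell_i - 1)$, so that $\Sigma^{-1} = I - U \tilde D U^T$ with $\tilde D_{ii} = 1 - 1/\ell_i$. The factorization $\bS - \lambda I = \Sigma^{1/2}(W - \lambda \Sigma^{-1})\Sigma^{1/2}$ combined with the matrix-determinant lemma converts $\det(\bS - \lambda I)=0$, for $\lambda$ off the spectrum of $W$, into the $r \times r$ secular equation
\[
\det\bigl(I_r + \lambda\, \tilde D \, U^T (W - \lambda I)^{-1} U\bigr) \;=\; 0.
\]
First I would prove eigenvalue displacement. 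Using anisotropic convergence of the resolvent, $U^T (W - \lambda I)^{-1} U \to m_F(\lambda)\, I_r$ almost surely (where $m_F$ is the Stieltjes transform of the Marchenko--Pastur law with aspect ratio $\gamma$), the limiting secular equation factorizes into $r$ scalar equations $1 + \lambda(1 - 1/\ell_i)\, m_F(\lambda) = 0$. Inverting via the explicit form of $m_F$ yields $\lambda = \ell_i\bigl(1 + \gamma/(\ell_i-1)\bigr)$, which exceeds $\lambda_+(\gamma)$ precisely when $\ell_i > 1 + \sqrt{\gamma}$; sub-critical spikes have no outlier solution, matching the piecewise definition of $\lambda(\ell;\gamma)$. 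A standard Hurwitz/continuity argument then upgrades the deterministic limit to almost-sure convergence $\lambda_{i,n} \to \lambda(\ell_i;\gamma)$.

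Next I would read off the eigenvector statements from the same framework. Rewriting $\bS v_{i,n} = \lambda_{i,n} v_{i,n}$ as $(W - \lambda_{i,n}\Sigma^{-1}) w_{i,n} = 0$ with $w_{i,n} = \Sigma^{1/2} v_{i,n}$ gives
\[
w_{i,n} \;=\; -\lambda_{i,n}\,(W - \lambda_{i,n} I)^{-1} U\, b, \qquad b \;=\; \tilde D\, U^T w_{i,n},
\]
where $b$ solves the secular system and, in the limit, lies along $e_i$. Computing $u_j^T w_{i,n}$ via the anisotropic resolvent limit and imposing $\|v_{i,n}\|^2 = w_{i,n}^T \Sigma^{-1} w_{i,n} = 1$ yields a closed expression for $|\langle u_i, v_{i,n}\rangle|^2$ involving $m_F(\lambda_i)$ and $m_F'(\lambda_i)$, which after substitution of the explicit Marchenko--Pastur formulas collapses to the claimed cosine $c(\ell_i;\gamma)^2$. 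The same calculation delivers cross-orthogonality: when $\ell_i \neq \ell_j$ the off-diagonal block of $U^T (W - \lambda I)^{-1} U$ vanishes and distinct scalar equations correspond to limiting spike components supported on orthogonal basis vectors, forcing $\langle u_i, v_{j,n}\rangle \to 0$.

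The main obstacle is the anisotropic resolvent convergence itself. The classical Marchenko--Pastur theorem only controls the normalized trace of $(W - \lambda I)^{-1}$, whereas the argument above needs every entry $u_i^T (W - \lambda I)^{-1} u_j$ against deterministic spike directions to concentrate at $m_F(\lambda)\delta_{ij}$. This is the content of the isotropic/anisotropic local laws for Wishart matrices, and is where the Gaussianity of $Z$ enters -- either through the rotational invariance of the Gaussian ensemble (which lets one rotate $u_i$ to coordinate axes and invoke classical Wishart results) or through Hanson--Wright-type quadratic concentration for non-Gaussian entries with finite fourth moment. Once this anisotropic input is in hand, the remainder is algebraic manipulation of $m_F$ and careful bookkeeping of the eigenvector normalization.
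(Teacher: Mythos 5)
The paper does not prove this theorem: it is imported wholesale from the spiked-model literature and cited accordingly \cite{baik2005phase,baik2006eigenvalues,paul2007asymptotics,nadler2008finite,Benaych-Georges2011}. There is therefore no in-paper argument to compare your proposal against, so let me instead evaluate the sketch on its own terms.

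Your plan is a correct high-level outline of the finite-rank-perturbation route of Benaych-Georges and Rao--Nadakuditi, one of the five cited sources. The algebra all checks: $\Sigma^{-1} = I - U\tilde D U^T$ with $\tilde D_{ii} = 1-1/\ell_i$ by Woodbury, the factorization $\bS - \lambda I = \Sigma^{1/2}(W - \lambda\Sigma^{-1})\Sigma^{1/2}$, the matrix determinant lemma collapsing $\det(W - \lambda\Sigma^{-1})=0$ to the $r\times r$ secular equation, and substituting the Marchenko--Pastur Stieltjes transform into $1 + \lambda(1-1/\ell_i)m_F(\lambda)=0$ does recover $\lambda(\ell;\gamma) = \ell\bigl(1+\gamma/(\ell-1)\bigr)$ with BBP threshold at $\ell_+(\gamma)=1+\sqrt\gamma$. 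You also correctly identify the technical crux, namely the anisotropic statement $U^T(W-\lambda I)^{-1}U \to m_F(\lambda)I_r$ for $\lambda$ outside the bulk, and you rightly note this reduces (for Gaussian $Z$, by rotation invariance of white Wishart) to concentration of a fixed $r\times r$ diagonal block of the resolvent. Two bookkeeping points worth flagging: be explicit that $m_F$ is the transform of the law of the $p\times p$ matrix $W=\tfrac1n Z^T Z$, which for $\gamma>1$ carries an atom at zero (the companion transform differs); and the cosine $c(\ell;\gamma)^2$ requires not just the resolvent limit but also the derivative-level statement $U^T(W-\lambda I)^{-2}U \to m_F'(\lambda)I_r$ to carry out the normalization $w^T\Sigma^{-1}w = 1$. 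It is also worth noting that Paul's and the Baik--Ben Arous--P\'ech\'e proofs among the cited sources reach the same conclusions by genuinely different means (second-order perturbation expansions and determinantal point processes respectively), so the theorem does not hinge on the particular secular-equation machinery you chose.
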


Earlier, we pointed out that at most the first $r$ eigenvalues emerge from the bulk. 
The following lemma formalizes this notion.
\begin{lem} \label{lem:max_bulk} \cite{baik2006eigenvalues,paul2007asymptotics}
Let $\lambda_{i,n}$ be the (decreasingly arranged \-- $\lambda_{1,n} \geq \lambda_{2,n} \geq \dots \geq \lambda_{p_n}$) empirical eigenvalues of the empirical covariance matrix $\bS_{n,p_n}$.  Under the assumptions [SPIKE] and [PGA], suppose there are $0 \leq k \leq r$ spikes $\ell_i$ exceeding $\ell_+(\gamma)$. Then
\[
\lambda_{k+1,n} \xrightarrow{a.s} \lambda_+(\gamma), \quad n \goto \infty.
\]
\end{lem}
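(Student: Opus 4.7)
My plan is to sandwich $\lambda_{k+1,n}$ between a lower bound obtained by rank-perturbation comparison with the null model $\Sigma = I$ and an upper bound obtained by passing to a principal submatrix of $\bS_n$. Write $\bS_n = \Sigma^{1/2} M_n \Sigma^{1/2}$ with $M_n = Z'Z/n$ and $Z \in \R^{n \times p}$ having i.i.d.\ $\cN(0,1)$ entries, and assume without loss of generality (by orthogonal invariance) that $\Sigma$ is diagonal with the $r$ spike values in positions $1, \ldots, r$.

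For the lower bound, I would decompose $\Sigma^{1/2} = I + E$ with $E$ diagonal and of rank $r$, and expand
\[
  \bS_n - M_n \;=\; E M_n + M_n E + E M_n E .
\]
This symmetric matrix has nonzero entries confined to rows and columns indexed by $\{1, \ldots, r\}$, so its rank is at most $2r$. Weyl's inequality for bounded-rank perturbations then gives $\lambda_{k+1,n}(\bS_n) \geq \lambda_{k+1+2r,n}(M_n)$. Since $k+1+2r$ is a fixed index, the Bai--Yin theorem (together with its standard extension to any fixed-index top eigenvalue of the Marchenko--Pastur bulk) yields $\lambda_{k+1+2r,n}(M_n) \goto \lambda_+(\gamma)$ a.s., and hence $\liminf_n \lambda_{k+1,n}(\bS_n) \geq \lambda_+(\gamma)$.

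For the upper bound, let $P$ be the coordinate projection onto the $k$ super-threshold spike positions. Cauchy interlacing applied to the principal $(p-k) \times (p-k)$ submatrix $B_n := P^\perp \bS_n P^\perp$ gives $\lambda_{k+1,n}(\bS_n) \leq \lambda_{1,n}(B_n)$. The crucial observation is that $B_n$ is itself the sample covariance of the coordinates $k+1, \ldots, p$ of $X = Z\Sigma^{1/2}$, whose underlying covariance is $\mathrm{diag}(\ell_{k+1}, \ldots, \ell_r, 1, \ldots, 1)$: a $(p-k)$-dimensional spiked model \emph{every spike of which is sub-threshold}, with aspect ratio $(p-k)/n \goto \gamma$. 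It therefore suffices to show that in such a ``no super-threshold spike'' model the largest empirical eigenvalue converges a.s.\ to $\lambda_+(\gamma)$.

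This last claim, which I expect to be the main obstacle, is the ``no escape'' half of the Baik--Silverstein phase transition: Theorem \ref{lem:spiked} as literally stated identifies the limits of empirical eigenvalues generated by super-threshold spikes, but does not by itself rule out additional escapees from the sub-threshold ones. The standard route is to analyze the Stieltjes transform of the spiked resolvent, as carried out in \cite{baik2006eigenvalues, paul2007asymptotics}; I would invoke their result at this step, closing the sandwich and yielding $\lambda_{k+1,n} \goto \lambda_+(\gamma)$ a.s.
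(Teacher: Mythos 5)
The paper provides no proof of this lemma: it is stated with a bare citation to Baik--Silverstein and Paul and appears nowhere in the appendix. Your proposal, by contrast, offers a genuine reduction argument, and it is correct. The lower-bound half is clean and elementary: $E M_n + M_n E + E M_n E$ vanishes on the block where both indices exceed $r$, so its rank is at most $2r$, Weyl gives $\lambda_{k+1,n}(\bS_n) \geq \lambda_{k+1+2r,n}(M_n)$, and Bai--Yin (with the standard fixed-index extension) gives $\liminf_n \lambda_{k+1,n} \geq \lambda_+(\gamma)$ almost surely. The upper-bound half via Cauchy interlacing and the observation that $B_n = P^\perp \bS_n P^\perp$ is itself a spiked sample covariance with all spikes sub-threshold and aspect ratio $(p-k)/n \to \gamma$ is also correct. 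You are right that the remaining step --- that the top eigenvalue of a purely sub-threshold spiked model sticks to the bulk edge --- is the irreducible content of the Baik--Silverstein ``no-escape'' phase transition, and cannot be squeezed out of Theorem~\ref{lem:spiked} alone; one must cite it. What your route buys is a clean separation of the two directions: showing that at most $k + 2r$ eigenvalues can lie strictly above $\lambda_+(\gamma)$ needs only elementary rank-perturbation inequalities plus Bai--Yin, whereas showing that $\lambda_{k+1,n}$ actually converges down to $\lambda_+(\gamma)$ needs the deeper no-escape result, reduced to its $k=0$ special case. The paper simply attributes the whole lemma to the references without exposing this structure, so your proof is a legitimate refinement rather than a duplication.
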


All results of this paper can be viewed as consequences of
Theorem \ref{lem:spiked} and Lemma \ref{lem:max_bulk}.
We believe the {\it conclusions} of Theorem \ref{lem:spiked} and Lemma \ref{lem:max_bulk} 
extend to a wider range of assumptions,  
where the bulk distribution
of empirical eigenvalues still follows the Mar\v{c}enko-Pastur law.
See also the footnote in Section 1.

\subsection{Two Parametrizations of Eigenvalues}
\label{sec:twoparam}

Theorem \ref{lem:spiked} gives us 
two ways to describe the top eigenvalues.
We can equivalently describe either:
\bitem
\item Underlying theoretical eigenvalues $\ell_i$ associated to $\Sigma$; these are not directly observable.
\item Observable empirical eigenvalues  
$\lambda_{i,n}$ associated to empirical covariance matrices $\bS_{n,p_n}$,
with large-$n$ limits $\lambda_i$. The $(\lambda_i)$ 
correspond to, but are
displaced from, their theoretical counterparts $(\ell_i)$.  
\eitem
As long as we are speaking about a theoretical spike eigenvalue 
$\ell > \ell_+(\gamma) = (1+\sqrt{\gamma})$, 
the formula for the limit empirical eigenvalue
$\lambda = \lambda(\ell;\gamma) = \ell \cdot (1+\frac{\gamma}{\ell-1})$ applies
and gives a one-one-correspondence between the spike $\ell$ and limit empirical $\lambda(\ell)$.  
We can invert the relation $\ell \mapsto \lambda(\ell)$ to obtain:
\beq \label{eq:ellOfLambda}
  \ell(\lambda; \gamma) = \frac{\lambda+1-\gamma + \sqrt{(\lambda + 1 - \gamma)^2 - 4 \lambda}}{2},
\eeq
provided $\lambda$ lies above the bulk edge $\lambda_+(\gamma) = (1 + \sqrt{\gamma})^2$.

The one-one correspondence establishes a kind of interchangeability between
writing out key expressions in terms of spike parameters $\ell$ or in terms of limiting empirical
eigenvalues $\lambda$. It turns out to be very convenient to have understood conventions
whereby we can without comment write expressions in terms of $\ell$, in terms of $\lambda$,
or even in terms of both. Accordingly, we adopt three conventions.

\vspace*{2em}
\noindent
{\bf Convention 1.}
{\sl Provided $\ell > \ell_+$ \-- or equivalently $\lambda(\ell) > \lambda_+$ \--  we are permitted to write
expressions
\[
     F(\lambda) = G(\ell)
\]
and the meaning will be  unambiguous. If we are in a pedantic mood,
we can rewrite such expressions entirely in terms of $\lambda$ 
(hence as  $F(\lambda) = G(\ell(\lambda))$) or entirely in terms of $\ell$ (hence as $F(\lambda(\ell)) = G(\ell)$).
We  adopt without comment the habit of writing 
expressions in terms of a mixture of $\ell$'s and $\lambda$'s
when it gives simpler or more memorable formulas.}

\vspace*{1em}
\noindent
{\bf Convention 2.}
{\sl Provided $\ell > \ell_+$ \-- or equivalently $\lambda(\ell) > \lambda_+$ \--  there are defined quantities
$c$ and $s$ that may be written in terms either of $\ell$ or $\lambda$ and we may write 
expressions
\[
     F(\ell,\lambda,c,s)
\]
and the meaning will be  unambiguous. In a pedantic mood we could write everything out in terms of $\ell$ or in terms of $\lambda$.}

\vspace*{1em}
\noindent
{\bf Convention 3.}
{\sl In several cases below it will be convenient to
consider some  function $F$ as {\em in certain passages}  a function of $\ell$ and
{\em other passages} a function of $\lambda$. In such cases what we really mean is that 
we are assuming $\ell > \ell_+$ and that there is a function $F_\lambda(\cdot)$ and a function $F_\ell()$,
linked by $F_\lambda(\lambda(\ell)) = F_\ell(\ell)$ and $F_\ell(\ell(\lambda)) = F_\lambda(\lambda)$ over the relevant ranges
of both $\ell$ and $\lambda$,
and that when we write $F(\lambda)$, we really mean $F_\lambda(\lambda)$, while 
when we write $F(\ell)$, we mean $F_\ell(\ell)$. We believe there is little risk of confusion in these instances.
\footnote{Note: similar conventions are followed in object-oriented programming languages; different formulas
apply depending on the `type' of the argument. Here the type is `empirical eigenvalue (or its limit)' or `spike value'.}
}

\subsection{The Empirical Pivot and Asymptotic Pivot}

Let the empirical covariance matrix $S$ have spectral decomposition
$S = V \cdot \Lambda \cdot V'$. Let $\eta \in \R_+^{p}$ 
be some nonnegative sequence $(\eta_i)$; 
then $\hat{\Sigma}_n(\eta) = V \cdot diag(\eta) \cdot V'$ is a matrix 
where we retain the empirical eigenvectors but
replace the eigenvalues  $(\lambda_i)$ by $(\eta_i)$.

We consider a matrix-valued  discrepancy $\Delta$ 
between such an estimator $\hat{\Sigma}_n (\eta)$ and 
the underlying covariance $\Sigma$ which we call the {\it empirical pivot}.
Specifically, $\Delta \equiv \Delta({\Sigma},\hat{\Sigma}_n) \equiv\Sigma^{-1/2}  \hat{\Sigma}_n(\eta)\Sigma^{-1/2}$.

We always represent the empirical pivot  
in the so-called $W$-basis:
\[ 
\Delta_n = W'\Sigma^{-1/2}  \hat{\Sigma}_n(\eta)\Sigma^{-1/2}W.
\]
As in \cite{donoho2013optimalShrinkage},
the columns of $W$ are  basis vectors obtained, for $n,p > 2r$, by 
applying Gram-Schmidt orthogonalization
to the vectors 
$\{U_{1},V_{1,n},...,U_{r},V_{r,n}\}$, 
producing a sequence of orthonormal vectors $(W_i)_{1,\dots,2r}$,
which we then complete out to an orthogonal basis of $\R^p$. 
From now on, we always assume that we have transformed coordinates to this $W$-basis
and we simply write $\Delta_n =\Sigma^{-1/2}  \hat{\Sigma}_n(\eta)\Sigma^{-1/2}$.

\begin{lem} \label{lem:blocks} {\bf \cite{donoho2013optimalShrinkage}}
({\bf Convergence to the Asymptotic Pivot}).
Suppose that $\eta \in \R^{p}$ has the special form
\beq \label{eq:padbyone}
(\eta_i) = (\eta_1,\eta_2,\dots,\eta_r,1,\dots,1),\;\eta_i \geq 1.
\eeq
where $\eta_1$,\dots $\eta_r$ are fixed independently of $n,p > r$.
Let $\Delta^a( (\ell_i), (\eta_i)) = \Delta^a( (\ell_i), (\eta_i); \gamma)$ denote the
deterministic
block-diagonal $p \times p$ matrix  
\[
\Delta^a = \left[\begin{array}{ccccc}
A(\ell_{1},\eta_1)\\
 & A(\ell_{2},\eta_2)\\
 &  & A(\ell_{3},\eta_3)\\
 &  &  & ...\\
 &  &  &  & A(\ell_{r},\eta_{r})
\end{array}\right]\varoplus I_{p-2r} ,
\]
where the 2-by-2 matrices $A(\ell,\eta) \equiv A(\ell,\eta;\gamma)$ are defined via:
\[
A(\ell,\eta):=\left[\begin{array}{cc}
\frac{\eta c^{2}+s^{2}}{\ell} & \frac{(\eta-1)cs}{\sqrt{\ell}}\\
\frac{(\eta-1)cs}{\sqrt{\ell}} & c^{2}+\eta s^{2}
\end{array}\right] ,
\]
with $c=c(\ell;\gamma)$ and $s =\sqrt{1-c^{2}}$. We have almost surely and in probability
the convergence in Frobenius norm:
\begin{equation} \label{eq:FroConv}
   \| \Delta_n - \Delta^a( (\ell_i) , (\eta_i)) \|_F \goto 0,
\end{equation}
as $n \to \infty$.
\end{lem}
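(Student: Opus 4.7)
The plan is to exploit the crucial structural fact that $\Delta_n - I$ has rank at most $2r$, and then use Theorem \ref{lem:spiked} to obtain entrywise convergence of the $W$-basis matrix elements on the $2r$-dimensional subspace where the perturbation lives. Because the rank is bounded independent of $n$, entrywise convergence upgrades automatically to Frobenius norm convergence.

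First, I would decompose
\[
\Sigma^{-1/2}\hat\Sigma_n(\eta)\Sigma^{-1/2} - I \;=\; (\Sigma^{-1} - I) \;+\; \sum_{i=1}^{r} (\eta_i - 1)\,\Sigma^{-1/2}V_{i,n}V_{i,n}'\Sigma^{-1/2},
\]
using that $\hat\Sigma_n(\eta) - I = \sum_{i=1}^r (\eta_i-1) V_{i,n}V_{i,n}'$ since $\eta_i = 1$ for $i > r$. The first summand is supported on $\mathrm{span}(U_1,\dots,U_r)$ (since $\Sigma = I + \sum_i (\ell_i - 1) U_iU_i'$ and similarly for $\Sigma^{-1}$) while the second is supported on $\Sigma^{-1/2}\,\mathrm{span}(V_{1,n},\dots,V_{r,n}) \subset \mathrm{span}(U_1,\dots,U_r,V_{1,n},\dots,V_{r,n})$. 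The total perturbation therefore lives in the at-most-$2r$-dimensional subspace $\mathcal{M}_n := \mathrm{span}(U_i, V_{i,n} : 1 \le i \le r)$, whose basis under Gram--Schmidt is exactly $W_1,\dots,W_{2r}$. Thus on the complementary subspace $\mathrm{span}(W_{2r+1},\dots,W_p)$, both $\Delta_n$ and $\Delta^a$ act as the identity, and those rows and columns of $\Delta_n - \Delta^a$ are identically zero for all $n$.

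Second, on $\mathcal{M}_n$ I would reduce everything to computing $2r\times 2r$ matrix entries $\langle W_j, \Delta_n W_k\rangle$. By Theorem \ref{lem:spiked}, $\langle U_i,V_{i,n}\rangle \to c(\ell_i;\gamma)$ and $\langle U_i,V_{j,n}\rangle \to 0$ for $i \ne j$, while $\langle U_i,U_j\rangle = \delta_{ij}$ and $\langle V_{i,n}, V_{j,n}\rangle = \delta_{ij}$ exactly. Applying Gram--Schmidt to the ordered list $(U_1,V_{1,n},\dots,U_r,V_{r,n})$, these limits yield $W_{2i-1} \to U_i$ and $W_{2i} \to (V_{i,n} - c_i U_i)/s_i$ (up to signs) almost surely, since Gram--Schmidt is a continuous function of the Gram matrix whenever the limiting Gram matrix is nonsingular --- which it is here, because the $c_i$'s are strictly less than $1$ for $\ell_i > \ell_+(\gamma)$. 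A direct calculation (done in the single-spike case in the main text's discussion preceding the lemma) then shows that the $2\times 2$ block of $\Delta_n$ corresponding to indices $\{2i-1, 2i\}$ converges a.s.\ to $A(\ell_i,\eta_i)$, and off-diagonal $2\times 2$ blocks (between spikes $i \ne i'$) converge to $0$ because every entry is, up to bounded factors, a product of an $(\eta_\cdot - 1)$ factor with one of the inner products $\langle U_i, V_{i',n}\rangle$ or $\langle U_i, U_{i'}\rangle$ for $i \ne i'$, all of which vanish in the limit.

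The main obstacle will be the Gram--Schmidt step: we have only almost sure convergence of the defining inner products, not uniform control, and we need these to yield a.s.\ convergence of the $W_j$ vectors themselves so that the matrix entries $\langle W_j,\Delta_n W_k\rangle$ converge a.s. The standard remedy is to note that Gram--Schmidt can be written as a smooth function of the Gram matrix of the input vectors away from the singular locus, combined with the observation that the limiting Gram matrix of $(U_i, V_{i,n})_{i \le r}$ is block diagonal with $2\times 2$ blocks $\bigl(\begin{smallmatrix} 1 & c_i \\ c_i & 1\end{smallmatrix}\bigr)$, nonsingular because $c_i < 1$. Once this is in hand, the convergence of the finitely many entries on $\mathcal{M}_n$ is a.s., and because the number of nonzero entries of $\Delta_n - \Delta^a$ is bounded by $(2r)^2$ independent of $n$, entrywise a.s.\ convergence immediately gives $\|\Delta_n - \Delta^a\|_F \to 0$ a.s., as claimed.
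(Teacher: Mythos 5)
Your proposal is correct and follows essentially the same route as the paper: both proofs reduce the problem to the top-left $2r\times 2r$ block in the $W$-basis (you via the rank-$\le 2r$ observation for $\Delta_n - I$, the paper via explicitly block-diagonalizing $W'\Sigma^{-1/2}W$ and $W'\hat\Sigma W$ and multiplying out), and both then invoke the almost sure inner-product limits of Theorem \ref{lem:spiked} together with the finite, fixed block size to upgrade entrywise convergence to Frobenius convergence. The one thing to tighten is the phrase about the $W_j$ ``vectors themselves'' converging --- they live in $\R^{p_n}$ with growing $p_n$, so what actually converges is the finite matrix of relevant inner products, which as you note is a continuous function of the nonsingular limiting Gram matrix.
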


Below, we will call $\Delta^a$ the {\it asymptotic pivot}; it 
is actually a sequence of pivots, one for each pair $(n,p_n)$  visited along the
way to our proportional growth limit.
Convergence in Frobenius norm is not happening in one common space,
but instead, the Frobenius norm of the pivot difference 
is tending to zero along this sequence.

The following trivial but still useful corollary follows from
the continuity of $\eta \mapsto A(\ell,\eta)$ 
and the triangle inequality for the Frobenius norm.
\begin{cor}
\label{cor:AsyPivWPerturbations}
Consider a sequence of vectors $(\eta_{i,n})_{i=1}^r$ 
such that $\eta_{i,n} \goto \eta_i$, $n \goto \infty$, $i=1,\dots,r$.
Then for 
\[
\Delta_n((\eta_{i,n})) = W'\Sigma^{-1/2}  \hat{\Sigma}_n((\eta_{i,n}))\Sigma^{-1/2}W,
\]
we have both
\begin{equation} \label{eq:FroConvA}
   \| \Delta_n - \Delta^a( (\ell_i) , (\eta_i)) \|_F \goto 0,
\end{equation}
and
\begin{equation} \label{eq:FroConvB}
   \| \Delta_n - \Delta^a( (\ell_i) , (\eta_{i,n})) \|_F \goto 0,
\end{equation}
\end{cor}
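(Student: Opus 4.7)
The plan is to reduce both convergence statements to the fixed-$\eta$ result in Lemma \ref{lem:blocks}, using two small ingredients: (i) the empirical pivot is affine in $\eta$, and (ii) the blocks $A(\ell_i,\eta)$ depend continuously on $\eta$. Once these are set up, everything closes by two applications of the triangle inequality.

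First I would establish (\ref{eq:FroConvA}). Since $\hat{\Sigma}_n(\eta) = V_n \,\mathrm{diag}(\eta)\, V_n'$, the map $\eta \mapsto \hat{\Sigma}_n(\eta)$ is linear, and only the first $r$ coordinates of $\eta$ change between the two vectors $(\eta_{i,n})$ and $(\eta_i)$. Hence
\[
\hat{\Sigma}_n((\eta_{i,n})) - \hat{\Sigma}_n((\eta_i)) \;=\; \sum_{i=1}^{r}(\eta_{i,n}-\eta_i)\, V_{i,n} V_{i,n}'.
\]
Each rank-one term has Frobenius norm $1$, so by the triangle inequality this difference has Frobenius norm at most $\sum_{i=1}^{r}|\eta_{i,n}-\eta_i|\to 0$. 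Conjugation by $\Sigma^{-1/2}$ (and by the orthogonal $W$) multiplies Frobenius norm by at most $\|\Sigma^{-1/2}\|_{op}^{2} \leq 1$, since under [SPIKE] every eigenvalue of $\Sigma$ is at least $1$. Therefore $\|\Delta_n((\eta_{i,n})) - \Delta_n((\eta_i))\|_F \to 0$. Combining this with Lemma \ref{lem:blocks} applied to the fixed vector $(\eta_i)$ (which satisfies the hypothesis that its entries are $n$-independent), the triangle inequality immediately yields (\ref{eq:FroConvA}).

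Next I would handle (\ref{eq:FroConvB}) by a second triangle-inequality split:
\[
\|\Delta_n((\eta_{i,n})) - \Delta^a((\ell_i),(\eta_{i,n}))\|_F \leq \|\Delta_n((\eta_{i,n})) - \Delta^a((\ell_i),(\eta_i))\|_F + \|\Delta^a((\ell_i),(\eta_i)) - \Delta^a((\ell_i),(\eta_{i,n}))\|_F.
\]
The first term vanishes by the result of the previous paragraph. For the second term, observe that $\Delta^a$ is block-diagonal with $r$ blocks of size $2{\times}2$ plus an identity block that is the same in both pivots; so the Frobenius norm reduces to $\sum_{i=1}^{r}\|A(\ell_i,\eta_i) - A(\ell_i,\eta_{i,n})\|_F$. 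Each entry of $A(\ell,\eta)$ is affine in $\eta$, so $A(\ell_i,\cdot)$ is continuous and $A(\ell_i,\eta_{i,n}) \to A(\ell_i,\eta_i)$. Since $r$ is fixed, the finite sum tends to $0$.

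There is no genuine obstacle here; the only point requiring a moment of thought is ensuring the Frobenius-norm perturbation induced by replacing $(\eta_i)$ with $(\eta_{i,n})$ is controlled uniformly in $n$. This works cleanly precisely because only $r$ eigendirections are touched (so the perturbation has rank $\leq r$, giving a dimension-free Frobenius bound), and because the lower bound $\Sigma \succeq I$ from [SPIKE] keeps the conjugation by $\Sigma^{-1/2}$ non-expansive in Frobenius norm.
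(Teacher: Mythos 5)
Your proof is correct and follows the same route the paper's terse remark indicates: continuity of $\eta \mapsto A(\ell,\eta)$ combined with the triangle inequality for Frobenius norm, with the auxiliary bound $\|\Delta_n((\eta_{i,n}))-\Delta_n((\eta_i))\|_F\to 0$ obtained from the rank-$r$ affine perturbation and $\|\Sigma^{-1/2}\|_{op}\leq 1$. One negligible slip: the Frobenius norm of a block-diagonal difference is the root-sum-of-squares of the block norms, not the sum $\sum_{i=1}^{r}\|A(\ell_i,\eta_i)-A(\ell_i,\eta_{i,n})\|_F$ you wrote; since the sum dominates the correct quantity, the conclusion is unaffected.
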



\subsection{Pivot Optimization}
\label{sec:StudyAsyPiv}

Consider a spike configuration $(\ell_i)_{i=1}^r$  
fixed independently of $n, p$, $p > 2r$. 
Let $\EtaRNPa$ denote the collection of vectors $(\eta_{i})_{i=1}^{p}$
obeying $Ave_{i > r} \eta_i =1$.
The condition $\eta \in \EtaRNPa$ is a normalization condition
which is also obeyed by the underlying 
theoretical eigenvalues $(\ell_i)_{i=1}^{p} \in \EtaRNPa$.
Denote the empirical pivot by
\[
\DelEmp((\ell_i); (\eta_i)_{i=1}^p; n,p) =  \Sigma^{-1/2} \hat{\Sigma}(\eta) \Sigma^{-1/2},
\]
where $\hat{\Sigma}(\eta) = V diag(\eta_{i}) V'$.
Consider the {\bf empirical pivot optimization problem}
\[
   (\KEmp((\ell_i);n,p)) \qquad  \min_{\eta \in \EtaRNPa} \kappa(\DelEmp((\ell_i);(\eta_i); n , p)).
\]
The value of this finite-$n$  problem is random (because the $(V_{i})$ are random).

Lemma \ref{lem:blocks} suggests that this seemingly difficult problem can be
replaced by the seemingly easier problem of determining $r$ scalars 
$(\eta_i)_{i=1}^r$ that optimize the asymptotic pivot matrix.}
This section justifies such replacement.

\begin{lem} ({\bf Asymptotic Pivot Optimization}). \label{lem:AsyOptProb}
Consider a fixed configuration $(\ell_i)_{i=1}^r$ and a 
formal variable $\eta = (\eta_i)$ with $\eta_i$ free to vary in $(0,\infty)$ for $1=1,\dots,r$
and padded by ones $\eta_i =1$ for $i > r$ as in (\ref{eq:padbyone}). Define 
\[
\DelAsy((\ell_i); (\eta_i); n,p,\gamma) = \oplus_{i=1}^r A(\ell_i,\eta_{i}; \gamma)  \oplus I_{p-2r} .
\]
Consider the asymptotic pivot optimization problem
\beq \label{eq:asymp-pivot}
   (\KAsy((\ell_i);n,p_n,\gamma)) \qquad  \min_{(\eta_i)_{i=1}^r} \kappa(\DelAsy((\ell_i);(\eta_i); n , p_n, \gamma)) .
\eeq
\begin{enumerate}
\item Setting $\gamma$ fixed independently of $n$,
this problem  has the same optimal value for every $p > 2r$.  
There exists a configuration $(\eta_i^*)_{i=1}^r$ which, 
when padded by ones as in (\ref{eq:padbyone}), 
can achieve this optimal value, simultaneously for every $p > 2r$.  
Denote this configuration by  
$\eta^* = (\eta_i^*((\ell_i);\gamma))$. 

\item The configuration $\eta^*$ 
can be taken to satisfy the
constraint $\eta_i^* \geq 1$, $i=1,\dots,r$. Thus 
 we may equivalently write (\ref{eq:asymp-pivot})
as a constrained optimization problem:
\beq \label{eq:asymp-pivot-lbndone}
   (\KAsy((\ell_i);n,p_n,\gamma)) \qquad  \min_{\substack{(\eta_i)_{i=1}^r \\ \eta_i \geq 1}} \kappa(\DelAsy((\ell_i);(\eta_i); n , p_n, \gamma)) .
\eeq
\end{enumerate}
\end{lem}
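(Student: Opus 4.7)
The plan is to exploit the explicit block-diagonal structure of $\Delta^a$ to reduce the problem to a scalar optimization over $r$ variables, then use a monotonicity argument in $\eta$ to restrict attention to $\eta_i\ge 1$.

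For Part 1, since $\Delta^a((\ell_i),(\eta_i)) = \bigoplus_{i=1}^r A(\ell_i,\eta_i)\,\oplus\, I_{p-2r}$, its spectrum is the disjoint union of the spectra of the $2\times 2$ blocks together with the eigenvalue $1$ with multiplicity $p-2r$. Writing $\mu_\pm(\ell,\eta)$ for the two eigenvalues of $A(\ell,\eta)$, and noting that for $p>2r$ the value $1$ always sits in the spectrum, we get
\[
\kappa(\Delta^a)=\frac{\max\bigl(1,\,\max_i\mu_+(\ell_i,\eta_i)\bigr)}{\min\bigl(1,\,\min_i\mu_-(\ell_i,\eta_i)\bigr)}.
\]
This depends only on $(\ell_i,\eta_i,\gamma)$ and not on $p$. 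Hence the infimum over the $r$ free coordinates is the same for every $p>2r$, and any minimizer $(\eta_i^*)_{i=1}^r$ serves simultaneously for all $p>2r$. The normalization $Ave_{i>r}\eta_i=1$ in $\EtaRNPa$ is automatically met by the padding $\eta_i=1$ for $i>r$.

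For Part 2, the key step is to show that $A(\ell,\eta)$ is monotone non-decreasing in $\eta$ in the Löwner order. A direct computation yields the rank-two decomposition
\[
A(\ell,\eta)=uu^\top + \eta\, vv^\top,\qquad u=(s/\sqrt{\ell},\,-c)^\top,\quad v=(c/\sqrt{\ell},\,s)^\top,
\]
so $\partial_\eta A = vv^\top\succeq 0$. By monotonicity of eigenvalues under Löwner order, both $\mu_+(\ell,\eta)$ and $\mu_-(\ell,\eta)$ are non-decreasing in $\eta$. At $\eta=1$ the off-diagonal entry of $A$ vanishes and $A(\ell,1)=\mathrm{diag}(1/\ell,\,1)$, giving $\mu_+(\ell,1)=1$ and $\mu_-(\ell,1)=1/\ell\le 1$.

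Now take any optimizer $(\eta_i^*)$ and suppose some coordinate satisfies $\eta_{i_0}^*<1$. Replace it by $1$. By the monotonicity just established, the upper eigenvalue of the $i_0$-th block moves from $\mu_+(\ell_{i_0},\eta_{i_0}^*)\le 1$ up to exactly $1$, while the lower eigenvalue moves from $\mu_-(\ell_{i_0},\eta_{i_0}^*)\le 1/\ell_{i_0}$ up to $1/\ell_{i_0}$. Because $1$ already belongs to the global spectrum via the $I_{p-2r}$ block, the numerator of the displayed formula for $\kappa(\Delta^a)$ is unchanged, while the denominator can only increase. Thus $\kappa$ does not grow, so the replacement is still optimal; iterating over every coordinate with $\eta_i^*<1$ produces an optimizer in $[1,\infty)^r$, which is Part 2. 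The only mildly delicate step is the PSD decomposition $A=uu^\top+\eta vv^\top$; once that is in hand, monotonicity of eigenvalues and the replacement bookkeeping are routine.
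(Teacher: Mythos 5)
Your proof takes essentially the same route as the paper. For Part 1, both you and the paper observe that for $p>2r$ the spectrum of $\DelAsy$ is $\{1\}\cup\bigcup_{i=1}^r\{\nu_-(A(\ell_i,\eta_i)),\nu_+(A(\ell_i,\eta_i))\}$, so that $\kappa(\DelAsy)$ reduces to the same $p$-independent scalar expression $K_r((\ell_i),(\eta_i);\gamma)$. For Part 2, your explicit rank-two decomposition $A(\ell,\eta)=uu^\top+\eta\,vv^\top$ with $u=(s/\sqrt{\ell},-c)^\top$, $v=(c/\sqrt{\ell},s)^\top$ is correct (I checked the entries), and it yields $\partial_\eta A=vv^\top\succeq 0$; the paper obtains exactly this monotonicity in Lemma \ref{lem:increasingwrtEta} via the incremental identity $A(\ell,\eta+\epsilon)-A(\ell,\eta)=\epsilon\,(c/\sqrt{\ell},s)'(c/\sqrt{\ell},s)$. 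Your replace-and-iterate argument (raise any $\eta_{i_0}<1$ to $1$, observe the numerator $\max(1,\max_i\nu_+)$ is unchanged because the displaced $\nu_+$ lands exactly at $1$, and the denominator $\min(1,\min_i\nu_-)$ can only increase) is the same content as the paper's Lemmas \ref{lem:KrMinusLemma}--\ref{lem:KrPlusLemma}, packaged as a one-pass perturbation instead of a map $\eta\mapsto\eta^{1,+}=(\max(1,\eta_i))$.

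One genuine gap: the lemma explicitly asserts \emph{existence} of an achieving configuration, and your Part 1 just appeals to ``any minimizer,'' while your Part 2 replacement argument likewise starts from an optimizer assumed to exist. Since the feasible set $(0,\infty)^r$ is open, this is not automatic. The paper closes this via a compactness argument of the type in Lemma \ref{lem:boundsOnOptimalEta}: the identity estimator gives $\kappa\leq\ell_1$, and bounds of the form \eqref{eq:upperboundOnMinEig}--\eqref{eq:lowerboundOnMaxEig} force any near-optimizing sequence into a compact hypercube, where the continuous objective attains its infimum. Alternatively, after your Part 2 reduction one can note that on $[1,\infty)^r$ the map $\eta_i\mapsto\nu_+(A(\ell_i,\eta_i))$ tends to $\infty$ while $\nu_-$ stays bounded, so again the search collapses to a compact set. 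Either way, you need to say something here before ``any minimizer'' is meaningful.
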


\begin{thm} \label{lem:AsyShrinkOpt} ({\bf Asymptotics of empirical pivot optimization}).
The value of the empirical pivot optimization problem 
tends both almost surely and in probability to the
same value as the asymptotic pivot optimization problem:
\[
  val({K}^e((\ell_i);n,p_n)) \goto   val(\KAsy((\ell_i);n,p_n,\gamma)), \qquad n \goto \infty.
\]
Finally,  let
$ \eta^* = (\eta_i^*((\ell_i);\gamma))_{i=1}^r$  
denote an optimizing configuration 
of the asymptotic pivot optimization 
problem $\KAsy((\ell_i);n,p_n,\gamma)$. 
Padding this optimum configuration with $1$'s
asymptotically almost surely achieves the optimum of the 
empirical pivot 
\[
|\kappa(\DelEmp((\ell_i),(\eta^*_1,\dots \eta_r^*,1,1,\dots,1); n ,p_n))  -  val(\KEmp((\ell_i);n,p_n)) | \goto 0, \qquad n \goto \infty.
\]

\end{thm}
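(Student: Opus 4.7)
The plan is to sandwich the value of the empirical problem by matching asymptotic upper and lower bounds, both driven by Lemma \ref{lem:blocks} and Corollary \ref{cor:AsyPivWPerturbations}. The upper bound is essentially constructive and simultaneously establishes the theorem's final assertion about the padded asymptotic optimizer; the lower bound is the substantive step.

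For the upper bound $\limsup_n val(\KEmp((\ell_i);n,p_n)) \leq val(\KAsy)$, I take the asymptotic optimizer $\eta^* = (\eta_1^*, \dots, \eta_r^*)$ guaranteed by Lemma \ref{lem:AsyOptProb} and pad with ones to form $\bar{\eta}^* \in \EtaRNPa$. Lemma \ref{lem:blocks} gives $\|\DelEmp_n(\bar{\eta}^*) - \DelAsy((\ell_i), \eta^*)\|_F \to 0$ almost surely. The Hoffman-Wielandt inequality converts this into convergence of every ordered eigenvalue. Since $\DelAsy$ is positive definite---each block $A(\ell_i, \eta_i^*)$ has $\det = \eta_i^*/\ell_i > 0$ by part 2 of Lemma \ref{lem:AsyOptProb}, and the identity block contributes eigenvalues equal to one---$\kappa$ is continuous at $\DelAsy$, so $\kappa(\DelEmp_n(\bar{\eta}^*)) \to \kappa(\DelAsy) = val(\KAsy)$ a.s. This establishes both the upper bound and the theorem's final display.

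For the lower bound $\liminf_n val(\KEmp) \geq val(\KAsy)$, let $\hat{\eta}_n \in \EtaRNPa$ be a near-minimizer of the empirical problem. Let $P_{2r}$ denote projection onto the first $2r$ coordinates in the $W$-basis and set $M_n := P_{2r}\, \DelEmp_n(\hat{\eta}_n)\, P_{2r}$. Cauchy's interlacing theorem for principal submatrices yields $\lambda_{\max}(\DelEmp_n) \geq \lambda_{\max}(M_n)$ and $\lambda_{\min}(\DelEmp_n) \leq \lambda_{\min}(M_n)$, hence $\kappa(\DelEmp_n(\hat{\eta}_n)) \geq \kappa(M_n)$. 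Using the asymptotic orthogonality of bulk empirical eigenvectors $V_{i,n}$ ($i > r$) to the spike directions, I will argue that $M_n$ depends only on $(\hat{\eta}_{i,n})_{i \leq r}$ up to a Frobenius-norm error $o(1)$. Then pass to a subsequence along which $\hat{\eta}_{i,n_k} \to \hat{\eta}_i^\infty \in [0, \infty]$ for each $i \leq r$. If every $\hat{\eta}_i^\infty \in (0, \infty)$, Corollary \ref{cor:AsyPivWPerturbations} applied to the truncated configuration $(\hat{\eta}_{1,n_k}, \dots, \hat{\eta}_{r,n_k}, 1, \dots, 1)$ gives $M_{n_k} \to \oplus_i A(\ell_i, \hat{\eta}_i^\infty)$ in Frobenius norm, so $\kappa(M_{n_k}) \to \kappa(\oplus_i A(\ell_i, \hat{\eta}_i^\infty)) \geq val(\KAsy)$ since the limit is feasible for the asymptotic problem. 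If some $\hat{\eta}_i^\infty \in \{0, \infty\}$, the explicit formula $\det A(\ell, \eta) = \eta/\ell$ together with the trace computation shows $\kappa(A(\ell_i, \hat{\eta}_{i, n_k})) \to \infty$, forcing $\kappa(M_{n_k}) \to \infty$ and contradicting near-optimality given the upper bound. Either way, $\liminf \kappa(\DelEmp_n(\hat{\eta}_n)) \geq val(\KAsy)$. The concluding claim of the theorem then follows by combining the value convergence with the upper-bound construction.

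The main obstacle is the Frobenius-norm estimate $\|M_n - P_{2r}\, \DelEmp_n(\tilde{\eta}_n)\, P_{2r}\|_F \to 0$, where $\tilde{\eta}_n = (\hat{\eta}_{1,n}, \dots, \hat{\eta}_{r,n}, 1, \dots, 1)$. The discrepancy equals $P_{2r} \Sigma^{-1/2} \bigl(\sum_{i > r}(\hat{\eta}_{i,n} - 1) V_i V_i'\bigr) \Sigma^{-1/2} P_{2r}$, which could a priori be sizable since the bulk coordinates of $\hat{\eta}_n$ are essentially free apart from their average. The control relies on the identity $\sum_{i > r} V_i V_i' = I - \sum_{i \leq r} V_i V_i'$, which rewrites the troublesome sum in terms of a rank-$r$ residual aligned with the spike space, combined with the concentration of $\langle U_j, V_{i,n}\rangle$ for $j \leq r$, $i > r$ afforded by Theorem \ref{lem:spiked} and standard bulk-spike delocalization. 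Once this estimate is in place, the remainder of the argument is routine compactness and continuity.
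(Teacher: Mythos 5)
Your upper bound is essentially the paper's: pad the asymptotic optimizer with ones, apply Lemma~\ref{lem:blocks}, and invoke continuity of $\kappa$ at the nondegenerate limit; this part is fine.

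The lower bound has a genuine gap, and it is exactly the one the paper addresses via Definition~\ref{def:Wnr}. You project onto the first $2r$ coordinates in the $W$-basis and claim that
$\kappa\bigl(\oplus_i A(\ell_i,\hat\eta_i^\infty)\bigr) \geq val(\KAsy)$
``since the limit is feasible for the asymptotic problem.'' That justification is false. The value of the asymptotic pivot optimization at the configuration $\hat\eta^\infty$ is $\kappa(\DelAsy(\hat\eta^\infty))$, which, by (\ref{eq:condnoformula}), includes the eigenvalue $1$ contributed by the identity block; the $2r$-dimensional restriction $\oplus_i A(\ell_i,\hat\eta_i^\infty)$ drops that eigenvalue. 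When some $\hat\eta_i^\infty < 1$ the two condition numbers differ, and the restricted one can fall strictly below $val(\KAsy)$. Concretely, take $r=1$, $\gamma=0.5$, $\ell_1=2$; then $\ell_1 < \ell_1^+(\gamma) \approx 2.28$, so $\kappa_1^*(\ell_1) = \ell_1 = 2 = val(\KAsy)$, whereas for the feasible $\hat\eta_1^\infty = 0.8$ one computes $c^2 = 1/3$, $s^2 = 2/3$, $\nu_\pm(A(2,0.8)) \approx 0.878,\ 0.456$, hence $\kappa(A(2,0.8)) \approx 1.93 < 2$. So the inequality you assert does not hold for arbitrary feasible limits. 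You could try to argue that limits of near-minimizers must in fact satisfy $\hat\eta_i^\infty \geq 1$, but establishing that uses the lower bound you are in the middle of proving, so the argument becomes circular. The paper avoids this by restricting to a \emph{$(2r+1)$}-dimensional subspace $\cWnr$---the extra coordinate $w_{2r+1}$ keeps the eigenvalue $1$ in play---so that $\kappa(\DelAsy|\cWnr) = K_r(\hat\eta^\infty)$ and the feasibility argument goes through (this is precisely the ``note'' after Definition~\ref{def:Wnr}, and it feeds into Lemma~\ref{lem:LocalLipschitzRestriction}).

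Your identified ``main obstacle,'' the Frobenius control of $P_{2r}\Sigma^{-1/2}\bigl(\sum_{i>r}(\hat\eta_{i,n}-1)V_iV_i'\bigr)\Sigma^{-1/2}P_{2r}$, is correctly flagged as the substantive probabilistic step, but the proposed resolution does not work: the identity $\sum_{i>r}V_iV_i' = I - \sum_{i\leq r}V_iV_i'$ only collapses the sum when the weights $(\hat\eta_{i,n}-1)$ are constant, which they are not. The quantity is a weighted quadratic form $\sum_{i>r}(\hat\eta_{i,n}-1)\langle U_j,V_{i,n}\rangle^2$ (and cross terms), and the paper controls it with a bespoke concentration argument for uniformly distributed spherical coordinates (Lemmas~\ref{lem:MutilationIsBanal}, \ref{lem:TailBoundForWeightedChiSquare}, \ref{lem:cumulantbounds}), uniformly over $\hat\eta \in \EtaRNPf(\ell_0)$ with a summable $n^{-3}$ tail. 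Without something of that strength you cannot pass from the empirical minimizer---whose bulk coordinates are free apart from their average---to the ones-mutilated configuration that Corollary~\ref{cor:AsyPivWPerturbations} requires.
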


Summarizing: solving the
 asymptotic pivot optimization problem gives us a
 shrinker which approximately solves each associated large, finite-$n$
 empirical pivot optimization problem.

Of course, asymptotically small perturbations of $\eta^*$
are also asymptotically optimal. Arguing as for Corollary \ref{cor:AsyPivWPerturbations} gives
\begin{cor}
\label{cor:OptimalityWTuning}
Consider a sequence of vectors $(\eta_{i,n})_{i=1}^r$ 
such that $\eta_{i,n} \goto \eta_i^*$, $n \goto \infty$, $i=1,\dots,r$.
Then as $ n \goto \infty$, 
\[
|\kappa(\DelEmp((\ell_i),(\eta_{1,n},\dots \eta_{r,n},1,1,\dots,1); n ,p_n))  -  val(\KEmp((\ell_i);n,p_n)) | \goto 0 .
\]
\end{cor}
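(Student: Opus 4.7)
The plan is to combine Corollary \ref{cor:AsyPivWPerturbations} (Frobenius convergence of the empirical pivot to the asymptotic pivot even under vanishing perturbations of $\eta^*$) with Theorem \ref{lem:AsyShrinkOpt} (convergence of the empirical pivot problem's value to that of the asymptotic problem), mediated by continuity of the condition-number functional.

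Concretely, I would first invoke Corollary \ref{cor:AsyPivWPerturbations} with the perturbed sequence $(\eta_{i,n}) \to (\eta_i^*)$ to obtain
\[
\bigl\| \DelEmp\bigl((\ell_i),(\eta_{1,n},\dots,\eta_{r,n},1,\dots,1); n, p_n\bigr) - \DelAsy\bigl((\ell_i),(\eta_i^*); n, p_n, \gamma\bigr) \bigr\|_F \goto 0,
\]
both almost surely and in probability. Since the Frobenius norm dominates the operator norm, Weyl's inequality forces convergence of every ordered eigenvalue of $\DelEmp$ to the corresponding eigenvalue of $\DelAsy$; in particular $\lambda_{\max}$ and $\lambda_{\min}$ converge.

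Next, I would verify that $\DelAsy((\ell_i),(\eta_i^*); n, p_n, \gamma)$ has spectrum confined to a compact subset of $(0,\infty)$ uniformly in $n$. The $I_{p_n-2r}$ block contributes eigenvalue $1$; each $2\times 2$ block $A(\ell_i,\eta_i^*;\gamma)$ has positive trace, and a direct computation using $c^{2}+s^{2}=1$ yields $\det A(\ell_i,\eta_i^*)=\eta_i^*/\ell_i>0$, so each block has two strictly positive eigenvalues depending only on the fixed quantities $\ell_i,\eta_i^*,\gamma$. Consequently the map $M\mapsto\kappa(M)=\lambda_{\max}(M)/\lambda_{\min}(M)$ is continuous at the limit, and
\[
\kappa\bigl(\DelEmp((\ell_i),(\eta_{1,n},\dots,\eta_{r,n},1,\dots,1); n, p_n)\bigr) \goto \kappa\bigl(\DelAsy((\ell_i),(\eta_i^*); n, p_n, \gamma)\bigr) = val(\KAsy((\ell_i); n, p_n, \gamma)),
\]
the last equality holding because $(\eta_i^*)$ was chosen as an optimizer of the asymptotic pivot problem.

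Finally, I would combine this limit with $val(\KEmp((\ell_i); n, p_n)) \goto val(\KAsy((\ell_i); n, p_n, \gamma))$ supplied by Theorem \ref{lem:AsyShrinkOpt}, and close by the triangle inequality: both terms inside the absolute value in the corollary tend to the same limit. The only mildly delicate step is the uniform-in-$n$ positivity of $\lambda_{\min}(\DelAsy)$, which reduces to the determinant identity above for each of finitely many fixed indices $i=1,\dots,r$; this is where I expect the routine calculation to live, though it is entirely explicit.
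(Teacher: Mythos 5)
Your proposal is correct and follows essentially the paper's intended argument: invoke Corollary \ref{cor:AsyPivWPerturbations} for Frobenius convergence of the perturbed empirical pivot to $\Delta^a((\ell_i),(\eta_i^*))$, apply continuity of the condition number at matrices whose spectrum lies in a fixed compact subset of $(0,\infty)$, and combine with the value convergence from Theorem \ref{lem:AsyShrinkOpt}. The only cosmetic difference is that you argue eigenvalue stability directly via Weyl's inequality, whereas the paper packages the same fact as the local Lipschitz Lemma \ref{lem:LocalLipschitzCond}; both routes are interchangeable here.
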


\subsection{Optimality over all Orthogonally Invariant Procedures}
\label{sec:OrthoInvar}

{  The pivot optimization explicitly optimizes over 
diagonals in the spectral decomposition 
$\hat{\Sigma} = V diag((\eta_i)) V'$
of the estimator. Actually 
this optimization covers {\it all} orthogonally-equivariant
procedures.

\begin{lem}
{\bf Diagonal Representation of Orthogonal Equivariance:} 
\label{lem:diagrep}
Let $\semidefp$ denote the collection of symmetric
nonnegative semidefinite matrices.
{\sl Any OE procedure $\hat{\Sigma}(S)$ is of the form 
\begin{equation} \label{eq:OE_characterization}
\hat{\Sigma}(S) = V diag(\eta_1,\dots, \eta_p)V'
\end{equation}
where $S=V\Lambda V'$ is a spectral decomposition of $S$ and
\begin{enumerate}
\item for $i \leq m = \min(n,p)$, $\eta_i : {SD(p)} \mapsto \bR^+$ is a nonnegative function of $S$.
\item {for some $\eta_0: SD(p) \mapsto \bR^+$ and all $i > m$, $\eta_i = \eta_0$.}
\end{enumerate}
}
\end{lem}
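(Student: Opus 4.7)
The plan is to exploit the stabilizer subgroup of $S$ under the conjugation action of $O(p)$. Fix $S \in \semidefp$ with spectral decomposition $S = V \Lambda V'$, and let $G_S = \{U \in O(p) : U' S U = S\}$ be its stabilizer. Orthogonal equivariance forces $\hat{\Sigma}(S) = U' \hat{\Sigma}(S) U$ for every $U \in G_S$, i.e.\ $\hat{\Sigma}(S)$ lies in the commutant of $G_S$. The first step is therefore to identify $G_S$ and its commutant. Writing $\Lambda$ as a block-scalar matrix $\bigoplus_k \mu_k I_{n_k}$ (grouping equal eigenvalues of $S$ into eigenspaces of dimensions $n_k$, including the null space when $p > m$), $G_S$ is, in the $V$-basis, exactly the block-diagonal group $\bigoplus_k O(n_k)$. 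A standard Schur-type argument then shows that the commutant consists of block-diagonal matrices with each block a scalar multiple of the identity on the corresponding eigenspace.

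Applying this to $M := V' \hat{\Sigma}(S) V$ gives $M = \bigoplus_k \eta_k(S) I_{n_k}$ for some nonnegative scalars $\eta_k(S) \geq 0$, hence $\hat{\Sigma}(S) = V \, \mathrm{diag}(\eta_1,\dots,\eta_p) \, V'$ with the convention that $\eta_i = \eta_j$ whenever $\lambda_i = \lambda_j$. In particular, since $S = \tfrac{1}{n} X' X$ has rank at most $m = \min(n,p)$, the indices $i > m$ all correspond to the single eigenvalue $0$, so by the scalar-block conclusion they share a common value $\eta_0(S) := \eta_i(S)$, giving assertion (2). For $i \leq m$, each $\eta_i$ is well-defined as a nonnegative function of $S$ (for instance, by ordering eigenvalues decreasingly $\lambda_1 \geq \cdots \geq \lambda_p$ and reading off the corresponding scalar block); this yields assertion (1).

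The one delicate point, which I see as the main obstacle, is independence of the representation (\ref{eq:OE_characterization}) from the (non-unique) choice of spectral decomposition. Two distinct spectral decompositions $S = V \Lambda V' = \tilde V \tilde \Lambda \tilde V'$ with the same ordering of eigenvalues differ by $\tilde V = V Q$ for some $Q \in G_S$; because $M$ is block-scalar and $Q$ is block-diagonal orthogonal, $Q M Q' = M$, so $V \, \mathrm{diag}(\eta_i) \, V' = \tilde V \, \mathrm{diag}(\eta_i) \, \tilde V'$ and the $\eta_i(S)$ really are well-defined functions of $S$ alone. This same invariance is what allows $\eta_0$ to be defined unambiguously on the whole null eigenspace when $p > n$.

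Finally, the converse (that any rule of the form (\ref{eq:OE_characterization}) is OE) is immediate from $U' S U = (U' V) \Lambda (U' V)'$ giving a valid spectral decomposition of $U' S U$, so $\hat{\Sigma}(U' S U) = (U' V) \mathrm{diag}(\eta_i(S)) (U' V)' = U' \hat{\Sigma}(S) U$, where we have used that the $\eta_i$ depend on $S$ only through its invariants, which are unchanged under $S \mapsto U' S U$. No further calculation is needed beyond the commutant computation.
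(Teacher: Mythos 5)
Your proof is correct, but takes a genuinely different route from the paper's. The paper argues entirely by elementary manipulations with two concrete families of orthogonal matrices: it first applies equivariance with the sign matrices $\Xi = \mathrm{diag}(\pm 1,\dots,\pm 1)$, which stabilize any diagonal $\Lambda$, to conclude that $\hat{\Sigma}(\Lambda) = V'\hat{\Sigma}(S)V$ must itself be diagonal; it then applies equivariance with explicit transpositions $\Xi_{i_0,i_1}$ that swap two null eigenvectors $V_{i_0}, V_{i_1}$ (which leaves $S$ fixed since both live in the null space) to conclude $\eta_{i_0}=\eta_{i_1}$ for all $i_0,i_1>m$. Your argument instead identifies the full stabilizer subgroup $G_S \cong \bigoplus_k O(n_k)$ (one block per eigenspace, including the null eigenspace when $p>m$) and invokes a Schur-type commutant computation to conclude that $V'\hat{\Sigma}(S)V$ is block-scalar. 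This is more structural: it delivers the sharper fact that $\eta_i=\eta_j$ whenever $\lambda_i=\lambda_j$, from which both clauses of the lemma fall out simultaneously; and it explicitly addresses independence of the representation (\ref{eq:OE_characterization}) from the (non-unique) choice of spectral decomposition, a point the paper's elementary proof leaves implicit. The trade-off is that your argument presupposes irreducibility of the standard real representation of $O(n_k)$ and the real form of Schur's lemma, whereas the paper's proof is self-contained and requires no representation theory.
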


The diagonal representation shows that if, 
for a given realization $S = V \Lambda V'$, 
we optimize across all possible diagonals in 
$\hat{\Sigma}(\eta) = V \cdot diag(\eta) \cdot V'$ with variable $\eta$, 
we have thereby optimized over the {\it range of matrices possible
from orthogonally-equivariant estimators}. 


Empirical pivot optimization performs 
such optimization, however
under a normalization condition, which normalizes the estimated 
bulk eigenvalues to have the same average as the bulk theoretical eigenvalues.
The value of the normalized optimization problem
is actually the same as that of the un-normalized problem. 
Indeed, the objective function is scale invariant; namely, for $b > 0$
\[
\kappa(\Delta^e( (\ell_i), ( \eta_i  )_{i=1}^{p_n} ; n , p)) =
\kappa(\Delta^e( (\ell_i), (  b \cdot \eta_i  )_{i=1}^{p} ; n , p)).
\]
It follows that
\[
\kappa(\Delta^e( (\ell_i), ( \eta_i  ))) =
\kappa(\Delta^e( (\ell_i), (   \eta_i / (Ave_{i>r} \eta_i) ))).
\]
The constrained optimization problem is thus effectively unconstrained.  
We prefer the normalized problem because
the asymptotic analysis identifying its limit behavior
is then more transparent and better connected to the 
Asymptotic Pivot Optimization problem.

\subsection{Condition Number of the Asymptotic Pivot} \label{ssec:condNumPivot}

We turn our attention to evaluating and optimizing  the asymptotic pivot,
which we henceforth typically denote simply by
$\Delta((\ell_i); (\eta_i))$ \-- the value of $\gamma$ being suppressed in notation.

Using the block diagonal representation of Lemma \ref{lem:blocks},
the argument for Lemma \ref{lem:AsyOptProb} shows that if $\eta_i \geq 1$,
the asymptotic pivot has condition number
\beq  \label{eq:condnoformula}
 \kappa(\Delta((\ell_i), (\eta_i))) = \frac{\max_i \nu_+(A(\ell_i,\eta_i))}{\min_i \nu_-(A(\ell_i,\eta_i))} ,
\eeq
where $\nu_\pm(A)$ denote the maximum and minimum characteristic values of
the two-by-two matrix  $A$, and the maximum and minimum range over $1 \leq i \leq {r+1}$,
where we formally set $\ell_{r+1}=1$ and $\eta_{r+1}=1$.

\begin{lem} \label{lem:defTD} {\bf \cite{donoho2013optimalShrinkage}}
For the 2-by-2 matrix $A(\ell,\eta),$ 
we have
\[
  D \equiv D(\ell,\eta) =  det(A(\ell,\eta)) = \frac{\eta}{\ell},
\]
\[
  T \equiv T(\ell,\eta) =  tr(A(\ell,\eta)) = (\frac{1 + \dot{\eta} c^2}{\ell}+1+ \dot{\eta} s^2),
\]
where $\dot{\eta} \equiv \eta-1$. Consequently:
\[
   \nu_\pm(A(\ell,\eta)) = T/2  \pm \sqrt{T^2/4-D}.
\]
\end{lem}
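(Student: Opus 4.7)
The plan is to verify the three claims by direct computation on the explicit $2\times 2$ matrix $A(\ell,\eta)$, exploiting the identity $c^2+s^2=1$ throughout.

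\textbf{Step 1 (Trace).} I would add the two diagonal entries of $A(\ell,\eta)$ and rewrite each using $\dot\eta=\eta-1$. The key observation is
\[
\eta c^2+s^2 \;=\; (c^2+s^2) + (\eta-1)c^2 \;=\; 1+\dot\eta c^2,
\qquad c^2+\eta s^2 \;=\; 1+\dot\eta s^2 ,
\]
so summing yields $T=(1+\dot\eta c^2)/\ell+1+\dot\eta s^2$ immediately.

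\textbf{Step 2 (Determinant).} Here the work is to show the factor of $\ell$ in both diagonals combines with the off-diagonal denominator $\sqrt{\ell}\cdot\sqrt{\ell}$ to leave an overall $1/\ell$, and then that the $c,s$ numerator collapses to exactly $\eta$. Writing
\[
\ell\cdot D \;=\; (\eta c^2+s^2)(c^2+\eta s^2)-(\eta-1)^2 c^2 s^2 ,
\]
I would expand the first product as $\eta(c^4+s^4)+(\eta^2+1)c^2s^2$ and the subtracted term as $(\eta^2-2\eta+1)c^2s^2$. The $\eta^2$ terms cancel, leaving
\[
\eta(c^4+s^4)+2\eta c^2 s^2 \;=\; \eta(c^2+s^2)^2 \;=\; \eta ,
\]
so $D=\eta/\ell$. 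This cancellation is the only nontrivial algebraic step, and it is the closest thing to an obstacle, though it is entirely mechanical.

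\textbf{Step 3 (Eigenvalues).} Since $A(\ell,\eta)$ is symmetric and $2\times 2$, its eigenvalues are the roots of the characteristic polynomial $\nu^2 - T\nu + D=0$, giving $\nu_\pm = T/2\pm\sqrt{T^2/4-D}$ by the quadratic formula. (Nonnegativity of the discriminant follows from symmetry, or directly from $T^2/4-D=\tfrac14(a_{11}-a_{22})^2+a_{12}^2\geq 0$.) Substituting the expressions for $T$ and $D$ from Steps 1--2 yields the stated formula for $\nu_\pm(A(\ell,\eta))$, completing the proof.
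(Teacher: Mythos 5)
Your proof is correct and follows essentially the same route as the paper's: a direct computation of trace and determinant followed by the quadratic formula for the eigenvalues of a symmetric $2\times2$ matrix. The only cosmetic difference is that the paper factors $A(\ell,\eta)=\mathrm{diag}(\ell^{-1/2},1)\,B\,\mathrm{diag}(\ell^{-1/2},1)$ and uses multiplicativity of the determinant, whereas you pull out the factor $1/\ell$ by hand; the core cancellation $\eta(c^4+s^4)+2\eta c^2 s^2=\eta(c^2+s^2)^2=\eta$ is identical.
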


For later use, we note that $D(\ell,1) =1/\ell$ and $T(\ell,1) = \frac{1}{\ell} + 1$. Therefore,
\[
  \nu_+( A(\ell,1)) = 1 \qquad \nu_-(A(\ell,1)) = \ell^{-1}.
\]

\section{The Optimal Shrinker in the Single-Spike Case}
\label{sec:OptShrinkSingleSpike}
We initially consider single-spike configurations $r=1$, and derive
an optimal shrinkage nonlinearity,  $\lambda \mapsto \eta_1^*(\lambda)$. 

\subsection{The Closed-Form Expression}

The shrinker $\eta_1^*(\lambda)$ optimize the
asymptotic pivot will equivalently optimize, 
for $\ell = \ell(\lambda;\gamma) > \ell_+(\gamma)$,
this specialization of (\ref{eq:condnoformula}):
\begin{equation} \label{def:Kone}
 K_1(\eta) \equiv  \min_\eta \frac{ \max\left[ 1, \nu_+(A(\ell,\eta))\right] }{\min \left[  1,  \nu_-(A(\ell,\eta)) \right]} .
\end{equation}
\begin{thm} \label{thm:spike_1} 
In the single-spike case, $r=1$, 
the $K_1$-optimal shrinker has the following form
\begin{equation}
\label{eq:optSingleSpikeShrink}
\eta_1^*(\lambda) = \left\{ \begin{array}{ll}  \frac{\ell}{1 + \gamma + \frac{2\gamma}{\ell-1}} & \ell > \ell_1^+(\gamma) \\
                                                1 & \ell \leq \ell_1^+(\gamma) ,
                                                \end{array}   \right.
\end{equation}
where $\ell_1^+(\gamma) =  1+(\gamma + \sqrt{\gamma^2 + 8\gamma})/2$. 
In fact, $\ell_1^+(\gamma) > \ell_+(\gamma)$,
and so $\ell = \ell(\lambda;\gamma)$ is also expressible as a function of $\lambda$ 
throughout the domain $\ell > \ell_1^+(\gamma)$. 
\end{thm}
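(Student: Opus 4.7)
The plan is to apply Lemma~\ref{lem:AsyOptProb} (constrained asymptotic pivot optimization with $\eta\ge 1$), which reduces Theorem~\ref{thm:spike_1} to choosing one scalar $\eta\ge 1$ that minimizes $K_1(\eta)$ in (\ref{def:Kone}). Working in the nontrivial regime $\ell>\ell_+(\gamma)$, so that $c,s\in(0,1)$, I first show that for every $\eta>1$ the value $1$ lies strictly between the eigenvalues of $A(\ell,\eta)$. The characteristic polynomial value $p_\eta(1)=1-T(\ell,\eta)+D(\ell,\eta)$ satisfies $p_1(1)=0$ (since $A(\ell,1)$ has eigenvalues $1$ and $1/\ell$ by the remark following Lemma~\ref{lem:defTD}); using $T=\eta(c^2/\ell+s^2)+(c^2+s^2/\ell)$ and $D=\eta/\ell$, a direct computation gives $p_\eta(1)=-(\eta-1)s^2(\ell-1)/\ell<0$ for $\eta>1$. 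Hence $\nu_-(A(\ell,\eta))<1<\nu_+(A(\ell,\eta))$, so the outer $\max/\min$ in (\ref{def:Kone}) collapses and $K_1(\eta)=\nu_+/\nu_-=\kappa(A(\ell,\eta))$.

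Next I exploit the Vieta identity $\kappa(A)+1/\kappa(A)=T^2/D-2$. Since $x\mapsto x+1/x$ is strictly increasing on $[1,\infty)$, the problem is equivalent to minimizing $\Phi(\eta):=T(\ell,\eta)^2/D(\ell,\eta)$. Writing $\alpha:=c^2/\ell+s^2$ and $\beta:=c^2+s^2/\ell$, Lemma~\ref{lem:defTD} yields $\Phi(\eta)=\ell(\eta\alpha+\beta)^2/\eta=\ell(\eta\alpha^2+2\alpha\beta+\beta^2/\eta)$, which is strictly convex on $(0,\infty)$ with unique unconstrained minimizer
\[
\eta^\circ \;=\; \frac{\beta}{\alpha} \;=\; \frac{\ell c^2 + s^2}{c^2 + \ell s^2}.
\]

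Finally I substitute the closed forms from Theorem~\ref{lem:spiked}, namely $c^2=[(\ell-1)^2-\gamma]/[(\ell-1)(\ell-1+\gamma)]$ and $s^2=\gamma\ell/[(\ell-1)(\ell-1+\gamma)]$. Routine algebraic simplification collapses $\eta^\circ$ to $\ell(\ell-1)/[(\ell-1)+\gamma(\ell+1)]=\ell/(1+\gamma+2\gamma/(\ell-1))$, matching (\ref{eq:optSingleSpikeShrink}). The feasibility test $\eta^\circ\ge 1$ rearranges to the quadratic inequality $(\ell-1)^2\ge\gamma(\ell+1)$, whose relevant root is $\ell_1^+(\gamma)=1+(\gamma+\sqrt{\gamma^2+8\gamma})/2$; when $\ell\ge\ell_1^+(\gamma)$ the unconstrained optimum is admissible, while for $\ell_+<\ell<\ell_1^+$ strict convexity of $\Phi$ pushes the constrained minimizer to the boundary $\eta=1$. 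A short inequality shows $\ell_1^+(\gamma)-\ell_+(\gamma)=[\gamma-2\sqrt\gamma+\sqrt{\gamma^2+8\gamma}]/2>0$ for all $\gamma>0$, so $\ell\mapsto\lambda(\ell;\gamma)$ is bijective on the active region and Convention~1 of Section~\ref{sec:twoparam} legitimizes expressing the shrinker as a function of $\lambda$ via (\ref{eq:ellOfLambda}). The main obstacle I foresee is the first step: the two-sided bracketing $\nu_-(A)<1<\nu_+(A)$ is what licenses replacing the piecewise objective $K_1$ by the smooth scalar $T^2/D$, to which single-variable calculus applies; everything downstream is algebraic cleanup.
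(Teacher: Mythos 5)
Your proof is correct and follows essentially the same route as the paper's: reduce to a scalar optimization over $\eta\ge 1$ (via Lemma~\ref{lem:AsyOptProb}), recognize that the condition number is a monotone function of $T^2/D$, minimize $T^2/D$ by calculus, then translate back to the $\ell$-parametrization and identify the threshold $\ell_1^+$. The small differences are improvements in rigor: you explicitly verify the two-sided bracketing $\nu_-<1<\nu_+$ for $\eta>1$ via $p_\eta(1)=-(\eta-1)s^2(\ell-1)/\ell<0$, which the paper's proof of Lemma~\ref{lem:spikeFormulaEquivalent} only assumes with a ``provided that'' clause; you use the Vieta identity $\kappa+1/\kappa=T^2/D-2$ rather than invoking the $\sqrt{1-4D/T^2}$ form; and you establish strict convexity of $\Phi(\eta)=\ell(\eta\alpha^2+2\alpha\beta+\beta^2/\eta)$, giving uniqueness of $\eta^\circ=\beta/\alpha$ and a clean justification for pushing the constrained minimizer to $\eta=1$ when $\eta^\circ<1$ (the paper simply solves the first-order condition $2DT'=TD'$ without addressing second-order structure). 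The algebraic identities and the derivation of $\ell_1^+(\gamma)$ all check out.
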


\subsection{Properties of \texorpdfstring{$\eta_1^*$}{optimal single spike shrinker}}
Let $\eta_+(\ell)$ denote the formula
occurring in the upper branch 
of the case statement defining $\eta_1^*$ in (\ref{eq:optSingleSpikeShrink}).
Then $\eta_1^*(\lambda(\ell)) = \eta_+(\ell)$
throughout the interval $\ell > \ell_1^+$. Note that $\ell_1^+(\gamma)$ is precisely the value of $\ell$ at which  $\eta_+(\ell) = 1$ and that $\ell > \ell_1^+(\gamma)$ gives
the interval of those $\ell$ where $\eta_+(\ell)  > 1$. The branching by cases
in (\ref{eq:optSingleSpikeShrink})  serves to keep $\eta_1^* \geq 1$, which is appropriate
in the spiked model, as every model eigenvalue 
is at least $1$, and  it therefore makes no sense to shrink to  a value less $1$.

Corresponding to $\ell_1^+$ on the $\ell$-scale, we have a threshold 
$\lambda_{\eta_1^*}^+(\gamma) = \lambda(\ell_1^+(\gamma); \gamma)$ on the $\lambda$-scale.
 This threshold exceeds the upper bulk edge $\lambda_+(\gamma)$. We have
\[
\eta_1^*(\lambda)  \searrow 1 \qquad \lambda \searrow \lambda_{\eta_1^*}^+,
\]
while
\[
   \eta_1^*(\lambda) = 1 ,\qquad \lambda \leq \lambda_{\eta_1^*}^+(\gamma).
\]
So the shrinker $\eta_1^*$ behaves like  a form of
`soft' thresholding for smaller values of $\lambda$, with dead-zone 
 $\{ \lambda \leq \lambda_{\eta_1}^+(\gamma) \}$, throughout which all
 eigenvalues are collapsed to $1$. Such collapse is a feature we will see
 again below.
\begin{defn}
The scalar nonlinearity $\eta()$ is said to {\bf collapse the vicinity of the bulk to 1}
provided that for some $\eps > 0$ and $i=1,\dots,min(n,p_n)$,
\[
    \lambda_{i,n} < \lambda_+(\gamma) + \eps  \mbox{ implies } \eta(\lambda_{i,n}) = 1.
\]
\end{defn}

Using this terminology, we say that the 
optimal single-spike shrinker $\eta_1^*$ collapses 
the vicinity of the bulk to $1$.

\begin{defn} \label{def:kappa1star}
{\bf Optimal $\kappa$-loss.}
$\kappa_1^{*}(\ell) \equiv \kappa_1^{*}(\ell;\gamma)$ denotes the
optimal condition number achieved by $\eta_1^{*}(\ell)$. In detail, for $ \ell \geq 1$ set
\begin{eqnarray}
\kappa_1^{*}(\ell) &\equiv& \kappa(A(\ell,\eta_1^*(\ell))). \nonumber 
\end{eqnarray}
\end{defn}

Formulas derived in Lemmas \ref{lem:increasingwrtELL} and \ref{lem:decreasingRwrtELL}
lead to a closed-form expression for $\kappa_1^*$:
\begin{cor}
\begin{eqnarray}
\kappa_1^{*}(\ell) &=&  \frac{1+ \sqrt{\delta^*(\ell)}}{1-\sqrt{\delta^*(\ell)}}, \label{eq:kappa1defn}
\end{eqnarray}
where
\[
\delta^*(\ell;\gamma) = \left\{ \begin{array}{ll}
 \frac{\gamma((\ell-1)^2-\gamma)}{(\ell-1) ((1 + \gamma) (\ell-1) + 2 \gamma)} & \ell > \ell_1^+(\gamma) \\
\frac{(1 - 1/{\ell})^2}{(1 + 1/{\ell})^2} &  1 \leq \ell < \ell_1^+(\gamma)
\end{array} \right . .
\]
\end{cor}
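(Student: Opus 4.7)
The plan is to reduce the claim to a direct algebraic evaluation, exploiting the fact that $A(\ell,\eta_1^*(\ell))$ is a $2\times 2$ positive definite matrix whose eigenvalues have closed-form descriptions through the trace $T$ and determinant $D$ supplied by Lemma \ref{lem:defTD}. The first step is purely algebraic: for any $2\times 2$ positive definite matrix with trace $T$ and determinant $D$, Lemma \ref{lem:defTD} gives $\nu_{\pm}=T/2\pm\sqrt{T^{2}/4-D}$, whence
\[
\kappa=\frac{\nu_+}{\nu_-}=\frac{T/2+\sqrt{T^{2}/4-D}}{T/2-\sqrt{T^{2}/4-D}}=\frac{1+\sqrt{1-4D/T^{2}}}{1-\sqrt{1-4D/T^{2}}}.
\]
Setting $\delta := 1-4D/T^{2}$, this shows $\kappa_{1}^{*}(\ell)=(1+\sqrt{\delta^{*}(\ell)})/(1-\sqrt{\delta^{*}(\ell)})$ with $\delta^{*}(\ell)$ the value of $1-4D/T^{2}$ at $\eta=\eta_{1}^{*}(\ell)$. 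So the entire corollary reduces to identifying the right-hand side of the piecewise formula for $\delta^{*}$.

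Next I would handle the easy branch $1\leq \ell<\ell_{1}^{+}(\gamma)$. Here $\eta_{1}^{*}(\ell)=1$, so $\dot{\eta}=0$ and Lemma \ref{lem:defTD} gives $D=1/\ell$, $T=1+1/\ell$. A one-line computation yields
\[
1-\frac{4D}{T^{2}}=1-\frac{4/\ell}{(1+1/\ell)^{2}}=\frac{(1+1/\ell)^{2}-4/\ell}{(1+1/\ell)^{2}}=\frac{(1-1/\ell)^{2}}{(1+1/\ell)^{2}},
\]
matching the second case of $\delta^{*}$.

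For the interesting branch $\ell>\ell_{1}^{+}(\gamma)$, I substitute $\eta=\eta_{+}(\ell):=\ell/(1+\gamma+2\gamma/(\ell-1))$ into $D=\eta/\ell$ and $T=(1+\dot{\eta}c^{2})/\ell+1+\dot{\eta}s^{2}$, using the spiked-model identities
\[
c^{2}=\frac{(\ell-1)^{2}-\gamma}{(\ell-1)(\ell-1+\gamma)},\qquad s^{2}=\frac{\gamma\ell}{(\ell-1)(\ell-1+\gamma)},
\]
and simplify $1-4D/T^{2}$ to the advertised expression. Alternatively, and more cleanly, the closed-form formulas for $\nu_{+}(A(\ell,\eta_{+}(\ell)))$ and $\nu_{-}(A(\ell,\eta_{+}(\ell)))$ derived in Lemmas \ref{lem:increasingwrtELL} and \ref{lem:decreasingRwrtELL} (which established monotonicity of $\nu_{+}$ and the corresponding behavior of $\nu_{-}$ along the optimal path) yield the product $D=\nu_{+}\nu_{-}$ and sum $T=\nu_{+}+\nu_{-}$ directly; substituting those into $\delta^{*}=1-4D/T^{2}=((\nu_{+}-\nu_{-})/(\nu_{+}+\nu_{-}))^{2}$ collapses the computation.

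The main obstacle is just the bookkeeping in the upper branch: one must verify that after clearing denominators, the identity
\[
1-\frac{4D}{T^{2}}\;=\;\frac{\gamma((\ell-1)^{2}-\gamma)}{(\ell-1)\bigl((1+\gamma)(\ell-1)+2\gamma\bigr)}
\]
reduces to a polynomial identity in $\ell$ and $\gamma$ with common factor $(\ell-1)(\ell-1+\gamma)$ cancelling. No new idea is needed; the computation is guided entirely by the factorizations already visible in $c^{2}$, $s^{2}$, and $\dot{\eta}$ (the latter of which vanishes exactly at $\ell=\ell_{1}^{+}$, which is the boundary where both branches of $\delta^{*}$ must agree, a useful sanity check).
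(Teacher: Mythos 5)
Your proposal is correct and follows essentially the same route the paper does. The paper simply attributes the corollary to formulas computed in Lemmas \ref{lem:increasingwrtELL} and \ref{lem:decreasingRwrtELL}, where $R(\ell)=4D/T^{2}$ is found to equal $4\ell/(\ell+1)^{2}$ on $1\le\ell\le\ell_{1}^{+}$ and $(x+\gamma)^{2}/(x((1+\gamma)x+2\gamma))$ with $x=\ell-1$ on $\ell>\ell_{1}^{+}$; taking $\delta^{*}=1-R$ gives exactly the two branches you derived, so your reduction via $\kappa=(1+\sqrt{1-4D/T^{2}})/(1-\sqrt{1-4D/T^{2}})$ and substitution of $\eta_{1}^{*}$ together with the $c^{2},s^{2}$ identities of (\ref{eq:c2s2ident}) is the intended argument, just written out more fully.
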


{For very large values $\lambda \gg 1$,
  $\eta_1^*(\lambda)$ resembles
linear shrinkage, with asymptotic slope $\frac{1}{1+\gamma}$.
This resemblance yields the following:}

\begin{cor} ({\bf Large-$\ell$ asymptotics.}) \label{cor:kappaLargeEll} As $\ell \goto \infty$, 
\begin{eqnarray}
\eta_1^{*}(\lambda) &\sim& \frac{\ell}{1+\gamma},  \nonumber \\
\kappa_1^{*}(\ell) &\goto& \frac{1+\sqrt{\frac{\gamma}{\gamma+1}}}{1-\sqrt{\frac{\gamma}{\gamma+1}}}. \nonumber
\end{eqnarray}

\label{cor:asymp_sol}
\end{cor}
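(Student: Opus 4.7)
The plan is to directly evaluate the closed-form expressions for $\eta_1^*(\lambda)$ and $\kappa_1^*(\ell)$ stated in Theorem \ref{thm:spike_1} and the Corollary preceding the statement, then take the large-$\ell$ limit. Since $\ell_1^+(\gamma)$ is finite, for all sufficiently large $\ell$ we are in the upper branch of the piecewise definitions, so both formulas take explicit rational form and no case analysis is needed in the limit.

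First I would handle the $\eta_1^*$ asymptotics. From (\ref{eq:optSingleSpikeShrink}), for $\ell > \ell_1^+(\gamma)$,
\[
  \eta_1^*(\lambda) \;=\; \frac{\ell}{1+\gamma+\tfrac{2\gamma}{\ell-1}}.
\]
The denominator tends to $1+\gamma$ as $\ell \to \infty$ (the error is $O(1/\ell)$), so dividing by $\ell$ gives $\eta_1^*(\lambda)/\ell \to 1/(1+\gamma)$, which is the asserted asymptotic equivalence. (One may also note, via Convention 1 and $\lambda(\ell) = \ell(1+\gamma/(\ell-1))$, that $\lambda/\ell \to 1$, so equivalently $\eta_1^*(\lambda)\sim \lambda/(1+\gamma)$ in the $\lambda$ variable; this is the `slope $1/(1+\gamma)$' phenomenon already highlighted in the introduction.)

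Next I would handle $\kappa_1^*$. Using the upper-branch expression for $\delta^*(\ell;\gamma)$ from the preceding Corollary,
\[
  \delta^*(\ell;\gamma) \;=\; \frac{\gamma\bigl((\ell-1)^2-\gamma\bigr)}{(\ell-1)\bigl((1+\gamma)(\ell-1)+2\gamma\bigr)}.
\]
Dividing numerator and denominator by $(\ell-1)^2$ and letting $\ell \to \infty$, both the $-\gamma$ in the numerator and the $+2\gamma$ in the denominator vanish, leaving
\[
  \delta^*(\ell;\gamma) \;\longrightarrow\; \frac{\gamma}{1+\gamma}.
\]
Since $\gamma/(1+\gamma) < 1$, the map $x \mapsto (1+\sqrt{x})/(1-\sqrt{x})$ is continuous at this limit point, so plugging into (\ref{eq:kappa1defn}) yields
\[
  \kappa_1^*(\ell) \;\longrightarrow\; \frac{1+\sqrt{\gamma/(\gamma+1)}}{1-\sqrt{\gamma/(\gamma+1)}},
\]
which is the claim.

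There is really no hard step: the result is a direct limit computation from the closed-form formulas already in hand. The only thing worth being careful about is ensuring that $\ell$ is eventually in the regime $\ell > \ell_1^+(\gamma)$ so the upper-branch formulas apply (immediate, since $\ell_1^+(\gamma)$ is a fixed finite number depending only on $\gamma$), and that the continuity of $x \mapsto (1+\sqrt{x})/(1-\sqrt{x})$ is invoked at a point strictly below $1$ so the denominator stays bounded away from zero.
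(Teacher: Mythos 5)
Your proof is correct; both limits are elementary once you have the closed forms, and your remarks about staying on the upper branch and about continuity of $x \mapsto (1+\sqrt{x})/(1-\sqrt{x})$ at $\gamma/(1+\gamma) < 1$ are exactly the points worth making. The route differs from the paper's, mainly in the second claim. For $\eta_1^*$ the paper evaluates $\lim \eta_1^*(\lambda(\ell))/\ell$ from the alternate form $\eta_1^* = (\ell c^2 + s^2)/(\ell s^2 + c^2)$ of Lemma \ref{lem:spikeFormulaEquivalent}, using $c \to 1$, $s \to 0$, $\ell s^2 \to \gamma$; this is equivalent to your explicit rational-formula calculation. For $\kappa_1^*$, the paper does \emph{not} plug into the $\delta^*$ expression. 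Instead it computes $\lim T(\ell,\eta_1^*(\lambda(\ell))) = 2$ and $\lim D = 1/(1+\gamma)$, deduces $\lim \nu_\pm = 1 \pm \sqrt{\gamma/(1+\gamma)}$ (equation \ref{eq:LimitNuEtaSingle}), and takes the ratio. That intermediate limit for $\nu_\pm$ is cited by name later in the proof of Lemma \ref{lem:decreasingwrtELL}, so the paper's version of this proof is doing double duty. Your approach is cleaner for the stated conclusion alone, but it leans on the $\kappa_1^* = (1+\sqrt{\delta^*})/(1-\sqrt{\delta^*})$ Corollary, which the paper asserts without a displayed derivation (it follows from the $T$, $D$ formulas in Lemmas \ref{lem:increasingwrtELL}--\ref{lem:decreasingRwrtELL} together with the identity $\kappa = (1+\sqrt{1-4D/T^2})/(1-\sqrt{1-4D/T^2})$ appearing in the proof of Lemma \ref{lem:spikeFormulaEquivalent}); a fully self-contained argument would either verify that formula or fall back to the trace/determinant route.
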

Corollary \ref{cor:asymp_sol} documents the approximate linearity of the optimal shrinker
 at large eigenvalues.
For large underlying spike eigenvalues, the optimal loss
becomes a function only of the aspect ratio $\gamma$.

To prove Theorem \ref{thm:spike_1} we use a formula
 equivalent to (\ref{eq:optSingleSpikeShrink}) that
 provides different insights in later sections.

\begin{lem} \label{lem:spikeFormulaEquivalent}
Throughout the interval
$\ell > \ell_1^+(\gamma)$,
\begin{equation}
\label{eq:altSingleSpikeShrink}
\eta_1^{*}(\lambda) =\frac{\ell c^{2}+s^{2}}{\ell s^{2}+c^{2}} .
\end{equation}
\end{lem}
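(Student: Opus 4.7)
The claim is a purely algebraic identity on the interval $\ell > \ell_1^+(\gamma)$ between the explicit upper-branch formula
$\eta_1^*(\lambda) = \ell/(1+\gamma + 2\gamma/(\ell-1))$
supplied by Theorem \ref{thm:spike_1} and the proposed ratio $(\ell c^2 + s^2)/(\ell s^2 + c^2)$. Since the cosine and sine of the angle between empirical and population eigenvectors have closed forms from Theorem \ref{lem:spiked}, the natural strategy is direct computation: reduce everything to rational functions of $\ell$ and $\gamma$ and check that the two expressions agree.

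As a first step I would rewrite $c^2 = c(\ell;\gamma)^2$ and $s^2 = 1-c^2$ over the common denominator $(\ell-1)(\ell-1+\gamma)$. A short manipulation of the formula $c^2 = (1 - \gamma/(\ell-1)^2)/(1 + \gamma/(\ell-1))$ gives
\[
c^2 = \frac{(\ell-1)^2 - \gamma}{(\ell-1)(\ell-1+\gamma)}, \qquad s^2 = \frac{\gamma \ell}{(\ell-1)(\ell-1+\gamma)}.
\]
On the regime of interest, $\ell > \ell_1^+(\gamma) > \ell_+(\gamma) = 1+\sqrt{\gamma}$, so $(\ell-1)^2 > \gamma$ and both quantities are strictly positive, guaranteeing that the denominator in the proposed ratio does not vanish.

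Next I would compute numerator and denominator of $(\ell c^2 + s^2)/(\ell s^2 + c^2)$ separately. For the numerator, $\ell\bigl((\ell-1)^2 - \gamma\bigr) + \gamma\ell = \ell(\ell-1)^2$, so
\[
\ell c^2 + s^2 = \frac{\ell(\ell-1)}{\ell-1+\gamma}.
\]
For the denominator, factor $\gamma\ell^2 + (\ell-1)^2 - \gamma = \gamma(\ell-1)(\ell+1) + (\ell-1)^2 = (\ell-1)\bigl[(1+\gamma)\ell + \gamma - 1\bigr]$, giving
\[
\ell s^2 + c^2 = \frac{(1+\gamma)\ell + \gamma - 1}{\ell-1+\gamma}.
\]

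Finally, forming the ratio cancels the common factor $\ell - 1 + \gamma$, and using the identity $(1+\gamma)\ell + \gamma - 1 = (1+\gamma)(\ell-1) + 2\gamma$ yields
\[
\frac{\ell c^2 + s^2}{\ell s^2 + c^2} = \frac{\ell(\ell-1)}{(1+\gamma)(\ell-1) + 2\gamma} = \frac{\ell}{1+\gamma + \dfrac{2\gamma}{\ell-1}},
\]
which is precisely the upper branch of (\ref{eq:optSingleSpikeShrink}) that is active on $\ell > \ell_1^+(\gamma)$. No step presents a genuine obstacle; the only vigilance needed is algebraic bookkeeping and confirming that one is in the regime where $c,s>0$ and the closed forms from Theorem \ref{lem:spiked} apply, so that the displayed simplifications are legitimate.
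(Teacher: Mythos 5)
Your algebraic verification is correct, but as a proof of the lemma it is circular. You treat the upper-branch formula of Theorem \ref{thm:spike_1} as a given starting point and then show it coincides with $(\ell c^2 + s^2)/(\ell s^2 + c^2)$. However, in the paper Theorem \ref{thm:spike_1} is itself proved \emph{via} Lemma \ref{lem:spikeFormulaEquivalent}: its proof opens with ``We establish equivalence between the formula presented in the theorem and the formula of Lemma \ref{lem:spikeFormulaEquivalent},'' and the main text signals ``To prove Theorem \ref{thm:spike_1} we use a formula equivalent to (\ref{eq:optSingleSpikeShrink}).'' So the algebraic identity you establish is precisely the content of the paper's proof of the Theorem, and appealing to the Theorem in order to prove the Lemma inverts the logical dependency.

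The missing idea is the optimization itself. The paper's proof of the Lemma minimizes $\kappa(A(\ell,\eta))$ over $\eta$ for fixed $\ell$. Writing $T = \mathrm{tr}(A(\ell,\eta))$ and $D = \det(A(\ell,\eta))$ as in Lemma \ref{lem:defTD}, one has $\kappa(A) = \bigl(1+\sqrt{1-4D/T^2}\bigr)/\bigl(1-\sqrt{1-4D/T^2}\bigr)$; monotonicity of $x \mapsto (1+x)/(1-x)$ and $y\mapsto\sqrt{1-y}$ reduces the problem to minimizing $T^2/D$. Setting $\partial_\eta(T^2/D) = 0$ gives $2DT' = TD'$ (using $T,D>0$ on the relevant domain), and substituting $T = (\eta c^2 + s^2)/\ell + c^2 + s^2\eta$ and $D = \eta/\ell$ and simplifying yields $\eta\,(c^2/\ell + s^2) = s^2/\ell + c^2$, i.e.\ (\ref{eq:altSingleSpikeShrink}). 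Your algebra is still valuable --- it is exactly the computation that turns the lemma's formula into the theorem's --- but it belongs to the proof of the Theorem, not the Lemma.
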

Of course, in (\ref{eq:altSingleSpikeShrink}) 
$c^2 = c^2(\ell(\lambda;\gamma);\gamma)$, $s^2=1-c^2$,
and $\ell$ can all be expressed in terms of $\lambda$
via $\ell = \ell(\lambda; \gamma)$.
Lemma \ref{lem:spikeFormulaEquivalent} is proven in the Appendix, 
alongside the proof of Theorem \ref{thm:spike_1}.

\subsection{Rigorous Optimality}

The phrase `$\kappa$-loss-optimal' in Theorem \ref{thm:spike_1}
refers to the asymptotic pivot optimization problem defined in equation (\ref{eq:asymp-pivot}),
in which we optimize over $r$ variables $(\eta_i)_{i=1}^r$ (at the moment, with $r=1$). Our optimality result, means that the fixed function $\eta_1^*$, evaluated at $\lambda_1 = \lambda(\ell_1)$ gives the optimum value.
 Lemma \ref{lem:AsyOptProb} yields
\begin{lem} \label{lem:SingleSpikeAsympOpt}
For each $\ell_1 \geq 1$ and for each $(n,p_n)$ pair where $p_n > 2$,
\beq \label{eq:optSingSpike}
   \kappa(\DelAsy(\ell_1; \eta_1^*(\lambda(\ell_1)); n , p_n, \gamma)) =    \min_{(\eta_i)_{i=1}^r} \kappa(\DelAsy(\ell_1;(\eta_i); n , p_n, \gamma)).
\eeq
\end{lem}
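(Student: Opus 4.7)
The plan is to reduce the lemma to a direct application of Theorem \ref{thm:spike_1} by unwinding the definitions of $\DelAsy$ and $\kappa(\cdot)$ in the single-spike case. The heavy lifting was done in the derivation of the optimal single-spike shrinker; this lemma merely repackages that result inside the asymptotic pivot optimization framework.

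First I would specialize the block-diagonal representation in Lemma \ref{lem:blocks}: when $r=1$, we have
\[
\DelAsy(\ell_1;\eta_1;n,p_n,\gamma) = A(\ell_1,\eta_1;\gamma) \oplus I_{p_n-2}.
\]
The identity block $I_{p_n-2}$ contributes only eigenvalues equal to $1$, so the full spectrum of $\DelAsy$ consists of $\nu_+(A(\ell_1,\eta_1))$, $\nu_-(A(\ell_1,\eta_1))$, and $1$ (with multiplicity $p_n-2$). In particular, the spectrum, and hence $\kappa(\DelAsy)$, is independent of $p_n$ as long as $p_n > 2$, verifying the $p$-independence claim from Lemma \ref{lem:AsyOptProb}.

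Next I would invoke part (2) of Lemma \ref{lem:AsyOptProb}, which allows us to restrict the minimization to $\eta_1 \geq 1$ without changing the optimal value. Under that constraint, $\nu_+(A(\ell_1,\eta_1)) \geq \nu_+(A(\ell_1,1)) = 1$ and $\nu_-(A(\ell_1,\eta_1)) \leq \nu_-(A(\ell_1,1)) = 1/\ell_1 \leq 1$ (as may be checked from Lemma \ref{lem:defTD}), so formula (\ref{eq:condnoformula}) yields
\[
\kappa(\DelAsy(\ell_1;\eta_1;n,p_n,\gamma)) \;=\; \frac{\max\bigl[1,\nu_+(A(\ell_1,\eta_1))\bigr]}{\min\bigl[1,\nu_-(A(\ell_1,\eta_1))\bigr]} \;=\; K_1(\eta_1),
\]
the objective defined in (\ref{def:Kone}).

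Finally I would apply Theorem \ref{thm:spike_1}, which identifies $\eta_1^*(\lambda(\ell_1))$ as the minimizer of $K_1$ over $\eta_1 \geq 1$. Combining this with the previous displayed identity yields
\[
\kappa(\DelAsy(\ell_1;\eta_1^*(\lambda(\ell_1));n,p_n,\gamma)) \;=\; \min_{\eta_1 \geq 1} K_1(\eta_1) \;=\; \min_{\eta_1 \in (0,\infty)} \kappa(\DelAsy(\ell_1;\eta_1;n,p_n,\gamma)),
\]
which is the stated equality. There is no real obstacle here: the only subtlety is confirming that the restriction $\eta_1 \geq 1$ imposed by Lemma \ref{lem:AsyOptProb} is consistent with the unconstrained minimization in (\ref{eq:optSingSpike}), and that the formula (\ref{eq:condnoformula}) genuinely reduces to $K_1$ under that restriction \-- both of which follow immediately from the trace/determinant identities of Lemma \ref{lem:defTD}.
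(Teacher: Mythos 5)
Your proof is correct and follows essentially the same route as the paper: Lemma~\ref{lem:AsyOptProb} handles the $p$-independence and the reduction to the constrained minimization over $\eta_1 \geq 1$, and Theorem~\ref{thm:spike_1} identifies $\eta_1^*(\lambda(\ell_1))$ as the minimizer of the resulting objective $K_1$. One small sign error: by Lemma~\ref{lem:increasingwrtEta} the map $\eta \mapsto \nu_-(A(\ell_1,\eta))$ is \emph{increasing}, so under $\eta_1 \geq 1$ you have $\nu_-(A(\ell_1,\eta_1)) \geq \nu_-(A(\ell_1,1)) = 1/\ell_1$ rather than $\leq$; this does not damage the argument, since the identity $\kappa(\DelAsy) = \max\bigl[1,\nu_+\bigr]/\min\bigl[1,\nu_-\bigr] = K_1(\eta_1)$ already follows from the fact that the spectrum of $\DelAsy$ is exactly $\{\nu_+,\nu_-,1\}$, independently of which branch attains the max or min.
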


Let $\kappa_1^{*}(\ell)$  denote the value of [both sides of] equation (\ref{eq:optSingSpike}).
We can show that the empirical performance tends to this theoretical value.
Because  $\eta_1^*$ collapses the vicinity of the bulk to $1$, the spiked model
asymptotics cause, with high probability for large $n$, the full sequence
$(\eta_1^*(\lambda_{i,n}))_{i=1}^{p_n}$ 
to be identical to the simpler sequence $(\eta_1^*(\lambda_{1,n}),1,1,\dots,1)$.
This in turn is an asymptotically small perturbation of 
the deterministic sequence $(\eta_1^*(\lambda_1),1,1,\dots,1)$,
whereby $\lambda_{1}$ replaces the stochastic $\lambda_{1,n}$.
Corollary \ref{cor:OptimalityWTuning} and Theorem \ref{lem:AsyShrinkOpt} yield:
\begin{cor}  \label{cor:formalOptOneSpike}
Always assuming  {\bf [PGA]} and {\bf [SPIKE]},
we have the following limit in probability and almost surely:
\[
\kappa(\DelEmp(\ell_1,(\eta^*_1(\lambda_{i,n})); n ,p_n)) \goto  \kappa_1^{*}(\ell_1;\gamma) , \qquad n \goto \infty,
\]
as well as optimality of that limit:
\[
val(\KEmp(\ell_1;n,p_n)) \goto  \kappa_1^{*}(\ell_1;\gamma) , \qquad n \goto \infty.
\]
\end{cor}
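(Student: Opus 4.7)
The plan is to invoke Corollary \ref{cor:OptimalityWTuning}, together with Theorem \ref{lem:AsyShrinkOpt} and Lemma \ref{lem:SingleSpikeAsympOpt}, after verifying that the random filled configuration $(\eta_1^*(\lambda_{i,n}))_{i=1}^{p_n}$ is, eventually almost surely, of the form $(\eta_{1,n},1,1,\dots,1)$ with $\eta_{1,n} \to \eta_1^*(\lambda_1)$. Once this reduction is in place, the existing machinery does all the remaining work.

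First I would handle the sub-principal coordinates $i \geq 2$. Since $r=1$, Lemma \ref{lem:max_bulk} gives $\lambda_{2,n} \xrightarrow{a.s.} \lambda_+(\gamma)$. Because $\eta_1^*$ collapses the vicinity of the bulk to $1$ -- concretely $\eta_1^*(\lambda) = 1$ whenever $\lambda \leq \lambda_{\eta_1^*}^+(\gamma)$, and this threshold strictly exceeds $\lambda_+(\gamma)$ -- picking $\varepsilon > 0$ with $\lambda_+(\gamma) + \varepsilon < \lambda_{\eta_1^*}^+(\gamma)$ yields, on an event of probability tending to one, $\lambda_{i,n} \leq \lambda_{2,n} < \lambda_+(\gamma) + \varepsilon$ for all $i \geq 2$ by monotonicity of the ordered eigenvalues. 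Hence $\eta_1^*(\lambda_{i,n}) = 1$ for every $i \geq 2$ on that event.

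Next I would handle the top coordinate. If $\ell_1 > \ell_+(\gamma)$, Theorem \ref{lem:spiked} yields $\lambda_{1,n} \xrightarrow{a.s.} \lambda(\ell_1;\gamma) =: \lambda_1$, while if $\ell_1 \leq \ell_+(\gamma)$, Lemma \ref{lem:max_bulk} (with $k=0$) gives $\lambda_{1,n} \xrightarrow{a.s.} \lambda_+(\gamma)$; declare $\lambda_1 := \lambda_+(\gamma)$ in this subcritical regime. In either regime, the deterministic limit lies in a continuity region of $\eta_1^*$: the formula in Theorem \ref{thm:spike_1} is continuous on $(\lambda_{\eta_1^*}^+,\infty)$, the dead-zone branch is constant, and the two branches match at $\lambda_{\eta_1^*}^+$ since $\eta_+(\ell_1^+(\gamma)) = 1$. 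Thus $\eta_{1,n} := \eta_1^*(\lambda_{1,n}) \xrightarrow{a.s.} \eta_1^*(\lambda_1)$.

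Combining the two steps, on an event of probability tending to one the full filled sequence $(\eta_1^*(\lambda_{i,n}))_{i=1}^{p_n}$ equals $(\eta_{1,n},1,1,\dots,1)$ with $\eta_{1,n} \to \eta_1^*(\lambda_1)$. Corollary \ref{cor:OptimalityWTuning} then delivers
\[
\bigl|\kappa(\DelEmp(\ell_1,(\eta_{1,n},1,\dots,1);n,p_n)) - val(\KEmp(\ell_1;n,p_n))\bigr| \goto 0,
\]
Theorem \ref{lem:AsyShrinkOpt} asserts $val(\KEmp(\ell_1;n,p_n)) \goto val(\KAsy(\ell_1;n,p_n,\gamma))$, and Lemma \ref{lem:SingleSpikeAsympOpt} identifies the latter with $\kappa_1^*(\ell_1;\gamma)$. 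Chaining these three facts produces both the claimed almost-sure/in-probability convergence of $\kappa(\DelEmp(\ell_1,(\eta_1^*(\lambda_{i,n}));n,p_n))$ and the convergence $val(\KEmp(\ell_1;n,p_n)) \goto \kappa_1^*(\ell_1;\gamma)$. The main place where care is needed is the boundary behavior at $\ell_1 = \ell_1^+(\gamma)$ (and at $\ell_1 = \ell_+(\gamma)$), where one must verify continuity of $\eta_1^*$ across the dead-zone edge to apply Corollary \ref{cor:OptimalityWTuning}; this is handled by the explicit matching of branches at $\ell_1^+(\gamma)$ noted above, and is the only non-bookkeeping step.
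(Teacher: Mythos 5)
Your proposal is correct and follows essentially the same route as the paper: collapse of the bulk reduces the filled sequence to $(\eta_1^*(\lambda_{1,n}),1,\dots,1)$, continuity of $\eta_1^*$ turns this into an asymptotically small perturbation of $(\eta_1^*(\lambda_1),1,\dots,1)$, and Corollary \ref{cor:OptimalityWTuning} together with Theorem \ref{lem:AsyShrinkOpt} and Lemma \ref{lem:SingleSpikeAsympOpt} finish the argument. The only small imprecision is that, since $\lambda_{2,n} \to \lambda_+(\gamma)$ almost surely by Lemma \ref{lem:max_bulk}, the containment in the dead zone holds eventually almost surely rather than merely ``with probability tending to one''; this strengthening is what gives the claimed a.s.\ convergence in addition to the in-probability version.
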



\section{Multi-spike Case}
\label{sec:ShrinkMultiSpike}

We now turn to the multi-spiked case $r > 1$. 
Suppose that a hypothetical optimal shrinker $\eta()$ collapses the vicinity of the bulk, so that 
$\eta(\lambda_{i,n})=1$ for eigenvalues inside the asymptotic support of the bulk.
If there are  $k$ limit eigenvalues $\lambda_i = \lambda(\ell_i; \gamma ) > \lambda_+(\gamma)$ 
emerging from the bulk, we can restrict
the range of the maximum and minimum somewhat:
\begin{equation} \label{eq:optpivot}
 \kappa(\Delta) = \frac{ \max \left[ 1,\max_{1 \leq i \leq k}  \nu_+(A(\ell_i,\eta_i)) \right] }{\min \left[  1/\ell_{k+1}, \min_{1\leq i \leq k}  \nu_-(A(\ell_i,\eta_i)) \right]} .
\end{equation}

\subsection{Using the single-spike nonlinearity in the multi-spike case}

Previous work in \cite{donoho2013optimalShrinkage} 
may lead one to expect the optimal single-spike shrinker 
to be applicable to the multi-spike case
and to be also optimal in that case.  
Such hopes founder in general, for the following reason.
A performance analysis of the single-spike-optimal 
nonlinearity {\it in the multi-spike case} shows
that the single-spike performance measures
$\nu_{\pm}(A(\ell,\eta_1^*))$ {\it evaluated
at a sub-principal eigenvalue $ \ell \in (\ell^+, \ell_1]$}
can `stick out' beyond the eigenvalue range 
\[
[ \nu_{-}(A(\ell_1,\eta_1^*)), \nu_{+}(A(\ell_1,\eta_1^*))]
\]
which defines $\kappa^*_1$. Consequently, the single-spike
optimal nonlinearity \-- when used in  
the multi-spike case \--
may exhibit worse $\kappa$-loss performance
than it does in the single-spike case.
As it turns out, this 'sticking-out' phenomenon
can't happen whenever 
$\gamma$ is sufficiently large $\gamma$. 

\begin{thm} \label{thm:MultiSingle}
Let $\gamma_m^* = \gammamvals \approx \gammamval$ 
denote the unique solution for $\gamma > 0$ of the equation
\[
     \gamma - \sqrt{\gamma}\sqrt{1+\gamma} = \frac{- \gamma}{1+\gamma}.
\]
For $\gamma > \gamma_m^*$, 
the single-spike optimal nonlinearity, $\eta^*_1$, 
if applied in the multi-spike case, $r > 1$,
is also optimal. 
\end{thm}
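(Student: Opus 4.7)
My plan is to match the natural lower bound $\kappa_1^*(\ell_1;\gamma)$ against the actual multi-spike condition number attained by plugging in $\eta_i = \eta_1^*(\lambda(\ell_i))$ at every spike. The lower bound is immediate from formula (\ref{eq:optpivot}): the numerator is at least $\max(1,\nu_+(A(\ell_1,\eta_1)))$ and the denominator is at most $\min(1,\nu_-(A(\ell_1,\eta_1)))$ (using $1/\ell_{k+1}\leq 1$), so $\kappa(\Delta^a)\geq K_1(\eta_1)$ for every feasible $(\eta_i)$; minimizing over $\eta_1$ alone gives $\mathrm{val}(K^a)\geq\kappa_1^*(\ell_1;\gamma)$ without any hypothesis on $\gamma$.

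To match this lower bound, I introduce the envelopes
\[
\phi_\pm(\ell) \;=\; \nu_\pm(A(\ell,\eta_1^*(\lambda(\ell)))),
\]
where for $1\leq\ell\leq\ell_1^+$ we have $\eta_1^*(\lambda(\ell))=1$ and $A(\ell,1)=\mathrm{diag}(1/\ell,1)$, so $\phi_-(\ell)=1/\ell$ and $\phi_+(\ell)=1$. Single-spike optimality at the top spike gives $\phi_-(\ell_1)\leq 1\leq\phi_+(\ell_1)$ and $\phi_+(\ell_1)/\phi_-(\ell_1)=\kappa_1^*(\ell_1;\gamma)$. To force the outer max and min in (\ref{eq:optpivot}) to be realized at $i=1$ when $\eta_1^*$ is used componentwise, it suffices to establish two monotonicity claims: $\phi_+$ is non-decreasing and $\phi_-$ is non-increasing in $\ell$ on $[1,\infty)$. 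Combined with $\phi_-(\ell_1)\leq 1\leq\phi_+(\ell_1)$, these imply that every sub-principal block's eigenvalues and the bulk references $1,1/\ell_{k+1}$ all sit inside $[\phi_-(\ell_1),\phi_+(\ell_1)]$, so the multi-spike condition number collapses to $\kappa_1^*(\ell_1;\gamma)$. Theorem \ref{lem:AsyShrinkOpt} then transfers this asymptotic optimality to the empirical pivot.

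The monotonicity of $\phi_+$ (the content of the forthcoming Lemma \ref{lem:increasingwrtELL}) is unconditional in $\gamma$ and follows by a direct derivative computation using the closed form (\ref{eq:optSingleSpikeShrink}) together with the formulas of Lemma \ref{lem:defTD}. The monotonicity of $\phi_-$ (Lemma \ref{lem:decreasingRwrtELL}) is the delicate statement: for small $\gamma$, numerical inspection shows that $\phi_-$ dips strictly below its $\ell\to\infty$ limit $1-\sqrt{\gamma/(1+\gamma)}$ at some intermediate $\ell$, producing exactly the ``sticking-out'' phenomenon flagged in the text, so $\eta_1^*$ fails multi-spike optimality for such $\gamma$. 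The threshold $\gamma_m^*$ is precisely the critical value at which this dip first disappears and $\phi_-$ becomes monotone non-increasing on $(\ell_1^+,\infty)$.

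The main obstacle is the derivative analysis pinpointing $\gamma_m^*$. My plan is to reparameterize along the single-spike trajectory using $c^2\in(0,1)$ from Lemma \ref{lem:spikeFormulaEquivalent}, writing $\eta_1^*=(\ell c^2+s^2)/(\ell s^2+c^2)$, $D=\eta_1^*/\ell$, and $T=A_{11}+A_{22}$, so that $\phi_-=T/2-\sqrt{T^2/4-D}$ becomes an explicit algebraic function of $c^2$ and $\gamma$. Computing $d\phi_-/d\ell$, setting it to zero, and clearing square roots should reduce to $\gamma(2+\gamma)^2=(1+\gamma)^3$, equivalently $\gamma^2+\gamma-1=0$, whose positive root is $(\sqrt{5}-1)/2=\gamma_m^*$; this is exactly the defining equation given in the theorem statement. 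The sign inequality $\phi_-'(\ell)\leq 0$ on $(\ell_1^+,\infty)$ then holds if and only if $\gamma\geq\gamma_m^*$, which is the hypothesis of the theorem.
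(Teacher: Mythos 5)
Your proposal matches the paper's own argument essentially step for step: lower-bound the asymptotic loss by $\kappa_1^*(\ell_1;\gamma)$ using the top block alone, then achieve it by showing the envelopes $\nu_+(A(\ell,\eta_1^*(\lambda(\ell))))$ and $\nu_-(A(\ell,\eta_1^*(\lambda(\ell))))$ are respectively non-decreasing (Lemma \ref{lem:increasingwrtELL}, unconditional) and non-increasing for $\gamma>\gamma_m^*$ (Lemma \ref{lem:decreasingwrtELL}, the delicate part, which the paper proves via the helper functions $H$, $R$, $S$ and a sufficient-condition argument at $\ell\to\infty$ — arriving at exactly the defining equation $\gamma^2+\gamma-1=0$ you identify). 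Two small points: the $\nu_-$ monotonicity is Lemma \ref{lem:decreasingwrtELL}, not Lemma \ref{lem:decreasingRwrtELL} (which concerns $R(\ell)=4D/T^2$), and the paper establishes only the sufficient (``if'') direction rather than your asserted ``if and only if,'' which is stronger than the theorem needs.
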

}

\subsection{Optimal procedure in the multi-spike case}

We now construct a multi-spike optimal nonlinearity $\eta_m^*$ to
exhibit, for a given top theoretical eigenvalue $\ell_1$, 
the same performance in the multi-spike case 
as does $\eta_1^*$ in the single-spike case.  
To achieve this, the construction guarantees 
inequalities 
\[
\nu_{-}(A(\ell_1,\eta_1^*)) \leq \nu_{-}(A(\ell,\eta_m^*)), \qquad 
  1 \leq \ell \leq \ell_1.
\]
\[
\nu_{+}(A(\ell_1,\eta_1^*)) \geq \nu_{+}(A(\ell,\eta_m^*)), \qquad 
  1 \leq \ell  \leq \ell_1.
\]
ensuring  the following {\it no-sticking-out} condition for each $\ell \in [1,\ell_1]$ :
\[
[ \nu_{-}(A(\ell,\eta_m^*)), \nu_{+}(A(\ell,\eta_m^*))] \subset
[ \nu_{-}(A(\ell_1,\eta_1^*)), \nu_{+}(A(\ell_1,\eta_1^*))].
\]

\begin{thm} \label{thm:OptimalShrinkage} 
Consider a multi-spike configuration 
$(\ell_i)_{i=1}^r$ with $1 \leq \ell_r <  \ell_{r-1} < \dots < \ell_1$.
We provide an explicit optimal nonlinear shrinker, 
$\eta_m^*(\lambda; \lambda_1, \gamma)$, in (\ref{eq:optmultispike}) below; this depends on $\gamma$ and 
on the tuning constant $\lambda_1$ 
(i.e. the limiting top empirical eigenvalue
$\lambda(\ell_1)$).

\begin{enumerate}

\item  {\bf (Optimal Asymptotic Pivot).}
Here {\it optimal} means that the condition number of the asymptotic pivot 
is the smallest achievable by any nonlinearity $\eta(\cdot)$  at the 
given spike configuration:   
\[
    \kappa( \Delta(\eta^*_m)) = \min_{\eta}   \kappa( \Delta(\eta)).  
\]
More explicitly, as soon as $p > r+1$:
\[
  \kappa(\DelAsy((\ell_i); (\eta^*_m(\lambda(\ell_i))); n , p, \gamma)) =    \min_{(\eta_i)_{i=1}^r} \kappa(\DelAsy((\ell_i);(\eta_i); n , p, \gamma)).
\]
 \item {\bf (Optimal Asymptotic Loss).} The optimal condition number obeys:
\beq \label{eq:OptAsyLoss}
    \kappa( \Delta(\eta^*_m)) = \kappa_1^*(\ell_1).
\eeq
 That is, the optimal $\kappa$-loss is the same in the multispike model as in that instance of the single-spike model 
 having the same value $\ell_1$ for the  largest spike. 

\item {\bf (Optimal Shrinkage Formula).} 
Suppose $\ell_1 > \ell_1^+(\gamma)$,  and
let $\nu_-^{1,*} = \nu_-(\ell_1;\eta_1^*(\lambda(\ell_1),\gamma))$.
Define 
\[
   \ell_{\eta_m^*}^+(\ell_1,\gamma) \equiv \frac{1}{\nu_-^{1,*}}.
\]
Also set
\[
    a \equiv c^2/\ell+s^2, \qquad b \equiv s^2/\ell+c^2.
\]
Then $\nu_-^{1,*} \neq 1/(a\ell)$ and
the optimal multi-spike shrinker obeys
\beq \label{eq:optmultispike}
    \eta_m^*(\lambda(\ell))  = \left \{ \begin{array}{ll}
                 \frac{(\nu_-^{1,*} - b)}{a -1/(\ell\cdot\nu_-^{1,*})}  & \ell >  \ell_{\eta_m^*}^+ \\
                 1           &  \ell \leq  \ell_{\eta_m^*}^+
                 \end{array}
                 \right . .
\eeq
\item {\bf (Collapse of the Bulk.)} 
Define $\lambda_{\eta_m^*}^+ = \lambda(\ell^+_{\eta_m^*}(\gamma);\gamma)$. Then
$\lambda_{\eta_m^*}^+ > \lambda_+(\gamma)$, so $\eta_m^*( \cdot ; \lambda_1) $ collapses the vicinity of the bulk to $1$,
for each $\lambda_1 > \lambda_+(\gamma)$.
\end{enumerate}
\end{thm}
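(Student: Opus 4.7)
The plan is to match a lower bound on $\kappa(\Delta^a)$ with an explicit construction that achieves it. For the \textbf{lower bound}, fix any feasible $(\eta_i)_{i=1}^{r}$. By Lemma \ref{lem:blocks} the asymptotic pivot is block-diagonal with blocks $A(\ell_i,\eta_i)$ and a trailing identity; by the condition-number formula (\ref{eq:condnoformula}), restricting the numerator's max and the denominator's min to the single block $A(\ell_1,\eta_1)$ can only decrease the ratio, yielding
\[
\kappa(\Delta^a((\ell_i),(\eta_i))) \;\geq\; \frac{\max[1,\nu_+(A(\ell_1,\eta_1))]}{\min[1,\nu_-(A(\ell_1,\eta_1))]} \;=\; K_1(\eta_1) \;\geq\; \kappa_1^*(\ell_1),
\]
the last inequality by Theorem \ref{thm:spike_1}. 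This gives one direction of Part (2).

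For the \textbf{construction} underlying Parts (3) and (4), the guiding idea is to pin the smaller eigenvalue of each subordinate block to the single-spike optimum $\nu_-^{1,*}$. Using Lemma \ref{lem:defTD}, the characteristic polynomial of $A(\ell,\eta)$ is $\nu^2 - (\eta a + b)\nu + \eta/\ell = 0$ with $a = c^2/\ell + s^2$ and $b = s^2/\ell + c^2$; substituting $\nu = \nu_-^{1,*}$ and solving for $\eta$ produces the closed form (\ref{eq:optmultispike}). Since this formula gives $\eta = 1$ exactly at $\ell = 1/\nu_-^{1,*}$ and would require $\eta < 1$ below that threshold (violating the feasibility constraint $\eta \geq 1$ from Lemma \ref{lem:AsyOptProb}), one defaults to $\eta_m^* = 1$ on the dead zone $\ell \leq 1/\nu_-^{1,*}$, where trivially $\nu_-(A(\ell,1)) = 1/\ell \geq \nu_-^{1,*}$ and $\nu_+(A(\ell,1)) = 1 \leq \nu_+^{1,*}$. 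The dead-zone threshold for Part (4) is then $\ell_{\eta_m^*}^+ = 1/\nu_-^{1,*}$; using the single-spike identities $\nu_+^{1,*}\nu_-^{1,*} = \eta_1^*/\ell_1$ and $\nu_+^{1,*}/\nu_-^{1,*} = \kappa_1^*(\ell_1)$ together with Lemma \ref{lem:spikeFormulaEquivalent} and (\ref{eq:kappa1defn}), the strict inequality $1/\nu_-^{1,*} > 1 + \sqrt{\gamma} = \ell_+(\gamma)$ follows from a direct algebraic check on $\delta^*(\ell_1;\gamma)$.

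Finally, for Parts (1)--(2) we must verify the matching \textbf{upper bound} $\kappa(\Delta^a(\eta_m^*)) \leq \kappa_1^*(\ell_1)$. Outside the dead zone, the pinning gives $\nu_- = \nu_-^{1,*}$ and $\nu_+\nu_- = \eta/\ell$, so $\nu_+ \leq \nu_+^{1,*}$ reduces to the monotonicity
\[
\eta_m^*(\lambda(\ell))/\ell \;\leq\; \eta_1^*/\ell_1, \qquad \ell \in [1,\ell_1],
\]
with equality at $\ell = \ell_1$ by construction (direct substitution into (\ref{eq:optmultispike}) recovers $\eta_1^*$ from the single-spike characteristic equation). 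I expect this monotonicity to be the \textbf{main technical obstacle}: it calls for differentiating the closed form (\ref{eq:optmultispike}) and exploiting the explicit $\ell$-dependence of $c^2(\ell;\gamma)$ and $s^2$ from Theorem \ref{lem:spiked} to certify the sign of $\frac{d}{d\ell}\log(\eta_m^*(\lambda(\ell))/\ell)$. Once secured, every block satisfies $[\nu_-,\nu_+] \subset [\nu_-^{1,*},\nu_+^{1,*}]$, forcing $\kappa(\Delta^a(\eta_m^*)) = \nu_+^{1,*}/\nu_-^{1,*} = \kappa_1^*(\ell_1)$, which together with the lower bound establishes Parts (1)--(2). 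A routine sign check on the denominator $a - 1/(\ell\nu_-^{1,*})$ of (\ref{eq:optmultispike}) over the operating range $\ell \in (\ell_{\eta_m^*}^+,\ell_1]$ closes the construction.
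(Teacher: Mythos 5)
Your reduction is clean and correct up to the final step, but the final step is exactly where the proof lives, and you leave it unproven. You correctly identify (i) the lower bound $\kappa(\Delta^a(\eta)) \geq \kappa_1^*(\ell_1)$ from Theorem \ref{thm:spike_1} applied to the top block; (ii) the construction that pins $\nu_-(A(\ell,\eta_m^*(\ell)))=\nu_-^{1,*}$ by solving the characteristic equation for $\eta$, matching Lemma \ref{solution}; (iii) the dead zone and its threshold; and (iv) the reduction of the required bound $\nu_+(\ell,\eta_m^*(\ell))\leq\nu_+^{1,*}$ to the inequality $\eta_m^*(\lambda(\ell))/\ell \leq \eta_1^*/\ell_1$ via the determinant identity $\nu_+\nu_-=\eta/\ell$. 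But you then say you ``expect this monotonicity to be the main technical obstacle'' and leave it as a sign-check to be carried out later. That sign-check is the theorem; deferring it means the proposal has not proven the upper bound, hence not proven Parts 1--2.

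Worth flagging: the direct route you sketch is not just ``routine differentiation.'' The paper explicitly avoids it. Lemma \ref{lem:EtaNuRatioNonincreasing} analyzes the closely related quantity $\eta(\ell,\nu_0)/\ell$, but \emph{only for $\nu_0>1$} (with an extra restriction $\nu_0 < 1/(1-\sqrt{\gamma})$ when $\gamma<1$), where the key polynomial inequality $\gamma(\nu_0)^2 > (1-\nu_0)^2$ has a fixed sign. Your construction uses $\nu_0 = \nu_-^{1,*} < 1$, and since Lemma \ref{lem:support0} gives $\nu_-^{1,*} < 1/(1+\sqrt{\gamma})$, that same polynomial inequality flips sign; the numerator and denominator of $\eta_m^*(\ell)/\ell$ are both negative over the operating range, and the monotonicity direction reverses. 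So a direct proof would need a companion to Lemma \ref{lem:EtaNuRatioNonincreasing} valid in the complementary regime --- plausible, but not a footnote. The paper circumvents this entirely by a contradiction argument: it introduces the auxiliary $\eta^+$ solving $\nu_+(\ell,\eta^+)=\nu_+^{1,*}$, invokes Lemma \ref{lem:EtaNuRatioNonincreasing} (which works because there $\nu_0 = \nu_+^{1,*}>1$) together with the transfer Lemma \ref{lem:increasingGap}, and uses $\partial_\eta\nu_\pm>0$ (Lemma \ref{lem:increasingwrtEta}) to close the contradiction. If you want to push the direct route through, you must actually establish the monotonicity of $\eta(\ell,\nu_-^{1,*})/\ell$ including its sign structure and the $\gamma<1$ edge cases; as written, the proposal asserts the conclusion rather than proving it.
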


\noindent
{\bf Extension to  clustered spikes.} 
 {\sl The theorem statement and proof specifically exclude the case of clustered spike
eigenvalues \-- i.e. $\ell_i = \ell_{i+1}$ for some $1 \leq i < r$.
It is possible to show that the same conclusions
hold in the clustered case, meaning that the same
nonlinearity is optimal and  the same formulas for
the optimal $\kappa$-loss hold. 
However, in the clustered case, the underlying
asymptotic pivot and the underlying proofs are different.
 } 

At the top eigenvalue, we have
 \[
      \eta_m^*(\lambda_1; \lambda_1) = \eta_1^*(\lambda_1). 
 \]
 Consequently, 
 \[
  \eta_m^*(\lambda_1; \lambda_1)  \sim \frac{1}{1+\gamma} \lambda_1, \qquad \lambda_1 \goto \infty.
 \]

\subsection{Rigorous Optimality}
Theorem \ref{thm:OptimalShrinkage} offers a certain formal optimality result;
it shows  that applying
$\eta^* = (\eta^*_m(\cdot;\lambda_1))$ to the limit eigenvalues $\lambda_i$
produces a sequence that minimizes the condition number of the asymptotic pivot. 
This is two steps removed from a `full' optimality result. We now take those steps. 
Our first step shows that applying the oracle procedure 
$\eta^*_m(\cdot; \lambda_1)$ to the empirical eigenvalues $\lambda_{i,n}$ 
also gives the (asymptotically) optimal $\kappa$-loss. 
Now the nonlinearity $\eta^*_m(\cdot; \lambda_1)$ collapses the bulk,
and we have the convergence $\eta^*_m(\lambda_{i,n}; \lambda_1) \goto \eta^*_m(\lambda_i; \lambda_1)$
as $n \goto \infty$ for $i=1,\dots,r$. 
Lemma \ref{lem:blocks} and Theorem \ref{lem:AsyShrinkOpt} 
and the immediately following corollaries apply here, and we obtain:

\begin{cor} \label{cor:AsympOracle}
Always assuming [SPIKE] and [PGA],
the asymptotic performance of the oracle procedure has the following limit:
\[
\kappa(\DelEmp((\ell_i),(\eta^*_m(\lambda_{i,n};\lambda_1)); n ,p_n)) \goto_{a.s.}  \kappa_1^{*}(\ell_1;\gamma) , \qquad n \goto \infty,
\]
which is asymptotically optimal:
\[
val(\KEmp((\ell_i);n,p_n)) \goto_{a.s.}  \kappa_1^{*}(\ell_1;\gamma) , \qquad n \goto \infty.
\]
\end{cor}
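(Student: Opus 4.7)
The plan is to reduce the corollary to the convergence machinery of Section 2 (Lemma \ref{lem:blocks}, Corollary \ref{cor:AsyPivWPerturbations}, Theorem \ref{lem:AsyShrinkOpt}) combined with the formal optimality already established in Theorem \ref{thm:OptimalShrinkage}. The key observation is that, thanks to the ``collapse the vicinity of the bulk'' property, feeding the random empirical eigenvalues through $\eta_m^*(\cdot;\lambda_1)$ asymptotically yields a padded vector of the form (\ref{eq:padbyone}).

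First I would argue that $(\eta_m^*(\lambda_{i,n};\lambda_1))_{i=1}^{p_n}$ is, eventually almost surely, of padded shape. Let $k$ be the number of spikes with $\ell_i>\ell_+(\gamma)$. By Lemma \ref{lem:max_bulk}, $\lambda_{k+1,n}\to \lambda_+(\gamma)$ a.s., and by Theorem \ref{thm:OptimalShrinkage}(4) the dead-zone threshold satisfies $\lambda_{\eta_m^*}^+ > \lambda_+(\gamma)$. Therefore for large $n$ a.s.\ every eigenvalue with index $i>k$ lies strictly below $\lambda_{\eta_m^*}^+$ and so is mapped to $1$. For the finitely many indices $i\leq k$, Theorem \ref{lem:spiked} yields $\lambda_{i,n}\to \lambda_i$ a.s., and each $\lambda_i$ lies strictly inside the smooth upper branch of (\ref{eq:optmultispike}) where $\eta_m^*(\cdot;\lambda_1)$ is continuous; hence $\eta_m^*(\lambda_{i,n};\lambda_1)\to \eta_m^*(\lambda_i;\lambda_1)$ a.s. Setting $\eta_{i,n}:=\eta_m^*(\lambda_{i,n};\lambda_1)$ and $\eta_i:=\eta_m^*(\lambda_i;\lambda_1)$ for $i\leq r$ (with $\eta_{i}=1$ for $i>r$ whenever the index corresponds to a sub-phase-transition spike), Corollary \ref{cor:AsyPivWPerturbations} applies and delivers
\[
\|\DelEmp((\ell_i),(\eta_{i,n}); n, p_n) - \DelAsy((\ell_i),(\eta_i); n, p_n, \gamma)\|_F \;\goto\; 0\qquad a.s.
\]

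Second I would transfer this Frobenius convergence to convergence of condition numbers. The limiting asymptotic pivot is block-diagonal with $2\times 2$ blocks $A(\ell_i,\eta_m^*(\lambda_i;\lambda_1))$ of positive determinant $\eta_m^*(\lambda_i;\lambda_1)/\ell_i>0$ (Lemma \ref{lem:defTD}) and a trailing identity block, hence positive definite with uniformly bounded condition number. Since $\kappa(\cdot)$ is continuous on the set of positive-definite matrices with bounded smallest eigenvalue, the Frobenius convergence above gives $\kappa(\DelEmp)\to \kappa(\DelAsy)$ a.s. By Theorem \ref{thm:OptimalShrinkage}(1)-(2), this limit equals $\kappa_1^*(\ell_1;\gamma)$, which yields the first claimed convergence. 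The optimality-of-the-limit statement follows by invoking Theorem \ref{lem:AsyShrinkOpt} directly: $val(\KEmp)\to val(\KAsy)$ a.s., and Theorem \ref{thm:OptimalShrinkage} identifies $val(\KAsy)$ with $\kappa_1^*(\ell_1;\gamma)$.

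The only delicate step is the transfer from Frobenius convergence of the pivot to convergence of its extremal spectral values. The potential worry --- that $\nu_-(\DelEmp)$ could drift to $0$ or that some eigenvalue from the identity block could drift above the block-spike eigenvalues --- is easily controlled: the extremal characteristic values of the limit block-diagonal matrix are determined by the $2\times 2$ blocks (Lemma \ref{lem:defTD}) together with the identity block's eigenvalue $1$, and all of these are bounded away from $0$ and $\infty$, so Weyl's inequality together with Frobenius convergence gives uniform control and hence continuous dependence of $\kappa$.
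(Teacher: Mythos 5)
Your proposal is correct and follows essentially the same route as the paper, which dismisses this corollary with a brief appeal to the collapse-of-the-bulk property, Lemma \ref{lem:blocks}, Theorem \ref{lem:AsyShrinkOpt}, and the two perturbation corollaries (\ref{cor:AsyPivWPerturbations}, \ref{cor:OptimalityWTuning}); your write-up is a faithful expansion of exactly that argument. One small imprecision worth noting: for indices $i\leq k$ you assert that each $\lambda_i$ lies strictly in the smooth upper branch of (\ref{eq:optmultispike}), but since the dead-zone threshold $\lambda_{\eta_m^*}^+$ exceeds $\lambda_+(\gamma)$, some of these supercritical limits can still sit inside the dead zone; the needed convergence $\eta_m^*(\lambda_{i,n};\lambda_1)\to\eta_m^*(\lambda_i;\lambda_1)$ nevertheless holds for all $i$ because $\eta_m^*(\cdot;\lambda_1)$ is globally continuous (Lemma \ref{lem:continuity}), so this does not break the proof.
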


Our second step is more delicate. The oracle procedure 
$\eta^*_m(\cdot; \lambda_1)$ is tuned using  the top {\it limit} eigenvalue $\lambda_1$. 
However we only observe the empirical eigenvalue $\lambda_{1,n}$, 
not the limit $\lambda_1$. 
In practice, we would tune using  $\lambda_{1,n}$, giving the fully empirical procedure 
$\eta_{i,n}^e \equiv \eta_{m}^*(\lambda_{i,n}; \lambda_{1,n},\frac{p_n}{n})$. Lemma \ref{lem:AsympMShrink} shows that $(\eta^e)$ 
is also asymptotically optimal. 

\begin{lem} \label{lem:AsympMShrink}
Always assuming the spike model [SPIKE] and the [PGA] asymptotic $p_n/n \goto \gamma$,
we have both almost surely and in probability
\[
      \max_{1\leq i \leq \min(n,p_n)} | \eta_{i,n}^e  - \eta^*_m(\lambda_{i,n}; \lambda_1) |  \goto 0, \qquad n \goto \infty.
\] 
Consequently, the empirical shrinker is asymptotically optimal:
\[
\kappa(\DelEmp((\ell_i),(\eta_{i,n}^e); n ,p_n)) \goto_{a.s.}  \kappa_1^{*}(\ell_1;\gamma) , \qquad n \goto \infty.
\]
\end{lem}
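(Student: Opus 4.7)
The plan is to derive the uniform convergence from (i) joint continuity of the shrinker $(\lambda, \lambda_1, \gamma) \mapsto \eta_m^*(\lambda; \lambda_1, \gamma)$ on the relevant open region, (ii) the convergence of the tuning inputs, $\lambda_{1,n} \to \lambda_1$ a.s.\ by Theorem \ref{lem:spiked} and $p_n/n \to \gamma$ by [PGA], and (iii) the strict dead-zone gap $\lambda_{\eta_m^*}^+(\lambda_1,\gamma) > \lambda_+(\gamma)$ from Theorem \ref{thm:OptimalShrinkage}(4), which persists under small perturbations of $(\lambda_1,\gamma)$. From the closed form (\ref{eq:optmultispike}), $\eta_m^*$ is smooth on its upper branch and constant ($=1$) on its lower branch, and the two branches match continuously at the threshold; likewise $\lambda_{\eta_m^*}^+$ varies continuously with $(\lambda_1,\gamma)$.

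I would split the max over $1 \leq i \leq \min(n, p_n)$ into two groups. Let $k \leq r$ be the number of theoretical spikes with $\ell_i > \ell_+(\gamma)$. For $i \leq k$, Theorem \ref{lem:spiked} gives $\lambda_{i,n} \to \lambda_i$ a.s., so joint continuity yields both $\eta_m^*(\lambda_{i,n}; \lambda_{1,n}, p_n/n) \to \eta_m^*(\lambda_i; \lambda_1, \gamma)$ and $\eta_m^*(\lambda_{i,n}; \lambda_1, \gamma) \to \eta_m^*(\lambda_i; \lambda_1, \gamma)$, so their difference vanishes. For $i > k$, Lemma \ref{lem:max_bulk} gives $\lambda_{k+1,n} \to \lambda_+(\gamma)$ a.s., and every later eigenvalue satisfies $\lambda_{i,n} \leq \lambda_{k+1,n}$. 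Choose $\epsilon > 0$ with $\lambda_{\eta_m^*}^+(\lambda_1, \gamma) > \lambda_+(\gamma) + 2\epsilon$; by continuity of the threshold, $\lambda_{\eta_m^*}^+(\lambda_{1,n}, p_n/n) > \lambda_+(\gamma) + \epsilon$ for all large $n$ a.s., while simultaneously $\lambda_{i,n} < \lambda_+(\gamma) + \epsilon/2$. Hence both $\eta_{i,n}^e = 1$ and $\eta_m^*(\lambda_{i,n}; \lambda_1) = 1$ for these indices, so the $i > k$ block contributes exactly zero to the max for all large $n$, and the max is driven by the finitely many spike indices, already shown to vanish.

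For the ``consequently'' clause, observe that the components $\eta_{i,n}^e$ with $i \leq r$ converge a.s.\ to $\eta_i^* := \eta_m^*(\lambda_i; \lambda_1, \gamma)$ (equal to $1$ for $k < i \leq r$), which are precisely the optimizers of the asymptotic pivot by Theorem \ref{thm:OptimalShrinkage}(1); and $\eta_{i,n}^e = 1$ for $i > r$ once $n$ is large. Corollary \ref{cor:OptimalityWTuning} then yields $|\kappa(\DelEmp((\ell_i),(\eta_{i,n}^e); n, p_n)) - val(\KEmp)| \to 0$ a.s., and $val(\KEmp) \to \kappa_1^*(\ell_1;\gamma)$ by Theorem \ref{lem:AsyShrinkOpt} combined with Theorem \ref{thm:OptimalShrinkage}(2). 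The main obstacle is the piecewise structure of $\eta_m^*$ at its dead-zone edge: when $\ell_1$ is close to $\ell_1^+(\gamma)$, or a sub-principal $\ell_i$ is close to $\ell_+(\gamma)$, one has to rule out oscillation of $\eta_{i,n}^e$ across the threshold induced by the stochastic perturbation of the tuning. The strict gap $\lambda_{\eta_m^*}^+ > \lambda_+$ together with continuity of the threshold in $(\lambda_1,\gamma)$ absorbs this, since a uniform buffer persists for all sufficiently small $\lambda_{1,n} - \lambda_1$ and $p_n/n - \gamma$, which is exactly what the a.s.\ convergence of these two perturbations provides.
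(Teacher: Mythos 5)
Your proof follows the same overall strategy as the paper's — split the max into a finite spike block, treated by continuity, and a tail block, treated by showing everything in the tail is collapsed to $1$ — but there is a genuine gap in the tail argument. You invoke Theorem~\ref{thm:OptimalShrinkage}(4) to get the strict gap $\lambda_{\eta_m^*}^+(\lambda_1,\gamma) > \lambda_+(\gamma)$, but that part of the theorem is explicitly stated for $\lambda_1 > \lambda_+(\gamma)$. When $\ell_1 \leq \ell_+(\gamma) = 1+\sqrt{\gamma}$ — i.e.\ $k = 0$, all spikes sub-threshold — the tuning value degenerates to $\lambda_1 = \lambda_+(\gamma)$, and then $\nu_-^{1,*} = 1/(1+\sqrt{\gamma})$, $\ell^+_{\eta_m^*} = 1+\sqrt{\gamma}$, so $\lambda_{\eta_m^*}^+(\lambda_1,\gamma) = \lambda_+(\gamma)$ with no buffer at all. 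In that case your choice of $\epsilon$ with $\lambda_{\eta_m^*}^+(\lambda_1,\gamma) > \lambda_+(\gamma)+2\epsilon$ is impossible, and the argument that the $i>k$ block contributes ``exactly zero'' breaks down. The paper handles this by an explicit case split: when $\ell_1 \leq 1+\sqrt{\gamma}$ it notes that $\lambda_{1,n} \to (1+\sqrt{\gamma})^2 < \lambda_1^+(\gamma)$, so for large $n$ the empirical tuning $\lambda_{1,n}$ falls below the \emph{single-spike} threshold $\lambda_1^+(p_n/n)$ (which is strictly above the bulk edge by Theorem~\ref{thm:spike_1}), and this forces $\eta_m^*(\cdot;\lambda_{1,n},p_n/n)\equiv 1$ on the entire relevant range. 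To close the gap you should add the analogous case split, replacing the role of $\lambda_{\eta_m^*}^+$ by $\lambda_1^+$ when $\lambda_1 = \lambda_+(\gamma)$.

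Two smaller remarks. Your split by $k$ (the number of spikes with $\ell_i > \ell_+(\gamma)$) is actually slightly cleaner than the paper's split at $r$: it avoids appealing to $\lambda_{i,n} \to \lambda_i$ for sub-threshold spikes $k < i \leq r$, where the convention $\lambda_i = \lambda_+(\gamma)$ must be invoked. For the ``consequently'' clause, you route through Corollary~\ref{cor:OptimalityWTuning} (after noting $\eta_{i,n}^e = 1$ for $i>r$ eventually, which makes the sequence of the required ones-padded form), whereas the paper uses the direct Lipschitz bound $\| \Delta_{\eta_n} - \Delta_{\eta^*}\|_{op} \leq \max_i|\eta_{i,n}^e - \eta_i^*|$ from Lemma~\ref{lem:LocalLipschitzCond}; both routes are valid and of comparable length.
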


\section{A Parameter-Free Minimax Nonlinearity}
\label{sec:MinimaxInterp}
  
In general, achieving optimal performance requires tuning
the shape of the nonlinearity $\eta(\lambda; (\ell_i))$ based 
on the underlying configuration of spike
eigenvalues $(\ell_i)$. 
\footnote{For $\gamma > \gamma_m^* = \gammamvals$, 
Theorem \ref{thm:MultiSingle}
observed that we can even achieve optimal performance
with a  {\it tuning-parameter-free} nonlinearity,
i.e. applying the single-spike nonlinearity $\eta_1^*$
even in the multi-spike setting.
For $\gamma \leq \gamma_m^*$ Theorem \ref{thm:OptimalShrinkage} 
shows that tuning can be reduced to {\it one real parameter}, 
the top spike value $\ell_1$,
yielding optimal performance by $\eta_m^*(\lambda; \ell_1)$.}

This section presents the best available guarantees 
which can be offered by a  tuning-parameter-free nonlinearity
to be applied eigenvalue by eigenvalue, and where the nonlinearity
is not tuned in any way
using the configuration of the eigenvalues \-- in particular
the shape of the nonlinearity does not depend on $\ell_1$
(as  $\eta_m(\cdot; \ell_1)$ does).

\subsection{Minimax \texorpdfstring{$\kappa$}{k}-Loss}

We focus on the class $\cL_r$ of {\it all} $r$-spike models with 
$\ell_1 > \ell_2 > \dots > \ell_r > 1$,
and study the minimax $\kappa$-loss
\[
	\min_\eta \max_{(\ell_i) \in \cL_r} \kappa((\ell_i),\eta).
\]
This provides the best possible guarantee for performance
of a single nonlinearity used across a whole collection 
of spike configurations.

For some (possibly joint) nonlinearity $\eta$ 
we measure the global worst-case performance:
\[
   K_{r}^*(\eta)  = \max_{\cL_{r}} \kappa( \Delta( (\ell_i) ; \eta(\cdot) ) ).
\]
Minimizing this quantity across all shrinkage choices $\eta$ yields the
minimax $\kappa$-loss across {\it all } spike models \-- 
i.e. we can obtain a {\it globally} minimax nonlinearity.
By Theorem \ref{thm:MultiSingle}, 
for $\gamma > \gamma_m^*  = \gammamval$, 
the single-spike nonlinearity is optimal at every spike configuration, 
and hence it is globally minimax over that range of $\gamma$. 
A different approach
intentionally `mis-tunes' the optimal rule,
setting the assumed $\ell_1$ to $ \infty$ independently of
the actual situation;  the result, $\eta_m^*(\cdot ; \infty, \gamma)$, 
is a minimax procedure for each $\gamma > 0$. 
This procedure is tuning-parameter-free, as $\gamma=p/n$ is known
and properly not a tunable parameter.

\newcommand{\mmbp}{\mbox{\sc \footnotesize mm}}
\begin{thm} \label{thm:minimaxEta}
({\bf Minimax Loss, and a Minimax Procedure}) 

\bitem 
\item {\bf Minimax $\kappa$-loss. }
For each $r \geq 1$, the optimum $\kappa$-loss which can be guaranteed uniformly 
over the class $\cL_r$  of all $r$-spike models  $\ell_i \geq 1$, $i=1,\dots, r$
is given by:
\[
   \inf_{\eta} \max_{(\ell_i) \in \cL_{r}} \kappa( \Delta( (\ell_i) ; \eta(\cdot) ) ) = \kappa_1^*(\infty,\gamma) =  \frac{1+\sqrt{\frac{\gamma}{\gamma+1}}}{1-\sqrt{\frac{\gamma}{\gamma+1}}}.
\]

\item {\bf Minimax Shrinker $\etammnl$. }
Define \[
\etammnl(\cdot; \gamma) \equiv \eta_m^*(\cdot ; \infty,\gamma) \equiv \lim_{\ell_1 \goto \infty} \eta_m^*(\cdot; \ell_1,\gamma).
\] 
The limit exists and can be characterised using functions $a(\ell)$ and $b(\ell)$ 
from part 3 of Theorem \ref{thm:OptimalShrinkage}, as follows.
For $\nu_-^{\mmbp} \equiv \nu_-^{\mmbp}(\gamma) = 1 - \sqrt{\frac{\gamma}{\gamma+1}}$, 
note that $a(\ell) \ell \neq 1/\nu_-^{\mmbp}$ on $\ell > 1/\nu_-^{\mmbp} $ and set 
\beq \label{eq:minimaxdef}
    \etammnl(\lambda(\ell))  = \left \{ \begin{array}{ll}
                 \frac{(\nu_-^{\mmbp} - b)}{a -1/(\ell\cdot\nu_-^{\mmbp})}  & \ell >  1/\nu_-^{\mmbp} \\
                 1           &  \ell \leq  1/\nu_-^{\mmbp}
                 \end{array}
                 \right . .
\eeq
The nonlinearity so defined obeys:

\beq \label{eq:asyinequality}
      \nu_-(A(\ell, \etammnl)) \geq \nu_-^{\mmbp}(\gamma), \qquad \forall \ell > 1.
\eeq
\item {\bf Minimaxity of $\etammnl$}
The nonlinearity $\etammnl$ delivers the optimal guarantee: 
\[
K_{r}^*( \etammnl  ) =  \max_{(\ell_i) \in \cL_{r}} \kappa( \Delta( (\ell_i) ; \etammnl (\cdot) ) )  =  \frac{1+\sqrt{\frac{\gamma}{\gamma+1}}}{1-\sqrt{\frac{\gamma}{\gamma+1}}}.
\]
\eitem

\end{thm}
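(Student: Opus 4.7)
The strategy begins by lower-bounding the minimax value from single-spike considerations. For any nonlinearity $\eta$ and any $(\ell_i)\in\cL_r$, Lemma \ref{lem:blocks} places both $1$ (from the identity block) and $\nu_\pm(A(\ell_1,\eta(\lambda(\ell_1))))$ (from the top-spike block) among the eigenvalues of $\DelAsy$, so
\[
  \kappa(\DelAsy((\ell_i);\eta)) \;\geq\; \frac{\max\bigl(1,\nu_+(A(\ell_1,\eta(\lambda(\ell_1))))\bigr)}{\min\bigl(1,\nu_-(A(\ell_1,\eta(\lambda(\ell_1))))\bigr)} \;\geq\; \min_{\eta_1} K_1(\eta_1) \;=\; \kappa_1^*(\ell_1;\gamma),
\]
using the definition (\ref{def:Kone}) and the single-spike optimality from Lemma \ref{lem:SingleSpikeAsympOpt}. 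Taking $\sup_{\ell_1}$ (with $\ell_2,\dots,\ell_r\in(1,\ell_1)$ chosen arbitrarily) and applying Corollary \ref{cor:kappaLargeEll} gives $\inf_\eta \max_{(\ell_i)\in\cL_r} \kappa(\DelAsy((\ell_i);\eta)) \geq \lim_{\ell_1\to\infty}\kappa_1^*(\ell_1;\gamma) = \kappa_1^*(\infty;\gamma)$.

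\textbf{Step 2: Construction of $\etammnl$ and closed form.} By Lemma \ref{lem:defTD}, the top-spike extremes satisfy $\nu_+^{1,*}\nu_-^{1,*}=\eta_1^*(\lambda(\ell_1))/\ell_1$ and $\nu_+^{1,*}/\nu_-^{1,*}=\kappa_1^*(\ell_1)$. Corollary \ref{cor:kappaLargeEll} gives $\eta_1^*(\lambda(\ell_1))/\ell_1 \to 1/(1+\gamma)=(1-u)(1+u)$ and $\kappa_1^*(\ell_1)\to(1+u)/(1-u)$ as $\ell_1\to\infty$, with $u=\sqrt{\gamma/(\gamma+1)}$, whence $\nu_\pm^{1,*}\to 1\pm u = \nu_\pm^{\mmbp}$. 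Substituting this limit pointwise in formula (\ref{eq:optmultispike}) yields formula (\ref{eq:minimaxdef}), establishing both the existence of $\etammnl$ and its closed form.

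\textbf{Step 3: Key inequalities on $\nu_-$ and $\nu_+$.} For $\ell\leq 1/\nu_-^{\mmbp}$, $\etammnl(\lambda(\ell))=1$ and the off-diagonal of $A(\ell,1)$ vanishes by Lemma \ref{lem:defTD}, so $A(\ell,1)=\mathrm{diag}(1/\ell,1)$ and $\nu_-(A(\ell,1))=1/\ell\geq\nu_-^{\mmbp}$. For $\ell> 1/\nu_-^{\mmbp}$, formula (\ref{eq:minimaxdef}) is engineered as the limit of Theorem \ref{thm:OptimalShrinkage}'s no-sticking-out construction (which guarantees $\nu_-(A(\ell,\eta_m^*(\cdot;\lambda_1)))\geq\nu_-^{1,*}$ on $\ell\in[1,\ell_1]$), so that $\nu_-(A(\ell,\etammnl(\lambda(\ell))))=\nu_-^{\mmbp}$, proving (\ref{eq:asyinequality}). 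A symmetric argument controlling the upper spectral endpoint yields the companion bound $\nu_+(A(\ell,\etammnl(\lambda(\ell))))\leq\nu_+^{\mmbp}$ for all $\ell\geq 1$.

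\textbf{Step 4: Minimaxity and main obstacle.} For any $(\ell_i)\in\cL_r$, the eigenvalues of $\DelAsy((\ell_i);\etammnl)$ consist of $1$ (from the identity block) together with the pairs $\nu_\pm(A(\ell_i,\etammnl(\lambda(\ell_i))))$ for $i=1,\dots,r$. By Step 3 and $\nu_-^{\mmbp}\leq 1\leq\nu_+^{\mmbp}$, all such eigenvalues lie in $[\nu_-^{\mmbp},\nu_+^{\mmbp}]$, so $\kappa(\DelAsy((\ell_i);\etammnl))\leq\nu_+^{\mmbp}/\nu_-^{\mmbp}=(1+u)/(1-u)=\kappa_1^*(\infty;\gamma)$. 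Supremizing over $\cL_r$ gives $K_r^*(\etammnl)\leq\kappa_1^*(\infty;\gamma)$, which matches the lower bound from Step 1 and completes the proof of all three conclusions. The delicate point is establishing the companion $\nu_+$ bound uniformly in $\ell$: Theorem \ref{thm:OptimalShrinkage} articulates the no-sticking-out principle on the $\nu_-$ side, whereas minimaxity requires simultaneous control of both spectral endpoints; verifying that formula (\ref{eq:optmultispike}) controls $\nu_+$ from above by $\nu_+^{1,*}$, and checking that this control is preserved in the limit $\ell_1\to\infty$, is the main technical step.
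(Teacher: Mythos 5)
Your proposal follows essentially the same route as the paper: a single-spike lower bound $\kappa_1^*(\ell_1)\uparrow\kappa_1^*(\infty)$, the pointwise limit $\eta_m^*(\cdot;\ell_1)\to\etammnl$ obtained by sending $\nu_-^{1,*}\to\nu_-^{\mmbp}$ in (\ref{eq:optmultispike}), and a matching upper bound from uniform control of $\nu_\pm$. Step 1 is correct, and Step 2 correctly identifies the limiting value (your derivation via the determinant/ratio is a little different from the paper's direct use of (\ref{eq:LimitNuEtaSingle}), but equivalent).

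There is one mis-statement and one genuine gap. The mis-statement: you say ``Theorem \ref{thm:OptimalShrinkage} articulates the no-sticking-out principle on the $\nu_-$ side,'' but the proof of that theorem establishes \emph{both} inequalities (\ref{eq:optimalityconditions_low}) and (\ref{eq:optimalityconditions_up}), i.e.\ $\nu_+(\ell,\eta_m^*(\ell;\ell_1))\le\nu_+(\ell_1,\eta_1^*(\ell_1))$ as well as the $\nu_-$ bound, for $1\le\ell\le\ell_1$. So the $\nu_+$-side control you flag as ``the main technical step'' is already in hand; what's left is passing to the limit. The argument is not truly ``symmetric'' \emph{inside} the limit — the construction forces $\nu_-$ to \emph{equal} $\nu_-^{1,*}$ (Lemma \ref{solution}) while $\nu_+$ is only shown $\le\nu_+^{1,*}$ by contradiction — but both bounds survive $\ell_1\to\infty$. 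The paper closes the $\nu_+$ gap more directly via Lemma \ref{lem:increasingwrtELL}, which gives the clean uniform bound $\nu_+(A(\ell,\eta_1^*(\ell)))\le 1+\sqrt{\gamma/(\gamma+1)}$ for all $\ell$; combined with the no-sticking-out inequality this gives $\sup_\ell\nu_+(A(\ell,\etammnl))\le\nu_+^{\mmbp}$. You should invoke that lemma rather than gesturing at symmetry. The gap: you never check that the formula (\ref{eq:minimaxdef}) is well-defined, i.e.\ that the denominator $a(\ell)-1/(\ell\nu_-^{\mmbp})$ does not vanish on $\ell>1/\nu_-^{\mmbp}$. The paper devotes the opening portion of its proof to exactly this (equation (\ref{eq:denominator_bound})), and it is needed both to make sense of $\etammnl$ and to justify the pointwise limit via continuity of $\nu\mapsto\eta(\ell;\nu)$. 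Without it, Step 2's ``substituting this limit pointwise'' has not actually been justified.
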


\section{Performance Comparisons}
\label{sec:PerformanceComparisons}

We now consider scalar nonlinearities that approximate the optimal joint nonlinearity, $\eta_m^*$. 
We consider the minimax nonlinearity $\etammnl(\cdot ; \gamma)$, 
just introduced in the last section, alongside the following:
\bitem
\item {\it Generalized Soft Thresholding $\etammst$.}  
Apply the (generalized) soft thresholding nonlinearity specifically tuned with
 threshold  at the bulk edge $\lambda_0 = \lambda_+(\gamma)$ 
and slope $b = b(\gamma) = \frac{1}{1+\gamma}$. 
\beq \label{eq:defGST}
 \etammst(\lambda) \equiv  1 + \frac{1}{1+\gamma} (\lambda - \lambda_+(\gamma))_+   .
\eeq
Like the optimal nonlinearity, $\etammst$  has  slope $1/(1+\gamma)$ for large values of $\lambda$,
and a dead zone; 
in this case the dead zone ends at the bulk edge $\lambda_+(\gamma)$.
We use the label {\sc mmst}, because tuning the nonlinearity in this specific way makes it minimax.
\item {\it Optimal Nonlinearity for Precision Estimation.} 
\cite{donoho2013optimalShrinkage} found the optimal nonlinearity for estimation 
of the so-called precision matrix $\Sigma^{-1}$ with respect to Frobenius norm loss, under the spiked model,
and gave explicit expressions; using the notations of this paper we can write:
\begin{equation}
\label{eq:optPrecision}
\etapnl(\lambda) = \left\{ \begin{array}{ll}  
\frac{\ell }{\ell s^{2}+c^{2}} & \ell > \ell_+(\gamma),\\
                                                1 & \ell \leq \ell_+(\gamma) .
                                                \end{array}   \right.
\end{equation}
Compare this formula  with the alternate formula (\ref{eq:altSingleSpikeShrink})  for the 
single-spike-optimal shrinker in Lemma \ref{lem:spikeFormulaEquivalent}.
One sees immediately that $\eta_{NPL}$ has the same asymptotic slope \--  $\frac{1}{1+\gamma}$ \--
and that there is again a dead zone; this time extending out only to the bulk edge $\lambda_+(\gamma)$.
\eitem

\begin{table}
\footnotesize
\begin{tabular}{| l | l | l | c|}
\hline
Rule & Notation & Source & Optimality Properties\\
\hline
Optimal 1-spike Nonlinearity    & $\eta_1^*$ & Theorem \ref{thm:spike_1} & Individually Optimal, single-spike case  \\
Optimal Nonlinearity                  & $\eta_m^*$ & Theorem \ref{thm:OptimalShrinkage} & Individually Optimal and  Globally Minimax  \\
Minimax Nonlinearity                & $\etammnl$    & Theorem \ref{thm:minimaxEta} & Globally minimax\\
Generalized Soft Threshold     & $\etammst $ & (\ref{eq:defGST})& Globally minimax \\
Precision Nonlinearity               & $\etapnl$ &  (\ref{eq:optPrecision}) & Optimal Frobenius loss for estimation of $\Sigma^{-1}$ \\
\hline
\end{tabular}
\end{table}

The performance of these rules depends 
on the number of underlying spikes, the spike amplitudes, and
$\gamma$. There is an endless variety of possible combinations 
we could study for performance evaluations.

For performance comparisons, we consider both $\kappa$-loss and RSRG. Our results
above yield a formula for the optimal RSRG.

\begin{cor}
{\bf Optimal Relative Sharpe Ratio Guarantees.}
Again assuming [SPIKE] and [PGA], define:
\[
\RSRG^*(\ell_1,\dots,\ell_r) = \lim_{n \goto \infty} \inf_{\eta} \RSRG(\hat{\Sigma}(\eta), \Sigma).
\]
This is the smallest RSRG achievable 
asymptotically by an orthogonally invariant procedure.
Then, 
\[
\RSRG^*(\ell_1,\dots,\ell_r)  \goto {\sqrt{1 + \gamma}}, \qquad{ as  } \; \ell_1 \goto \infty, 
\]
where either $r=1$, or where, in the limit process, we keep $\ell_2,\dots,\ell_r$ fixed.
\end{cor}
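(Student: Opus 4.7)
The plan is to combine three already-established ingredients: (i) the isomorphism of Lemma \ref{lem:ReduceConditionNumber} between $\RSRG$ and $\kappa$-loss, (ii) the asymptotic optimality results for $\kappa$-loss from Theorems \ref{thm:asymptotic_optimal_loss}/\ref{thm:OptimalShrinkage}, and (iii) the closed form for the large-$\ell$ limit of $\kappa_1^*$ in Corollary \ref{cor:kappaLargeEll}. The argument is essentially a one-line algebraic reduction once these are in place.

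First I would use Lemma \ref{lem:ReduceConditionNumber} to write, for any orthogonally equivariant $\hat{\Sigma}(\eta)$,
\[
   \RSRG(\hat{\Sigma}(\eta),\Sigma) = g\bigl(\kappa(\Delta(\eta))\bigr), \qquad g(\kappa) := \tfrac12\sqrt{\kappa + \kappa^{-1} + 2}.
\]
Since $g$ is strictly increasing on $[1,\infty)$, the map $\eta \mapsto \RSRG$ is minimized exactly when $\eta$ minimizes $\kappa(\Delta(\eta))$. Applying Theorem \ref{thm:asymptotic_optimal_loss} in the single-spike case, and Theorem \ref{thm:OptimalShrinkage} (specifically the optimal asymptotic loss \eqref{eq:OptAsyLoss}) in the multi-spike case, gives, almost surely,
\[
   \lim_{n\to\infty} \inf_{\eta} \RSRG(\hat{\Sigma}(\eta),\Sigma) \;=\; g\bigl(\kappa_1^*(\ell_1;\gamma)\bigr),
\]
for each fixed configuration $(\ell_1,\dots,\ell_r)$, by continuity of $g$ away from $\kappa = 0$.

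Second I would send $\ell_1 \to \infty$. By Corollary \ref{cor:kappaLargeEll},
\[
   \kappa_1^*(\ell_1;\gamma) \longrightarrow \kappa_\infty := \frac{1+\alpha}{1-\alpha}, \qquad \alpha := \sqrt{\tfrac{\gamma}{\gamma+1}},
\]
and in the multi-spike case this limit does not depend on the values of the fixed subsidiary spikes $\ell_2,\dots,\ell_r$, since \eqref{eq:OptAsyLoss} matches the single-spike value exactly. All that remains is a short algebraic simplification: noting that $\kappa + \kappa^{-1} + 2 = (\sqrt{\kappa} + 1/\sqrt{\kappa})^2$, I obtain
\[
  g(\kappa_\infty) = \tfrac12\bigl(\sqrt{\kappa_\infty} + 1/\sqrt{\kappa_\infty}\bigr)
     = \frac{1}{\sqrt{1-\alpha^2}} = \sqrt{1+\gamma},
\]
using $1 - \alpha^2 = 1/(1+\gamma)$. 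This yields the stated limit $\RSRG^*(\ell_1,\dots,\ell_r) \to \sqrt{1+\gamma}$.

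There is essentially no hard step. The only subtlety is the order of limits: the corollary asks for $\lim_n \inf_\eta \RSRG$ followed by $\ell_1 \to \infty$, not a joint limit. This is handled cleanly because the monotonicity of $g$ turns the optimization into a $\kappa$-loss optimization, for which the inner limit is exactly the content of Theorems \ref{thm:asymptotic_optimal_loss} and \ref{thm:OptimalShrinkage} at each fixed $(\ell_i)$; then taking $\ell_1\to\infty$ is a deterministic calculation using Corollary \ref{cor:kappaLargeEll}. The only thing to verify carefully is that $\kappa_1^*$ is large enough (i.e.\ $>1$) so that $g$ is continuous and well-defined at the limit, which follows because $\kappa_\infty > 1$ for every $\gamma > 0$.
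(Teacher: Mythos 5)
Your proposal is correct, and it supplies exactly what the paper leaves implicit: the paper's appendix contains no proof for this corollary, treating it as an immediate consequence of Lemma \ref{lem:ReduceConditionNumber}, Theorem \ref{thm:asymptotic_optimal_loss} (with Theorem \ref{thm:OptimalShrinkage}, eq.\ \eqref{eq:OptAsyLoss}, supplying the $\ell_1$-only dependence in the multi-spike case), and Corollary \ref{cor:kappaLargeEll}. Your chain --- pull the infimum inside the monotone map $g(\kappa)=\tfrac12\sqrt{\kappa+\kappa^{-1}+2}$, pass the $n\to\infty$ limit through $g$ by continuity, then send $\ell_1\to\infty$ via $\kappa_1^*(\ell_1)\to(1+\alpha)/(1-\alpha)$ with $\alpha=\sqrt{\gamma/(\gamma+1)}$ and simplify $g$ using $\kappa+\kappa^{-1}+2=(\sqrt{\kappa}+1/\sqrt{\kappa})^2$ and $1-\alpha^2=1/(1+\gamma)$ --- is exactly the intended argument, and the algebra checks out.
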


The {\it relative regret} is the performance deficit relative to best achievable performance, expressed in percentage terms:
\[
    \kappa \mbox{-loss Reg}[\eta,(\ell_i)] = 100 \cdot  \left ( 1- \frac{\kappa[\eta^*,(\ell_i)]}{\kappa[\eta,(\ell_i)]} \right ) .
\]
\[
    \mbox{\RSRG Reg}[\eta,(\ell_i)] = 100 \cdot  \left ( 1- \frac{\RSRG[\eta^*,(\ell_i)]}{\RSRG[\eta,(\ell_i)]} \right ) .
\]
(Dependence of both sides on $\gamma$ is suppressed).

This measure depends -- possibly sensitively -- on the spike configuration $(\ell_i)$,
and its interaction with $\eta$. It varies from zero up to some maximum value,
which depends on $\gamma$ only:
\[
      \mbox{MaxReg}(\eta,\gamma) \equiv \max_r \max_{(\ell_i) \in \cL_r}   \mbox{Reg}[\eta,(\ell_i)] .
\]

Figure \ref{fig:MaximumRegretVGammaSoftThresh} 
shows  maximal regret for three rules  which 
are globally minimax, but not individually optimal.
Perhaps surprisingly,
these three rules, though simpler than the optimal nonlinearity,
never suffer  much regret, at least for  $\gamma \leq 2$.
In particular, 
\bitem
\item {\it Minimax soft thresholding }  $\etammst$
 is always within a few percent of optimal
 over the range $0 \leq \gamma \leq 2$;
\item {  The {\it single-spike optimal nonlinearity} is   {\it within one percent} of optimal -- across all $\gamma$; while of course we know it is
optimal for $\gamma > \gamma_m^* =\gammamval$.}
 \item The nonlinearity $\etapnl$, optimal for estimation of precision matrix $\Sigma^{-1}$
in Frobenius loss  under the spiked model, is always within several percent
of  RSRG-regret optimal over the full range $0 \leq \gamma \leq 2$.
\item Finally, the {\it minimax nonlinearity}  $\etammnl \equiv \eta_m^*(\cdot; \infty)$
 is always within a few percent of  optimal, over the same range; while over the important range $\gamma < 1/2$,
 it has even lower maximal RSRG-regret than the
single-spike nonlinearity.
 \eitem
 Figure \ref{fig:MaximumKappaLossVGammaSoftThresh} shows that the
 maximal regrets in $\kappa$-loss are slightly larger, yet still
 in the range of several percent.
 
 
\begin{figure}
\centering
\includegraphics[width=1\textwidth]{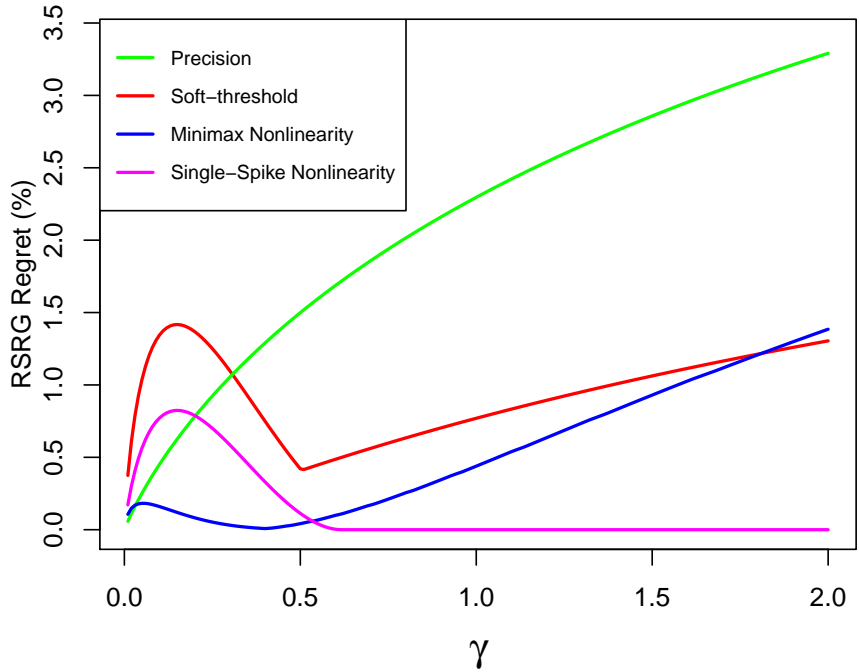}
\caption{Worst-case RSRG regrets for MMST, Precision,  Minimax and Single-spike nonlinearities.
These rules are within a few percent of optimal for $\gamma \leq 2$. The single-spike nonlinearity is within one percent of optimal for all $\gamma$ and is precisely optimal for $\gamma > \gamma_m^* =\gammamval $. The minimax nonlinearity has still better worst-case regret than the single-spike nonlinearity over the important range $0 \leq \gamma < 1/2$.}
\label{fig:MaximumRegretVGammaSoftThresh}
\end{figure}


\begin{figure}
\centering
\includegraphics[width=1\textwidth]{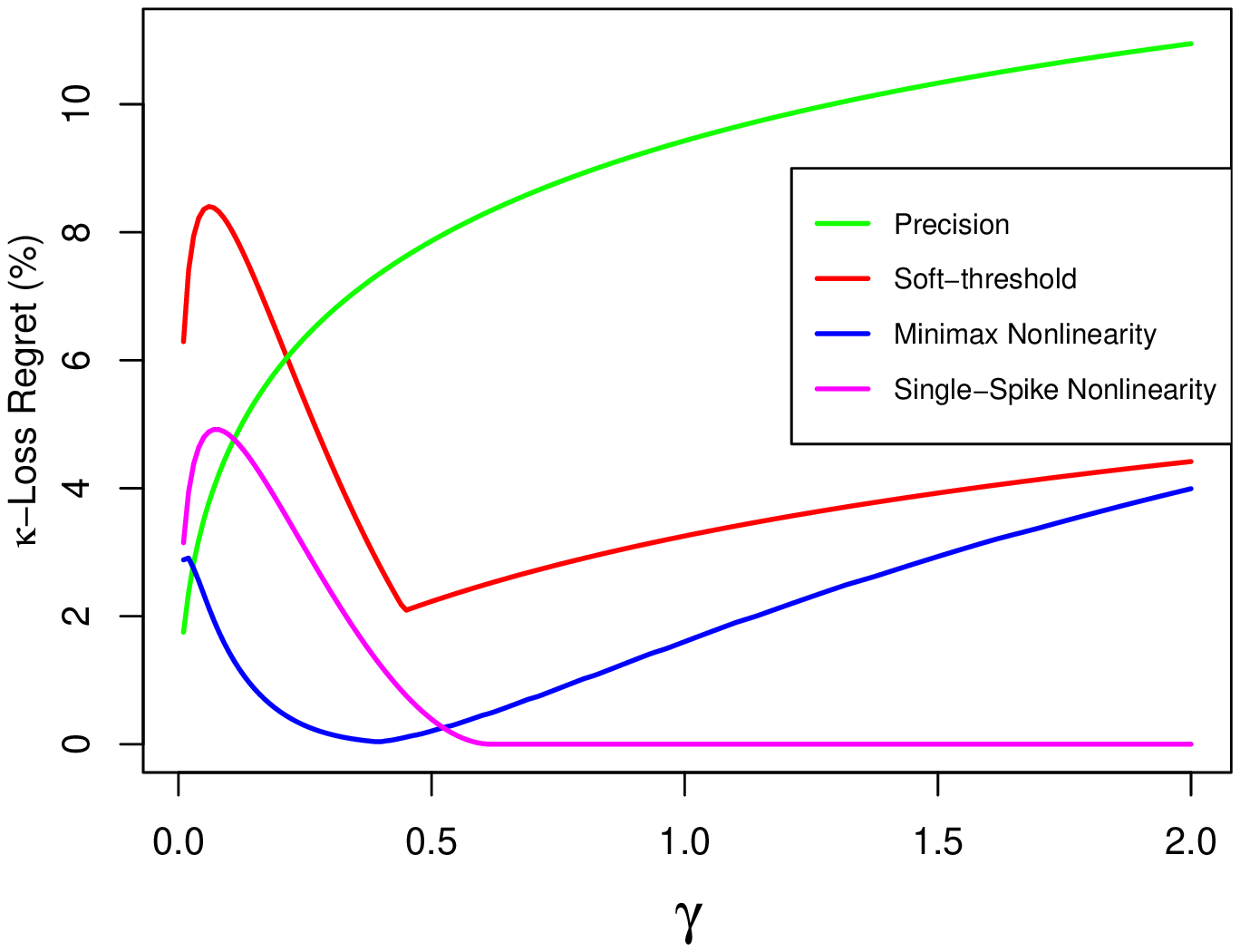}
\caption{Worst-case $\kappa-$loss regrets for MMST, Precision,  Minimax, and single-spike nonlinearities.
These rules are always within several percent of 
optimal for $\gamma \leq 2$.The single-spike nonlinearity is within five percent of optimal for all $\gamma$ and is precisely optimal for $\gamma > \gamma_m^* = \gammamval$. The minimax nonlinearity has still better worst-case regret than the single-spike nonlinearity over the important range $0 \leq \gamma < 1/2$.}
\label{fig:MaximumKappaLossVGammaSoftThresh}
\end{figure}
\section{Least-Favorable \texorpdfstring{$\mu$}{mean vector}}
\label{ssec:LFUF}

Recall the interpretation of $\kappa$-loss given in the introduction for the tasks of
MUCE and MTDA. In each task the covariance matrix estimator $\hat{\Sigma}$ is
used along with many possible vectors $\mu$ to solve 
a range of individual allocation/discrimination problems.
There is (at least one) least-favorable  $\mu$ vector underlying the RSRG approach, i.e.
a forecast vector $\mu$ at which the RSRG guarantees are still effective, but only just.
%
%
\begin{thm} \label{thm:worst-case-forecast}
In either the single-spike case $r=1$ or the multiple-spike case $r>1$,
let $U_{1,n}$ denote the top eigenvector of $\Sigma$, $V_{1,n}$
the top eigenvector of $\bS$, and $W_{i,n}$ the result of Gram-Schmidt
orthogonalization of $(U_{1,n},V_{1,n})$. 
There exist constants $\alpha_j = \alpha_j((\ell_i),\gamma)$, $j =1,2$ such that 
\[
\mu_n \propto \alpha_1 U_{1,n} 
      + \alpha_2 W_{2,n}
\]
asymptotically attains the worst-RSRG.
\end{thm}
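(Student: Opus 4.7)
The plan is to reduce the worst-case Sharpe-ratio problem to the task of identifying the extreme eigenvectors of the pivot $\Delta$, then transport the answer back through $\Sigma^{1/2}$ into the original coordinate system.

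\textbf{Step 1: Whitening and reduction to extremal eigenvectors of $\Delta$.} Set $\nu_n = \Sigma^{-1/2}\mu_n$ and use the identities $\mu_n'\Sigma^{-1}\mu_n = \nu_n'\nu_n$, $\mu_n'\hat{\Sigma}^{-1}\mu_n = \nu_n'\Delta^{-1}\nu_n$, and $\mu_n'\hat{\Sigma}^{-1}\Sigma\hat{\Sigma}^{-1}\mu_n = \nu_n'\Delta^{-2}\nu_n$. Direct manipulation then gives
\[
\bigg(\frac{\SR(\Sigma;\mu_n,\Sigma)}{\SR(\hat{\Sigma};\mu_n,\Sigma)}\bigg)^{2} = \frac{(\nu_n'\nu_n)(\nu_n'\Delta^{-2}\nu_n)}{(\nu_n'\Delta^{-1}\nu_n)^{2}}.
\]
Diagonalizing $\Delta$ and writing $p_i \propto (\nu_n)_i^{2}$ in the eigenbasis, the right-hand side equals $E_p[\delta^{-2}]/E_p[\delta^{-1}]^{2} = 1+\mathrm{Var}_p(\delta^{-1})/E_p[\delta^{-1}]^{2}$. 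Maximizing variance with a bounded support forces a two-point distribution on the extreme eigenvalues of $\Delta$, and explicit calculus yields optimal weight $p^{\ast} = \delta_{\max}/(\delta_{\max}+\delta_{\min})$ on the top eigenvector and $1-p^{\ast}$ on the bottom. Thus the optimizing $\nu_n$ is a specific mixture of the max- and min-eigenvectors of $\Delta$.

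\textbf{Step 2: Extreme eigenvectors live in the top $W$-block.} By Lemma \ref{lem:blocks}, the empirical pivot converges in Frobenius norm to $\DelAsy = A(\ell_1,\eta_1^{*})\oplus\cdots\oplus A(\ell_r,\eta_r^{*}) \oplus I_{p-2r}$. The no-sticking-out property established inside Theorem \ref{thm:OptimalShrinkage} guarantees that, for the optimal shrinker $\eta_m^{*}$ with $\ell_1 > \ell_1^{+}(\gamma)$,
\[
\nu_{+}(A(\ell_1,\eta_1^{*})) = \delta_{\max}(\DelAsy), \qquad \nu_{-}(A(\ell_1,\eta_1^{*})) = \delta_{\min}(\DelAsy),
\]
with strict separation from the interior eigenvalue $1$ of the identity tail and from the sub-principal blocks. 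This strict asymptotic eigengap, combined with Frobenius-norm convergence of the pivot, allows a Davis--Kahan argument to transfer eigenvector information: the extreme unit eigenvectors of $\Delta$ concentrate on the $2$-dimensional subspace $\mathrm{span}(W_{1,n},W_{2,n})$. Writing the unit eigenvectors of the $2\times 2$ matrix $A(\ell_1,\eta_1^{*})$ in the $(W_1,W_2)$-frame produces deterministic coefficients $(\beta_1,\beta_2)=\beta((\ell_i),\gamma)$ such that asymptotically $\nu_n \propto \beta_1 W_{1,n} + \beta_2 W_{2,n}$.

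\textbf{Step 3: Transport from $\nu_n$ to $\mu_n$.} Since $W_{1,n}=U_{1,n}$, we have $\Sigma^{1/2}W_{1,n} = \sqrt{\ell_1}\,U_{1,n}$. For $W_{2,n}$, the spectral formula $\Sigma^{1/2} = I + \sum_i (\sqrt{\ell_i}-1)U_{i,n}U_{i,n}'$ gives $\Sigma^{1/2}W_{2,n} = W_{2,n} + \sum_{i\geq 1}(\sqrt{\ell_i}-1)\langle U_{i,n},W_{2,n}\rangle U_{i,n}$. By construction $\langle U_{1,n},W_{2,n}\rangle = 0$, and by Theorem \ref{lem:spiked} $\langle U_{i,n},V_{1,n}\rangle \to 0$ a.s.\ for $i>1$, whence also $\langle U_{i,n},W_{2,n}\rangle \to 0$. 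Therefore $\Sigma^{1/2}W_{2,n} \to W_{2,n}$ in norm, and
\[
\mu_n = \Sigma^{1/2}\nu_n \;\propto\; \alpha_1\,U_{1,n} + \alpha_2\,W_{2,n},\qquad \alpha_1 = \sqrt{\ell_1}\,\beta_1,\;\alpha_2 = \beta_2,
\]
both deterministic in $((\ell_i),\gamma)$. Plugging this $\mu_n$ back into $\SR(\Sigma)/\SR(\hat{\Sigma})$ and invoking Lemma \ref{lem:ReduceConditionNumber} and Lemma \ref{lem:AsympMShrink} confirms it asymptotically attains the worst-case ratio $\frac{1}{2}\sqrt{\kappa_1^{*}(\ell_1;\gamma)+1/\kappa_1^{*}(\ell_1;\gamma)+2}$.

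\textbf{Main obstacle.} The Frobenius convergence in Lemma \ref{lem:blocks} controls eigenvalues cleanly but only yields eigenvector convergence when there is an asymptotic eigengap. The delicate part of the proof is verifying, using the no-sticking-out conditions inside the proof of Theorem \ref{thm:OptimalShrinkage}, that the extreme eigenvalues of the top $A(\ell_1,\eta_1^{*})$ block are strictly separated from both $1$ and from the eigenvalues of the lower blocks in the limit, so that Davis--Kahan delivers convergence of the extremal eigenvectors to the correct $2$-dimensional subspace. Everything else is algebra and the propagation through $\Sigma^{1/2}$; the boundary case $\ell_1 = \ell_1^{+}(\gamma)$ (where the optimal shrinker collapses everything to the identity and any $\mu$ is trivially least-favorable) is handled separately.
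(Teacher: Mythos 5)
Your Step 1 reduction is correct and parallels the paper's argument, just with a different whitening: you set $\nu = \Sigma^{-1/2}\mu$ and work with the pivot $\Delta = \Sigma^{-1/2}\hat\Sigma\Sigma^{-1/2}$, whereas the paper sets $x = \hat\Sigma^{-1/2}\mu$ and works with $\tilde\Delta = \hat\Sigma^{-1/2}\Sigma\hat\Sigma^{-1/2}$ (introducing Lemma~\ref{lem:blockstilde} for its $W$-basis block structure). Both whitening choices lead to the same Kantorovich-type optimization, and your explicit optimal weight $p^{*}=\delta_{\max}/(\delta_{\max}+\delta_{\min})$ checks out. Step~3, the transport through $\Sigma^{1/2}$ using $\langle U_{1,n},W_{2,n}\rangle=0$ and $\langle U_{i,n},W_{2,n}\rangle\to 0$ for $i>1$, is also fine.

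The genuine gap is in Step 2, and it is exactly where you flag the ``main obstacle'': the eigengap you need for Davis--Kahan does not hold. By Lemma~\ref{solution} and part~3 of Theorem~\ref{thm:OptimalShrinkage}, the optimal shrinker $\eta_m^{*}$ is constructed precisely so that $\nu_{-}(A(\ell,\eta_m^{*}(\ell)))=\nu_{-}^{1,*}$ for \emph{every} $\ell\in(\ell_{\eta_m^{*}}^{+},\ell_1]$. Thus in a multispike configuration where two or more spikes escape the dead zone, the bottom eigenvalue $\nu_{-}^{1,*}$ of $\Delta^a$ has multiplicity at least two, sitting in several distinct $2\times2$ blocks. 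There is therefore no asymptotic separation of the bottom eigenvalue from those of the lower blocks, Davis--Kahan gives no control, and the empirical bottom eigenvector of $\Delta_n$ need not concentrate on $\mathrm{span}(W_{1,n},W_{2,n})$ at all — it can wander among the degenerate blocks. So the assertion ``the extreme unit eigenvectors of $\Delta$ concentrate on $\mathrm{span}(W_{1,n},W_{2,n})$'' is false as stated.

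The fix, which is what the paper does, is to drop the eigenvector-convergence claim entirely. Since the theorem only asserts that a specific sequence $\mu_n$ \emph{asymptotically attains} the worst RSRG, it suffices to \emph{construct} a candidate rather than locate the exact finite-$n$ maximizer: take $\tilde u_{\pm}$ to be the eigenvectors of the $2\times2$ block for $\ell_1$, set $x_n^{a}\propto \bW_{n,2}(\tilde u_{+}+\tilde u_{-})$, and use the Frobenius convergence of the pivot in the $W$-basis to show directly that the quadratic forms $x_n^{a\prime}\tilde\Delta_n x_n^{a}$ and $x_n^{a\prime}\tilde\Delta_n^{-1}x_n^{a}$ converge to the corresponding asymptotic values. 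No eigengap is required. If you want to stay in your $\nu$-coordinates, you would likewise need to justify convergence of $(\nu_n^{a})'\Delta_n^{-1}\nu_n^{a}$ and $(\nu_n^{a})'\Delta_n^{-2}\nu_n^{a}$, which Lemma~\ref{lem:blocks} does not hand you directly for the inverse powers; this is one reason the paper proves the analogous structural Lemma~\ref{lem:blockstilde} for $\tilde\Delta_n$.
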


This worst-case forecast lies entirely in the span of $U_{1,n}$ and $V_{1,n}$.
In the empirical finance setting, these vectors are the `model market direction' and the `recent market direction' respectively. The theorem tells us that portfolios which hold an appropriate compromise between recent and model market directions are the ones that the covariance estimator is most `concerned about'. Namely, their future Sharpe ratios
are the ones yielding the largest disappointment, 
relative to their recent Sharpe ratios. 
The optimal shrinkage described here is 
designed to optimally limit such disappointment.
In modern trading, massive amounts of capital are being invested
along the recent market direction; such investments are made by momentum or returns-chasing investors.
{\it Such behavior was shown here to expose investors to future Sharpe ratio disappointments.}
The optimal shrinker proposed here {\it in effect} anticipates the need to protect such investors.

\section{Conclusion}
The optimal equivariant shrinker for relative condition number loss
has been derived under the standard spiked model.
It significantly shrinks  even very substantial eigenvalues  \--
by a factor of roughly $\frac{1}{1 + \gamma}$.
It also imposes a surprisingly large dead zone, where all eigenvalues are collapsed to 1.
It performs very much like generalized soft thresholding $\etammst$
having  deadzone extending to the bulk edge and slope $\frac{1}{1 + \gamma}$ above it. It also performs
very much like the optimal rule  $\etapnl$ derived in \cite{donoho2013optimalShrinkage}
for estimating the precision matrix $\Sigma^{-1}$
under Frobenius loss: $\Vert \Sigma^{-1} - \hat{\Sigma}^{-1}\Vert_F$.
Both $\etapnl$ and $\etammst$ have  
small regret compared to the optimal rule.

The shrinker is at odds with certain `sophisticated' conventional
practices regarding eigenvalues. 
A vast literature discusses the choice of the number of factors in a factor model; many practitioners
propose to use {\it any} significant factors, 
even weak \-- barely detectable \-- ones \cite{kritchman2009non}\cite{passemier2012determining}\cite{owen2016bi}. 
This paper's shrinker would in many cases not include factors in a model {\it even when} they
are highly statistically significant.
Indeed, from the viewpoint of this paper, eigenestimates of certain 
factors can be very noisy -- {\it too noisy to be helpful} -- 
despite statistical significance. 

The paper aligns well with other, `naive' conventional practices.
In high-dimensional discriminant analysis the `naive Bayes' 
practitioner tradition would suggest that naive diagonal covariance estimates can be better than 
more seemingly accurate empirical covariance estimates, 
{\it even when there is undeniable correlation} \cite{hand2001idiot},\cite{bickel2004some}.
In mean-variance portfolio allocation, some practitioners advocate for
diagonal covariance matrices \cite{stivers2016mitigating} even when correlations exist.
This paper's conclusions lend additional theoretical support to this common practice.

\appendix
\section{Proofs Deferred from Main Text}
\label{sec:Proofs}

\subsection{Proofs for Section \ref{sec:intro}}
\subsubsection*{Proof of Theorem \ref{thm:asymptotic_optimal_loss}}

\begin{proof}

The trivial constant estimator 
$\hat{\Sigma} \equiv \hat{\Sigma}(S) =I_{p \times p}$ $\forall S$ 
is orthogonally equivariant.
It has loss $ \kappa(\Delta(\Sigma, I_{p \times p})) = \kappa(\Sigma)$. 
In the spiked model $\Sigma \gtrsim I$, so is always positive definite, so $\kappa(\Sigma) < \infty$.
Hence there are OE procedures with finite loss.

Consider some such OE procedure $\hat{\Sigma}$.
The condition number $\kappa(\Delta(\Sigma, \hat{\Sigma}))= \infty $ 
in cases where $\hat{\Sigma}$
has a null space. As $\hat{\Sigma}$
has finite loss, it must also be positive definite.

OE estimators have a diagonal representation, 
exposed in Section \ref{sec:OrthoInvar}, Lemma \ref{lem:diagrep}.
In this representation,  a positive definite matrix 
will have strictly positive entries: $\eta_i > 0$, $1 \leq i \leq p$. Hence
$Ave_{i>r} \eta_i > 0$,
and the rescaled procedure 
$ \tilde{\Sigma}^{(r)} = \left( Ave_{i>r} \eta_i \right)^{-1} \hat{\Sigma}$ is well-defined.
Since $\kappa-$loss is scale invariant, 
the loss of this rescaled estimator is the same as that of $\hat{\Sigma}$.
Note that such a rescaled procedure $\tilde{\Sigma}$ is also orthogonally equivariant.

Let $OEU^{(r)}_{n,p_n}$ denote the collection of orthogonally-equivariant procedures
$\tilde{\Sigma}^{(r)}$ whose diagonal representations
obey the constraint $\tilde{\eta} \in \EtaRNP$,
i.e. obey ${Ave}_{i>r} \tilde{\eta}^{(r)}_i = 1$. The rescaling argument of the
last few paragraphs shows that
the range of losses achievable by procedures  $\hat{\Sigma} \in OE$ is the same as
those achievable by procedures $\tilde{\Sigma} \in OEU$. In particular:
\begin{equation}\label{eq:normalization}
\inf_{OE_{n,p_n}} \kappa(\Delta(\Sigma,\hat{\Sigma}_{n,p_n})) = \inf_{OEU^{(r)}_{n,p_n}} \kappa(\Delta(\Sigma,\tilde{\Sigma}_{n,p_n})).
\end{equation}
Section \ref{sec:StudyAsyPiv} studies the optimization problem on the right-hand side of \eqref{eq:normalization}. It defines $val({K}^e((\ell_i);n,p_n))$ to be the value of this optimization problem (which is a random quantity, as $\tilde{\Sigma}_{n,p_n}$ depends on $S$). Theorem \ref{lem:AsyShrinkOpt} shows that $val({K}^e((\ell_i);n,p_n))$ has an almost sure limit as $p_n/n \goto \gamma$. 
This almost sure limit is equal to the value provided by Theorem 
\ref{lem:AsyOptProb}, which is the common value of the (deterministic) problem
$K^a((\ell_i); n, p_n,\gamma)$
for all large enough $n$ and $p_n > 2r$. 
By Theorem \ref{thm:OptimalShrinkage}, this value is equal to
$ \kappa^*(\ell_1; \gamma)$. 

Combining the above yields this almost-sure and in-probability limit:
\begin{eqnarray*}
\kappa^*(\ell_1;\gamma) &=& \liminf_{n \goto \infty} \inf_{OE_{n,p_n}} \kappa(\Delta(\Sigma,\hat{\Sigma})). 
\end{eqnarray*}

\end{proof}
\subsubsection*{Proof of Theorem \ref{thm:asymptotic_optimal_nonlinearity}}


\begin{proof}
The explicit form of the optimal nonlinearity 
is given in Theorem \ref{thm:OptimalShrinkage},
initially tuned by the (unobservable) underlying true
eigenvalue $\ell_1$.
For the specific empirical tuning 
mentioned in the statement of this Theorem,
Corollary \ref{cor:AsympOracle} and Lemma \ref{lem:AsympMShrink} prove 
\begin{equation*}
\lim _{n \rightarrow \infty} L(\hat{\Sigma}(\eta^*), \Sigma) =_{a.s} \kappa_1^*(\ell_1; \gamma).
\end{equation*}
\end{proof}

\subsection{Proofs for Section \ref{sec:Basic Tools}}
\subsubsection*{Proof of Lemma \ref{lem:ReduceConditionNumber}}
\begin{proof}
Suppose $X_t \sim (\mu, \Sigma)$ be the vector of returns at period $t$. Let $\HS$ be the estimated covariance of the returns. 
For a portfolio with holdings $h\in\R^{p}$, 
we define the Sharpe ratio $\SR(h) = \SR(h; \mu, \Sigma)$ by\footnote{Traditionally the Sharpe ratio 
involved the notion of risk-free rate which was typically positive.
In recent times, the risk-free rate has often been close to zero, and here
we simply take the risk-free-rate as zero.}
\[
\SR(h)  = \frac{E h'X_t}{SD(h'X_t)} = \frac{h'\mu}{\sqrt{h' \Sigma h}} .
\]
Given $\mu$ and $\Sigma$, in problem (\ref{MRK_Prob}) the maximum
SR is achieved by any portfolio proportional to
\[
h^*= \Sigma^{-1}\mu .
\]
The Sharpe ratio achieved by an optimal portfolio is 
\[
\SR^*(\mu, \Sigma) \equiv \sqrt{\mu' \Sigma^{-1}\mu} ,
\]
A user with forecast $\mu$ who
allocates a portfolio using the estimated covariance matrix
$\HS$ {\it as if} it were the true covariance $\Sigma$
will obtain the portfolio 
\[
\hat{h}=\frac{1}{\mu'\HS^{-1}\mu} \cdot \HS^{-1} \mu,
\]
achieving the Sharpe ratio
\[
\SR(\HS;\mu, \Sigma)=\frac{\mu' \HS ^{-1}\mu}{\sqrt{\mu'\HS^{-1}\Sigma{\HS}^{-1}\mu}}.
\]
We have 
\begin{eqnarray}\label{eq:raw_form}
\frac{\SR({\HS};\mu, \Sigma)}{\SR(\Sigma;\mu, \Sigma)} &=& \frac{\mu' {\HS}^{-1}\mu}{\sqrt{\mu' {\HS}^{-1} \Sigma {\HS}^{-1}\mu}\sqrt{\mu' \Sigma^{-1}\mu}} \nonumber \\
&=& \frac{\Vert x\Vert_{2}^{2}}{\sqrt{x'\hat{\Sigma}^{-\frac{1}{2}}\Sigma\hat{\Sigma}^{-\frac{1}{2}}x}\sqrt{x'\hat{\Sigma}^{\frac{1}{2}}\Sigma^{-1}\hat{\Sigma}^{\frac{1}{2}}x}},
\end{eqnarray}
where  $x \equiv \hat{\Sigma}^{-\frac{1}{2}}\mu$.
WLOG, assume $\Vert x\Vert=1$. We can
use the matrix version of Kantorovich's inequality \cite{marshall1990matrix} to bound the denominator. 
For a positive semi-definite matrix $A$, and a unit vector $x$, $\|x\|_2=1$,
\[
(x'Ax)(x'A^{-1}x)\leq\frac{1}{4}(\kappa(A)+\frac{1}{\kappa(A)}+2), 
\]
where $\kappa(A)$ denotes the condition number of $A$. In particular,
if  $x$ is in
the span of the most dominant and the least dominant eigenvectors
of $A$ and $x'Ax=\frac{\lambda_{1}(A)+\lambda_{min}(A)}{2}$, the expression holds with equality. 
Applying this to our
problem, and noting that for two full-rank conformable square matrices $A$ and $B$, $\kappa(AB) = \kappa(BA)$ and $\kappa(A) = \kappa(A^{-1})$,
we get
\begin{eqnarray}
\frac{\SR(\Sigma;\mu, \Sigma)}{\SR(\HS;\mu, \Sigma)}  &\leq& \frac{1}{2}\sqrt{\kappa(\hat{\Sigma}^{-\frac{1}{2}}\Sigma\hat{\Sigma}^{-\frac{1}{2}})+\frac{1}{\kappa(\hat{\Sigma}^{-\frac{1}{2}}\Sigma\hat{\Sigma}^{-\frac{1}{2}})}+2} \nonumber \\ 
& = & \frac{1}{2}\sqrt{\kappa(\Sigma^{-\frac{1}{2}}\hat{\Sigma}\Sigma^{-\frac{1}{2}})+\frac{1}{\kappa(\Sigma^{-\frac{1}{2}}\hat{\Sigma}\Sigma^{-\frac{1}{2}})}+2} \label{eq:worst_case}
\end{eqnarray}

Since there is at least one $x$ for which the denominator of
(\ref{eq:raw_form}) is exactly
equal to (\ref{eq:worst_case}), the reciprocal of (\ref{eq:worst_case})
gives the worst case relative Sharpe Ratio, 
$\min_{\mu}\frac{\SR({\HS};\mu, \Sigma)}{\SR(\Sigma;\mu, \Sigma)}$.

\end{proof}

\subsubsection*{Proof of Lemma \ref{lem:blocks}}
\begin{proof}
\newcommand{\bUo}{{\textbf{U}_1}}
\newcommand{\bWo}{{\textbf{W}_1}}
\newcommand{\bWt}{{\textbf{W}_2}}
\newcommand{\bVo}{{\textbf{V}_1}}
\newcommand{\bVt}{{\textbf{V}_2}}
By definition, 
\begin{eqnarray}
\Delta_n & = & W'\Sigma^{-\frac{1}{2}} \hat{\Sigma}(\eta) \Sigma^{-\frac{1}{2}}W \nonumber \\
& = & W'\Sigma^{-\frac{1}{2}} WW' \hat{\Sigma}(\eta) WW'\Sigma^{-\frac{1}{2}}W. \label{eq:partitioned_delta_1}
\end{eqnarray}
We organize the proof into two parts. First, we establish the asymptotic block diagonal structure of $W'\Sigma^{-\frac{1}{2}} W$. Then, using the first part, we proceed to show the asymptotic block-diagonal structure for $\Delta_n$. 

\noindent
{\bf Part I:} Let's define 
\begin{eqnarray*}
\Upsilon &=& diag(\ell_1^{-\frac{1}{2}}, \cdots, \ell_r^{-\frac{1}{2}}) - I_{r\times r}\\
\textbf{W}_{1} =  [W_{1},\cdots, W_{2r}] ,
&& \textbf{W}_{2} =  [W_{2r+1},\cdots, W_{p_n}] \\
\bUo = [U_{1},\cdots, U_{r}] &&
\textbf{U}_2 = [U_{r+1},\cdots, U_{p_n}] \\
L_1 &=& diag(\ell_1, 1, \ell_2, 1, \dots, \ell_r, 1) .
\end{eqnarray*}

By [SPIKE], 
\begin{eqnarray}
W'\Sigma^{-\frac{1}{2}} W & = & W'(\bUo \Upsilon \bUo'+ I_{p_n\times p_n})W \nonumber \\
&=& W'\bUo \Upsilon \bUo' W + I_{p_n\times p_n} \nonumber \\
&=& [\bWo, \bWt]'\bUo \Upsilon \bUo' [\bWo, \bWt] + I_{p_n\times p_n} \nonumber \\
&=& \left[\begin{array}{cc}
\bWo' \bUo \\
\bWt' \bUo 
\end{array}\right] \Upsilon 
\left[\begin{array}{cc}
\bUo' \bWo & \bUo' \bWt
\end{array}\right] + I_{p_n\times p_n}. \label{eq:term1_block_diagonal_rep}
\end{eqnarray}
By construction, $\bWt' \bUo = 0_{(p_n - 2r)\times r}$. 
Hence, \eqref{eq:term1_block_diagonal_rep} is equal to 
\begin{eqnarray} \label{eq:term1_block_diagonal_rep2}
\left[\begin{array}{cc}
\bWo' \bUo \Upsilon \bUo' \bWo & 0_{r \times (p_n - 2r)} \\
0_{(p_n - 2r) \times r} & 0_{(p_n - 2r) \times (p_n - 2r)}
\end{array}\right] + I_{p_n\times p_n}.
\end{eqnarray}
By Theorem \eqref{lem:spiked}, 
$\bWo' \bUo \xrightarrow{a.s} \Gamma$ (say) in Frobenius norm. 
Here, $\Gamma \in \R ^{2r \times r}$ and 
\[
\Gamma_{i, j} = \left\{ \begin{array}{cc}
1 & i = 2j - 1, \; 1 \leq j \leq r \\
0 & \mbox{otherwise}
\end{array} \right..
\]
Therefore, \eqref{eq:term1_block_diagonal_rep2} can be rewritten
\begin{eqnarray*} 
\left[\begin{array}{cc}
\Gamma \Upsilon \Gamma' + \zeta_n & 0_{r \times (p_n - 2r)} \\
0_{(p_n - 2r) \times r} & 0_{(p_n - 2r) \times (p_n - 2r)}
\end{array}\right] + I_{p_n\times p_n}.
\end{eqnarray*}
where $\zeta_n$ is a symmetric $2r\times 2r$ remainder term that goes to $0_{2r\times 2r}$ almost surely as $n \rightarrow \infty$. Since 
\begin{equation*}
\Gamma \Upsilon \Gamma' = L_1^{-\frac{1}{2}} - I_{2r \times 2r},
\end{equation*}
we conclude that
\begin{equation} \label{eq:partI_results}
W'\Sigma^{-\frac{1}{2}}W = 
\left[\begin{array}{cc}
L_1^{-\frac{1}{2}} + \zeta & 0_{2r\times(p - 2r)}\\
0_{(p - 2r) \times 2r} & I_{(p_n - 2r)\times (p_n - 2r)}
\end{array}\right].
\end{equation}
\noindent
{\bf Part II: } In order to write $\Delta_n$ in block-diagonal form, we define 
\begin{eqnarray}
\bVo = [V_{1},\cdots, V_{r}], && \bVt = [V_{r+1},\cdots, V_{p_n}] \nonumber\\
E_1 &=& diag(\eta_1, \cdots, \eta_r) . \label{eq:Eonedef}
\end{eqnarray}
Applying \eqref{eq:partI_results} and the definitions above, we can write
\[
\Delta_n = \left[\begin{array}{cc}
(\Delta_n)_{1,1} & (\Delta_n)_{1,2}\\
(\Delta_n)_{2,1} & (\Delta_n)_{2\times 2}
\end{array}\right] ,
\]
where 
\begin{eqnarray*} \label{eq:partitioned_delta_2}
(\Delta_n)_{1,1} &=& L_1^{-\frac{1}{2}} \bigg( \bWo' \bVo E_1 \bVo'\bWo + \bWo'\bVt \bVt'\bWo \bigg) L_1^{-\frac{1}{2}} + o_p(1) \\
(\Delta_n)_{1,2} &=& (L_1^{-\frac{1}{2}} +\zeta)\big(\bWo' \bVo E_1 \bVo' \bWt + \bWo' \bVt \bVt'\bWt \big) \\
(\Delta_n)_{2,1} &=& (\Delta_n)_{1,2}'  \\
(\Delta_n)_{2,2} &=& \bWt' \bVo E_1 \bVo' \bWt + \bWt' \bVt  \bVt'\bWt.
\end{eqnarray*}
The terms $\Delta_{1,2}$, $\Delta_{2,1}$ and $\Delta_{2,2}$ are deterministic and simple.
\begin{eqnarray*}
(\Delta_n)_{1,2} &=& (L_1^{-\frac{1}{2}} + \zeta) \big(\bWt' \bVo E_1 \bVo' \bWt + \bWo' \bVt \bVt'\bWt \big) \hfill \quad 
\mbox{[since $\bWt$ are orthogonal to $\bVo$]} \\ 
&=& (L_1^{-\frac{1}{2}} + \zeta) \big( 0 + \bWo' \bVt \bVt'\bWt \big) \hfill \quad 
\mbox{[since $\bWt$ is in the span of $\bVt$]} \\
&=& (L_1^{-\frac{1}{2}} + \zeta)\bWo'\bWt = 0
\end{eqnarray*}
{ Perforce}  $(\Delta_n)_{2,1}=0$ also. We also have
\begin{eqnarray*}
(\Delta_n)_{2,2} &=& \bWt' \bVo E_1 \bVo' \bWt + \bWt' \bVt \bVt'\bWt \\
&=& 0 + \bWt' \bVt  \bVt'\bWt \hfill \quad 
\mbox{[since $\bWt$ is in the span of $\bVt$]}\\
&=& \bWt'\bWt = I_{p_n - 2r}
\end{eqnarray*}
Since the comparable $(2,1)$, $(1,2)$ and $(2,2)$
blocks of the asymptotic pivot are identical, we
have an isometry between Frobenius errors of the full $p_n \times p_n$ matrices 
and those in the upper left $(1,1)$ block:
\begin{equation} \label{eq:frobiso}
  \|\Delta_n - \oplus_{i=1}^r A(\ell_i,\eta_{i}; \gamma) \oplus I_{p_n - 2r}\|_F = \| (\Delta_n)_{1,1} - \oplus_{i=1}^r A(\ell_i,\eta_{i}; \gamma) \|_F .
\end{equation}

It remains to consider the upper left block $(\Delta_n)_{1,1}$. Because $VV' = \bVo \bVo + \bVt \bVt $
and $E_1 = diag(\eta_1,\dots, \eta_r)$:
\begin{eqnarray}
(\Delta_n)_{1,1} &=& L_1^{-\frac{1}{2}} \bigg( \bWo' \bVo E_1 \bVo'\bWo + \bWo'\bVt \bVt'\bWo \bigg) L_1^{-\frac{1}{2}} + o_p(1) \nonumber \\
&=&  L_1^{-\frac{1}{2}} \bigg( \bWo' \bVo (E_1 - I_{r}) \bVo'\bWo + \bWo'V  V'\bWo \bigg) L_1^{-\frac{1}{2}} + o_p(1) \nonumber \\
 &=& L_1^{-\frac{1}{2}} \bigg( \bWo' \bVo (E_1 - I_{r}) \bVo'\bWo + I_r \bigg) L_1^{-\frac{1}{2}} + o_p(1). \label{eq:delta_11_limit}
\end{eqnarray}
By Theorem \ref{lem:spiked}, we have  
\beq \label{eq:QRFrobConvergence}
    \bWo'\bVo\goto_{a.s.}  \Omega, \qquad n \goto \infty,
\eeq
say, with convergence along $2r \times r$ matrices in Frobenius norm.
Here $\Omega$ is a block-diagonal matrix:
\[
\Omega = \oplus_{i=1}^r \left[\begin{array}{c}
c(\ell_i) \\
s(\ell_i) 
\end{array}\right] . 
\] 

Using this block structure, one can see that 
\begin{eqnarray}
\bWo' \bVo (E_1 - I_{r}) \bVo'\bWo + I_r 
&\goto& \oplus_{i=1}^{2r} B(\ell_i,\eta_i) \label{eq:AtildeLim}
\end{eqnarray}
where $B(\ell,\eta)$ denotes the 2-by-2 matrix
 \[
 \left[\begin{array}{cc}
 s^2 + c^2 \eta & c \cdot s \cdot (\eta - 1) \\
 c \cdot s \cdot (\eta - 1) & c^2 + s^2 \eta
 \end{array}\right] , \qquad c = c(\ell), s = s(\ell) .
 \]
Therefore, the almost sure limit of \eqref{eq:delta_11_limit} obeys:
\begin{equation} \label{eq:LimDeltaOneOne}
 (\Delta_n)_{1,1} \goto\oplus_{i=1}^r A(\ell_i, \eta_i).
\end{equation}
Applying  (\ref{eq:LimDeltaOneOne}) to (\ref{eq:frobiso}) completes the argument.
\end{proof}

The results just given showed that,
in the special case $\eta_i=1$, $i >p$,
the pivot $\Delta^e( (\ell_i); (\eta_i), n,p)$
has an asymptotically block-diagonal representation:
\begin{equation} \label{eq:pointwise}
\Vert\Delta^a((\ell_i); \eta;n, p_n) - \Delta^e((\ell_i); \eta;n, p_n) \Vert_F \rightarrow 0 .
\end{equation}
The following  additional 
lemma strengthens this into a result uniform across bounded $\eta$. 

\begin{defn} \label{def:Wnr} By $\cWnr$ we denote the 2r+1-dimensional subspace 
spanned by the first $r$ columns of $U$, the first $r$ columns of $V$
and by one additional unit vector $w_{2r+1}$, say, which is chosen uniformly
at random from the $(p-2r-1)$ dimensional sphere 
situated in the
orthocomplement of the column span of the first $r$ columns
of $U$ and and first $r$ columns of $V$ in $R^p$.

For a matrix $M$ the symbol $M|\cWnr$ denotes the matrix $M$ restricted to
subspace $\cWnr$.
\end{defn}

{\it  Note: In earlier arguments we focused attention on a $2r$-dimensional
subspace, while below, we will have need for a $2r+1$-dimensional subspace.
The rationale is as follows. The additional dimension 
allows us in a sense to `compress' all other dimensions beyond 
those in the first $r$ columns of $U$ and $V$ into one single dimension. This allows
us to use $2r+1$-dimensional arguments below which might otherwise involve
growing dimensions. At the same time, certain bounds we develop show that
attention to this $2r+1$-dimensional subspace will be sufficient; while
a $2r$-dimensional subspace might fail to represent the eigenvalue $1$.}

\newcommand{\rnor}{ \eta^{1,r}}
\begin{lem}
\label{lem:PivotUniformConvergence}
For an $r$-vector $\eta$, let $\rnor$ denote the 
padding of $(\eta_i)_{i=1}^r$ with ones out to a $p$-vector.
Fix $\ell_0 > 1$. Let $\cL_r(\ell_0)$ denote the
collection of all spike configurations in  $[0,\ell_0]^r$. 
Almost surely and in probability:
\[
 \sup_{\eta \in \cL_r(\ell_0)} \Vert\Delta^a((\ell_i); \rnor ;n, p_n) - \Delta^e((\ell_i); \rnor ;n, p_n) \Vert_F \rightarrow 0, \quad n \goto \infty.
\]
Similarly:
\[
 \sup_{\eta \in \cL_r(\ell_0)} \Vert\Delta^a((\ell_i); \rnor;n, p_n)|\cWnr - \Delta^e((\ell_i); \rnor;n, p_n)|\cWnr \Vert_F \rightarrow 0.
\]
\end{lem}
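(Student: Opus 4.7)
The plan is to upgrade Lemma \ref{lem:blocks}'s pointwise convergence to uniform convergence via a finite-net argument, exploiting compactness of $\cL_r(\ell_0) = [0,\ell_0]^r$ together with a Lipschitz bound in $\eta$ whose constant is deterministic and independent of $n$.

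First I would establish Lipschitz continuity in $\eta$ for both pivots, uniformly in $n$. For the asymptotic pivot, each block $A(\ell,\eta)$ is affine in $\eta$; subtracting blockwise and applying the triangle inequality yields
\[
\|\Delta^a((\ell_i); \rnor) - \Delta^a((\ell_i); (\eta')^{1,r})\|_F \leq C_1(\ell_0,\gamma)\,\|\eta - \eta'\|_2.
\]
For the empirical pivot, write $\hat{\Sigma}(\rnor) = I_p + \sum_{i=1}^r (\eta_i-1)V_i V_i'$, so that
\[
  \Delta^e(\rnor) - \Delta^e((\eta')^{1,r}) = \sum_{i=1}^r (\eta_i - \eta'_i)\,W'\Sigma^{-1/2} V_i V_i' \Sigma^{-1/2} W.
\]
Each summand has Frobenius norm at most $\|\Sigma^{-1/2}\|_{op}^2 \|V_i V_i'\|_F \leq 1$, since $\Sigma \succeq I$ under [SPIKE] and $W$ is orthogonal. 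Cauchy--Schwarz then gives $\|\Delta^e(\rnor) - \Delta^e((\eta')^{1,r})\|_F \leq \sqrt{r}\,\|\eta - \eta'\|_2$ pathwise, for every realization of $V$ and $W$.

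Next, fix $\epsilon > 0$ and set $\delta = \epsilon/(2\max(C_1,\sqrt{r}))$. Choose a finite $\delta$-net $\{\eta^{(1)},\dots,\eta^{(N)}\} \subset \cL_r(\ell_0)$ of cardinality $N$ depending only on $\ell_0,\delta,r$. Since each $\eta^{(j)}$ is fixed independently of $n$, Lemma \ref{lem:blocks} gives $\|\Delta^a((\eta^{(j)})^{1,r}) - \Delta^e((\eta^{(j)})^{1,r})\|_F \to 0$ almost surely for each $j$; the intersection of finitely many probability-one events has probability one, so the maximum over $j \leq N$ tends to zero almost surely. For arbitrary $\eta \in \cL_r(\ell_0)$, select a net point $\eta^{(j)}$ with $\|\eta - \eta^{(j)}\|_2 \leq \delta$ and apply the triangle inequality together with the two Lipschitz bounds:
\[
  \|\Delta^a(\rnor) - \Delta^e(\rnor)\|_F \leq \epsilon + \max_{1 \leq j \leq N} \|\Delta^a((\eta^{(j)})^{1,r}) - \Delta^e((\eta^{(j)})^{1,r})\|_F.
\]
Taking the supremum over $\eta \in \cL_r(\ell_0)$, then $\limsup_{n}$, and finally $\epsilon \downarrow 0$ along a countable sequence yields the first conclusion; convergence in probability follows the same way from the in-probability version of Lemma \ref{lem:blocks}. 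The second conclusion is immediate: restriction to $\cWnr$ is an orthogonal projection action, so $\|M|\cWnr\|_F \leq \|M\|_F$, and uniform Frobenius convergence on $\R^p$ implies uniform Frobenius convergence on $\cWnr$.

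The main obstacle is securing $n$-uniformity of the Lipschitz constant for $\Delta^e$. This hinges on the [SPIKE] assumption $\Sigma \succeq I$, which makes the map $M \mapsto W'\Sigma^{-1/2} M \Sigma^{-1/2} W$ a Frobenius-norm contraction, combined with the affine structure of $\eta \mapsto \hat{\Sigma}(\rnor)$ once the bulk identity contribution is peeled off. With that $n$-free Lipschitz bound in hand, everything else is the textbook compactness-plus-equicontinuity reduction, and the second display follows trivially from the first.
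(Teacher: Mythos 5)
Your proof is correct, but it takes a genuinely different route from the paper's. The paper works in the other direction: it first observes that the unrestricted and restricted displays have \emph{identical} Frobenius discrepancies (the two pivots agree exactly outside the top $(2r+1)\times(2r+1)$ block), reducing everything to the restricted problem. It then bounds $F_n(\eta)$ by $G_n(\eta)$ using contraction by $\Sigma^{-1/2}$ (as you also do), and \emph{directly} dominates $G_n(\eta)$ by the $\eta$-free quantity $\ell_0\,r\,\|\Omega^{(n)}-\Omega\|_F$, where $\Omega^{(n)} = {\bf W}_1'{\bf V}_1$ and $\Omega$ is its limit. Since the spiked asymptotics give $\|\Omega^{(n)}-\Omega\|_F\to 0$ a.s., uniformity falls out immediately --- no net is required. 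Your approach instead establishes Lipschitz continuity of both pivots in $\eta$ with $n$-free constants (exactly the right observation, and your pathwise contraction argument $\|W'\Sigma^{-1/2}V_iV_i'\Sigma^{-1/2}W\|_F\le 1$ is sound under $\Sigma\succeq I$), and upgrades the pointwise a.s. convergence of Lemma \ref{lem:blocks} to uniform convergence by a finite $\delta$-net on the compact cube $[0,\ell_0]^r$. In effect, the paper fixes $\eta$ and exploits a Lipschitz bound in $\Omega$; you fix $\Omega$ and exploit a Lipschitz bound in $\eta$ plus compactness. Your route is slightly heavier machinery but more modular and would extend automatically to any compact parameter set; the paper's route is leaner because the uniform bound is exhibited in one line as a constant times a quantity already known to vanish.

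Two small remarks. First, your derivation of the restricted statement from the unrestricted one (via $\|M|\cWnr\|_F\le\|M\|_F$) is valid but weaker than the paper's observation of exact equality off the top block; both suffice here. Second, labeling the Lipschitz constant for $\Delta^a$ as $C_1(\ell_0,\gamma)$ is slightly off --- that constant depends on the fixed configuration $(\ell_i)$ and $\gamma$, not on $\ell_0$ --- but this has no bearing on the argument.
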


\newcommand{\tworpone}{2r+1}
\begin{proof}
To clarify, the second display concerns the comparison of two $\tworpone$ by $\tworpone$ matrices,
while the first display concerns the comparison of two $p_n$ by $p_n$ matrices.
We  adopt a basis for $\bR^{p_n}$ and $\bR^r$, 
so that the smaller matrices are simply
the upper left block of the corresponding larger  matrices. 
Moreover, the larger matrices
being compared are identical outside the upper block (see the proof of Lemma \ref{lem:blocks}). 
Therefore, the second claim is sufficient to 
establish the first, since the matrices in question 
are identical outside the upper left $\tworpone$ by $\tworpone$ block.
Note that $(\ell_i)$ are fixed, and only the $(\eta_i)_{i=1}^r$ are allowed to vary. Define the function
\[
 F_n(\eta) = \Vert\Delta^a((\ell_i); \eta;n, p_n)|\cWnr - \Delta^e((\ell_i); \eta;n, p_n)|\cWnr \Vert_F .
\]
By  Lemma \ref{lem:blocks}, at each fixed $\eta$ the sequence $(F_n(\eta))$ tends to zero almost surely.
We will demonstrate convergence uniform in $\eta$.\\
{We saw above at (\ref{eq:AtildeLim}) that}
 the discrepancy
\[
G_n(\eta) = \| \hat{\Sigma}(\eta)|\cWnr -  \left( \oplus_{i=1}^{2r} B(\ell_i,\eta_i) \, \oplus w_{2r+1} w'_{2r+1}  \right) \|_F
\]
converges almost surely to zero,
for appropriate $2 \times 2$ blocks $B(\ell_i,\eta_i)$
and appropriate  realization-dependent vectors
$w_{2r+1}$. 
Since all eigenvalues of $\Sigma ^ {-\frac{1}{2}}$ are less than or equal to one, we have 
(using notation from the proof of Lemma \ref{lem:blocks}): 
\begin{eqnarray*}
F_n(\eta) &=& \Vert \Delta^a((\ell_i); \eta;n, p_n)|\cWnr - \Delta^e((\ell_i); \eta;n, p_n)|\cWnr \Vert_F \\
&=& \Vert (\Sigma ^ {-\frac{1}{2}}|\cWnr) \bigg( \hat{\Sigma}(\eta)|\cWnr -  \left( \oplus_{i=1}^{2r} B(\ell_i,\eta_i) \, \oplus w_{2r+1} w'_{2r+1}  \right) \bigg) (\Sigma ^ {-\frac{1}{2}}|\cWnr) \Vert_F \\
&\leq& \Vert \hat{\Sigma}(\eta)|\cWnr -  \left( \oplus_{i=1}^{2r} B(\ell_i,\eta_i) \, \oplus w_{2r+1} w'_{2r+1}  \right) \Vert_F \\
&=& G_n(\eta) .
\end{eqnarray*}

Now letting $\Omega_{i}^{(n)}$ denote the $i$'th column of {  matrix $\Omega^{(n)} = {\bf W}'_1 {\bf V}_1$
and $\Omega_{i}^{(\infty)}$ denote the $i$'th column of its
limit matrix $\Omega =  \lim_{n \goto \infty}{\bf W}'_1 {\bf V}_1$
}, we write:  
\begin{eqnarray*}
G_n &=& \| \Omega^{(n)} E_1 (\Omega^{(n)})' -   \Omega E_1 (\Omega)'  \|_F \\
& = & \| \sum_{i=1}^r  \eta_i [ \Omega_{i}^{(n)}  (\Omega_{i}^{(n)})' -   \Omega_{i}^{(\infty)}  (\Omega_{i}^{(\infty)})'] \|_F \\
& \leq & \sum_{i=1}^r  \eta_i  \|[ \Omega_{i}^{(n)}  (\Omega_{i}^{(n)})' -   \Omega_{i}^{(\infty)}  (\Omega_{i}^{(\infty)})'] \|_F  .
\end{eqnarray*}
{where $E_1 = diag(\eta_1,\dots,\eta_r)$  as earlier in (\ref{eq:Eonedef})}.
Since for unit vectors $x$, $y$,
 $\| xx' - yy'\|_F^2 = 2( 1 - (x'y)^2) = \|x - y\|_2^2$,
\begin{eqnarray*}
G_n &\leq&  \left( \max_i  \eta_i \right) \cdot \sum_{i=1}^r \|\Omega_{i}^{(n)}-\Omega_{i}^{(\infty)}\|_2 \\
    &\leq&   \ell_0 \cdot r  \cdot  \|\Omega^{(n)}-\Omega\|_F .
\end{eqnarray*}
This last expression used the hypothesis
that $\eta \in \cL_r(\ell_0)$, and gives us an upper bound that
does not depend on $\eta$. We know from (\ref{eq:QRFrobConvergence})
that $\|\Omega^{(n)}-\Omega\|_F \goto 0$ both almost surely and in probability, and so
\[
     \sup_{\eta \in \cL_r(\ell_0)} G_n(\eta) \goto 0, \qquad n \goto \infty,
\]
both almost surely and in probability. 
This shows that $F_n(\eta)$ uniformly converges to zero almost surely
and in probability.
\end{proof}

\begin{lem}
\label{lem:MutilationIsBanal}
For a $p$-vector $\eta$ obeying $Ave_{i>r} \eta_i =1$,
let $\eta^{1,r}$ denote the ones-mutilated vector
\[
  \eta^{1,r} = \left \{ \begin{array}{ll}
                            \eta_i & 1 \leq i \leq r\\
                             1       & r < i \leq p
                      \end{array}
               \right . .
\]
Fix $\ell_0 > 1$. Let $\EtaRNPf(\ell_0)$ denote the
collection of all $\eta \in \EtaRNPf \cap [(\ell_0^2+1)^{-1},\ell_0^2+1]^p$.
There is $C = C(\eps,\ell_0,\gamma) > 0$ so that under [SPIKE] and [PGA]:
\[
 \sup_{\eta \in \EtaRNPf(\ell_0)} P\{ \Vert\Delta^e((\ell_i); \eta )|\cWnr - \Delta^e((\ell_i); \eta^{1,r})|\cWnr \Vert_F > \eps \} \leq C \cdot n^{-3}.
\]
\end{lem}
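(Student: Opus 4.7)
The plan is to bound $M := (\Delta^e((\ell_i); \eta) - \Delta^e((\ell_i); \eta^{1,r}))|\cWnr$ directly, exploiting the vanishing-first-moment property built into $\eta \in \EtaRNPf$ together with the delocalization of bulk eigenvectors.

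I would begin by writing the difference in closed form. Since $\hat{\Sigma}(\eta) - \hat{\Sigma}(\eta^{1,r}) = \sum_{i>r}(\eta_i - 1)\, V_i V_i'$, restriction to $\cWnr$ gives
\[
M \;=\; \sum_{i>r}(\eta_i - 1)\, \alpha_i \alpha_i', \qquad \alpha_i \;:=\; W' \Sigma^{-1/2} V_i \;\in\; \R^{2r+1},
\]
a $(2r+1) \times (2r+1)$ matrix whose Frobenius norm is controlled, up to a constant depending only on $r$, by $\max_{j,k} |M_{jk}|$. The normalization condition forces $\sum_{i>r}(\eta_i - 1) = 0$, so for any deterministic matrix $\bar{A}$ we may subtract it from every rank-one summand without changing $M$:
\[
M \;=\; \sum_{i>r}(\eta_i - 1)\bigl(\alpha_i \alpha_i' - \bar{A}\bigr).
\]
Choosing $\bar{A}_{jk}$ to equal the common (over $i > r$) expectation of $\alpha_i^{(j)}\alpha_i^{(k)}$ reduces the problem to bounding a weighted sum of mean-zero random scalars, one such sum for each of the $(2r+1)^2$ entries.

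Next I would control each entry $M_{jk}$ via a high-moment bound. From Theorem \ref{lem:spiked} and standard delocalization estimates for bulk eigenvectors in the Gaussian spike model, one has $|\alpha_i^{(j)}| = O(p^{-1/2})$ so that $\alpha_i^{(j)}\alpha_i^{(k)} - \bar{A}_{jk}$ is an $O(1/p)$ mean-zero random variable whose cross-index correlations decay rapidly in $|i - i'|$. Combined with $|\eta_i - 1| \leq \ell_0^2$ and $\sum_{i>r}(\eta_i - 1)^2 \leq p\, \ell_0^4$, a Wick-style expansion of $\E[M_{jk}^6]$ into sums over $6$-tuples $(i_1, \dots, i_6)$ yields
\[
\E[M_{jk}^6] \;\leq\; C(\ell_0,\gamma)\, p^{-3},
\]
with all dependence on $\eta$ absorbed into the $\ell_0$-dependent constant. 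Markov's inequality at the sixth moment then gives $P(|M_{jk}| > \eps/(2r+1)) \leq C' \eps^{-6} n^{-3}$ (using $p/n \to \gamma$), and a union bound over the $O(1)$ entries converts this into the claimed Frobenius-norm tail bound, uniformly in $\eta \in \EtaRNPf(\ell_0)$.

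The main obstacle is the sixth-moment calculation itself. Each $\alpha_i$ is data-dependent, built from the bulk eigenvector $V_i$ of $\bS$, so $\E[M_{jk}^6]$ expanded over $6$-tuples couples multiple bulk eigenvectors simultaneously, and one must organise the bookkeeping so that only the diagonal pairings --- where the six indices form three coincident pairs --- contribute at the leading $p^{-3}$ order, with all other index patterns strictly smaller. Two tools are available for this: either polynomial-tail concentration estimates for quadratic forms in delocalized eigenvectors, or the fact that under $N(0,\Sigma)$ sampling the bulk block of $V$ is, conditional on the eigenvalues, asymptotically rotationally invariant on the orthocomplement of the spike subspace. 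Either route produces the bound uniformly in $\eta$, because all $\eta$-dependence resides in the purely algebraic coefficient $(\eta_{i_1}-1)\cdots(\eta_{i_6}-1)$, whose magnitude is controlled by powers of $\ell_0$.
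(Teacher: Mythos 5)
Your outline follows the same three–step skeleton as the paper's proof: exploit the zero-sum constraint $\sum_{i>r}(\eta_i-1)=0$ to center the entries of the difference matrix, establish a sixth-moment bound of order $p^{-3}$, and then apply Markov plus a union bound over the $O(1)$ entries. That is exactly the paper's mechanism. However, there are two issues worth flagging.

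First, a technical misstatement. You justify the sixth-moment bound by asserting that the ``cross-index correlations decay rapidly in $|i-i'|$.'' This is not the structure of the problem. The quantities $\alpha_i^{(j)}\alpha_i^{(k)}$ built from the bulk eigenvectors are constrained to lie on a sphere, so the covariance between terms indexed by $i$ and $i'$ is small and roughly \emph{uniform} in $|i-i'|$, not decaying with distance. The argument goes through anyway, but for a different reason: the paper reduces to the case where the relevant coordinates are coordinates of a point uniform on a spherical equator, then uses the Laurent--Massart concentration of the chi-square radius to replace spherical coordinates by genuine i.i.d.\ Gaussians $Z_i/\sqrt{p-r}$, at which point the $Z_i^2$ are truly independent and the cumulant additivity $\kappa_\ell(\sum c_j Z_j^2) = \kappa_\ell(Z_1^2)\sum c_j^\ell$ yields $E|Y|^6 = O(m^3)$ directly (Lemmas \ref{lem:TailBoundForWeightedChiSquare}--\ref{lem:cumulantbounds}). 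Your proposal names this tool (``asymptotically rotationally invariant... on the orthocomplement of the spike subspace'') as one of two alternatives, but the ``decaying correlations'' heuristic you give as the default route is wrong and would mislead the actual calculation.

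Second, the organizational device. You work directly with $M=\sum_{i>r}(\eta_i-1)\alpha_i\alpha_i'$ where $\alpha_i = W'\Sigma^{-1/2}V_i$; the paper instead changes to an $X$-basis obtained by Gram--Schmidt on $[\mathbf{V}_1\;\mathbf{U}_1]$, so that $x_1,\dots,x_r$ coincide exactly with $V_1,\dots,V_r$. In that basis the top $r$ rows and columns of the two pivots agree \emph{identically}, $\Sigma^{-1/2}$ can be discarded as a contraction, and the discrepancy collapses to a few explicit random scalars ($\zeta_n$, $\rho_n$, $\xi_n$) living in the lower-right block. Your route forces you to bound all $(2r+1)^2$ entries of $M$, each of which mixes spike and bulk directions and still carries the $\Sigma^{-1/2}$ weighting; nothing fails, but you lose the clean identification that only a small number of coordinates are genuinely random. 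You correctly flag the sixth-moment computation as ``the main obstacle,'' and that is fair: that step is the entire content of the lemma, and it is not carried out in your proposal.
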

\newcommand{\bUo}{{\bf U}_1}
\newcommand{\bVo}{{\bf V}_1}
\newcommand{\hSigma}{\hat{\Sigma}}
\begin{proof}
We have:
\begin{eqnarray*}
F_n(\eta) &=& \Vert \Delta^e((\ell_i); \eta)|\cWnr - \Delta^e((\ell_i); \eta^{1,r})|\cWnr \Vert_F \\
&=& { \Vert (\Sigma ^ {-\frac{1}{2}}|\cWnr) \bigg( \hat{\Sigma}(\eta)|\cWnr - 
\hat{\Sigma}(\eta^{1,r} )|\cWnr \bigg) (\Sigma ^ {-\frac{1}{2}}|\cWnr) \Vert_F} \\
&\leq& \Vert \hat{\Sigma}(\eta)|\cWnr - \hat{\Sigma}(\eta^{1,r} )|\cWnr \Vert_F \\
&\equiv& G_n(\eta) , \mbox{ say }.
\end{eqnarray*}
Construct now a so-called 
$X$-basis $[x_{1},\dots,x_{2r+1}]$ 
for the subspace $\cWnr$
by applying Gram-Schmidt to $[\bVo \bUo ]$ 
(followed by $w_{2r+1}$)
(rather than to the interlacing of $\bUo$ and $\bVo$ (followed by $w_{2r+1}$) as 
would be done for the
$W$-basis).  
Partition the basis of $2r+1$ vectors
into three blocks
$[x_{1},\dots, x_r]$,
$[x_{r+1},\dots,x_{2r}]$
and $x_{2r+1}$. Here again, by definition \ref{def:Wnr}, the
$x_{2r+1}$ vector is a direction chosen uniformly at random in the
orthocomplement of the column span of $X$.
Each of $\hat{\Sigma}(\eta)|\cWnr$ and
$\hat{\Sigma}(\eta^{1,r} )|\cWnr$ can be represented $2r+1$ by $2r+1$
partitioned matrix 
\[
\hSigma_n = \left[\begin{array}{ccc}
(\hSigma_n)_{1,1} & (\hSigma_n)_{1,2} & (\hSigma_n)_{1,3}\\
(\hSigma_n)_{2,1} & (\hSigma_n)_{2, 2} & (\hSigma_n)_{2,3}\\
(\hSigma_n)_{3,1} & (\hSigma_n)_{3, 2} & (\hSigma_n)_{3,3}\\
\end{array}\right] ,
\]
where the lower right block is $1 \times 1$ and
involves the projection $x'_{2r+1} \hSigma_n x_{2r+1}$,
and the other entries in the third column or row
are either column or row vectors.
Direct calculations show that, in either case, we have the invariance,
\[
 \hat{\Sigma} x_{i} = \eta_i \cdot x_i, \qquad i=1,\dots,r,
\]
so that, in the $X$-basis
\[
(\hSigma_n)_{1,1} = diag(\eta_{1},\eta_{2},\dots,\eta_{r}).
\]
Also, by invariance and Gram-Schmidt,
\[
x'_{i} \hSigma x_{j} =  \eta_i \cdot x'_{i} x_{j} = 0, \qquad 1 \leq i \leq r, \quad r < j \leq 2r+1,
\]
so $(\hSigma_n)_{1,2} = (\hSigma_n)_{2,1} = 0_{r \times r}$ in either case,
while also $(\hSigma_n)_{1,3} = (\hSigma_n)'_{3,1} = 0_{r \times 1}$
in both cases.
Meanwhile, for blocks $(\hSigma_n)_{2, 2}$ and $(\hSigma_n)_{3, 3}$ the two cases differ:
\[
(\hSigma_n(\eta^{1,r}))_{2, 2} = I_{r \times r}, \qquad  (\hSigma_n(\eta))_{2, 2} = I_{r \times r} + \zeta_n,
\]
where $\zeta_n$ is an $r \times r$ matrix-valued random variable, and
\[
(\hSigma_n(\eta^{1,r}))_{3, 3} = 1, \qquad  (\hSigma_n(\eta))_{3, 3} = 1 + \xi_n,
\]
where $\xi_n$ is a real-valued random variable. Finally, let $\rho_n $
denote the $r \times 1$ vector-valued random variable $\rho_n = (\hSigma_n(\eta))_{2,3}$.
(noting that $(\hSigma_n(\eta^{1,r}))_{2,3} = 0$).

We conclude that
\[
   G_n(\eta) = (\| \zeta_n \|_F^2 + 2 \| \rho_n \|_2^2 + \xi_n^2)^{1/2}.
\]
We give the argument in the case $r=1$, such that $\zeta_n$ and $\rho_n$ are
real-valued. Letting $x_2$ denote the second column of the basis matrix $X$,
and taking into account that the first $r$ entries of
$\eta$ are identical to those of $\eta^{1,r}$, we have
\begin{eqnarray*}
   \zeta_n &=&  x'_2 \cdot (\hat{\Sigma}(\eta)|\cWnr - \hat{\Sigma}(\eta^{1,r}) |\cWnr  ) \cdot  x_2 \\
   &=& \sum_{r+1}^p (\eta_i - 1) u_i^2,
\end{eqnarray*}
where $Ave_{i=r+1}^p \eta_i = 1$ and the $u_i$ give the coordinate representation
of the vector $x_2$ in the eigenvector or $V$-basis. The random vector 
$U = (0,u_2,\dots, u_p)$
has norm $\|U\|_2=1$, and is uniformly distributed
on the spherical equator $u_1=0$. We invoke Lemma \ref{lem:TailBoundForWeightedChiSquare}; 
it gives us, for large $n$, this bound:
\[
   \sup_{\eta \in \EtaRNP(\ell_0)}   
      P \{ | \zeta_n |  > 
     \eps\} \leq C(\eps, \ell_0) \cdot p_n^{-3}.
\]
Lemma  \ref{lem:TailBoundForWeightedChiSquare}
also implies
\[
   \sup_{\eta \in \EtaRNP(\ell_0)}   
      P \{ | \xi_n |  > 
     \eps\} \leq C(\eps, \ell_0) \cdot p_n^{-3},
\]
with possibly a different $C$ than in the previous display.
Finally, the same Lemma also implies:
\[
   \sup_{\eta \in \EtaRNP(\ell_0)}   
      P \{ | \rho_n |  > 
     \eps\} \leq C(\eps, \ell_0) \cdot p_n^{-3}.
\]
Combining the last three displays 
completes the proof for the case $r = 1$.

The proof for general $r > 1$ goes similarly, but
involves an analog for Lemma \ref{lem:TailBoundForWeightedChiSquare} \--
involving matrices rather than scalars. We omit  details.
\end{proof}
\newcommand{\cE}{{\cal E}}
\begin{lem} \label{lem:TailBoundForWeightedChiSquare}
Let $U = (0_{1 \times r}, u_{r+1},\dots,u_{p})$
be a random $p$-vector uniformly distributed on the 
$p-r$ dimensional
spherical equator $ \cE_{r,p} = S^{p-1} \cap \{ u : u_1 = \dots = u_r = 0 \}$. 
Then, for $\eps > 0$ and for all sufficiently large $n,p_n$,
\[
   \sup_{\eta \in \EtaRNP(\ell_0)}   
      P \{ |\sum_{i=r+1}^{p} (\eta_i -1) u_i^2 |  > 
     \eps\} \leq C(\eps, \ell_0) \cdot p_n^{-3}.
\]

Let $V= (0_{1 \times r}, v_{r+1},\dots,v_{p})$ be a random $p$-vector uniformly distributed on
$\cE_{r,p} \cap U^\perp$.
Then, for $\eps > 0$ and for all sufficiently large $n,p_n$.
\[
   \sup_{\eta \in \EtaRNP(\ell_0)}   
      P \{ |\sum_{i=r+1}^{p} (\eta_i -1) v_i^2 |  > 
     \eps\} \leq C(\eps, \ell_0) \cdot p_n^{-3},
\]
and
\[
   \sup_{\eta \in \EtaRNP(\ell_0)}   
      P \{ |\sum_{i=r+1}^{p} \eta_i \; u_i v_i |  > 
     \eps\} \leq C(\eps, \ell_0) \cdot p_n^{-3}.
\]

\end{lem}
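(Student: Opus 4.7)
\textbf{Proof Proposal for Lemma \ref{lem:TailBoundForWeightedChiSquare}.}
The plan is to reduce each spherical average to a ratio of Gaussian quantities, exploit the normalization $Ave_{i>r}\eta_i=1$ to center the numerator, and then apply standard sub-exponential concentration, which will in fact yield bounds exponentially small in $p_n$ (much smaller than the claimed $p_n^{-3}$). Uniformity in $\eta$ will be automatic, since the constants in the concentration bounds depend on $\eta$ only through the $L^\infty$ bound $|\eta_i - 1| \leq \ell_0^2$.

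For the first statement, represent $U = (0_{1\times r}, g_{r+1}/\|g\|, \dots, g_p/\|g\|)$ with $g \sim N(0, I_{p-r})$, so
\[
\sum_{i=r+1}^{p} (\eta_i-1) u_i^2 \;=\; \frac{\sum_{i=r+1}^{p} (\eta_i-1)(g_i^2-1)}{\|g\|^2},
\]
where the crucial step was to use $\sum_{i>r}(\eta_i-1)=0$ (from $Ave_{i>r}\eta_i=1$) to recenter each $g_i^2$. The numerator is a sum of independent mean-zero sub-exponential terms with sub-exponential norm at most $C\ell_0^2$, so Bernstein's inequality gives $P(|\text{num}|>\eps(p_n-r)/2)\leq 2\exp(-c(\ell_0)\,\eps^2\, p_n)$. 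The denominator is chi-squared with $p_n-r$ degrees of freedom, so $P(\|g\|^2 < (p_n-r)/2)\leq \exp(-c\,p_n)$. Combining via a union bound and absorbing constants into $C(\eps,\ell_0)$ gives a bound that is $o(p_n^{-3})$, a fortiori bounded by $C(\eps,\ell_0)p_n^{-3}$ for large $n$.

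For the second statement concerning $V$, condition on $U$. Given $U$, the conditional distribution of $V$ is uniform on the $(p-r-2)$-dimensional sphere inside $\cE_{r,p}\cap U^{\perp}$. Repeat the Gaussian argument in this orthocomplement: the same recentering trick works, the only change being that the effective ambient dimension drops from $p-r$ to $p-r-1$, which is inconsequential for the asymptotic bounds. Taking expectation over $U$ then yields the desired uniform probability bound.

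The cross-term bound $P(|\sum \eta_i u_i v_i|>\eps)\leq C(\eps,\ell_0)p_n^{-3}$ is the main obstacle, since we cannot simply invoke the $Ave_{i>r}\eta_i=1$ constraint to center a bilinear form. The plan here is to use the joint Gaussian representation $u=g/\|g\|$ and $v=h_\perp/\|h_\perp\|$ with $g,h\sim N(0,I_{p-r})$ independent and $h_\perp = h - (g^\top h/\|g\|^2)g$. Then
\[
\sum_{i=r+1}^{p}\eta_i u_i v_i \;=\; \frac{1}{\|g\|\,\|h_\perp\|}\left[\sum_{i=r+1}^{p}\eta_i g_i h_i \;-\; \frac{g^\top h}{\|g\|^2}\sum_{i=r+1}^{p}\eta_i g_i^2\right].
\]
Condition on $g$: the first bracketed sum is Gaussian with variance $\sum \eta_i^2 g_i^2 \leq (\ell_0^2+1)^2\|g\|^2$, so it is $O_P(\ell_0^2\sqrt{p_n})$. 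For the second piece, $g^\top h/\|g\|^2 = O_P(1/\sqrt{p_n})$ while $\sum \eta_i g_i^2 = O_P(\ell_0^2 p_n)$, again giving $O_P(\ell_0^2\sqrt{p_n})$. Since $\|g\|\|h_\perp\|\geq p_n/2$ with exponentially high probability, the ratio is $O_P(\ell_0^2/\sqrt{p_n})$, with deviations of order $\eps$ having exponentially small probability via a Hanson-Wright type bound applied to each piece. The resulting exponential bound is again trivially $\leq C(\eps,\ell_0) p_n^{-3}$ for $p_n$ large, completing the proof.
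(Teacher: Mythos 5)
Your approach is correct but genuinely different from the paper's. You both start from the same Gaussian representation $U = Z/\|Z\|$ and exploit the centering $\sum_{i>r}(\eta_i-1)=0$, and both control the denominator via Laurent--Massart-type concentration of $\chi^2$. Where you diverge is in the treatment of the centered numerator: the paper deliberately avoids exponential concentration and instead uses a sixth moment bound $E|Y|^6 \leq C(\ell_0)\, m^3$ (derived via the additivity of cumulants in Lemma~\ref{lem:cumulantbounds}) combined with Markov's inequality, giving exactly the $p_n^{-3}$ rate. You instead invoke Bernstein/Hanson--Wright and obtain exponentially small tails, which are stronger but of course still imply the stated polynomial bound. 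The paper's moment approach is more self-contained and requires no sub-exponential machinery; yours is shorter once the concentration tools are in hand and yields the correct intuition that $p_n^{-3}$ is far from tight. Both are valid.

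Two small remarks. First, for the cross-term you say the $Ave_{i>r}\eta_i=1$ constraint cannot be used to center the bilinear form; in fact this particular centering is unnecessary because $U\perp V$ forces $\sum_i u_i v_i = 0$ deterministically, so $\sum\eta_i u_i v_i = \sum(\eta_i - c)u_i v_i$ for any constant $c$, and the sum is automatically mean zero; your argument still goes through without this observation, but it simplifies the bookkeeping. Second, the Hanson--Wright step for the cross-term is a little under-specified: you need to state that after restricting to a high-probability event on $g$ (say $\|g\|^2 \in [p_n/2, 2p_n]$), the conditional distribution of the bracketed terms is Gaussian/sub-exponential with explicit scale depending only on $\ell_0$ and $p_n$, and then integrate out $g$ via a union bound. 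As written it reads as an $O_P$ estimate rather than a quantitative tail bound, which is what the lemma actually asserts. Tightening that paragraph to the level of rigor of your first two parts would make the proposal complete.
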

\begin{proof}

We give the argument for the first displayed inequality only;
similar arguments yield the other inequalities.

Note that $E u_i^2 = \frac{1}{p-r}$, $i=r+1,\dots,p$; hence
for $\eta \in \EtaRNP$,
$E  \sum_{i=r+1}^{p} (\eta_i-1)  u_i^2 = 0$.
The result in question is a uniform bound on the probability of
large fluctuations of a family of zero-mean random variables.

On an appropriate probability space, 
$U =_D Z/R$ where $Z = (0_{1\times r}, Z_{r+1},\dots,Z_p)$ and
the $Z_i$ are iid $N(0,1)$, while $R = \| Z \|_2$. 
By  Laurent and Massart,  \cite{laurent2000} p. 1325, 
we have the exponential bounds
on $Y \equiv   \|Z \|_2^2/m$, where $m = p-r$:
\begin{eqnarray*}
P \{ Y > 1+  2\sqrt{ t/m} + 2 t/m \} &\leq& \exp(-t ), \\
P \{ Y < 1-  2 \sqrt{t/m} \}          &\leq& \exp(-t ).
\end{eqnarray*}
Consequently, we have excellent control on $P \{ |u_i - Z_i/\sqrt{p-r}| > \eps \}$ and similar
quantities. We therefore can work with Gaussian variables
$Z_i/\sqrt{p-r}$ rather than 
uniform spherical coordinates $u_i$,
and it will be sufficient to show:
\beq \label{eq:neededbound}
     P \{ |  \frac{1}{p-r} \sum_{i=r+1}^{p} (\eta_i-1) Z_i^2 | > \eps  \} \leq C(\eps,\ell_0) / (p-r)^3.
\eeq
Let $Y= \sum_{j=1}^m  (\eta_j-1) Z_j^2$
denote a random sum of the type referred 
to in (\ref{eq:neededbound}).
We can obtain the moment bound:
\[
    E \left| Y \right|^6  \leq  { C \cdot (\ell_0+1)^6\cdot  m^3}.
\]
Markov's inequality $\eps^6 \cdot P\{\vert \frac{1}{m} Y \vert > \eps \} \leq E (\frac{Y}{m})^6$, combined with this
moment bound, gives (\ref{eq:neededbound}).

This bound, and the constant $C$,
are obtainable from Lemma 
\ref{lem:cumulantbounds} below
by setting
$c_j = (\eta_j-1)$ and $\zeta_j = Z_j^2$,
and recalling that for $\eta \in \EtaRNP(\ell_0)$, 
$\eta_i \leq M = \ell_0^2+1$.
\end{proof}

No doubt very elegant proofs could be based on 
Paul L\'evy's concentration of measure
for the sphere.

\begin{lem} \label{lem:cumulantbounds}
Suppose $\xi = \sum_{j=1}^m c_j \zeta_j $, 
where $\zeta_j$ are iid,
and with the first six cumulants finite.
Suppose also that $E \xi = 0$.
The sixth moment of $\xi$ obeys
\[
   E \xi^6 \leq C \cdot \|(c_j)\|_\infty^6  \cdot m^3 ,
\]
for some $C$ which can be made explicit in terms of the 
cumulants $\kappa_\ell(\zeta_1)$, $\ell=1,\dots,6$.
\end{lem}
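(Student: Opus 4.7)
\textbf{Proof proposal for Lemma \ref{lem:cumulantbounds}.} My plan is to compute $E\xi^6$ by means of the moment-cumulant expansion and exploit the additivity of cumulants under independent sums. Since the $\zeta_j$ are i.i.d., writing $\kappa_\ell = \kappa_\ell(\zeta_1)$ for $\ell = 1, \dots, 6$, the homogeneity and additivity of cumulants under independent addition give
\[
\kappa_\ell(\xi) \; = \; \sum_{j=1}^m c_j^{\,\ell} \, \kappa_\ell, \qquad \ell \geq 1.
\]
In particular $|\kappa_\ell(\xi)| \leq |\kappa_\ell| \cdot m \cdot \|(c_j)\|_\infty^{\ell}$ for every $\ell$, since $\sum_j |c_j|^\ell \leq m \|(c_j)\|_\infty^\ell$. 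The hypothesis $E\xi = 0$ means $\kappa_1(\xi) = 0$, so partitions containing singleton blocks will not contribute.

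Next I would apply the standard moment-cumulant formula, which expresses $E\xi^6$ as a sum over set partitions of $\{1,\dots,6\}$. With singleton blocks excluded, only four partition types survive: a single block of size $6$; one block of size $4$ plus one of size $2$ (multiplicity $15$); two blocks of size $3$ (multiplicity $10$); and three blocks of size $2$ (multiplicity $15$). This yields
\[
E\xi^6 \; = \; \kappa_6(\xi) \; + \; 15\, \kappa_4(\xi)\,\kappa_2(\xi) \; + \; 10\, \kappa_3(\xi)^2 \; + \; 15\, \kappa_2(\xi)^3.
\]

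Substituting the coefficient-wise bound on each $\kappa_\ell(\xi)$ then gives, term by term,
\[
|\kappa_6(\xi)| = O(m \|(c_j)\|_\infty^6), \quad |\kappa_4(\xi)\kappa_2(\xi)| = O(m^2 \|(c_j)\|_\infty^6),
\]
\[
\kappa_3(\xi)^2 = O(m^2 \|(c_j)\|_\infty^6), \quad \kappa_2(\xi)^3 = O(m^3 \|(c_j)\|_\infty^6),
\]
where each hidden constant is an explicit polynomial in $|\kappa_2|,\dots,|\kappa_6|$. The dominant contribution comes from the last term, yielding $E\xi^6 \leq C \cdot \|(c_j)\|_\infty^6 \cdot m^3$ with $C = 15\kappa_2^3 + O(1)$ absorbing the lower-order terms (which are bounded by constants times $m^3$ as well, since $m \geq 1$).

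The only mild subtlety is bookkeeping the moment-cumulant expansion and verifying that partitions with singleton blocks really do vanish under $E\xi = 0$; once that is in hand the argument is mechanical, and the asserted constant $C$ is manifestly a polynomial in $\kappa_1(\zeta_1),\dots,\kappa_6(\zeta_1)$, hence finite under the stated finiteness hypothesis on the first six cumulants of $\zeta_1$.
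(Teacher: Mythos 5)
Your proof is correct and matches the paper's argument essentially line by line: both use additivity and homogeneity of cumulants to get $\kappa_\ell(\xi) = \kappa_\ell(\zeta_1)\sum_j c_j^\ell$ and the bound $|\kappa_\ell(\xi)| \leq |\kappa_\ell(\zeta_1)|\, m\, \|(c_j)\|_\infty^\ell$, then invoke the same moment--cumulant identity $E\xi^6 = \kappa_6(\xi) + 15\kappa_4(\xi)\kappa_2(\xi) + 10\kappa_3(\xi)^2 + 15\kappa_2(\xi)^3$ (with singleton partitions vanishing because $\kappa_1(\xi)=0$), and identify the $\kappa_2^3$ term as the dominant $O(m^3)$ contribution. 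The only cosmetic difference is that you spell out the partition count explicitly, and your closing remark ``$C = 15\kappa_2^3 + O(1)$'' is slightly loose --- the clean statement is that one may take $C = |\kappa_6| + 15|\kappa_4\kappa_2| + 10\kappa_3^2 + 15|\kappa_2|^3$ since $m, m^2 \leq m^3$ for $m\geq 1$ --- but this does not affect correctness.
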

\begin{proof}
The additivity of cumulants tells us that
\[
k_\ell(\xi) = \sum_j c_j^\ell k_\ell(\zeta_j).
\]
For $\zeta_j$ iid we have 
$k_\ell(\xi) = k_\ell(\zeta_1) \sum c_j^\ell$,
and so $|k_\ell(\xi)| \leq |k_\ell(\zeta_1)| \cdot m \cdot \|(c_j)\|_\infty^\ell$. 
When the random variable $\xi$ is centered,
$E \xi = k_1(\xi)=0$, its sixth
moment 
\begin{equation}
\label{eq:MomentFromCumulant}
E \xi^6  = k_6(\xi) + 15 k_4(\xi) k_2(\xi) 
     + 10 k_3^2(\xi) + 15 k_2^3(\xi).
\end{equation}
Suppose that $\|(c_j)\|_\infty \leq M$.
Of the four terms on the RHS on (\ref{eq:MomentFromCumulant}), the strongest dependence on $m$ is contributed by the term $k_2^3 = O(m^3)$,
and the other terms are $O(m^2)$ or smaller. 
\end{proof}

\subsection{Proofs for Section \ref{sec:StudyAsyPiv}}

\begin{lem} \label{lem:boundsOnOptimalEta}
Assume [SPIKE]. 
Let $\Delta(\eta) = \Sigma^{-\frac{1}{2}}\hat{\Sigma}(\eta)\Sigma^{-\frac{1}{2}}$,
where $\eta \in \EtaRNPf$.
Consider the function $K:\EtaRNPf \rightarrow \R^+$ defined so:
\begin{equation}
	K(\eta) \equiv \kappa(\Delta(\eta)).
\end{equation}
Consider the problem of minimizing $K(\cdot)$  over the domain $\EtaRNPf$. 
There exists a solution  $\eta^* \equiv (\eta_i^*)_{i=1}^{p}$ for this minimization problem, with the following properties:
\begin{enumerate}
\item $Ave_{i>r} \eta^*_i = 1$,
\item $\max_i \eta^*_i \leq \ell_1^2 + 1$,
\item $\min_i \eta^*_i \geq (\ell_1^2 + 1)^{-1}$.
\end{enumerate}
\end{lem}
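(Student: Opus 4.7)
The plan is a routine coercivity/compactness argument. Property (1) is already enforced by membership in $\EtaRNPf$, so it suffices to produce a minimizer satisfying (2) and (3). The normalization $Ave_{i>r}\eta_i = 1$ forces the two trivial bounds $\min_i \eta_i \leq 1$ and $\max_i \eta_i \geq 1$ for every $\eta \in \EtaRNPf$. Plugging in the trivial choice $\eta = \mathbf{1}$ yields $\hat\Sigma(\mathbf{1}) = VV' = I$ and hence $\Delta(\mathbf{1}) = \Sigma^{-1}$, whose condition number under [SPIKE] equals $\kappa(\Sigma) = \ell_1$. This already shows $\inf_{\EtaRNPf} K \leq \ell_1$, which will serve as our reference level.

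Next, I would establish one-sided control on the extreme eigenvalues of $\Delta(\eta)$ via the generalized Rayleigh quotient
\[
\nu_{\max}(\Delta(\eta)) = \sup_{v\ne 0} \frac{v'\hat\Sigma(\eta) v}{v'\Sigma v}, \qquad \nu_{\min}(\Delta(\eta)) = \inf_{v\ne 0} \frac{v'\hat\Sigma(\eta) v}{v'\Sigma v}.
\]
Testing these against eigenvectors of $\hat\Sigma(\eta)$ corresponding respectively to $\max_i \eta_i$ and $\min_i \eta_i$, and using $\lambda_{\max}(\Sigma) = \ell_1$ and $\lambda_{\min}(\Sigma) = 1$ (valid under [SPIKE]), one gets
\[
\nu_{\max}(\Delta(\eta)) \geq \frac{\max_i \eta_i}{\ell_1}, \qquad \nu_{\min}(\Delta(\eta)) \leq \min_i \eta_i.
\]
Combining these with the normalization inequalities of the first paragraph, whenever $\max_i \eta_i > \ell_1^2 + 1$ one obtains $K(\eta) \geq (\max_i\eta_i)/\ell_1 > \ell_1 + 1/\ell_1 > \ell_1$, and symmetrically when $\min_i \eta_i < (\ell_1^2+1)^{-1}$ one gets $K(\eta) \geq 1/(\ell_1 \min_i\eta_i) > \ell_1$. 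In either violating case $K(\eta) > K(\mathbf{1})$, so no such $\eta$ can be a minimizer.

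It remains to extract an actual minimizer. The set $A = \EtaRNPf \cap [(\ell_1^2+1)^{-1},\, \ell_1^2+1]^p$ is nonempty (it contains $\mathbf{1}$) and compact, and on $A$ the map $\eta \mapsto \hat\Sigma(\eta)$ is continuous with values in the positive definite cone uniformly bounded away from singularity; hence $K$ is continuous on $A$ and attains its minimum at some $\eta^* \in A$. By the exclusion from the previous paragraph, $\eta^*$ is in fact a global minimizer over $\EtaRNPf$, and it satisfies (2)-(3) by construction in $A$. The only mildly delicate step is verifying the one-sided eigenvalue bounds on $\Delta(\eta)$; everything else is standard coercivity packaging.
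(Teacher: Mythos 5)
Your proof is correct and takes essentially the same approach as the paper. The paper combines the two eigenvalue bounds into $K(\eta) \geq (\max_i\eta_i)/(\ell_1 \min_i\eta_i)$ and passes through a minimizing sequence; you keep them separate (using $\min_i\eta_i \le 1 \le \max_i\eta_i$ from the normalization to one-side each bound) and package it as a coercivity/exclusion argument — but the key eigenvalue inequalities, the reference level $K(\mathbf{1}) = \ell_1$, and the final compactness step are identical.
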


\begin{proof}
Let $ K^* \equiv \inf_{\eta \in \EtaRNPf} K(\eta)$. 
Since $K(1,\dots, 1) = \ell_1$, $K^* \leq \ell_1$. \\
Let $\eta^{(j)} \equiv (\eta^{(j)}_i)_{i=1}^p$ be a sequence 
in $\EtaRNPf$ 
asymptotically achieving the optimal value:
$\lim_{j\rightarrow \infty} K(\eta^{(j)}) = K^*$. 

For an SPD matrix $A$, denote its smallest eigenvalue by $\lambda_{min}(A)$ 
and its largest eigenvalue by $\lambda_{max}(A)$. For a pair $A$, $B$ of conformable SPD matrices,
\[
 \lambda_{max}(ABA) \geq  \lambda_{min}(A) \lambda_{max}(B) \lambda_{min}(A), \quad 
 \lambda_{min}(ABA) \leq  \lambda_{max}(A) \lambda_{min}(B) \lambda_{max}(A).
\]
Note that $\Delta(\eta) = A\cdot B \cdot A$
where $A = \Sigma^{-1/2}$ and $B = \hat{\Sigma}(\eta)$,
and also note that 
$\lambda_{max}(\hat{\Sigma}(\eta)) = \| \eta \|_{\infty}$, while 
$\lambda_{max}^2(\Sigma^{-1/2})  = 1$, while  $\lambda_{min}(\hat{\Sigma}(\eta)) = \min_{i=1}^p \eta_i$ and
$\lambda_{min}^2(\Sigma^{-1/2}) = \frac{1}{\ell_1}$.
We conclude that 
\begin{eqnarray} \label{eq:upperboundOnMinEig}
	\lambda_{min}(\Delta(\eta)) &\leq& \lambda_{max}(\Sigma^{-\frac{1}{2}})\lambda_{min}(\hat{\Sigma}(\eta))
                                       \lambda_{max}(\Sigma^{-\frac{1}{2}}) \\ 
     &=& 1  \cdot \min_{i=1}^p \eta_i \cdot 1 \nonumber  \\
    &=& \min_{i=1}^p\eta_i  . \nonumber
\end{eqnarray}
\begin{eqnarray} \label{eq:lowerboundOnMaxEig}
	\lambda_{max}(\Delta(\eta)) &\geq& \lambda_{min}(\Sigma^{-\frac{1}{2}})\lambda_{max}(\hat{\Sigma}(\eta))
     \lambda_{min}(\Sigma^{-\frac{1}{2}})  \\ 
     &=& \frac{1}{\ell_1^{1/2}}  \cdot \max_i \eta_i \cdot \frac{1}{\ell_1^{1/2}}  
    = \frac{\max_i \eta_i}{\ell_1} .\nonumber
\end{eqnarray}
Hence
\[
   K(\eta) \geq \frac{\max_i \eta_i}{\ell_1 \cdot (\min_{i=1}^p \eta_i)} .
\]
By the convergence $\lim_{j\rightarrow \infty}K(\eta^{(j)}) = K^*$ 
and the upper bound $K^* \leq \ell_1$, 
 $\exists N_0$ s.t. $\forall j >N_0$, $K(\eta^{(j)}) < \ell_1 + \frac{1}{\ell_1}$. 
We conclude that eventually, 
\beq \label{eq:condBND}
\frac{\max_i \eta^{(j)}_i}{  \min_{i=1}^p \eta^{(j)}_i} \leq \ell_1^2 + 1 , \qquad j > N_0.
\eeq

By the hypothesis, the points $\eta^{(j)}$ in our sequence
obey $Ave_{i>r} \eta^{(j)}_i = 1$.
Hence $ \min_{i=1}^p \eta_i^{(j)} \leq 1 \le \max_{i=1}^p \eta_i^{(j)}$ for every $j$.  
Rewriting (\ref{eq:condBND}),
\[
\frac{1}{\ell_1^2+1} \leq  \frac{\max_i \eta^{(j)}_i}{\ell_1^2 + 1} \leq  \min_{i=1}^p \eta^{(j)}_i \leq 1 , \qquad j > N_0.
\]
This shows us that, eventually for all large $j$, 
both  $\max_i \eta^{(j)}_i \leq \ell_1^2+1$
and  $\min_i \eta^{(j)}_i \geq (\ell_1^2+1)^{-1}$.
Hence the subsequence $(\eta^{(j)}: j > N_0)$ lies in the 
compact hypercube $[(\ell_1^2 + 1)^{-1}, \ell_1^2 + 1]^p$.

Since $K( \cdot )$ is continuous on this domain,
 $K( \cdot )$ attains its infimum on
the compact $ \EtaRNPf \cap [(\ell_1^2 + 1)^{-1}, \ell_1^2 + 1]^p$, by a $p$-vector $\eta^*$
with the two claimed properties. 
\end{proof}

\subsubsection*{Proof of Lemma \ref{lem:AsyOptProb}}
\begin{proof}
Consider the case $p>2r$. 
Then, by the block diagonal structure of the asymptotic pivot, the set of eigenvalues of $\Delta^a((\ell_i);(\eta_i);n,p,\gamma)$ is equal to:
\[
\{1\}\cup (\cup_{i=1}^r\{\nu_-(\ell_i, \eta_i, \gamma), \nu_+(\ell_i, \eta_i, \gamma)\}),
\]
where $\nu_-(\ell_i, \eta_i, \gamma)$ and $\nu_+(\ell_i, \eta_i, \gamma)$ represent the smallest and largest eigenvalue of the $A(\ell_i, \eta_i; \gamma)$.
Therefore, for each given set of $\eta_i$'s, $\kappa(\Delta^a((\ell_i);(\eta_i);n,p,\gamma))$ is equal to 
\begin{equation} \label{eq:asymptLoss}
K_r((\ell_i)_{i=1}^r,(\eta_i)_{i=1}^r; \gamma) \equiv \frac{\max(1, \max_i \nu_+(\ell_i, \eta_i, \gamma))}{\min( 1, \min_i \nu_-(\ell_i, \eta_i, \gamma))},
\end{equation}
which is only a function of $\gamma, (\eta_i)$, and $(\ell_i)$ and not a function of $p>2r$. Therefore, for all $p$, (\ref{eq:asymptLoss}) has the same optimal value. 
A compactness argument as in Lemma \ref{lem:boundsOnOptimalEta} 
can be used to show that the infimum is achieved and at least one $\eta^*$ exists. Point 1. follows.

{ Point 2 flows from sublemma \ref{sublemma:lbndone},
showing we are entitled to optimize over the constraint class $\eta_i \geq 1$, $i=1,\dots,r$, without any loss of generality.}
\end{proof}

{
\begin{lem} \label{sublemma:lbndone}
\begin{equation} \label{eq:equality}
  \min_{\eta_i \geq 0} K_r((\ell_i),(\eta_i)) 
  =  \min_{\eta_i \geq 1} K_r((\ell_i),(\eta_i)).
\end{equation}
\end{lem}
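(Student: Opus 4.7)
The plan is a one-sided clamping argument. The inclusion $\{\eta_i\ge 1\}^r\subset\{\eta_i\ge 0\}^r$ gives the trivial direction $\min_{\eta_i\ge 0} K_r \le \min_{\eta_i\ge 1} K_r$ immediately. For the reverse inequality, I will show that for every feasible point $\eta=(\eta_i)$ with some $\eta_i<1$, the clamped vector $\tilde\eta_i:=\max(\eta_i,1)$ satisfies
\[
K_r((\ell_i),(\tilde\eta_i))\ \le\ K_r((\ell_i),(\eta_i)),
\]
so the infimum over $\{\eta_i\ge 0\}^r$ is attained (in value) on $\{\eta_i\ge 1\}^r$.

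The engine is a joint monotonicity statement for the two eigenvalues of the $2\times 2$ block. Direct inspection of the formula in Lemma \ref{lem:defTD} yields the rank-one positive semidefinite decomposition
\[
A(\ell,\eta)=A(\ell,0)+\eta\,v v^\top,\qquad v=\bigl(c/\sqrt{\ell},\,s\bigr)^\top,
\]
so by Weyl's eigenvalue monotonicity for Hermitian perturbations, both $\eta\mapsto\nu_-(A(\ell,\eta))$ and $\eta\mapsto\nu_+(A(\ell,\eta))$ are nondecreasing on $[0,\infty)$. At the reference value $\eta=1$ the off-diagonals of $A(\ell,\eta)$ vanish and $A(\ell,1)=\operatorname{diag}(1/\ell,\,1)$, giving $\nu_+(A(\ell,1))=1$ and $\nu_-(A(\ell,1))=1/\ell$.

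With this monotonicity in hand, compare the numerator and denominator of $K_r$ at $(\eta_i)$ versus $(\tilde\eta_i)$. For indices $i$ with $\eta_i\ge 1$ nothing changes. For each $i$ with $\eta_i<1$, monotonicity forces $\nu_+(A(\ell_i,\eta_i))\le\nu_+(A(\ell_i,1))=1$ and $\nu_-(A(\ell_i,\eta_i))\le\nu_-(A(\ell_i,1))=1/\ell_i$. In the numerator of $K_r$, the outer $\max(1,\cdot)$ has baseline $1$, so slot $i$ contributed at most this baseline before clamping and contributes exactly the baseline after: the numerator is unchanged. In the denominator, the slot-$i$ contribution rises from $\nu_-(A(\ell_i,\eta_i))$ to $1/\ell_i$, so the inner $\min_i$ can only increase, and hence so can $\min(1,\cdot)$. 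Thus the ratio $K_r$ cannot grow, delivering the clamping inequality.

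The main step requiring care is the \emph{joint} monotonicity of both eigenvalues in $\eta$. Mere monotonicity of $T=\operatorname{tr}(A)$ and $D=\det(A)$ in $\eta$ (which Lemma \ref{lem:defTD} makes evident, both being affine increasing) is not enough to control the two roots of $x^2-Tx+D$ individually. The rank-one PSD perturbation viewpoint sidesteps this obstacle in one line via Weyl; as an alternative, direct differentiation of $\nu_\pm=T/2\pm\sqrt{T^2/4-D}$ yields $\partial_\eta\nu_\pm=\pm(T'\nu_\pm-D')/\sqrt{T^2-4D}$, and one checks that the ratio $D'/T'=1/(c^2+\ell s^2)$ sits uniformly between $\nu_-(A(\ell,\eta))$ and $\nu_+(A(\ell,\eta))$ throughout $\eta\ge 0$, giving the same conclusion.
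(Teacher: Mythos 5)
Your proposal is correct and takes essentially the same route as the paper: both establish the reverse inequality by clamping to $\tilde\eta_i=\max(\eta_i,1)$, and both get the joint monotonicity of $\nu_\pm(A(\ell,\cdot))$ from the rank-one PSD perturbation $A(\ell,\eta)=A(\ell,0)+\eta\,vv^\top$, $v=(c/\sqrt\ell,s)^\top$ (the paper's Lemma \ref{lem:increasingwrtEta}), together with $\nu_+(A(\ell,1))=1$ and $\nu_-(A(\ell,1))=1/\ell$. The paper packages the slot-by-slot comparison into two auxiliary lemmas about $K_{+,r}$ and $K_{-,r}$, but the underlying argument is the one you give.
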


\newcommand{\tpeta}{\eta^{1,+}}
\begin{proof}
To prove this, it will be convenient to define 
\[
  K_{+,r}(\eta) \equiv \max(1,  \max_{i=1}^r \nu_+(A(\ell_i,\eta_i))),
\]
and
\[
K_{-,r}(\eta) \equiv \min(1, \min_{i=1}^r \nu_-(A(\ell_i,\eta_i))).
\]
Thus, abbreviating $K_r(\eta) \equiv K_r( (\ell_i),(\eta_i))$,
\[
  K_r(\eta) = \frac{K_{+,r}(\eta)}{K_{-,r}(\eta)}.
\]
For an $r$-vector $\eta$, define $\tpeta  = ( \max(1,\eta_i))_{i=1}^r$.

The next two lemmas 
(\ref{lem:KrMinusLemma})-(\ref{lem:KrPlusLemma}) 
show that
\begin{eqnarray*}
   K_r(\eta) &\equiv& \frac{K_{+,r}(\eta)}{K_{-,r}(\eta)} \\
       &=& \frac{K_{+,r}(\tpeta)}{K_{-,r}(\eta)}  \qquad \hfill [\mbox{Lemma }\ref{lem:KrPlusLemma}]\\
       &\geq&  \frac{K_{+,r}(\tpeta)}{K_{-,r}(\tpeta)}  \qquad \hfill [\mbox{Lemma } \ref{lem:KrMinusLemma}] \\
       &=& K_r(\tpeta).
\end{eqnarray*}
It follows that
\begin{eqnarray*}
  \min_{\eta_i \geq 0} K_r((\ell_i),(\eta_i)) &\geq&   \min_{\eta_i \geq 0} K_r((\ell_i),(\tpeta_i)) \\
 &=&   \min_{\eta_i \geq 1} K_r((\ell_i),(\eta_i)).
\end{eqnarray*}
On the other hand because the condition $\min_{\eta_i \geq 0}$ covers more cases than
$\min_{\eta_i \geq 1}$, we also have
\[
  \min_{\eta_i \geq 0} K_r((\ell_i),(\eta_i)) \leq  \min_{\eta_i \geq 1} K_r((\ell_i),(\eta_i)).
\]
Hence the two sides of (\ref{eq:equality}) are equal.
\end{proof}

\begin{lem}  \label{lem:KrMinusLemma}
For all $\eta \in \bR_+^r$,
\[
   K_{-,r}(\eta) \leq K_{-,r}(\tpeta) .
\]
\end{lem}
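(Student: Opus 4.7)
The plan is to reduce the vector-valued claim to a coordinatewise monotonicity statement about $\nu_-(A(\ell,\eta))$, and then derive that monotonicity from a rank-one perturbation identity. First I would note that $A(\ell,1)=\mathrm{diag}(1/\ell,\,1)$, so $\nu_-(A(\ell,1))=1/\ell$. A direct entry-by-entry expansion using $s^2=1-c^2$ shows that
\[
A(\ell,\eta)-A(\ell,1) \;=\; (\eta-1)\begin{pmatrix} c^2/\ell & cs/\sqrt{\ell}\\ cs/\sqrt{\ell} & s^2 \end{pmatrix} \;=\; (\eta-1)\,vv^{\top},
\]
where $v=(c/\sqrt{\ell},\,s)^{\top}$. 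So $A(\ell,\eta)$ differs from $A(\ell,1)$ by a rank-one perturbation whose sign is that of $\eta-1$.

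Given this identity, for any $\eta\le 1$ we have $A(\ell,\eta)=A(\ell,1)-(1-\eta)vv^{\top}$ with $(1-\eta)vv^{\top}$ positive semidefinite. A one-line Rayleigh quotient argument (equivalently, Weyl's inequality) then yields
\[
\nu_-(A(\ell,\eta)) \;=\; \min_{\|x\|=1}x^{\top}A(\ell,\eta)\,x \;\le\; \min_{\|x\|=1}x^{\top}A(\ell,1)\,x \;=\; 1/\ell,
\]
since subtracting a PSD matrix lowers every Rayleigh quotient. Equivalently, $\nu_-(A(\ell,\eta))\le\nu_-(A(\ell,\max(1,\eta)))$ whenever $\eta\le 1$; for $\eta\ge 1$ there is nothing to do because $\max(1,\eta)=\eta$.

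With this coordinatewise fact, the vector-valued claim is routine. For each $i$ with $\eta_i\ge 1$, the definition of $\tpeta$ gives $\tpeta_i=\eta_i$ and hence $\nu_-(A(\ell_i,\eta_i))=\nu_-(A(\ell_i,\tpeta_i))$. For each $i$ with $\eta_i<1$, $\tpeta_i=1$ and the preceding inequality gives $\nu_-(A(\ell_i,\eta_i))\le 1/\ell_i = \nu_-(A(\ell_i,\tpeta_i))$. Taking $\min_i$ on both sides preserves the inequality, and then the outer $\min(1,\cdot)$ appearing in the definition of $K_{-,r}$ is monotone in its argument, so $K_{-,r}(\eta)\le K_{-,r}(\tpeta)$. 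The only nontrivial step is recognizing the rank-one structure of $A(\ell,\eta)-A(\ell,1)$; once that identity is verified by direct algebra, the rest is mechanical.
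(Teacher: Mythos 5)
Your proof is correct and follows essentially the same route as the paper's: both identify $A(\ell,\eta)-A(\ell,1)$ as a rank-one perturbation $(\eta-1)vv^{\top}$ with $v=(c/\sqrt{\ell},s)^{\top}$ and then invoke eigenvalue monotonicity under the semidefinite ordering (the paper cites this directly; you phrase it via the Rayleigh quotient). The only cosmetic difference is that you make explicit the identity $A(\ell,1)=\mathrm{diag}(1/\ell,1)$, which the paper leaves implicit.
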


\begin{proof}
The proof of Lemma \ref{lem:increasingwrtEta} below
shows that, for each $0 \leq \eta  \leq 1$ 
$A(\ell,1) = A(\ell,\eta) + (1-\eta) \cdot B$ where $B$ is nonnegative definite.
Eigenvalues  of symmetric matrices are nondecreasing under the NND matrix ordering. Hence,
\[
 \nu_-(A(\ell,\eta)) \leq  \nu_-(A(\ell,1) ), \qquad \mbox{ if }  0  < \eta < 1.
\]
Applying this specifically with choices $(\ell_i)$, $(\eta_i)$, we see that
\[
   \nu_-(A(\ell_i,\eta_i))  \leq \nu_-(A(\ell_i,\tpeta_i)) , \qquad i=1,\dots, r,
\]
and so
\[
   K_{-,r}(\eta) = \min(1, \min_{i=1}^r \nu_-(A(\ell_i,\eta_i))) \leq \min(1, \min_{i=1}^r \nu_-(A(\ell_i,\tpeta_i))) =  K_{-,r}(\tpeta).
\]
\end{proof}

\begin{lem} \label{lem:KrPlusLemma}
For all $\eta \in \bR_+^r$,
\[
   K_{+,r}(\tpeta) = K_{+,r}(\eta) .
\]
\end{lem}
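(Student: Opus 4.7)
The plan is to split the indices into $I_{\geq} = \{i : \eta_i \geq 1\}$ (where $\tpeta_i = \eta_i$ by definition) and $I_{<} = \{i : \eta_i < 1\}$ (where $\tpeta_i = 1$), and to show that every $\nu_+$-contribution coming from $I_{<}$ is $\leq 1$ under both nonlinearities, hence gets absorbed by the outer $\max(1,\,\cdot\,)$ leaving the overall max unchanged.

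The only genuine ingredient to verify is: for $\eta \leq 1$, $\nu_+(A(\ell,\eta)) \leq 1$. I would piggyback on exactly the tool already invoked in the proof of Lemma~\ref{lem:KrMinusLemma}, namely the matrix identity $A(\ell,1) = A(\ell,\eta) + (1-\eta) B$ with $B$ nonnegative definite, supplied by (the proof of) Lemma~\ref{lem:increasingwrtEta}. For $\eta \leq 1$ this yields $A(\ell,1) \succeq A(\ell,\eta)$ in the Loewner order, and monotonicity of eigenvalues under the PSD ordering gives $\nu_+(A(\ell,\eta)) \leq \nu_+(A(\ell,1))$. The remark recorded immediately after Lemma~\ref{lem:defTD} identifies $\nu_+(A(\ell,1)) = 1$, completing the bound.

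Putting the pieces together: for $i \in I_{\geq}$ the quantities $\nu_+(A(\ell_i,\eta_i))$ and $\nu_+(A(\ell_i,\tpeta_i))$ are literally equal; for $i \in I_{<}$ both are $\leq 1$ (the $\tpeta$-version equals $1$, the $\eta$-version is $\leq 1$ by the bound above), and therefore disappear under the outer $\max(1,\,\cdot\,)$. Hence
\[
K_{+,r}(\eta) \;=\; \max\!\left(1,\; \max_{i \in I_{\geq}} \nu_+(A(\ell_i,\eta_i))\right) \;=\; \max\!\left(1,\; \max_{i \in I_{\geq}} \nu_+(A(\ell_i,\tpeta_i))\right) \;=\; K_{+,r}(\tpeta).
\]
There is no real obstacle; the crux is simply the observation that $\max(1,\,\cdot\,)$ neutralizes any drop produced by lifting sub-unity values up to $1$, so the whole argument reduces to citing the Loewner-monotonicity fact already in hand from the previous lemma.
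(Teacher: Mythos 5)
Your argument is correct and uses the same two ingredients the paper does: the Loewner ordering $A(\ell,1) = A(\ell,\eta) + (1-\eta)B$ with $B$ PSD (from the proof of Lemma \ref{lem:increasingwrtEta}), and the evaluation $\nu_+(A(\ell,1))=1$. The paper reaches the same conclusion via a chain of set-manipulations inside the outer $\max$, whereas you split the index set into $\{\eta_i \ge 1\}$ and $\{\eta_i < 1\}$ up front; the two presentations are reorganizations of the identical argument.
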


\begin{proof}
We first remark that {\it for every $\ell \geq 1$, 
$\eta=1$ marks the boundary separating $\nu_+(\ell,\cdot)  \leq 1 $ from $\nu_+ (\ell,\cdot) \geq 1$.}
To see this, we look ahead to  Lemma 2.6.1, which shows that
\[
  \nu_+(\ell,\eta) =  (T + \sqrt{T^2-4D})/2 ,
\]
where $D = D(\ell,\eta)$ and $T = T(\ell,\eta)$ as in Lemma 2.6.1.
In the special case $\eta=1$,
$T = T(\ell,1) = 1 + 1/\ell$ and $D = D(\ell,1) = 1/\ell$.
We get
\[
  \nu_+(\ell,1) = ((1+1/\ell) + \sqrt{(1+1/\ell)^2 - 4/\ell})/2  = 1.
\]
Again looking ahead to the
proof of Lemma \ref{lem:increasingwrtEta} below,  
for each $\eta \geq 1$ 
$A(\ell,\eta) = A(\ell, 1) + (\eta-1) \cdot B$ where $B$ is nonnegative definite.
Eigenvalues  of symmetric matrices are nondecreasing under the
NND matrix ordering.
Hence for $\eta \geq 1$, 
\[
\nu_+(A(\ell,\tpeta)) = \nu_+(A(\ell,\tpeta)) \geq \nu_+(A(\ell,1)) = 1.
\]
Similarly, for $ 0 < \eta < 1$, arguing as in the previous lemma,
$A(\ell,\eta) = A(\ell, \eta) + (1-\eta) \cdot B$ where $B$ is nonnegative definite,
and so
\[
\nu_+(A(\ell,\eta)) \leq \nu_+(A(\ell,\tpeta)) = \nu_+(A(\ell,1)) = 1.
\]
Hence for $\eta \geq 0$,
\[
\nu_+(A(\ell,\tpeta)) \geq 1.
\]
It follows that 
\begin{equation} \label{eq;maxgeqone}
1 \leq \max_{i=1}^r \nu_+(A(\ell_i,\tpeta_i))),
\end{equation}
and that
\[
\max_{i=1}^r \nu_+(A(\ell_i,\eta_i))) \leq \max_{i=1}^r \nu_+(A(\ell_i,\tpeta_i))).
\]
We have
\begin{eqnarray*}
K_{+,r}(\eta) &=&  \max(1,  \max_{i=1}^r \nu_+(A(\ell_i,\eta_i)))\\
&=&  \max \left (1,  \max \{  \nu_+(A(\ell_i,\eta_i)): \nu_+(A(\ell_i,\eta_i)) \geq 1 \} \right ) \\
&=&  \max \left (1,  \max \{  \nu_+(A(\ell_i,\eta_i)): \eta_i  \geq 1 \} \right ) \\
&=&  \max \left(1,  \max \{  \nu_+(A(\ell_i,\eta_i)): \eta_i = \tpeta_i \} \right ) \\
&=&  \max \left(1,  \max \{  \nu_+(A(\ell_i,\tpeta_i)):  \eta_i = \tpeta_i \} \right ) \\
&=&  \max \left(1,  \max \{  \nu_+(A(\ell_i,\tpeta_i)):   i=1,\dots, r \} \right ) \\
&=& \max_{i=1}^r \nu_+(A(\ell_i,\tpeta_i))) \qquad [\mbox{ by } (\ref{eq;maxgeqone})] \\
&=& K_{+,r}(\tpeta) .
\end{eqnarray*}
\end{proof}
}

\subsubsection{Proofs for Lemma \ref{lem:AsyShrinkOpt}}

\begin{lem} ({\bf Local Lipschitz character of condition number.}) \label{lem:LocalLipschitzCond}
Fix $M > 1$ and let $\Delta$ denote a symmetric 
matrix with eigenvalues lying between $1/M$ and $M$.
There is a constant $C(M)$ so that, whenever $\| \Delta_n - \Delta\|_F < \frac{1}{2M}$,
\begin{equation} \label{eq:LocLipCondNum}
   | \kappa(\Delta_n) - \kappa(\Delta) | \leq C(M) \| \Delta_n - \Delta\|_F;
\end{equation}
for example we may take $C(M) = 4M^3$.
\end{lem}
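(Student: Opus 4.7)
The plan is to reduce the question to bounding perturbations of the maximum and minimum eigenvalues of $\Delta$, and then to control the ratio $\lambda_{\max}/\lambda_{\min}$ via an elementary algebraic identity for the difference of two fractions.

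First I would invoke Weyl's inequality in the form $|\lambda_i(\Delta_n) - \lambda_i(\Delta)| \leq \|\Delta_n - \Delta\|_{op} \leq \|\Delta_n - \Delta\|_F$ to see that eigenvalues are $1$-Lipschitz in Frobenius norm. Since the eigenvalues of $\Delta$ lie in $[1/M, M]$ and the perturbation has Frobenius size less than $1/(2M)$, this gives the a priori bounds
\[
\lambda_{\max}(\Delta_n) \leq M + \tfrac{1}{2M}, \qquad \lambda_{\min}(\Delta_n) \geq \tfrac{1}{M} - \tfrac{1}{2M} = \tfrac{1}{2M} > 0 ,
\]
so $\kappa(\Delta_n)$ is finite and the ratio is well-controlled on both ends.

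Next, writing $a = \lambda_{\max}(\Delta_n)$, $b = \lambda_{\min}(\Delta_n)$, $c = \lambda_{\max}(\Delta)$, $d = \lambda_{\min}(\Delta)$, I would use the identity
\[
\frac{a}{b} - \frac{c}{d} = \frac{(a-c)d - (b-d)c}{bd},
\]
which splits the numerator into two terms, each of which is (an eigenvalue difference) times (an eigenvalue bounded by $M$). Applying Weyl in each of the two numerator terms and the lower bounds $b \geq 1/(2M)$ and $d \geq 1/M$ in the denominator gives
\[
|\kappa(\Delta_n) - \kappa(\Delta)| \leq \frac{2M \cdot \|\Delta_n - \Delta\|_F}{(1/(2M))\cdot (1/M)} = 4M^3 \, \|\Delta_n - \Delta\|_F,
\]
which is precisely the stated bound with $C(M) = 4M^3$.

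I do not anticipate any real obstacle here: the only subtle point is ensuring that $\lambda_{\min}(\Delta_n)$ stays bounded away from zero, which is exactly what the smallness hypothesis $\|\Delta_n - \Delta\|_F < 1/(2M)$ delivers; without such a hypothesis the condition number blows up and no Lipschitz bound is possible. The remainder is routine arithmetic on the two-fraction identity.
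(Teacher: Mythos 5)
Your proof is correct and follows essentially the same route as the paper: the paper factors $\kappa = K \circ \Lambda$ where $\Lambda$ is the extreme-eigenvalue map ($1$-Lipschitz by Weyl/spectral continuity, the paper's second sublemma) and $K(u,v)=u/v$ is locally Lipschitz with constant $4M^3$ (the paper's first sublemma, stated without proof); your two-fraction identity and Weyl argument are exactly the calculations that would prove those two sublemmata, and your arithmetic for $C(M)=4M^3$ matches.
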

%
%
\begin{proof}
We state without proof immediately below two elementary sublemmata. 
These lemmas define locally Lipschitz functions $K$ and $\Lambda$ such that
$\kappa = K \circ \Lambda $; combining them
gives the desired result, (\ref{eq:LocLipCondNum}).
\end{proof}
\begin{lem} \label{lem:sub_lemma_local_lipschitz}
Consider the function $K: \bR^+ \times \bR^+ \mapsto \bR^+$ defined by $K(u,v) = u/v$.
At each point $(u,v)$ where $v > 0$ this
function is locally Lipschitz with respect to the $\ell^\infty_2$-norm $|(u,v)|_{\infty} = max(|u|,|v|)$.
Indeed, fix $M > 0$ and suppose $1/M \leq v_0 \leq u_0 \leq M$. Then if 
$|(u,v) - (u_0,v_0)|_{\infty} \leq \epsilon$
\[
     |K(u,v) - K(u_0,v_0)| \leq C(M) \cdot \epsilon,
\]
provided $\epsilon < 1/(2M)$, where $C(M) = 4M^3$.
\end{lem}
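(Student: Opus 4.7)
The plan is to write the difference $K(u,v)-K(u_0,v_0) = u/v - u_0/v_0$ as a single fraction with common denominator $vv_0$, then bound the resulting numerator and denominator separately. Algebraic manipulation gives the identity $uv_0 - u_0 v = (u-u_0)v_0 - u_0(v-v_0)$, which cleanly isolates the two perturbations. The hypothesis $|(u,v)-(u_0,v_0)|_\infty \leq \epsilon$ yields $|u-u_0| \leq \epsilon$ and $|v-v_0| \leq \epsilon$, while $u_0,v_0 \leq M$ bounds the prefactors, giving a numerator bound of roughly $2M\epsilon$.

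The one nontrivial step is controlling the denominator: we need $vv_0$ bounded away from zero, not merely $v_0v_0$. Since $v_0 \geq 1/M$ by hypothesis, a lower bound on $v$ follows from $v \geq v_0 - |v-v_0| \geq 1/M - \epsilon$. This is where the restriction $\epsilon < 1/(2M)$ is used essentially: it guarantees $v \geq 1/(2M) > 0$, so that $vv_0 \geq 1/(2M^2)$. Combining the numerator bound $2M\epsilon$ with the denominator bound $1/(2M^2)$ produces $|K(u,v)-K(u_0,v_0)| \leq 4M^3\epsilon$, and we may take $C(M) = 4M^3$ as claimed.

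No real obstacle arises; the only subtlety is the role of the smallness condition $\epsilon < 1/(2M)$, which is precisely what prevents the denominator from being perturbed across zero. An alternative, slightly slicker approach would be to verify $|\nabla K(u,v)|_1 = 1/v + u/v^2$ on the closed box $[1/(2M), 2M]^2$ (into which the segment from $(u_0,v_0)$ to $(u,v)$ is contained under our hypotheses) is bounded by $2M + 4M^3 \leq 4M^3 \cdot 2$, and then invoke the mean value inequality; but the direct algebraic route above is shorter and gives the explicit constant $4M^3$ immediately.
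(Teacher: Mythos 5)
The paper explicitly leaves this lemma unproven, describing it as one of "two elementary sublemmata" stated without proof, so there is no paper argument to compare against. Your direct computation is correct: the decomposition $uv_0 - u_0 v = (u-u_0)v_0 - u_0(v-v_0)$ gives a numerator bound of $2M\epsilon$, and the condition $\epsilon < 1/(2M)$ forces $v \geq v_0 - \epsilon \geq 1/(2M)$ so that $vv_0 \geq 1/(2M^2)$, yielding exactly $|K(u,v)-K(u_0,v_0)| \leq 4M^3\epsilon$. This is a clean and complete proof of the lemma.

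One small remark on the aside: the gradient-based alternative is conceptually sound but your stated bound on $|\nabla K|_1$ on the box $[1/(2M),2M]^2$ is off. There $1/v \leq 2M$ and $u/v^2 \leq 2M\cdot(2M)^2 = 8M^3$, so the correct bound is $2M + 8M^3$, not $2M + 4M^3$; the mean-value route would then give a constant like $8M^3 + 2M$ rather than $4M^3$. Since this is only offered as a sketch of an alternative and the main argument gives the advertised $C(M)=4M^3$ exactly, this does not affect the validity of your proof.
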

\begin{lem}
For an SPD matrix $A$, denote its smallest eigenvalue by $\lambda_{min}(A)$ 
and its largest eigenvalue by $\lambda_{max}(A)$. Let $\Lambda : SPD(p) \mapsto \bR^+ \times \bR^+ $ denote the mapping
$A \mapsto (\lambda_{max}(A),\lambda_{min}(A))$.
This mapping is Lipschitz with respect to Operator and Frobenius norms, 
with Lipschitz constant 1:
\[
|\Lambda(A) - \Lambda(B)|_{\infty} \leq \|A-B\|_{op} \leq  \|A-B\|_{F}.
\]
\end{lem}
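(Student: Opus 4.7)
The plan is to prove the two claimed inequalities separately; both are classical perturbation bounds that should present no serious difficulty.

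For the second inequality $\|A-B\|_{op} \leq \|A-B\|_F$, I would note that $A-B$ is symmetric, so it admits a real spectral decomposition with real eigenvalues $\mu_1,\dots,\mu_p$. Then $\|A-B\|_{op} = \max_i |\mu_i|$ while $\|A-B\|_F = (\sum_i \mu_i^2)^{1/2}$, and the first clearly does not exceed the second. This is a one-line observation.

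For the first inequality $|\Lambda(A) - \Lambda(B)|_\infty \leq \|A-B\|_{op}$, the plan is to use the Courant--Fischer variational characterization. For any unit vector $x$,
\[
x'Ax = x'Bx + x'(A-B)x \leq x'Bx + \|A-B\|_{op},
\]
where I used $|x'(A-B)x| \leq \|A-B\|_{op}$ for $\|x\|=1$. Taking $\sup_{\|x\|=1}$ on both sides yields $\lambda_{max}(A) \leq \lambda_{max}(B) + \|A-B\|_{op}$. Swapping the roles of $A$ and $B$ gives $|\lambda_{max}(A) - \lambda_{max}(B)| \leq \|A-B\|_{op}$. The identical argument using $\inf_{\|x\|=1}$ (or invoking the result for $-A,-B$) delivers $|\lambda_{min}(A) - \lambda_{min}(B)| \leq \|A-B\|_{op}$. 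Taking the maximum of these two scalar bounds gives exactly $|\Lambda(A) - \Lambda(B)|_\infty \leq \|A-B\|_{op}$.

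Concatenating the two inequalities concludes the proof. There is no genuine obstacle here; the only conceptual ingredient is Weyl's bound on eigenvalue perturbation for symmetric matrices, which follows in one step from the variational formula above and which one could alternatively just cite. The lemma is used in combination with Lemma~\ref{lem:sub_lemma_local_lipschitz} to write the condition number as $\kappa = K \circ \Lambda$ and thereby transfer local Lipschitz continuity of $K$ on $\bR^+\times\bR^+$ into the local Lipschitz bound (\ref{eq:LocLipCondNum}) for $\kappa$ in Frobenius norm.
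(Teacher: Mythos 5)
Your proof is correct. The paper actually states this sublemma without proof, explicitly calling it ``elementary'' (it is invoked alongside Lemma~\ref{lem:sub_lemma_local_lipschitz} to prove Lemma~\ref{lem:LocalLipschitzCond} via the composition $\kappa = K \circ \Lambda$, exactly as you note). Your argument supplies the proof the paper omits, and it is the standard one: Weyl's eigenvalue perturbation bound for symmetric matrices derived from the Rayleigh-quotient variational characterization, together with the observation that the operator norm of a symmetric matrix equals $\max_i|\mu_i|$ and is therefore bounded by the Frobenius norm $(\sum_i \mu_i^2)^{1/2}$. There is no gap; both steps are airtight, and you correctly identify the role the lemma plays in the surrounding argument.
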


We will use a well-known result on eigenvalues of submatrices; 
compare Theorem 4.3.28 of \cite{horn2012matrix}.
\begin{lem} \label{lem:restriction_reduces_extremes}
Let $B$ be a symmetric real matrix of the block form
\[
B = \left[\begin{array}{cc}
B_{1,1} & B_{1,2}\\
B_{2,1} & B_{2,2} 
\end{array}\right] ,
\]
where $B \in \R^{p\times p}$ and $B_{1,1} \in \R^{r \times r}$. Then
\[
\lambda_{max}(B)\geq \lambda_{max}(B_{1,1}), \qquad \lambda_{min}(B)\leq \lambda_{min}(B_{1,1}).
\]
\end{lem}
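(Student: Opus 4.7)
The plan is to invoke the variational (Rayleigh quotient) characterization of extreme eigenvalues of symmetric matrices and exploit the fact that $B_{1,1}$ is a principal submatrix, i.e., the quadratic form $y \mapsto y' B_{1,1} y$ on $\mathbb{R}^r$ is the restriction of $x \mapsto x' B x$ on $\mathbb{R}^p$ to the coordinate subspace $\{x = (y,0) : y \in \mathbb{R}^r\}$.

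First I would recall the Courant characterization
\[
\lambda_{max}(B) = \sup_{\|x\|_2=1} x'Bx, \qquad \lambda_{min}(B) = \inf_{\|x\|_2=1} x'Bx,
\]
and similarly for $B_{1,1}$ on $\mathbb{R}^r$. Then I would embed $\mathbb{R}^r$ into $\mathbb{R}^p$ by $y \mapsto \tilde y := (y, 0_{p-r})'$. For any unit vector $y \in \mathbb{R}^r$, $\tilde y$ is a unit vector in $\mathbb{R}^p$, and by the block form of $B$ we have $\tilde y' B \tilde y = y' B_{1,1} y$. Hence the supremum defining $\lambda_{max}(B)$ is taken over a strictly larger set than the supremum defining $\lambda_{max}(B_{1,1})$, yielding $\lambda_{max}(B) \ge \lambda_{max}(B_{1,1})$. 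The same argument with $\inf$ in place of $\sup$ yields $\lambda_{min}(B) \le \lambda_{min}(B_{1,1})$.

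There is essentially no obstacle here; the one thing to be careful about is simply that symmetry of $B$ is what legitimizes the Rayleigh-quotient characterization and that the block form, in particular the fact that $B_{1,1}$ sits as a principal submatrix, is what makes the restriction identity $\tilde y' B \tilde y = y' B_{1,1} y$ hold. For a self-contained presentation one could alternatively cite Cauchy's interlacing theorem (Theorem 4.3.28 of Horn and Johnson, as referenced just before the statement), which gives the stronger conclusion that all eigenvalues of $B_{1,1}$ interlace those of $B$; the two displayed inequalities are then immediate special cases.
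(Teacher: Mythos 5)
Your proof is correct. The paper does not supply an argument at all for this lemma---it merely labels it a "well-known result on eigenvalues of submatrices" and points to Theorem 4.3.28 of Horn and Johnson---so there is no paper proof to compare against; your Rayleigh-quotient/restriction argument is precisely the standard one and fills that gap correctly, and your closing remark that Cauchy interlacing gives the stronger statement matches the paper's own citation.
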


Armed with this, we can prove:

\begin{lem} \label{lem:LocalLipschitzRestriction}
Let $\cWnr$ denote the $2r+1$-dimensional subspace
of Definition \ref{def:Wnr}, and recall the convention that
for a matrix $\Delta$, $\kappa(\Delta|\cWnr)$ denotes the 
condition number of the restriction of $\Delta$ to the subspace $\cWnr$.
\bitem 
\item 
{ Restriction  to $\cWnr$
can at worst reduce the condition number of the asymptotic pivot:
\[
     \kappa( \DelAsy  | \cWnr ) \leq  \kappa(\DelAsy ).
\]
If all $\eta_i \geq 1$, $i=1,\dots,r$,} restriction
does not change the condition number of  the asymptotic pivot:
\[
      \kappa( \DelAsy | \cWnr ) =   \kappa(\DelAsy),
\]
whenever $p > 2r$. 
\item Restriction
can only reduce the condition number of the empirical pivot:
\[
     \kappa( \DelEmp  | \cWnr ) \leq  \kappa(\DelEmp ).
\]
\item Under the assumptions of 
Lemma \ref{lem:LocalLipschitzCond}, restriction preserves  Lipschitz character. Let $Q_{n,r}$ denote 
orthogonal projection onto $\cWnr$ 
and suppose that the empirical pivot is based on
a factor covariance estimate $\hat{\Sigma}(\eta)$ with at most $r$ factors, i.e. $\eta_{i,p}=1$, $r < i \leq p$.
Then for sufficiently small values of $\| Q_{n,r} (\DelEmp - \DelAsy) Q_{n,r} \|_F$ we have
\[
 | \kappa(\DelEmp | \cWnr) - \kappa(\DelAsy |\cWnr ) | \leq C(M) \| Q_{n,r} (\DelEmp - \DelAsy) Q_{n,r} \|_F ,
\]
where $C(M) = 4M^3$ as in Lemma \ref{lem:LocalLipschitzCond}.
\eitem
\end{lem}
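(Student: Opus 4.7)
The plan is to dispatch the three items in order. For the first inequality and for item (2), in any orthonormal basis adapted to the decomposition $\R^p = \cWnr \oplus \cWnr^\perp$, the restriction $\Delta|\cWnr$ appears as the upper-left $(2r+1)\times(2r+1)$ principal submatrix of $\Delta$. Invoking Lemma \ref{lem:restriction_reduces_extremes} gives $\lambda_{\max}(\Delta|\cWnr)\leq \lambda_{\max}(\Delta)$ and $\lambda_{\min}(\Delta|\cWnr)\geq \lambda_{\min}(\Delta)$, whence $\kappa(\Delta|\cWnr)\leq \kappa(\Delta)$. Applying this with $\Delta=\DelAsy$ and then with $\Delta=\DelEmp$ delivers both claimed inequalities.

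For the equality clause of item (1), I would exploit the block-diagonal form of $\DelAsy$ supplied by Lemma \ref{lem:blocks}: in the $W$-basis, $\DelAsy=\oplus_{i=1}^r A(\ell_i,\eta_i)\oplus I_{p-2r}$. Since $W_1,\ldots,W_{2r}$ span the first block and the extra unit vector $w_{2r+1}$ lies inside the invariant subspace of the trailing $I_{p-2r}$ block, the restriction $\DelAsy|\cWnr$ is itself block-diagonal of the form $\oplus_{i=1}^r A(\ell_i,\eta_i)\oplus (1)$. Thus the set of distinct eigenvalues of $\DelAsy|\cWnr$ equals the set of distinct eigenvalues of $\DelAsy$, namely $\{1\}\cup\bigcup_{i=1}^r\{\nu_-(A(\ell_i,\eta_i)),\nu_+(A(\ell_i,\eta_i))\}$. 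Consequently $\lambda_{\max}$ and $\lambda_{\min}$ agree across the two operators and the condition numbers match. Under the auxiliary hypothesis $\eta_i\geq 1$ this is especially clean: the monotonicity relation $A(\ell,\eta)=A(\ell,1)+(\eta-1)B$ with $B$ positive semidefinite, as invoked in the proof of Lemma \ref{lem:KrPlusLemma}, guarantees $\nu_+(A(\ell_i,\eta_i))\geq 1$, identifying $\lambda_{\max}(\DelAsy)=\max_i\nu_+(A(\ell_i,\eta_i))$, which matches the maximum over the restricted spectrum.

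For item (3), I would first record the Frobenius-norm identification $\|Q_{n,r}(\DelEmp-\DelAsy)Q_{n,r}\|_F=\|(\DelEmp-\DelAsy)|\cWnr\|_F$: in a basis adapted to $\cWnr\oplus\cWnr^\perp$, the operator $Q_{n,r}(\DelEmp-\DelAsy)Q_{n,r}$ has a single nonzero block, the upper-left $(2r+1)\times(2r+1)$ matrix representation of $(\DelEmp-\DelAsy)|\cWnr$. Next, by the block-diagonal argument just used, the spectrum of $\DelAsy|\cWnr$ is a sub-multiset of the spectrum of $\DelAsy$; the hypothesis of Lemma \ref{lem:LocalLipschitzCond} places the latter in $[1/M,M]$, hence also the former. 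Applying Lemma \ref{lem:LocalLipschitzCond} with reference matrix $B=\DelAsy|\cWnr$ and perturbed matrix $A=\DelEmp|\cWnr$, valid whenever $\|A-B\|_F<1/(2M)$, then yields $|\kappa(A)-\kappa(B)|\leq C(M)\|A-B\|_F$ with $C(M)=4M^3$, which is the claim.

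The main obstacle is the equality clause in item (1): the minimum over $\{1,\min_i\nu_-(A(\ell_i,\eta_i))\}$ must be preserved under restriction, and this is ensured only because $w_{2r+1}$ contributes an explicit eigenvalue $1$ to the restricted spectrum. This is precisely why $\cWnr$ is defined as $(2r+1)$-dimensional rather than $2r$-dimensional: a $2r$-dimensional variant would drop the eigenvalue $1$, and the equality could fail whenever $\min_i\nu_-(A(\ell_i,\eta_i))>1$. Everything else in the argument is either direct from Lemma \ref{lem:restriction_reduces_extremes} and the block form of $\DelAsy$, or a mechanical invocation of Lemma \ref{lem:LocalLipschitzCond}.
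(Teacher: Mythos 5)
Your proof is correct and follows essentially the same route as the paper: Lemma~\ref{lem:restriction_reduces_extremes} for the inequalities, the explicit block-diagonal form $\DelAsy|\cWnr=\oplus_{i=1}^r A(\ell_i,\eta_i)\oplus(1)$ (which preserves the distinct spectrum of $\DelAsy$ when $p>2r$) for the equality, and the Frobenius-norm identification $\|Q_{n,r}(\DelEmp-\DelAsy)Q_{n,r}\|_F=\|\DelEmp|\cWnr-\DelAsy|\cWnr\|_F$ followed by Lemma~\ref{lem:LocalLipschitzCond} for item (3). Your remark that the spectral containment $\mathrm{spec}(\DelAsy|\cWnr)\subset\mathrm{spec}(\DelAsy)$ licenses applying Lemma~\ref{lem:LocalLipschitzCond} to the restricted matrices with the same $M$, and your observation that the equality in item (1) would in fact hold without the $\eta_i\geq1$ hypothesis so long as $p>2r$ and the identity block is present, are both more explicit than what the paper records, but neither departs from the paper's argument.
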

\begin{proof}
Assume $p>2r$ and $\eta_i\geq 1$. By definition, 
\[
\DelAsy((\ell_i); (\eta_i); n,p,\gamma) = \oplus_{i=1}^r A(\ell_i,\eta_{i}; \gamma)  \oplus I_{p-2r}
\]
and, with $w_{2r+1}$ denoting the $2r+1$-th element 
in the construction of the orthobasis $W$,
\[
 \DelAsy  | \cWnr = \oplus_{i=1}^r A(\ell_i,\eta_{i}; \gamma) \;  \oplus  \; w_{2r+1} w'_{2r+1}.
\]
Therefore, the set of eigenvalues  
of either $\DelAsy$ or $\DelAsy  | \cWnr$ is: 
\[
(\cup_{i=1}^r\{\nu_-(\ell_i, \eta_i; \gamma)\})\cup(\cup_{i=1}^r\{\nu_+(\ell_i, \eta_i; \gamma)\})\cup\{1\}.
\]
It follows that
\[
\kappa(\DelAsy) = \kappa(\DelAsy | \cWnr) 
= K_r((\ell_i)_{i=1}^r,(\eta_i)_{i=1}^r ; \gamma)
\]
where $K_r$ was defined in Lemma \ref{sublemma:lbndone} above.
The independence of the RHS from $p > 2r$ 
proves our first assertion.


To prove our second claim, observe that by construction of $\cWnr$, $\DelEmp | \cWnr$ is the upper-left block of $\DelEmp$. 
Our second claim follows from Lemma \ref{lem:restriction_reduces_extremes}.

For our third assertion, observe that
\[
\| Q_{n,r} (\DelEmp - \DelAsy) Q_{n,r} \|_F = \|\DelEmp|\cWnr - \DelAsy|\cWnr\|_F.
\]
Now apply Lemma \ref{lem:LocalLipschitzCond}. 
\end{proof}

\subsubsection*{Proof of Theorem \ref{lem:AsyShrinkOpt}}

\newcommand{\etae}{{ \eta^e}}
\newcommand{\etaer}{{ \eta^{e,r}}}
\renewcommand{\etaa}{{ \eta^a}}
\newcommand{\etaar}{{ \eta^{a,r}}}

Let $\etae$ denote one of the minimizers 
of $\eta \mapsto \kappa(\Delta^e((\ell_i); (\eta_i )_{i=1}^{p_n};n, p_n))$ 
guaranteed by Lemma \ref{lem:boundsOnOptimalEta}.
Then by definition:  
\begin{eqnarray*}
val(K^e((\ell_i), n, p_n)) &=& \kappa(\Delta^e((\ell_i); \etae ))\\
&\geq& \kappa(\Delta^e((\ell_i); \etae )|\cWnr) \qquad 
  \hfill \mbox{[by Lemma \ref{lem:LocalLipschitzRestriction} part 2]}.
\end{eqnarray*}
{ Let $\etaer$ denote the ones-mutilated sequence 
$\etaer \equiv (\etae_1,\dots, \etae_r,1,\dots, 1)$.}
Lemma \ref{lem:boundsOnOptimalEta} gives the boundedness
 $(\ell_1^2 +1)^{-1} \leq \etae_i \leq \ell_1^2 +1$, and   
Lemma \ref{lem:PivotUniformConvergence} yields \-- uniformly across all
$\etae$ obeying these bounds \-- the convergence
\[
\| \Delta^e((\ell_i); \etaer ;n, p_n)|\cWnr  - \Delta^a((\ell_i); \etaer ;n, p_n)|\cWnr \|_F \xrightarrow{a.s} 0.
\]
{Lemma \ref{lem:MutilationIsBanal} yields
the convergence
\[
P\left \{ \| \Delta^e((\ell_i); \etae ;n, p_n)|\cWnr  - \Delta^e((\ell_i); \etaer ;n, p_n)|\cWnr \|_F  > \eps \right \} \leq C(\eps) n^{-3}.
\]
}
By the Lipschitz character of the condition number (of a restriction),
we have for small enough $\eps > 0$, and large enough $min(n,p_n)$,
\[
P \left \{ | \kappa(\Delta^e((\ell_i); \etae ;n, p_n)|\cWnr) -  \kappa(\Delta^a((\ell_i); \etaer ;n, p_n)|\cWnr)  \geq \eps \right\} \leq C'(\eps) n^{-3}.   
\]
\newcommand{\etaarpn}{\eta_{\gamma,p}^{a,r}}
Let $\etaarpn  \equiv ((\eta^a_\gamma)_i: 1 \leq i \leq p)$ 
denote an optimizing configuration 
of 
\[
\eta \mapsto \kappa(\Delta^a((\ell_i); (\eta_i)_{i=1}^p ;n, p, \gamma)),
\]
under the constraints of $(K^a)$
so that $val(K^a) = \kappa(\Delta^a((\ell_i);\etaarpn ;n, p, \gamma))$.
By the convention of the optimization
problem $(K^a)$ this solution has the form of a ones-padded sequence 
$\etaarpn \equiv ((\eta^a_\gamma)_1,\dots, (\eta^a_\gamma)_r,1,\dots, 1)$.
Moreover, by Lemma \ref{lem:AsyOptProb}, the optimized
variables $(\eta^a_\gamma)_1,\dots, (\eta^a_\gamma)_r$ embedded
in slots $1, \dots, r$  of $\etaarpn$ can be chosen to have the
{\it same values} across every instance of $\etaarpn$ where $p > 2r$ and $\gamma$ is the same.

Applying notation from the proof of Lemma \ref{lem:AsyOptProb}, as well as the conclusion of Lemma \ref{sublemma:lbndone}, 
we have
\begin{eqnarray*}
\kappa(\Delta^a((\ell_i); \etaer )|\cWnr) &=& K_r((\ell_i)_{i=1}^r, (\eta^{e}_i)_{i=1}^r; \gamma) \\
&\geq&  \min_{\eta_i \geq 0} K_r((\ell_i),(\eta_i); \gamma) \\
&=&  \min_{\eta_i \geq 1} K_r((\ell_i),(\eta_i); \gamma) \qquad [\mbox{Lemma } \ref{sublemma:lbndone}] \\
 &=&     K_r((\ell_i), ((\eta_\gamma^a)_i); \gamma) \\
 &=& val(K^a((\ell_i))) .
\end{eqnarray*}

We conclude that the event
\[
\big \{ \;  |\kappa(\Delta^e((\ell_i); \etae ;n, p_n)|\cWnr) -  \kappa(\Delta^a((\ell_i); \etaer ;n, p_n)|\cWnr)  | \leq \eps  \; \big \}
\]
implies
\[
\big \{ \; val(K^e((\ell_i), n, p_n)) > val(K^a((\ell_i);n, p_n)) - \epsilon  \; \big \} , 
\]
and hence that for sufficiently large $\min(n,p_n)$,
\[
  P \left \{  val(K^e((\ell_i), n, p_n)) < val(K^a((\ell_i);n, p_n)) - \epsilon \right \} \leq C'(\eps) n^{-3}.
\]

Because $\sum_{n > N} n^{-3} \leq N^{-2} \goto 0$ as $N \goto \infty$, 
we obtain an almost-sure eventual lower bound; namely
that for each $\epsilon > 0$, 
there is an a.s. finite random variable $N_0^-(\eps)$ so
that almost surely 
\begin{equation} \label{eq:lowerbound}
\big \{ \; val(K^e((\ell_i), n, p_n)) > val(K^a((\ell_i);n, p_n)) - \epsilon, \,  n > N_0^-(\eps)  \; \big \}.
\end{equation}

We next develop an asymptotic upper bound complementing this lower bound.
Recall the form $\etaarpn$, whose first $r$ entries are fixed
independently of $p > 2r$ and all the $p-r$ remaining entries are
identically $1$. We may apply Lemma \ref{lem:blocks} in such a case. 
Along some sequence 
$(n,p_n)$ where $\min(n,p_n) \goto \infty$ and $p_n/n \goto \gamma$,
\[
\Vert \Delta^a((\ell_i); \etaarpn ;n, p_n,\gamma) - \Delta^e((\ell_i); \etaarpn ;n, p_n) \Vert_F \xrightarrow{a.s} 0 , \mbox{ as }  n \goto \infty , 
\]
which by Lemma \ref{lem:LocalLipschitzCond} implies:
\[
|\kappa(\Delta^a((\ell_i); \etaarpn ;n, p_n,\gamma)) 
- \kappa(\Delta^e((\ell_i); \etaarpn ;n, p_n)) |\xrightarrow{a.s} 0 , \mbox{ as }  n \goto \infty . 
\]
Combining the above:
\begin{equation} \label{eq:solutionConvergestoOptimum}
|val(K^a((\ell_i);n,p_n, \gamma)) - \kappa(\Delta^e((\ell_i); \etaarpn ;n, p_n))|\xrightarrow{a.s} 0.
\end{equation}
By definition $
K^e((\ell_i);n, p_n) \leq \kappa(\Delta^e((\ell_i); \etaarpn ;n, p_n))$, 
so we conclude that 
for each $\epsilon > 0$, 
there is an a.s. finite random variable $N_0^+(\eps)$  so that almost surely
\begin{equation} \label{eq:upperbound}
\big \{ \;  val(K^a((\ell_i);n,p_n, \gamma)) > val(K^e((\ell_i);n, p_n)) - \epsilon, \, n > N_0^+(\eps) \; \big \} .
\end{equation}
 Lemma \ref{lem:AsyOptProb} shows that $val(K^a((\ell_i);n,p_n, \gamma))$
is a constant independent of $p > 2r$.
\eqref{eq:lowerbound} and \eqref{eq:upperbound}  therefore
prove that the almost sure limit 
$\lim_{n\rightarrow \infty} val( K^e((\ell_i);n, p_n))$ exists and is equal to 
\[
val(K^a((\ell_i);n, p, \gamma)) ,
\]
(which was our first claim). Moreover, \eqref{eq:solutionConvergestoOptimum} proves that our second claim is true almost surely. The same conclusions hold for in-probability convergence. 

\subsubsection{Proofs for Section \ref{sec:OrthoInvar}}

{
Let $m = \min(n, p)$, $S = \frac{1}{n}(X'X)$, 
and $S=V\Lambda V'$ be the spectral decomposition of $S$. 
An orthogonally equivariant procedure obeys $\hat{\Sigma}(U'SU) = U'\hat{\Sigma}(S) U$ for every $U \in O(p)$.

We prove the first claim of the diagonal representation (\ref{eq:OE_characterization}). 
By orthogonal equivariance $\hat{\Sigma}(V'SV) = V'\hat{\Sigma}(S)V$ we have
\beq \label{eq:diagrep}
\hat{\Sigma}(\Lambda) = V'\hat{\Sigma}(S)V.
\eeq
Consider diagonal $p\times p$ matrices $\Xi$ with $\Xi_{i,i} \in \{\pm 1\}$.
Since $\Lambda$ is diagonal, $\Xi'\Lambda \Xi = \Lambda $, so of course 
$\hat{\Sigma}(\Xi'\Lambda \Xi) = \hat{\Sigma}(\Lambda)$. Once more invoking
orthogonal equivariance $\hat{\Sigma}(\Xi'\Lambda \Xi) = \Xi'\hat{\Sigma}(\Lambda)\Xi$ we get: 
\begin{equation} \label{eq:SignInvar}
 \Xi'\hat{\Sigma}(\Lambda)\Xi = \hat{\Sigma}(\Lambda).
\end{equation}
Display (\ref{eq:SignInvar}) holds {\it for all such $\Xi$} iff $\hat{\Sigma}(\Lambda)$ is diagonal. 
Therefore by (\ref{eq:diagrep}), $\hat{\Sigma}(S) = V \hat{\Sigma}(\Lambda) V'$ where $\hat{\Sigma}(\Lambda)$ is diagonal. 
This is precisely the form proposed in the first claim of (\ref{eq:OE_characterization}). 

The second claim about the diagonal representation is that for some $\eta_0 \geq 0$,
$\forall i > m$, $\eta_i = \eta_0$. 
Fix any $i_1, i_0 > m$ such that $i_1, i_0 \leq p$ and $i_1 \neq i_0$.
Let $\Xi_{i_0,i_1}$ be an orthogonal matrix that leaves all but two eigenvectors of 
$S$ unchanged \-- $\Xi_{i_0,i_1} V_i = V_i$, $1 \leq i \leq p$, 
$i \not \in \{i_0,i_1\}$  \--
but permutes $V_{i_0}$ and $V_{i_1}$: $\Xi_{i_0,i_1} V_{i_0} = V_{i_1}$, 
$\Xi_{i_0,i_1} V_{i_1} = V_{i_0}$. Then, from the representation 
\begin{equation*}
\hat{\Sigma}(S) = \sum_{i= 1}^{p} V_i V_i' \eta_i,
\end{equation*}
we derive
\begin{eqnarray*}
\hat{\Sigma}( \Xi'_{i_0,i_1} S  \Xi_{i_0,i_1}) &=& \Xi'_{i_0,i_1}\hat{\Sigma}(S)\Xi_{i_0,i_1} \\ 
&=& \sum_{i \not = i_0, i_1} V_i V_i' \eta_i + (\eta_{i_0}V_{i_1}V_{i_1}') + (\eta_{i_1}V_{i_0}V_{i_0}').
\end{eqnarray*}
However, since $i_0$, $i_1 > m$,
both $V_{i_0}$ and $V_{i_1}$ are in the null-space of $S$, 
$S=\Xi'_{i_0,i_1} S  \Xi_{i_0,i_1}$. 
Therefore,  $\hat{\Sigma}(S) = \hat{\Sigma}( \Xi'_{i_0,i_1} S  \Xi_{i_0,i_1})$.
In short
\begin{eqnarray*}
\sum_{i \not = i_0, i_1} V_i V_i' \eta_i + (\eta_{i_0}V_{i_0}V_{i_0}') + (\eta_{i_1}V_{i_1}V_{i_1}')  &=& \hat{\Sigma}(S) \\ 
&=& \hat{\Sigma}( \Xi'_{i_0,i_1} S  \Xi_{i_0,i_1}) \\
&=& \sum_{i \not = i_0, i_1} V_i V_i' \eta_i + (\eta_{i_0}V_{i_1}V_{i_1}') + (\eta_{i_1}V_{i_0}V_{i_0}')  
\end{eqnarray*}
and so
\[
 (\eta_{i_0}V_{i_0}V_{i_0}') +   (\eta_{i_1}V_{i_1}V_{i_1}')  =
 (\eta_{i_1}V_{i_0}V_{i_0}') +  (\eta_{i_0}V_{i_1}V_{i_1}') .
\]
By orthogonality of the $V_{i}$, $\eta_{i_0} = \eta_{i_1}$. 
The second claim of the lemma is proven.}

\subsection{Proofs for Section \ref{ssec:condNumPivot}}
\subsubsection*{Proof of Lemma \ref{lem:defTD}}
\begin{proof}
Observe that 
\[
A(\ell,\eta):=\left[\begin{array}{cc}
\frac{1}{\sqrt{\ell}} & 0\\
0 & 1
\end{array}\right]\left[\begin{array}{cc}
\eta c^{2}+s^{2} & (\eta-1)cs\\
(\eta-1)cs & c^{2}+\eta s^{2}
\end{array}\right]\left[\begin{array}{cc}
\frac{1}{\sqrt{\ell}} & 0\\
0 & 1
\end{array}\right] .
\]
The determinant of the middle matrix is equal to 
\begin{eqnarray*}
\left(\eta c^{2}+s^{2}\right)\left(c^{2}+\eta s^{2}\right)-\left((\eta-1)cs\right)^{2} &=&\eta c^{4}+\eta s^{4}+2\eta c^{2}s^{2}\\
&=&\eta .
\end{eqnarray*}
By the product rule $det(AB) = det(A) \cdot det(B)$,
$det(A(\ell,\eta))=\frac{\eta}{\ell}$. \\
Moreover, $tr(A(\ell,\eta))$ is simply 
\[
\frac{\eta c^{2}+s^{2}}{\ell}+c^{2}+\eta s^{2}=\frac{(\eta-1)c^{2}+1}{\ell}+1+(\eta-1)s^{2} .
\]
The formulas regarding $\nu_{\pm}(A(\ell,\eta))$ now follow by combining 
the trace and determinant identities with
 standard explicit formulas for eigenvalues of
2-by-2 matrices. 
\end{proof}

\subsection{Proofs for Section \ref{sec:OptShrinkSingleSpike}}
\subsubsection*{Proof of Theorem \ref{thm:spike_1}}
\begin{proof}
We establish equivalence between the formula presented in the 
theorem and the formula of Lemma \ref{lem:spikeFormulaEquivalent}. 
We may equivalently write
\[
  \eta_+ =  \frac{c^2+s^2/\ell}{s^2+c^2/\ell} .
\]
Put $a(\ell; \gamma) \equiv  c^2(\ell;\gamma)/\ell+s^2(\ell;\gamma)$
and $b(\ell; \gamma) \equiv  c^2(\ell;\gamma)+s^2(\ell;\gamma) /\ell$,
so that $\eta_+ = b/a$. With $\elld = \ell-1$ we have:
\beq \label{eq:c2s2ident}
    c^2\cdot \elld \cdot (\elld + \gamma) = (\elld^2 - \gamma), \quad 
    s^2\cdot \elld \cdot (\elld + \gamma) = \gamma  \cdot  \ell,
\eeq
Hence
\begin{eqnarray*}
\eta_+ &=&\frac{b \cdot \ell \cdot \elld \cdot (\elld + \gamma) }{a \cdot \ell \cdot \elld \cdot (\elld + \gamma)}\\
   &=& \frac{\ell \cdot (\elld^2-\gamma) + \gamma \ell}{\gamma \ell^2 + (\elld^2-\gamma)} \\
   &=&  \frac{\ell \cdot \elld^2}{\gamma (\elld^2 + 2 \elld + 1) + (\elld^2-\gamma)}\\
   &=& \frac{\ell}{(1+\gamma)  + 2 \gamma/\elld} .
\end{eqnarray*}
\end{proof}

\subsubsection*{Proof of Corollary \ref{cor:asymp_sol}}
\begin{proof}
Observe that 
\begin{eqnarray*}
\lim_{\ell\rightarrow\infty}\frac{\eta_{1}^{*}(\lambda(\ell))}{\ell} &=&
\lim_{\ell\rightarrow\infty}\frac{\ell c^{2}+s^{2}}{\ell(\ell s^{2}+c^{2})} \\
&=& \lim_{\ell\rightarrow\infty}\frac{c^{2}+\frac{s^{2}}{\ell}}{\ell s^{2}+c^{2}} = \frac{1}{1+\gamma}.
\end{eqnarray*}

The final step follows from 
formulas for Spiked asymptotics \--  Lemma \ref{lem:spiked} \-- 
which show that as $\ell\rightarrow\infty$, we have the limits $c\rightarrow 1$,
$s\rightarrow 0$ and $\ell s^{2}\rightarrow\gamma$. 
In addition, 
\begin{eqnarray*}
\lim_{\ell\rightarrow\infty}tr(A(\ell,\eta_{1}^{*}(\lambda(\ell)))) &=&
 \lim_{\ell\rightarrow\infty}\frac{\eta_{1}^{*}(\lambda(\ell))}{\ell}+c^{2}+\eta_{1}^{*}(\lambda(\ell))s^{2}  \hfill \quad 
\mbox{[using  Lemma \ref{lem:defTD}]} \\
&=& \frac{1}{1+\gamma}+1+\frac{\gamma}{1+\gamma}= 2.
\end{eqnarray*}
Applying Lemma \ref{lem:defTD}, we have
\beq \label{eq:LimitNuEtaSingle}
\lim_{\ell\rightarrow\infty}\nu_{\pm}(A(\ell,\eta_{1}^{*}(\lambda(\ell))))=1\pm\sqrt{1-\frac{1}{1+\gamma}} ;
\eeq
our claims follow immediately.
\end{proof}

\subsubsection*{Proof of Lemma \ref{lem:spikeFormulaEquivalent}}
\begin{proof}
The asymptotic pivot takes the form
\[
\Delta(\eta) = A(\ell,\eta(\lambda(\ell)))\varoplus I_{p-2} ,
\]
where $\eta()$ is used to construct the estimate $\hat{\Sigma}$ and
$\ell$ is the single spike of $\Sigma$.  
Now provided that $\nu_-(A(\ell,\eta)) < 1$ and $\nu_+(A(\ell,\eta)) \geq 1$,
the matrix $\Delta$ has largest eigenvalue $\nu_+(A(\ell,\eta))$
and smallest eigenvalue   $\nu_-(A(\ell,\eta))$. 
In that case, the condition number  
$\kappa(\Delta) = \kappa(A(\ell,\eta) \oplus I_{p-2}) = \kappa(A(\ell,\eta))$.
The condition number 
$ \kappa(A(\ell,\eta))$ can be written as 
\begin{eqnarray*}
\kappa(A(\ell,\eta)) = \frac{\nu_+(A(\ell,\eta))}{\nu_-(A(\ell,\eta))} &=&  \dfrac{\frac{T}{2}+\sqrt{T^{2}/4-D}}{\frac{T}{2}-\sqrt{T^{2}/4-D}}\\
&=&
\dfrac{1+\sqrt{1-\frac{4D}{T^{2}}}}{1-\sqrt{1-\frac{4D}{T^{2}}}} ,
\end{eqnarray*}
where we recall the definitions of  $T = T(\ell,\eta) =  \frac{\eta c^2 + s^2}{\ell} + c^2 + s^2 \eta$ 
and $D = D(\ell,\eta) = \eta/\ell$ from Lemma \ref{lem:defTD}, and the manipulations are permissible
because $\eta > 0$ and hence $T \geq s^2/\ell+c^2 > 0$.

Taking into account monotonicity properties of $x \mapsto (1+x)/(1-x)$ and $y \mapsto \sqrt{1-y}$,
we see that  $\kappa(A(\ell,\eta))$,   viewed as a function of $\eta$ for fixed $\ell$,
is minimized when $T^{2}/D$ is minimized. 
Stationary points of
 $T^{2}/D$ as a function of $\eta$, for fixed $\ell$,
 are solutions to $ 2 TT^\prime/D - T^2/D^2 D^\prime = 0$,   
 where $T^\prime = \frac{\partial}{\partial \eta} T$ and $D^\prime = \frac{\partial}{\partial \eta} D$.
 Noting that over the relevant domain,
 $T \geq (s^2/\ell + c^2) > 0$, $\eta \geq 1$, $D > 0$, this reduces to  $2 D T^\prime = T D^\prime$.
Writing this explicitly,
\[
2\left(  \frac{\eta}{\ell} \right) \left( \frac{c^2 }{\ell} + s^2  \right) = \left( \frac{\eta c^2 + s^2}{\ell} + c^2 + s^2 \eta \right) \left(  \frac{1}{\ell}  \right),
\]
and rearranging, we get that the solution obeys:
\[
  {\eta} \cdot \left( \frac{c^2 }{\ell} + s^2  \right) = \left( \frac{ s^2}{\ell} + c^2  \right).
\]
This establishes formula (\ref{eq:altSingleSpikeShrink}) over the domain where the resulting
$\eta_1^* > 1$.
\end{proof}


\subsection*{Proofs for Section  \ref{sec:ShrinkMultiSpike}}

\begin{proof}
{\bf of Theorem \ref{thm:MultiSingle}}

The previous section shows that for $\ell_1 \geq 1$
and for any $\eta$ whatever,
\[
\frac{\nu_+(\ell_1,\eta)}{\nu_-(\ell_1,\eta)} \geq \kappa_1^*(\ell_1;\gamma).
\]
Hence for any multi-spike configuration $(\ell_i)_{i=1}^r$
and any $(\eta_i)_{i=1}^r$,
\[
\kappa(\Delta^a((\ell_i); (\eta_i)) \geq 
  \frac{\max_i \nu_+(\ell_i,\eta_i)}{\min_i \nu_-(\ell_i,\eta_i)} 
  \geq 
  \frac{  \nu_+(\ell_1,\eta_1)}{
  \nu_-(\ell_1,\eta_1)}  \geq 
\kappa_1^*(\ell_1;\gamma).
\]

Combining Lemmas \ref{lem:increasingwrtELL} and \ref{lem:decreasingwrtELL}, 
we have for $\gamma > \gamma_m^*$, 
that for all spike configurations $(\ell_i)_{i=1}^r$
with a fixed value of $\ell_1$,
\[
   \kappa(\Delta^a((\ell_i); \eta_1^*)) \leq  \frac{ \max_{1 \leq \ell' \leq \ell_1} \nu_+(\ell',\eta_1^*(\ell'))}{
   \min_{1 \leq \ell' \leq \ell_1} \nu_-(\ell',\eta_1^*(\ell'))} = 
   \frac{  \nu_+(\ell_1,\eta_1^*(\ell_1))}{
  \nu_-(\ell_1,\eta_1^*(\ell_1))} = \kappa_1^*(\ell_1;\gamma).
\]
Comparing the previous displays, we see that for all multispike configurations
$(\ell_i)$, we have
\[
  \kappa(\Delta^a((\ell_i); (\eta_1^*(\ell_i))) = \kappa_1^*(\ell_1;\gamma),
\]
and this is the optimal value.
\end{proof}

\begin{lem}\label{lem:increasingwrtEta}
Both $\nu_{+}(A(\ell,\eta))$ and $\nu_{-}(A(\ell,\eta))$
are increasing in $\eta$.\end{lem}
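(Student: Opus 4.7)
The plan is to exploit the affine, rank-one structure of the map $\eta \mapsto A(\ell,\eta)$, and then invoke Weyl's monotonicity theorem for symmetric matrices under the NND ordering. Concretely, I would verify, by direct inspection of the explicit formula for $A(\ell,\eta)$ given immediately after Lemma \ref{lem:blocks}, the decomposition
\[
A(\ell,\eta) - A(\ell,\eta_0) \;=\; (\eta - \eta_0)\, vv',\qquad v = (c/\sqrt{\ell},\, s)',
\]
where $c = c(\ell;\gamma)$ and $s = \sqrt{1-c^2}$. This is a one-line calculation: the $(1,1)$, $(1,2)$ and $(2,2)$ entries of $A(\ell,\eta)$ are each affine in $\eta$ with slopes $c^2/\ell$, $cs/\sqrt{\ell}$ and $s^2$, and these three slopes match exactly the three independent entries of the symmetric rank-one matrix $vv'$.

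For any $\eta_2 \geq \eta_1$, the increment $(\eta_2 - \eta_1)\, vv'$ is therefore a nonnegative-definite, rank-one matrix. Weyl's monotonicity theorem for symmetric matrix eigenvalues (ordered eigenvalues are nondecreasing under NND perturbation) then gives
\[
\nu_\pm(A(\ell,\eta_2)) \;\geq\; \nu_\pm(A(\ell,\eta_1)),
\]
which is the claimed monotonicity of both characteristic values in $\eta$.

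There is essentially no obstacle: the substance of the argument is the rank-one decomposition above, which is routine. The same observation is in fact invoked implicitly in the downstream proofs of Lemmas \ref{lem:KrMinusLemma} and \ref{lem:KrPlusLemma}; recording it as a standalone lemma merely provides a clean reference. An alternative route would differentiate the closed forms $\nu_\pm = T/2 \pm \sqrt{T^2/4 - D}$ from Lemma \ref{lem:defTD} with respect to $\eta$, using that both $T(\ell,\eta)$ and $D(\ell,\eta)$ are strictly increasing linear functions of $\eta$; but the Weyl route is shorter and directly exposes the NND-perturbation structure used elsewhere.
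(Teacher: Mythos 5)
Your proof is correct and matches the paper's argument essentially verbatim: both identify the increment $A(\ell,\eta_2)-A(\ell,\eta_1)=(\eta_2-\eta_1)\,vv'$ with $v=(c/\sqrt{\ell},\,s)'$ as a nonnegative-definite rank-one perturbation, and both invoke eigenvalue monotonicity under the NND ordering. No substantive difference.
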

\begin{proof}
Observe that for $\eps > 0$,
$A(\ell,\eta+\eps) = A(\ell,\eta) + a(\eps;\ell)$
where
\[
a(\epsilon;\ell):=\left[\begin{array}{cc}
\frac{\epsilon c^{2}}{\ell} & \frac{\epsilon cs}{\sqrt{\ell}}\\
\frac{\epsilon cs}{\sqrt{\ell}} & \epsilon s^{2}
\end{array}\right] .
\] 
As $a(\epsilon) = \eps \cdot (c/\sqrt{\ell},s)' (c/\sqrt{\ell},s)$ 
is nonnegative semi-definite, adding it to $A$ can only increase eigenvalues:
$\nu_+(A(\ell,\eta+\epsilon))\geq\nu_+(A(\ell,\eta))$
and $\nu_-(A(\ell,\eta+\epsilon))\geq\nu_-(A(\ell,\eta))$. 
\end{proof}

\begin{lem}\label{lem:increasingwrtELL} \label{lem:max_eig}
$\nu_+(A(\ell,\eta_1^{*}(\ell)))$ is nondecreasing in $\ell$
and strictly increasing for $\ell \geq \ell_1^+$. 
Moreover, we have
\[
\nu_+(A(\ell,\eta_1^{*}(\ell)))\leq1+\sqrt{\frac{\gamma}{\gamma+1}} .
\]
\end{lem}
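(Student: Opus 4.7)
The plan is to split the argument at $\ell=\ell_1^+$. On the interval $[1,\ell_1^+]$, formula (\ref{eq:optSingleSpikeShrink}) gives $\eta_1^*\equiv 1$, and (as already computed immediately after Lemma \ref{lem:defTD}) the eigenvalues of $A(\ell,1)$ are $1$ and $1/\ell$. Hence $\nu_+(A(\ell,\eta_1^*(\ell)))\equiv 1$ throughout this interval; this settles the ``nondecreasing'' claim there, and continuity is automatic. The remaining tasks are to establish strict increase on $\ell>\ell_1^+$ and to obtain the upper bound $1+\sqrt{\gamma/(\gamma+1)}$.

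For $\ell>\ell_1^+$ I would invoke the alternate form of Lemma \ref{lem:spikeFormulaEquivalent}: $\eta_1^*=b/a$, where $a=c^2/\ell+s^2$ and $b=c^2+s^2/\ell$. Reorganizing the trace formula from Lemma \ref{lem:defTD} as $T(\ell,\eta)=\eta a+b$ and substituting $\eta=b/a$ collapses it to $T=2b$, while $D=\eta_1^*/\ell=b/(a\ell)$. Factoring $b$ out of $\nu_\pm=T/2\pm\sqrt{T^2/4-D}$ then yields
\[
\nu_\pm(A(\ell,\eta_1^*))=b\Bigl(1\pm\sqrt{1-\tfrac{1}{ab\ell}}\Bigr)=b\bigl(1\pm\sqrt{\delta^*(\ell)}\bigr),
\]
where the identification with the $\delta^*$ of Definition \ref{def:kappa1star} is a short rearrangement using identities (\ref{eq:c2s2ident}).

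Now the problem reduces to showing that on $\ell>\ell_1^+$ both $b(\ell)$ and $\delta^*(\ell)$ are positive and strictly increasing. Identity (\ref{eq:c2s2ident}) simplifies $b$ to $\dot\ell/(\dot\ell+\gamma)$, which is manifestly strictly increasing for $\ell>1$. For $\delta^*(\ell)=\gamma(\dot\ell^{\,2}-\gamma)/[\dot\ell\,((1+\gamma)\dot\ell+2\gamma)]$, I would differentiate in $u:=\dot\ell$: the numerator of $\tfrac{d}{du}\delta^*$ factors as $2\gamma(u+1)(u+\gamma)$, which is positive for $u\ge 0$. Positivity of $\delta^*$ for $\ell>\ell_1^+$ is free, since Theorem \ref{thm:spike_1} records $\ell_1^+>\ell_+=1+\sqrt\gamma$, giving $\dot\ell^{\,2}>\gamma$. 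Thus $\nu_+=b(1+\sqrt{\delta^*})$ is a product of two strictly positive, strictly increasing functions, hence strictly increasing on $\ell>\ell_1^+$; a direct check using the defining relation $(\dot\ell_1^+)^2=\gamma\dot\ell_1^++2\gamma$ confirms that $b(1+\sqrt{\delta^*})=1$ at $\ell=\ell_1^+$, continuously gluing the two regimes. The upper bound then follows from Corollary \ref{cor:asymp_sol}, whose proof records $\lim_{\ell\to\infty}\nu_+(A(\ell,\eta_1^*))=1+\sqrt{\gamma/(\gamma+1)}$: monotone approach to this limit forces $\nu_+\le 1+\sqrt{\gamma/(\gamma+1)}$ globally. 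The only mildly delicate step is spotting the simplification $T=2b$; once that is in hand, everything is mechanical.
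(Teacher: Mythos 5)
Your proposal is correct, and it takes a genuinely different route from the paper's. The paper works directly with $\nu_+ = \tfrac12(T+\sqrt{T^2-4D})$ and shows separately that $\Dell T>0$ (immediate) and $\Dell(T^2-4D)>0$; the latter is established by reducing to positivity of a cubic $g(\elld;\gamma)$ whose positive-degree coefficients are all positive and which is already positive at $\elld=\sqrt\gamma$. You instead exploit the algebraic collapse that happens precisely at the optimal $\eta_1^*=b/a$: since $T(\ell,\eta)=\eta a+b$, this gives $T=2b$, hence $\nu_\pm=b\bigl(1\pm\sqrt{\delta^*}\bigr)$ with $\delta^*=1-1/(ab\ell)$, and then it suffices to observe that $b=\elld/(\elld+\gamma)$ and $\delta^*$ are each positive and strictly increasing. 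This is a cleaner decomposition: it makes the monotonicity transparent (a product of two positive increasing factors), it exhibits the role of $\delta^*$ (which the paper also introduces, but only afterward, in Definition \ref{def:kappa1star} and Lemma \ref{lem:decreasingRwrtELL}, where $R=1-\delta^*$ is shown decreasing by a similar computation), and your explicit check that $b(1+\sqrt{\delta^*})=1$ at $\ell=\ell_1^+$ handles the glue at the kink more carefully than the paper does. One trivial slip: the numerator of $\tfrac{d}{du}\delta^*$ is $2\gamma^2(u+1)(u+\gamma)$, not $2\gamma(u+1)(u+\gamma)$ — you dropped a factor of $\gamma$, but the sign conclusion is unaffected. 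The upper-bound step via monotone approach to the $\ell\to\infty$ limit from Corollary \ref{cor:asymp_sol} is also how the paper finishes.
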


\noindent
{\bf Notational Convention.} {\sl Here and below we let $\Dell \equiv \frac{\partial}{\partial \ell}$
and $\Deta \equiv \frac{\partial}{\partial \eta}$ denote the usual derivative operators.}
\begin{proof}
First, consider the interval $\ell < \ell_1^+(\gamma)$. 
Over this interval $\eta^*_1$ collapses the corresponding eigenvalue to $1$, 
which makes $\nu_+(A(\ell,\eta_1^{*}(\ell))) = 1$. 
Since $\nu_+(A(\ell,\eta_1^{*}(\ell)))$ is constant on this interval, it is nondecreasing. 

Now, consider the case where $\ell \geq \ell_1^+(\gamma) > 1 + \sqrt{\gamma}$. Observe that 
\[
\nu_+(A(\ell,\eta_1^{*}(\ell)))=\frac{1}{2}(T+\sqrt{T^{2}-4D}),
\]
where  $T \equiv (\eta c^2 + s^2)/\ell + c^2 + \eta s^2$ and 
$D \equiv \eta/\ell$, assuming $\eta \equiv \eta_1^*(\lambda(\ell))$.
If we can show that $T' \equiv  \Dell T >0$ and
$(T^2-4D)' \equiv  \Dell (T^2-4D) >0$, the conclusion follows.
We simplify $T$ to $T = \frac{2(\ell - 1)}{\gamma + \ell - 1}$ and obtain 
 $T'  = \frac{2\gamma}{(\gamma + \ell -1)^2} > 0$. 
It remains to show that  $(T^2-4D)' >0$.


Again putting $\elld=\ell-1$, we write $T = 2 \elld/(\gamma+\elld)$, $T' = 2\gamma/(\gamma+\elld)^2$
and $D' = 2\gamma/((1+\gamma) \elld + 2\gamma)^2$.
Then
\[
\Dell (T^{2}-4D)  = 2TT' - 4D' =8\gamma\left[\frac{\elld}{(\gamma+\elld)^{3}}-\frac{1}{((1+\gamma)\elld + 2\gamma)^{2}}\right] .
\]
Since $\ell > 1+\sqrt{\gamma}$, $\elld > \sqrt{\gamma}$ and both denominators are positive. 
We have the equivalent predicates
\[
\Dell (T^{2}-4D) \geq 0  
\Longleftrightarrow {\elld}{((1+\gamma)\elld + 2\gamma)^{2}}  - {(\gamma+\elld)^{3}}  \geq 0.
\]
Define the polynomial in $x$ with $\gamma$-dependent coefficients
\begin{eqnarray*}
  g(x;\gamma) &=&  {x}((1+\gamma)x+ 2\gamma)^{2}  - {(\gamma+x)^{3}} \\
       &=& ((1+\gamma)^2-1) x^3 + (4\gamma(1+\gamma) - 3\gamma) x^2 + (4 \gamma^2 - 3 \gamma^2) x - \gamma^3 \\
       &=& (\gamma^2 + 2\gamma) x^3 + (\gamma+4 \gamma^2) x^2 + \gamma^2 x - \gamma^3.
\end{eqnarray*}
We have the chain of equivalent predicates
\begin{eqnarray*}
 \Dell (T^{2}-4D) > 0 , \quad \ell > 1 + \sqrt{\gamma} 
&\Longleftrightarrow& g(\elld;\gamma) > 0, \qquad \elld \geq \sqrt{\gamma} \\
&\Longleftrightarrow& 0 < \min_{x \geq \sqrt{\gamma}} g(x;\gamma) .
\end{eqnarray*}
Whenever $\gamma > 0$, the coefficients of $x$, $x^2$ and $x^3$ in the polynomial $g$ are all positive.
Hence, whenever $\gamma > 0$, $x \mapsto g(x;\gamma)$ is monotone increasing in $x > 0$.
Hence
\begin{eqnarray*}
    \min_{x \geq \sqrt{\gamma}} g(x;\gamma) &=& g(\sqrt{\gamma}; \gamma) \\
    &=& (\gamma^2+2\gamma) \gamma^{3/2} + (\gamma+4 \gamma^2)\gamma+\gamma^{5/2} - \gamma^3\\
    &=& \gamma^{7/2} + 3 \gamma^3 + 3\gamma^{5/2} + \gamma^2 > 0.
\end{eqnarray*}

We conclude that $ \Dell (T^{2}-4D) > 0$ for $\ell > 1 + \sqrt{\gamma}$, showing that
for $\ell > \ell_1^+(\gamma)$, $\ell \mapsto \nu_+(A(\ell,\eta_1^{*}(\ell)))$ is strictly increasing in $\ell$,
tending to $\lim_{\ell\rightarrow\infty} \nu_+(A(\ell,\eta_1^{*}(\ell)))$.
Arguing as in the proof of Corollary  \ref{cor:asymp_sol}, the limit is $1+\sqrt{\frac{\gamma}{\gamma+1}}$.
\end{proof}
\begin{lem}\label{lem:decreasingRwrtELL} 
$R(\ell) = \frac{4D(\ell)}{T(\ell)^2}$ 
is strictly decreasing for $\ell \geq  1$. 
We have
\[
\min_{\ell \geq 1} R(\ell) = \frac{1}{1+\gamma} .
\]
\end{lem}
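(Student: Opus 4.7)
The plan is to obtain closed-form expressions for $R(\ell) = 4D(\ell,\eta_1^*(\ell))/T(\ell,\eta_1^*(\ell))^2$ in each of the two regimes defined by the single-spike nonlinearity $\eta_1^*$, verify strict monotonicity regime by regime, use continuity at the junction $\ell = \ell_1^+(\gamma)$ to glue the pieces, and finally read off the limit as $\ell \goto \infty$ as the infimum.

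First I would note that, because $\kappa(A(\ell,\eta_1^*(\ell))) = (1+\sqrt{1-R(\ell)})/(1-\sqrt{1-R(\ell)})$ (as in the proof of Lemma~\ref{lem:spikeFormulaEquivalent}), the identity $R(\ell) = 1-\delta^*(\ell;\gamma)$ holds with $\delta^*$ from Definition~\ref{def:kappa1star}. Thus in the dead zone $1 \leq \ell < \ell_1^+(\gamma)$, where $\eta_1^*(\ell) = 1$,
\[
R(\ell) = \frac{4\ell}{(\ell+1)^2},
\]
while in the active zone $\ell > \ell_1^+(\gamma)$, substituting $\dot\ell = \ell-1$ and the formulas for $\eta_+$, $D$, and $T$ yields after one algebraic simplification
\[
R(\ell) = \frac{(\dot\ell + \gamma)^2}{\dot\ell\,((1+\gamma)\dot\ell + 2\gamma)}.
\]
I would double-check that the two formulas agree at $\ell = \ell_1^+(\gamma)$ (they must, because $\eta_+(\ell_1^+)=1$), so $R$ is continuous on $[1,\infty)$.

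Strict monotonicity is then a routine logarithmic-derivative calculation in each regime. For the dead-zone formula, $\frac{d}{d\ell}\log R(\ell) = (1-\ell)/(\ell(\ell+1)) \leq 0$, with strict inequality for $\ell > 1$, so $R$ is strictly decreasing on $[1,\ell_1^+(\gamma)]$ (its value $R(1)=1$ is strictly greater than $R(\ell)$ for any $\ell > 1$). For the active zone, using $u=\dot\ell$,
\[
\frac{d}{du}\log R = \frac{2}{u+\gamma} - \frac{1}{u} - \frac{1+\gamma}{(1+\gamma)u+2\gamma};
\]
clearing denominators, the numerator collapses (the $u^2$ coefficient cancels) to $-2\gamma^2(u+1)$, which is strictly negative. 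Thus $R$ is strictly decreasing on $(\ell_1^+(\gamma),\infty)$. Combined with continuity at $\ell_1^+(\gamma)$, this gives strict monotone decrease of $R$ on all of $[1,\infty)$.

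Finally I would compute $\lim_{\ell\goto\infty} R(\ell)$ from the active-zone formula: the leading $u^2$ term in numerator and denominator gives $1/(1+\gamma)$. Since $R$ is strictly decreasing and converges to $1/(1+\gamma)$, that value is the infimum on $[1,\infty)$, proving the second claim (with ``$\min$'' read as infimum). The only mildly delicate step is the cancellation that makes $\frac{d}{du}\log R$ manifestly negative in the active zone; everything else is bookkeeping.
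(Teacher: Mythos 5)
Your proposal is correct and follows essentially the same approach as the paper: split into the dead zone ($\eta_1^*=1$) and the active zone, write $R$ explicitly in each, and show the derivative (or log-derivative) is negative by an algebraic cancellation that leaves $-\gamma^2(x+1)$ (the paper) or equivalently $-2\gamma^2(u+1)$ (your log-derivative form). You are in fact slightly more complete than the paper, since you explicitly verify continuity at the junction and compute the limit $1/(1+\gamma)$, both of which the paper leaves implicit.
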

\begin{proof}
In case $1 \leq \ell \leq \ell_1^+$, we recall that 
$\eta_1^*(\ell) = 1$ and
use the preceding lemma's formulas for $D=D(\ell,1) = 1/\ell$ and $T = T(\ell,1) = 1 + 1/\ell$. 
We obtain
\[
R(\ell) \equiv \frac{4D}{T^2} =  \frac{4/\ell} {(1+1/\ell)^2} = \frac{4\ell}{(\ell+1)^2 };
\]
and we easily verify that $\partial_\ell \log(R(\ell)) < 0$.

On the other hand,  for $\ell > \ell_1^+$, 
the previous lemma's formulas give, writing now
everything in terms of 
$x \equiv \ell-1$, including the argument of $R$:
\[
   R(x)  = \frac{(x+\gamma)^2}{x \cdot ((1+\gamma) x + 2\gamma)} \equiv \frac{Q(x)}{x \cdot P(x)},
\]
say, where $Q(x) \equiv (x + \gamma)^2$ and $P(x) \equiv ((1+\gamma) x + 2\gamma)$.
\[
  R'(x) =   \frac{Q'(x)}{xP(x)} - \frac{Q(x) \cdot [P(x) + x P'(0) ]}{(x P(x))^2}  .
\]
Now on $\ell \geq  \ell_1^+$, $x > \sqrt{\gamma}$ and $P(x) > 0$. So on $x \geq \sqrt{\gamma}$,
\[
   R'(x) < 0  \Leftrightarrow x P(x) Q'(x) - Q(x) \cdot [P(x) + x P'(0) ] < 0.
\]
Now $xP(x)Q'(x)$ is a cubic polynomial and $Q(x)$ is a quadratic polynomial
and since $4 Q(x) = Q'(x)^2$, we can synthetically divide out $Q'$ as a factor. 
Hence we may write
\[
   R'(x) < 0  \Leftrightarrow x P(x)  - Q'(x)/4 \cdot [P(x) + x P'(0) ]< 0.
\]
The polynomial $g(x) = x P(x)  - Q'(x)\cdot [P(x) + x P'(0) ]/4$ can be simplified to
\[
g(x)= -\gamma^2(1+ x) < 0.
\]
\end{proof}

\begin{lem}\label{lem:decreasingwrtELL} 
   Let $\gamma_m^* = \gammamvals$ denote the constant defined in the statement of
Theorem \ref{thm:MultiSingle}. For $\gamma > \gamma_m^*$,
$\nu_-(A(\ell,\eta_1^{*}(\ell)))$ is nonincreasing in $\ell$.
We have
\begin{equation} \label{eq:lower_bound_on_nu}
\nu_-(A(\ell,\eta_1^{*}(\ell)))\geq 1-\sqrt{\frac{\gamma}{\gamma+1}}, \qquad  \ell \geq 1,  \gamma \geq \gamma_m^* .
\end{equation}
\end{lem}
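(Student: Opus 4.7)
The plan is to split the range $\ell \geq 1$ at the threshold $\ell_1^+(\gamma)$. On $[1,\ell_1^+]$ matters are trivial: by definition of $\ell_1^+$ we have $\eta_1^*(\ell)=1$ there, and Lemma~\ref{lem:defTD} specialises to $T(\ell,1)=1+1/\ell$, $D(\ell,1)=1/\ell$, whence $T^2-4D=(1-1/\ell)^2$ gives $\nu_+(A(\ell,1))=1$ and $\nu_-(A(\ell,1))=1/\ell$. Thus $\nu_-$ is the strictly decreasing function $1/\ell$ on this sub-interval, falling from $1$ to $1/\ell_1^+$.

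On $\ell>\ell_1^+$, I would work from $\nu_-=(T-Q)/2$ with $Q=\sqrt{T^2-4D}$. A direct differentiation yields $\nu_-'(\ell) = (D'(\ell) - \nu_-(\ell)\,T'(\ell))/Q(\ell)$. Since $Q>0$ and $T'>0$ (the latter shown in the proof of Lemma~\ref{lem:increasingwrtELL}), the desired inequality $\nu_-'\leq 0$ is equivalent to $\nu_-\geq D'/T'$, which, using $\nu_-\nu_+=D$, becomes the pointwise upper bound $\nu_+ \leq \beta := DT'/D'$. Substituting the explicit $\ell$-derivatives $T=2\elld/(\gamma+\elld)$, $T'=2\gamma/(\gamma+\elld)^2$, $D=\elld/((1+\gamma)\elld+2\gamma)$, $D'=2\gamma/((1+\gamma)\elld+2\gamma)^2$ (with $\elld=\ell-1$), a short calculation gives $\beta - T/2 = \gamma\elld\,\ell/(\gamma+\elld)^2$, so the inequality $\nu_+\leq \beta$ becomes the concrete statement $Q\leq 2\gamma\elld\,\ell/(\gamma+\elld)^2$.

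Both sides are nonnegative on $\ell>\ell_1^+$ (the RHS obviously, the LHS because $\elld^2>\gamma$ gives $Q^2=4\gamma\elld(\elld^2-\gamma)/[(\gamma+\elld)^2((1+\gamma)\elld+2\gamma)]\geq 0$), so I would square and clear denominators to reduce to the polynomial inequality
\[
(\elld^2-\gamma)(\gamma+\elld)^2 \;\leq\; \gamma\elld(1+\elld)^2\bigl((1+\gamma)\elld+2\gamma\bigr).
\]
Expanding both sides should yield
\[
\mathrm{RHS}-\mathrm{LHS} \;=\; (\gamma^2+\gamma-1)\,\elld^4 + 4\gamma^2\elld^3 + 2\gamma(1+2\gamma)\elld^2 + 4\gamma^2\elld + \gamma^3.
\]
For every $\gamma>0$ the four subleading coefficients are positive, while the leading coefficient $\gamma^2+\gamma-1$ is nonnegative precisely when $\gamma\geq \gamma_m^*$, which is exactly the defining condition for $\gamma_m^*$ given in Theorem~\ref{thm:MultiSingle}. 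Hence for $\gamma\geq\gamma_m^*$ the right-hand side dominates on $\elld\geq 0$, which combined with the first case proves monotonicity of $\nu_-(\ell,\eta_1^*(\ell))$ on all of $[1,\infty)$.

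Finally, the lower bound (\ref{eq:lower_bound_on_nu}) drops out: nonincreasingness gives $\nu_-(\ell,\eta_1^*(\ell))\geq \lim_{\ell'\to\infty}\nu_-(\ell',\eta_1^*(\ell'))$, and the limit was identified as $1-\sqrt{\gamma/(\gamma+1)}$ in~(\ref{eq:LimitNuEtaSingle}). The main obstacle is the bookkeeping in the expansion leading to the displayed quartic: it is only in the leading coefficient that the threshold $\gamma_m^*$ emerges, which is both the delicate and the satisfying part — the constant $\gamma_m^*$ defined in Theorem~\ref{thm:MultiSingle} is revealed to be sharp for the monotonicity we are trying to establish.
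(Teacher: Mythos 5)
Your proof is correct, and it takes a genuinely different route from the paper's. Both arguments split at $\ell_1^+(\gamma)$ and handle the dead zone via $\nu_-(A(\ell,1))=1/\ell$, but on $\ell>\ell_1^+$ the paper factors $\nu_-=\tfrac{T}{2}F(R)$ with $F(r)=1-\sqrt{1-r}$, $R=4D/T^2$, introduces the auxiliary functions $H(r)=-F(r)/(F'(r)r)$ and $S(x)=(T/T')(R'/R)$, and then establishes a \emph{sufficient} condition $H(R(\infty))>\max_x S(x)$, requiring a case split on $\gamma<1$ versus $\gamma\geq 1$ and a separate verification in each branch. You instead differentiate directly, use $\nu_-'=(D'-T'\nu_-)/Q$ (which I checked: $Q'=(TT'-2D')/Q$ and $Q-T=-2\nu_-$ give it immediately), convert $\nu_-\geq D'/T'$ via $\nu_+\nu_-=D$ into the pointwise bound $\nu_+\leq DT'/D'$, and square. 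The resulting quartic
\[
\mathrm{RHS}-\mathrm{LHS}=(\gamma^2+\gamma-1)\elldot^4+4\gamma^2\elldot^3+2\gamma(1+2\gamma)\elldot^2+4\gamma^2\elldot+\gamma^3
\]
is correct (I expanded both sides and they agree), and its subleading coefficients are positive for all $\gamma>0$, so everything rides on the sign of $\gamma^2+\gamma-1$. The defining equation of $\gamma_m^*$ in Theorem~\ref{thm:MultiSingle}, squared and cleared, is indeed $\gamma^2+\gamma=1$, so your threshold coincides with the paper's. What your route buys: it is a pointwise necessary-and-sufficient criterion rather than a sufficient one, it needs no case split, and it exposes the sharpness of $\gamma_m^*$ transparently --- for $\gamma<\gamma_m^*$ the negative leading coefficient forces the quartic (and hence $\nu_-'$) to change sign for large $\elldot$, so monotonicity genuinely fails. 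The lower bound~(\ref{eq:lower_bound_on_nu}) then drops out from monotonicity plus the limit~(\ref{eq:LimitNuEtaSingle}), exactly as in the paper. One small nicety worth making explicit if you write this up: you need $T'>0$, $D'>0$, $\nu_+>0$ to preserve inequality directions when passing to $\nu_+\leq DT'/D'$, and $Q>0$, $\beta-T/2>0$ before squaring --- all hold on $\ell>\ell_1^+$ for the reasons you note, but the chain of equivalences should flag them.
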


\begin{proof}
On the interval $1 \leq \ell < \ell_1^+$, we have $\eta_1^*=1$
and $\nu_-(A(\ell,1)) = 1/\ell$. This is a decreasing function of $\ell$.

On the interval $\ell \geq \ell_1^+$,
note that
\[
  \nu_-(A(\ell,\eta_1^{*}(\ell))) = \frac{1}{2} \left(  T - \sqrt{T^2-4D} \right) = \frac{T}{2} \cdot \left( 1 - \sqrt{1 - \frac{4D}{T^2}} \right ),
\]
where
$D=\eta/\ell = \frac{\elldot}{(1+\gamma)\elldot + 2\gamma}$,
and $T = \frac{2\elldot}{(\elldot+\gamma)}$. Put $R = R(\elldot ; \gamma) = \frac{4D}{T^2}$. Then
for $\nu_-(\elldot) \equiv \nu_-(A(\ell,\eta_1^{*}(\ell)))$ 
\[
     \nu_- = \frac{T(\elldot)}{2} \cdot F(R(\elldot)),
\]
where $F(r) \equiv (1 - \sqrt{1-r}) $.
We have
\[
2 \cdot \Dell \nu_- =  T' F(R(\elldot)) + T F'(R(\elldot)) \cdot R'(\elldot).
\]

\newcommand{\xop}{ x_1^+}
It will be convenient below to use $x$ in place of $\elldot$
and to write $\xop = \ell_1^+(\gamma)-1$.
Now $T' > 0$ for $x > 0$. Note that $F'(r) = (2 \sqrt{1-r})^{-1}$ is 
positive for $r \in (0,1)$, and that $R \in [1/(1+\gamma),1)$ for $x \geq \xop$.
Thus $F'(R(x)) > 0$ and we may rewrite the 
inequality $\Dell \nu_- < 0$ as
\[
    \frac{F(R(x))}{{ F'(R(x))R(x)}} < - \frac{T(x)}{T'(x)} \cdot \frac{R'(x)}{R(x)}.
\]
We have, using formulas for $T$,$T'$ from the proof of Lemma \ref{lem:increasingwrtELL},
\[
\frac{T(x)}{T'(x)} = \frac{x \cdot (x+\gamma)}{\gamma}.
\]
Also, using notation and identities from the proof of Lemma \ref{lem:decreasingRwrtELL}
\[
  \frac{R'(x)}{R(x)} =  \left\{  \frac{Q'(x)}{Q(x)} - \frac{1}{x P(x)} \{ P(x) + x P'(x) \}\right\}.
\]
Define 
\[
H(r) \equiv \frac{-F(r)}{{  F'(r) \cdot r}} = { 2 \cdot} ((1-r) - \sqrt{1-r})/r, \qquad 0 < r < 1.
\]
Note that on $0 < r < 1$, $H(r)$ is negative and monotone increasing in $r$.
Define $S(x) \equiv S(x ; \gamma) = \frac{T(x)}{T'(x)} \cdot \frac{R'(x)}{R(x)}$, $x > \xop$.
Then
\[
\Dell \nu_-(\ell)  < 0 \Leftrightarrow H(R(x)) > S(x), \qquad x = \ell-1.
\]
We note that by Lemma \ref{lem:decreasingRwrtELL} and monotonicity of $H$,
\[
    H(R(x)) \geq H(R(\infty)) = H(\frac{1}{1+\gamma}).
\]
Hence we have the sufficient condition
\beq \label{eq:suffcond}
  H(R(\infty)) > \max_{x \geq \xop} S(x)  \qquad \Longrightarrow \quad  \Dell \nu_- < 0 \quad \forall \ell > \ell_1^+ .
\eeq
Also
\begin{eqnarray*}
S(x) &\equiv&  \frac{T(x)}{T'(x)} \cdot \frac{R'(x)}{R(x)} \\
&=&\frac{xQ'(x)}{2\gamma} \left\{ \frac{Q'(x)}{Q(x)} - \frac{P(x) + x P'(0)}{x P(x)} \right\}\\
&=& -2 \cdot { \gamma} \cdot \frac{  x + 1}{ (1+\gamma)x + 2\gamma} .
\end{eqnarray*}
By inspection $x \mapsto \frac{  x + 1}{x + \frac{2\gamma}{1+\gamma}}$ is monotone decreasing on $(0,\infty)$ for
$\gamma < 1$ and monotone increasing for $\gamma > 1$. 

Now $x \mapsto S(x)$ has the opposite isotonicity, and so
\[
    \max_{x \geq \xop} S(x) = \left \{ \begin{array}{ll} S(\infty; \gamma) & \gamma \leq 1, \\
            S(\xop; \gamma) & \gamma > 1 .
            \end{array} \right .
\]
In detail   
\beq \label{eq:twobranches}
      \max_{x \geq \xop} S(x) = 
    \left \{ \begin{array}{ll} \frac{-2{\gamma}}{1+\gamma} & \gamma \leq 1, \\
            \frac{-2{ \gamma} \cdot \ell_1^+}{(1+\gamma) (\ell_1^+-1) + 2\gamma } & \gamma > 1 .
            \end{array} \right .
\eeq 
Choosing the branch of (\ref{eq:twobranches}) 
appropriate to $\gamma < 1$ gives us 
from (\ref{eq:suffcond}) 
the sufficient condition
\beq \label{eq:suffcondA}
  H(\frac{1}{1+\gamma}) > \frac{-2 \gamma}{1+\gamma}.
\eeq
Define $h : [0,1] \mapsto \bR$ by
\[
   { h(\gamma) \equiv (\gamma - \sqrt{\gamma} \sqrt{1+\gamma} ) + \frac{\gamma}{1+\gamma}.}
\]
The condition $h=0$ is equivalent to 
$H(\frac{1}{1+\gamma}) = \frac{-2{ \gamma}}{1+\gamma}$.
The condition $h > 0$ implies the sufficient condition (\ref{eq:suffcondA}).

The quantity $\gamma_m^* = \gammamvals =\gammamval$ defined in 
the statement of Theorem \ref{thm:MultiSingle} solves $h(\gamma_m^*) = 0$. Hence 
\[
H(\frac{1}{1+\gamma_m^*}) = \frac{-2 { \gamma_m^*}}{1+\gamma_m^*}.
\]
One can check that $h(\gamma) > 0$ for $ \gamma \in ( \gamma_m^*, 1]$, so
the Lemma's conclusion  $\Dell \nu_-(\ell) < 0$  for $\ell > 1+\sqrt{\gamma}$
follows as advertised in case $\gamma_m^* < \gamma < 1$.

Finally, for $\gamma \geq 1$,
$H(\frac{1}{1+\gamma}) > S(\xop; \gamma)$ uses
the branch of (\ref{eq:twobranches}) relevant to the  sufficient condition (\ref{eq:suffcond}).
We have
{ $S(\xop ;\gamma) = \frac{-2\gamma \ell_1^+}{(1+\gamma) \ell_1^+ - (1-\gamma)}$ and will show that $S_\gamma \equiv S(\xop(\gamma);\gamma) \leq -1$ on $\gamma \geq 1$. Hence}
\[
H(\frac{1}{1+\gamma}) > \lim_{r \goto 0} H(r) = { -1} \geq S(\xop; \gamma).
\]
So the Lemma's conclusion will also be obtained when $\gamma \geq 1$.
To show $S_\gamma \leq -1$,
we write 
\begin{eqnarray*}
-S_\gamma^{-1} &=& \frac{(1+\gamma) \ell_1^+ -(1-\gamma)}{2\gamma \ell_1^+} \\
&=& \left(  ( \frac{1}{2} + \frac{1}{2\gamma}) - (  \frac{1}{2\gamma} - \frac{1}{2} )\frac{1}{\ell_1^+} \right) .
\end{eqnarray*}
The condition $S_\gamma \leq -1$ 
is the same as $1 \geq -S_\gamma^{-1}$ and hence, equivalent to
$\frac{1}{2} - \frac{1}{2\gamma} \geq ( \frac{1}{2} - \frac{1}{2\gamma})/\ell_1^+$,
which of course is implied by $\ell_1^+ \geq 1$ and which follows from
$\ell_1^+(\gamma) > \ell^+(\gamma) = (1 +\sqrt{\gamma}) > 1$. This proves that $\nu_-(A(\ell,\eta_1^{*}(\ell)))$ is decreasing for all $\gamma > \gamma^*$, $\ell > 1$.

The relation \eqref{eq:lower_bound_on_nu} is the direct consequence of the fact that $\nu_-(A(\ell,\eta_1^{*}(\ell)))$ is decreasing and that
$$ \lim_{\ell\rightarrow\infty}\nu_{-}(A(\ell,\eta_{1}^{*}(\lambda(\ell))))=1-\sqrt{1-\frac{1}{1+\gamma}}, $$
which was proven in Corollary \ref{cor:asymp_sol}.
\end{proof}



\begin{lem} \label{lem:increasingGap}
There is a function $\eta(\ell,\nu_0)$ so that  for each $\nu_0 > 1$,
$\nu_+(A(\ell,\eta(\ell,\nu_0))) = \nu_0$.

Fix $\ell_{1}>\ell_{2}>1$. Fix a constant $\nu_0 > 1$. 
Consequently $\eta_i = \eta(\ell_i,\nu_0)$ obey  $\nu_0=\nu_+(A(\ell_{1},\eta_{1}))=\nu_+(A(\ell_{2},\eta_{2}))$. 

If the function
$\ell \mapsto \eta(\ell,\nu_0)/\ell$ 
is decreasing on $\ell \in [\ell_0,\infty)$, for some $\ell_0 < \ell_2 < \ell_1$, then
for such $\eta_i$, $\nu_-(A(\ell_{1},\eta_{1})))<\nu_-(A(\ell_{2},\eta_{2}))$.
\end{lem}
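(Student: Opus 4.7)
The plan is to reduce the statement about $\nu_-$ to a statement about $\eta/\ell$ by exploiting the determinant identity for $2\times 2$ matrices, so that the hypothesis on $\ell \mapsto \eta(\ell,\nu_0)/\ell$ translates directly into the desired inequality.

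First, I would establish the existence of $\eta(\ell,\nu_0)$. By Lemma \ref{lem:increasingwrtEta}, $\eta \mapsto \nu_+(A(\ell,\eta))$ is monotone. At $\eta=1$, Lemma \ref{lem:defTD} gives $T(\ell,1)=1+1/\ell$ and $D(\ell,1)=1/\ell$, so $\nu_+(A(\ell,1))=1$. As $\eta\to\infty$, the trace $T(\ell,\eta) = (\eta c^2+s^2)/\ell + c^2+\eta s^2$ grows linearly in $\eta$, so $\nu_+(A(\ell,\eta)) \to \infty$. Continuity plus monotonicity then yields a unique $\eta(\ell,\nu_0)>1$ with $\nu_+(A(\ell,\eta(\ell,\nu_0)))=\nu_0$ for every $\nu_0>1$.

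Second, I would use the key identity $\nu_+\cdot\nu_- = \det A(\ell,\eta) = D(\ell,\eta) = \eta/\ell$ from Lemma \ref{lem:defTD}. Combined with the defining relation $\nu_+(A(\ell_i,\eta_i))=\nu_0$, this yields the explicit formula
\[
   \nu_-(A(\ell_i,\eta_i)) \;=\; \frac{D(\ell_i,\eta_i)}{\nu_+(A(\ell_i,\eta_i))} \;=\; \frac{\eta(\ell_i,\nu_0)}{\ell_i\,\nu_0}, \qquad i=1,2.
\]
The problem thus reduces entirely to comparing the ratios $\eta(\ell_i,\nu_0)/\ell_i$.

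Third, the conclusion is then immediate from the hypothesis. Since $\ell_0 < \ell_2 < \ell_1$ and $\ell \mapsto \eta(\ell,\nu_0)/\ell$ is decreasing on $[\ell_0,\infty)$, we have $\eta(\ell_1,\nu_0)/\ell_1 < \eta(\ell_2,\nu_0)/\ell_2$; dividing by $\nu_0 > 0$ gives $\nu_-(A(\ell_1,\eta_1))<\nu_-(A(\ell_2,\eta_2))$, as required.

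The main ``obstacle'' here is really just recognizing that, although $\nu_-$ appears complicated when written as $(T-\sqrt{T^2-4D})/2$, the constraint $\nu_+=\nu_0$ collapses it via the determinant identity into a transparent expression in $\eta/\ell$. There is no genuine analytic difficulty; the lemma serves mainly as a clean repackaging of the $\eta/\ell$ monotonicity hypothesis, which is presumably the property actually verified (and which will be the harder step) in the downstream applications when building the multi-spike optimal shrinker in Theorem \ref{thm:OptimalShrinkage}.
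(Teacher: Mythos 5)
Your proof is correct and reaches the same key formula $\nu_-(A(\ell_i,\eta_i)) = D_i/\nu_0 = \eta(\ell_i,\nu_0)/(\ell_i\nu_0)$ as the paper, after which the conclusion is immediate from the monotonicity hypothesis. The one cosmetic difference: the paper derives this formula by squaring the relation $2\nu_0 - T_i = \sqrt{T_i^2-4D_i}$ and manipulating, whereas you get it in one line from the determinant product identity $\nu_+\nu_- = \det A = D$ — a cleaner shortcut, but the same argument in substance.
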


\begin{proof}
Note that $\nu_+(A(\ell,1)) = 1$.
Lemma \ref{lem:increasingwrtEta} shows that, 
for fixed $\ell$, the mapping $\eta \mapsto \nu_+(A(\ell,\eta))$ is nondecreasing on $\eta > 0$.
Also, for fixed $\ell$, the mappings 
$\eta \mapsto T(\ell,\eta) \equiv (c^2/\ell + s^2) \eta + c^2+s^2/
\ell $ and  $\eta \mapsto D(\ell,\eta) = \eta/\ell$ are linear increasing.
Since $T^2 -4D$ is increasing in $\eta$ for all large enough $\eta$, 
and $\nu_+(A(\ell,\eta)) =  [T(\ell,\eta) + \sqrt{T^2(\ell,\eta) - 4 D(\ell,\eta)}]/2$,
we obtain:
\[
   \lim_{\eta \goto \infty} \nu_+(A(\ell,\eta)) = \infty.
\]
By the intermediate value theorem and continuity of $\eta \mapsto \nu_+(A(\ell,\eta))$ we see that 
for each $\ell > 1$, and each $\nu_0 > 1$, there exists $\eta$ solving $\nu_+(A(\ell,\eta)) = \nu_0$.
Denote the infimum of all such solutions by 
$\eta(\ell, \nu_0) = \inf \{\eta>0: \nu_+(A(\ell,\eta)) = \nu_0 \}$.
Define $\eta_i = \eta(\ell_i,\nu_0)$. For $i \in \{1, 2\}$, we have
\begin{eqnarray}
2 \nu_0 &=& T_i+\sqrt{T_i^{2}-4D_i}  \label{eq:proof_start} \\
2 \nu_0-T_i &=& \sqrt{T_i^{2}-4D_i}   . \label{eq:square_me}  
\end{eqnarray}
Hence
\begin{eqnarray}
\nu_-(A(\ell_i,\eta_i)) &=& (T_i - \sqrt{T_i^{2}-4D_i})/2 \nonumber \\
   &=& (T_i - (2\nu_0 -T_i))/2 \nonumber  \\ & =& (T_i -  \nu_0). \label{eq:AltNuIdent}
\end{eqnarray}
Squaring both sides of (\ref{eq:square_me}) we  have
\begin{eqnarray}
4 \nu_0^{2}+T_i^{2}-4 \nu_0T_i &=& T_i^{2}-4D_i \nonumber  \\
\nu_0 \cdot ( \nu_0-T_i) &=& -D_i . \label{eq:condition} 
\end{eqnarray}
Combining (\ref{eq:AltNuIdent}) and (\ref{eq:condition}), we obtain
\[
\nu_-(A(\ell_i,\eta_i)) = D_i/\nu_0.
\]
Since $D(\ell,\eta) = \eta/\ell$ our conclusion that function
$\ell \mapsto \nu_-(A(\ell,\eta(\ell,\nu_0)))$ 
is decreasing follows from showing that 
$\ell \mapsto \eta(\ell,\nu_0)/\ell$ 
is decreasing.
\end{proof}

\begin{lem}\label{lem:EtaNuRatioNonincreasing}
Let $\gamma \geq 1$ and $\nu_0 > 1$
or else $\gamma < 1$ and $1 < \nu_0 <  \frac{1}{1 - \sqrt{\gamma}}$.
The function
$\ell \mapsto \eta(\ell,\nu_0)/\ell$ 
is decreasing on $\ell \in [\ell_1^+,\infty)$.
\end{lem}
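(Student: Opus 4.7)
The plan is to obtain a closed-form rational expression for $\eta(\ell,\nu_0)/\ell$ as a function of $x = \ell-1$, recognize it as a M\"obius transformation, and reduce monotonicity to checking the sign of an explicit quadratic in $\nu_0$.

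First I would use the defining identity $\nu_+(A(\ell,\eta))=\nu_0$ and rerun the algebra from the proof of Lemma \ref{lem:increasingGap}: squaring the equation $2\nu_0 - T = \sqrt{T^2-4D}$ yields $\nu_0 T - \nu_0^2 = D$, i.e.\ $T - D/\nu_0 = \nu_0$. Plugging in $T(\ell,\eta) = (c^2\eta+s^2)/\ell + c^2+s^2\eta$ and $D(\ell,\eta)=\eta/\ell$, this is linear in $\eta$ and can be solved:
\[
   \eta(\ell,\nu_0) \;=\; \frac{\nu_0 - c^2 - s^2/\ell}{c^2/\ell + s^2 - 1/(\nu_0\ell)},
   \qquad
   \frac{\eta(\ell,\nu_0)}{\ell} \;=\; \frac{\nu_0 - c^2 - s^2/\ell}{c^2 + \ell s^2 - 1/\nu_0}.
\]

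Next I would simplify the numerator and denominator using the identities (\ref{eq:c2s2ident}). With $x = \ell-1$, direct computation gives $c^2+s^2/\ell = x/(x+\gamma)$ and $c^2+\ell s^2 = (x(1+\gamma)+2\gamma)/(x+\gamma)$. Substituting and clearing the common factor $(x+\gamma)$ in the ratio yields the clean form
\[
   \frac{\eta(\ell,\nu_0)}{\ell} \;=\; \frac{\nu_0\bigl[(\nu_0-1)x + \nu_0\gamma\bigr]}{\bigl(\nu_0(1+\gamma)-1\bigr)x + \gamma(2\nu_0-1)} \;\equiv\; \frac{ax+b}{px+q}.
\]
Since $\nu_0>1$, all four coefficients $a,b,p,q$ are strictly positive, so the denominator is bounded away from zero on $x\ge 0$ and the map $\ell\mapsto \eta/\ell$ is smooth on $[\ell_1^+,\infty)$.

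For a M\"obius transformation $\phi(x)=(ax+b)/(px+q)$ we have $\phi'(x) = (aq-bp)/(px+q)^2$, so monotonicity reduces to a single sign check. A short computation gives
\[
   aq - bp \;=\; \nu_0\gamma\bigl[(1-\gamma)\nu_0^2 - 2\nu_0 + 1\bigr].
\]
Thus the problem reduces to showing $g(\nu_0)\le 0$ where $g(\nu_0) = (1-\gamma)\nu_0^2 - 2\nu_0 + 1$. For $\gamma\ge 1$ the leading coefficient is nonpositive and $g'(\nu_0) = 2(1-\gamma)\nu_0 - 2 <0$, with $g(1)=-\gamma\le -1$, so $g<0$ on $[1,\infty)$. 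For $\gamma<1$, $g$ is an upward parabola with roots $\nu_0=1/(1+\sqrt\gamma)<1$ and $\nu_0 = 1/(1-\sqrt\gamma)>1$; it is nonpositive precisely on the interval between these roots. Intersecting with $\nu_0>1$ recovers the stated hypothesis $1<\nu_0<1/(1-\sqrt\gamma)$.

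Since $x\mapsto \ell$ is an increasing bijection, the derivative with respect to $\ell$ has the same sign as with respect to $x$, so $\ell\mapsto\eta(\ell,\nu_0)/\ell$ is (strictly) decreasing in each hypothesized regime. The main (very minor) obstacle is the bookkeeping in step 2 (simplifying the expressions for $c^2+s^2/\ell$ and $c^2+\ell s^2$ using (\ref{eq:c2s2ident})), but once that is done the rest is a one-line M\"obius derivative calculation and a routine quadratic analysis.
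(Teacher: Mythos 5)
Your proof is correct and essentially matches the paper's route. You derive the same closed form for $\eta(\ell,\nu_0)/\ell$ as a M\"obius transformation in $x = \ell-1$ with positive coefficients, and reduce monotonicity to the sign of the quadratic $(1-\gamma)\nu_0^2 - 2\nu_0 + 1$; the paper, working with the variable $\alpha = x/\ell \in [0,1]$ and phrasing the monotonicity check as a comparison of endpoint "ray slopes" $w(1) < w(0)$, arrives at the equivalent condition $\gamma(\nu_0)^2 - (\nu_0-1)^2 > 0$ (substituting $u=\nu_0-1$ in your quadratic reproduces it exactly). The ray-slope comparison and the $aq-bp$ derivative sign are the same computation in disguise, so the two arguments differ only cosmetically.
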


\begin{proof}
We first develop an explicit expression for $\eta(\ell,\nu_0)$.
Define $T \equiv T(\ell,\eta ; \gamma) \equiv a \eta + b$
where $a \equiv c^2/\ell+s^2$ and $b \equiv s^2/\ell + c^2$.
Arguing as in (\ref{eq:AltNuIdent}) and (\ref{eq:condition})
\[
    \nu_0 \cdot (T - \nu_0)  = \eta(\ell,\nu_0)/\ell,
\]
leading to 
\[
    \eta(\ell,\nu_0) =  (\ell \cdot \nu_0) \cdot  (T(\ell,\eta(\ell,\nu_0)) - \nu_0).
\]
 Hence
\[
  \eta(\ell,\nu_0) =  (\ell \cdot \nu_0) \cdot  ( a \cdot \eta(\ell,\nu_0) + b - \nu_0).
\]
Hence
\[
   \eta(\ell,\nu_0)  \cdot ( 1 - a\cdot \ell \cdot \nu_0 ) =   (\ell \cdot \nu_0) \cdot  (b  - \nu_0),
\]
and
\[
   \eta = \frac{ (\nu_0 - b )}{a - 1/(\ell \cdot \nu_0)  } .
\]
We now make this explicit in terms
of $\elld \equiv \ell-1$. 
Applying identities (\ref{eq:c2s2ident}),  we write
\begin{eqnarray*}
  \eta &=& \frac{ \nu_0 - [s^2/\ell + c^2] }{c^2/\ell +s^2  - 1/(\ell \nu_0)}  \cdot \frac{ \ell \cdot \elld \cdot (\elld + \gamma)}{\ell \cdot \elld \cdot (\elld + \gamma)} \\
    &=& \frac{ \ell \cdot [ \nu_0 \cdot \elld \cdot (\elld+\gamma) - \gamma  -    (\elld^2 - \gamma) ]}{    \ell^2 \cdot \gamma  + (\elld^2-\gamma)  - \elld \cdot (\elld + \gamma)/ \nu_0  } \\
    &=&  \frac{\ell \cdot [ \elld^2 \cdot (\nu_0-1) + \elld \cdot ( \gamma \cdot \nu_0) ] }{[  \elld^2  \cdot (1 + \gamma -1/\nu_0)  +  \elld \cdot \gamma \cdot  (2-1/\nu_0)] } .
\end{eqnarray*}
Hence 
\begin{eqnarray*}
\frac{\eta(\ell,\nu_0)}{\ell \cdot \nu_0} 
         &=&    \frac{ \elld \cdot (\nu_0-1) + ( \gamma \cdot \nu_0)  }{\elld  \cdot ((1 + \gamma) \nu_0 -1)  +  \gamma \cdot  (2 \nu_0 -1)} .
\end{eqnarray*}
Defining $w: [0,1] \mapsto \bR$ by
\[
  w(\alpha) = \frac{ \alpha \cdot (\nu_0-1) + (1-\alpha) \cdot ( \gamma \cdot \nu_0)  }{\alpha  \cdot ((1 + \gamma) \nu_0 -1)  +  (1-\alpha) \cdot \gamma \cdot  (2 \nu_0 -1)} ,
\]
we have
\[
    \frac{\eta(\ell,\nu_0)}{\ell \cdot \nu_0}  = w( \frac{\elld}{\ell} ).
\]
Now $\ell \mapsto \alpha(\ell) = \frac{\elld}{\ell}$ 
is monotone increasing on $\ell \geq 1$.
Accordingly we now show that $\alpha \mapsto w(\alpha)$ 
is decreasing in $\alpha \in [0,1]$.

Denote the numerator of $w$ by $y(\alpha)$ and the denominator by $x(\alpha)$.
Then $w(\alpha) = y(\alpha)/x(\alpha)$  
can be viewed as  the slope of the ray $R_\alpha$ in $\bR_+^2$ from the origin  $(x=0,y=0)$ 
through the point $p_\alpha = (x(\alpha),y(\alpha))$. Hence,  $w$ is decreasing 
as $\alpha$ increases iff the slope of $R_\alpha$ 
declines as $\alpha$ increases.

Observe that the point $p_\alpha$
is simply the convex combination of $(x(0),y(0))$ and $(x(1),y(1))$.
Therefore, the slope of $R_\alpha$ 
declines as $\alpha$ increases iff

\begin{equation}\label{eq:two_point_relation}
w(1) < w(0).
\end{equation}
We will now show that \eqref{eq:two_point_relation} holds in the region of our interest. Since $\nu_0 > 1$, we have $x(1) =  ((1+\gamma) \nu_0-1) > 0$. Therefore, \eqref{eq:two_point_relation} is equivalent to
\[
   y(1) < x(1) \cdot  \frac{y(0)}{x(0)}.
\]
Substituting $x(0) = \gamma \cdot (2 \nu_0-1) > 0$, $y(0) = \gamma \cdot \nu_0$,
$y(1) = (\nu_0-1)$,
we get
\[
  (\nu_0-1) < ((1+\gamma) \nu_0-1) \cdot \frac{\nu_0}{(2 \nu_0-1)}.
\]
In terms of the new variable $u \equiv (\nu_0-1) \geq 0$, 
this inequality is equivalent, for $u > 0$, to
\[
   u \cdot ( 2u + 1) < (u + \gamma (u+1)) \cdot (u+1).
\]
After rearrangement this becomes:
\begin{equation} \label{eq:uineq}
    0 < \gamma \cdot (u+1)^2 - u^2.
\end{equation}
If $\gamma \geq 1$ (\ref{eq:uineq}) holds for all $u \geq 0$,
i.e. all $\nu_0 \geq 1$. This proves the lemma for $\gamma \geq 1$.

If $\gamma < 1$, (\ref{eq:uineq}) holds for $0 \leq u < \frac{\sqrt{\gamma}}{1-\sqrt{\gamma}}$.

\end{proof}

\newcommand{\?}{\stackrel{?}{<}} 
\begin{lem}\label{lem:support0}
Let $\ell_1 > 1 + \sqrt{\gamma}$. Let $\nu_-^{1,*}:=\nu_-(\ell_1;\eta_1^*(\lambda(\ell_1),\gamma))$. Then, we have
\begin{equation}
	\nu_-^{1,*} < \frac{1}{1 + \sqrt{\gamma}} .
\end{equation}
\end{lem}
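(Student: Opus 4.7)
The plan is to split into two regimes according to the branching in (\ref{eq:optSingleSpikeShrink}). In the easy regime $1+\sqrt{\gamma} < \ell_1 \leq \ell_1^+(\gamma)$, one has $\eta_1^*(\lambda(\ell_1)) = 1$ by definition, and then Lemma \ref{lem:defTD} gives $\nu_-(A(\ell_1,1)) = 1/\ell_1$ directly. Since $\ell_1 > 1+\sqrt{\gamma}$ by hypothesis, $\nu_-^{1,*} = 1/\ell_1 < 1/(1+\sqrt{\gamma})$ follows immediately.

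The substantive case is $\ell_1 > \ell_1^+(\gamma)$. Here I would exploit the representation $\eta_1^* = b/a$ from Lemma \ref{lem:spikeFormulaEquivalent}, where $a = c^2/\ell_1 + s^2$ and $b = c^2 + s^2/\ell_1$. Plugging this into Lemma \ref{lem:defTD} collapses the trace to $T = a\eta_1^* + b = 2b$, so that the two eigenvalues of $A(\ell_1, \eta_1^*)$ are $b \pm \sqrt{b^2 - D}$ with $D = \eta_1^*/\ell_1$. Using the identities (\ref{eq:c2s2ident}), the quantities $b$ and $D$ admit the compact closed forms
\[
b = \frac{\elld}{\elld + \gamma}, \qquad D = \frac{\elld}{(1+\gamma)\elld + 2\gamma}, \qquad \elld := \ell_1 - 1.
\]

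With these in hand, the claim $b - \sqrt{b^2 - D} < 1/(1+\sqrt{\gamma})$ becomes a polynomial inequality in $\elld$ and $\sqrt{\gamma}$. The hypothesis $\ell_1 > 1+\sqrt{\gamma}$ translates to $\elld > \sqrt{\gamma}$, which makes $b > 1/(1+\sqrt{\gamma})$; this ensures $b - 1/(1+\sqrt{\gamma}) > 0$, legitimizing squaring. After squaring and cancelling the common positive factor $\gamma(\elld - \sqrt{\gamma})$, the inequality reduces to
\[
(\elld - \sqrt{\gamma})\bigl((1+\gamma)\elld + 2\gamma\bigr) < (1+\sqrt{\gamma})^2\,\elld\,(\elld + \sqrt{\gamma}).
\]

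The only obstacle is the polynomial bookkeeping to verify this last inequality, and it is mild: expanding both sides and collecting powers of $\elld$ should yield the clean factorization
\[
\mathrm{RHS} - \mathrm{LHS} = 2\sqrt{\gamma}\bigl[\elld^2 + (1+\gamma)\elld + \gamma\bigr] = 2\sqrt{\gamma}\,(\elld+1)(\elld+\gamma) = 2\sqrt{\gamma}\,\ell_1\,(\elld + \gamma),
\]
which is strictly positive for any $\gamma > 0$ and $\ell_1 > 1$. The strict positivity delivers the strict inequality $\nu_-^{1,*} < 1/(1+\sqrt{\gamma})$, completing the second case and hence the lemma.
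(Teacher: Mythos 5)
Your proof is correct. The easy case is handled exactly as one would expect, and the substantive case is done carefully: you use $T=2b$ (from $\eta_1^*=b/a$), compute the compact forms $b=\elld/(\elld+\gamma)$ and $D=\elld/((1+\gamma)\elld+2\gamma)$, verify $b-\beta>0$ with $\beta=1/(1+\sqrt{\gamma})$ so squaring is legitimate, and then the crucial observation that both $(b-\beta)^2$ and $b^2-D$ carry the common positive factor $\gamma(\elld-\sqrt{\gamma})/(\elld+\gamma)^2$ reduces everything to the polynomial inequality you state, whose difference factors as $2\sqrt{\gamma}\,\ell_1(\elld+\gamma)>0$. I re-checked each algebraic identity ($b-\beta=\sqrt{\gamma}(\elld-\sqrt{\gamma})/[(\elld+\gamma)(1+\sqrt{\gamma})]$, $b^2-D=\gamma\elld(\elld-\sqrt{\gamma})(\elld+\sqrt{\gamma})/[(\elld+\gamma)^2((1+\gamma)\elld+2\gamma)]$, and the final factorization) and they all hold.

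This is essentially the same route as the paper (split by cases, square, reduce to a polynomial inequality in $\elld$ and $\sqrt{\gamma}$), but your execution of the final verification is in fact tighter than what the paper gives. The paper reduces to the inequality
\[
\frac{2\elld}{\elld+\gamma}-\frac{1+\sqrt{\gamma}}{1+\gamma+2\gamma/\elld}>\frac{1}{1+\sqrt{\gamma}},
\]
and then argues via two separate bounds: the first term exceeds $2/(1+\sqrt{\gamma})$, and the subtracted term exceeds $1/(1+\sqrt{\gamma})$. But those two facts do not together imply the displayed inequality — a lower bound on the subtracted term is the wrong direction. The inequality is nevertheless true (your factorization shows the slack is exactly $2\gamma^{3/2}\ell_1(\elld-\sqrt{\gamma})$ up to a positive denominator), so your explicit cancellation-and-factorization step is the honest way to close the case. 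In short: correct proof, same decomposition and squaring strategy as the paper, but with a cleaner and fully rigorous final algebraic verification.
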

\begin{proof}
Let's consider two cases:\\
\noindent
{\bf Case I:} $\ell_1 \leq \ell_1^+(\gamma)$. In this scenario, $\eta^*_1(\ell) = 1$ and $\nu_-^{1,*} = \frac{1}{\ell_1}$. Since $\ell_1 > 1 + \sqrt{\gamma}$, the conclusion follows. 
\\
{\bf Case II:} $\ell_1 > \ell_1^+(\gamma)$. In terms of $T = T(\ell_1,\eta_1^*(\lambda(\ell_1))$
and $D = D(\ell_1,\eta_1^*(\lambda(\ell_1))$,
we have 
\[
\nu_-^{1,*} = \frac{T - \sqrt{T^2 -4D}}{2} .
\]
The inequality we are trying to establish, $\nu_-^{1,*} \? \frac{1}{1 + \sqrt{\gamma}} $, can 
thus be transformed to:
\begin{eqnarray*}
T - \frac{2}{1 + \sqrt{\gamma}} &\?& \sqrt{T^2 -4D} . \\
\end{eqnarray*}
If the LHS is negative, the inequality holds and our claim is proven. If the LHS is positive, we fall into this sub-case:
\[
0 \leq T - \frac{2}{1 + \sqrt{\gamma}} \? \sqrt{T^2 -4D} ;
\]
squaring both sides of $\?$ and simplifying, the sub-case becomes
\begin{eqnarray}\label{eq:lem:support0:term}
0 &\? &\frac{T}{1 + \sqrt{\gamma}} - D - \frac{1}{(1 + \sqrt{\gamma})^2} .
\end{eqnarray}
Now $T(\ell_1,\eta_1) = a_1 \eta_1 + b_1$ and $D(\ell_1,\eta_1) = \eta_1/\ell_1$,
where $a_1 = a(\ell_1; \gamma)$ and $b_1 = b(\ell_1;\gamma)$.
Multiplying the previous relation by $1 + \sqrt{\gamma}$, we have
\[
0 \? a_1 \eta_1 + b_1 - \frac{\eta_1}{\ell_1} \cdot {(1 + \sqrt{\gamma})} - \frac{1}{(1 + \sqrt{\gamma})} .
\]
Using $\eta_1 = b_1/a_1$  and also  $\eta_1 = \ell_1/(1 + \gamma + 2\gamma/\elld)$
this becomes
\[
0 \? 2b_1 - \frac{1 + \sqrt{\gamma}}{1+\gamma+2\gamma/\elld} - \frac{1}{(1 + \sqrt{\gamma})}.
\]
The identities (\ref{eq:c2s2ident}) give us $b_1 = \elld/(\elld+\gamma)$, and the relation can be rewritten
\beq \label{eq:subsid}
\frac{ 2 \elld}{\elld+\gamma} - \frac{1 + \sqrt{\gamma}}{1+\gamma+2\gamma/\elld}  > \frac{1}{(1 + \sqrt{\gamma})}.
\eeq
On the interval $\elld > \sqrt{\gamma}$ we have $\frac{ 2 \elld}{\elld+\gamma} > \frac{2}{(1 + \sqrt{\gamma})}$.
Also on that interval, $1+\gamma+2\gamma/\elld < (1 + \sqrt{\gamma})^2$, and so 
$\frac{1 + \sqrt{\gamma}}{1+\gamma+2\gamma/\elld} >  \frac{1}{(1 + \sqrt{\gamma})}$.
So inequality (\ref{eq:subsid}) and hence (\ref{eq:lem:support0:term}) holds throughout the interval $\ell > 1+ \sqrt{\gamma}$ .
\end{proof}

\begin{lem}\label{lem:supportone}
Let $\ell_1 > \ell > 1$. Let $\nu_-^{1,*}:=\nu_-(\ell_1;\eta_1^*(\lambda(\ell_1),\gamma))$. 
Put
\[
a\equiv c^2/\ell + s^2 ;
\]
where $c = c(\ell) $ and $s=s(\ell)$ are the cosine and sine induced by $\ell$.
Then, we have 
\begin{equation}
\label{eq:boundOnNuMinus}
	\nu_-^{1,*} < \frac{1}{a\ell} .
\end{equation}
\end{lem}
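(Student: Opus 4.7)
The strategy is to recast the target inequality as $a\ell\cdot \nu_-^{1,*}<1$ and to exploit the decomposition $a\ell = c^{2} + s^{2}\ell$, which is a convex combination of $1$ and $\ell$ with weights $c^{2}$ and $s^{2}$. Using identities (\ref{eq:c2s2ident}), for $\ell > \ell_+(\gamma)$ this simplifies to the closed form
\[
 a\ell \;=\; \frac{(1+\gamma)\elld + 2\gamma}{\elld + \gamma}, \qquad \elld = \ell-1,
\]
whose derivative with respect to $\elld$ works out to $\gamma(\gamma-1)/(\elld+\gamma)^2$. Two structural consequences follow: $a\ell \leq \ell$ always (with equality iff $\ell \leq \ell_+$, where $c=0$), and $\ell \mapsto a\ell$ is nonincreasing on $(\ell_+,\infty)$ when $\gamma \leq 1$ and strictly increasing when $\gamma > 1$.

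With these ingredients I would split on whether $\ell_1 \leq \ell_1^+(\gamma)$. If so, the single-spike nonlinearity collapses, $\eta_1^*(\lambda(\ell_1)) = 1$ and $\nu_-^{1,*} = 1/\ell_1$, and the bound follows immediately from $a\ell \leq \ell < \ell_1$. Otherwise $\ell_1 > \ell_1^+ > \ell_+$, so Lemma~\ref{lem:support0} supplies $\nu_-^{1,*} < 1/(1+\sqrt{\gamma}) = 1/\ell_+$, reducing the task to showing $a\ell \leq \ell_+$. This is immediate in two subcases: (a) any $\ell \leq \ell_+$, where $a\ell = \ell \leq \ell_+$; and (b) $\ell > \ell_+$ with $\gamma \leq 1$, where the monotonicity computation gives $a\ell \leq a(\ell_+)\ell_+ = \ell_+$.

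The one remaining case, and the main obstacle, is $\gamma > 1$ with $\ell_+ < \ell < \ell_1$, in which $a\ell$ is strictly increasing in $\ell$ and can exceed $\ell_+$, so Lemma~\ref{lem:support0} alone is not tight enough. Here I would use monotonicity to reduce to the right endpoint, $a\ell < a_1\ell_1$, and then sharpen the bound on $\nu_-^{1,*}$ itself. Combining the characterization $\eta_1^* = b_1/a_1$ from Lemma~\ref{lem:spikeFormulaEquivalent} with $T_1 = 2b_1$ and $D_1 = b_1/(a_1\ell_1)$ (via Lemma~\ref{lem:defTD}) yields
\[
 \nu_-^{1,*} \;=\; \frac{1/(a_1\ell_1)}{1 + \sqrt{1 - 1/(a_1 b_1 \ell_1)}}.
\]
A direct expansion using $c_1^2+s_1^2=1$ gives the identity $a_1 b_1 \ell_1 = 1 + c_1^2 s_1^2\bigl(\sqrt{\ell_1} - 1/\sqrt{\ell_1}\bigr)^2$; since $c_1,s_1\in(0,1)$ for $\ell_1 > \ell_+$ and $\ell_1 > 1$, every factor in the second term is strictly positive, so $a_1 b_1 \ell_1 > 1$ strictly. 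Hence the square root above is strictly positive, giving $1/\nu_-^{1,*} > a_1\ell_1$; combined with $a\ell < a_1\ell_1$, this closes the case. The only computationally delicate step is verifying the identity $a_1 b_1 \ell_1 - 1 = c_1^2 s_1^2(\sqrt{\ell_1} - 1/\sqrt{\ell_1})^2$, but this is a routine algebraic expansion via $c_1^4 + s_1^4 = 1 - 2c_1^2 s_1^2$.
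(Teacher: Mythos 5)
Your proof is correct and shares the same skeleton as the paper's: split on whether $\eta_1^*(\ell_1)=1$ (equivalently, $\ell_1 \leq \ell_1^+$), invoke Lemma~\ref{lem:support0} to handle the $\ell_+$ endpoint, and exploit monotonicity of $\ell \mapsto a\ell$ on each branch to reduce to the endpoints. The genuine divergence is in how you certify $\nu_-^{1,*} < 1/(a_1\ell_1)$. The paper uses the product formula $\nu_+\nu_- = D = b_1/(a_1\ell_1)$ together with the already-established fact $\nu_+\geq 1$ and the elementary observation $b_1 = c_1^2 + s_1^2/\ell_1 < 1$, giving $\nu_- = (b_1/\nu_+)\cdot 1/(a_1\ell_1) < 1/(a_1\ell_1)$ in one line. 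You instead rationalize the closed form $\nu_- = b_1 - \sqrt{b_1^2 - b_1/(a_1\ell_1)}$ to obtain
\[
\nu_-^{1,*} = \frac{1/(a_1\ell_1)}{1+\sqrt{1-1/(a_1 b_1\ell_1)}},
\]
and observe the denominator strictly exceeds $1$ via the identity $a_1 b_1\ell_1 - 1 = c_1^2 s_1^2\bigl(\sqrt{\ell_1}-1/\sqrt{\ell_1}\bigr)^2 > 0$. Both are elementary; the paper's route is shorter and leans on $\nu_+\geq 1$, whereas yours is self-contained and quantifies the slack $a_1b_1\ell_1-1$ exactly, which is a pleasant bonus. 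A secondary cosmetic difference: the paper merely notes $g(\ell)=a\ell$ is monotone on each branch (so its max on $[1,\ell_1]$ sits at $\ell_+$ or $\ell_1$) and bounds $\nu_-^{1,*}$ below $1/g$ at both endpoints without deciding which is larger; you compute $\partial_{\elldot}(a\ell) = \gamma(\gamma-1)/(\elldot+\gamma)^2$ and case-split on $\gamma\gtrless 1$ to pin down the maximizer, which costs a little extra bookkeeping but reaches the same conclusion.
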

\begin{proof}
We can write $\frac{1}{a\ell}$ as $\frac{1}{\ell s^2 + c^2}$.  We consider two cases:
\\ \noindent
{\bf Case I:} $\eta^*_1(\ell_1) = 1$. In this case, 
\[
\nu_-^{1,*} = \frac{1}{\ell_1} < \frac{1}{\ell} \leq \frac{1}{\ell s^2 + c^2} = \frac{1}{a\ell} ,
\]
as claimed. \\ \noindent
{\bf Case II:} $\eta^*_1(\ell_1) > 1$. For $\ell \in [1, \ell_1]$, let 
\begin{equation*}
g(\ell) \equiv a\ell = \ell s^2 + c^2.
\end{equation*}
$g(\ell)$ can be written in simplified form as
\begin{equation} \label{eq:g_ell}
g(\ell) = \left\{ \begin{array}{ll}  
\frac{\gamma + \ell +\gamma \ell -1}{\gamma + \ell -1} & \ell_1 \geq \ell > \ell_+(\gamma)\\
\ell & 1 \leq \ell \leq \ell_+(\gamma)
\end{array}   \right. .
\end{equation}
Note that the conclusion of the lemma follows if we show 
\begin{equation} \label{eq:lemma_conclusion_equivalent}
\max_{\ell \in [1, \ell_1]} g(\ell) < \frac{1}{\nu_-^{1,*}}.
\end{equation}
A simple derivative calculation shows that $g(\ell)$ is monotonic on each branch of \eqref{eq:g_ell}; it is always increasing on the lower branch, and, depending on the value of $\gamma$, it is either monotonically increasing or decreasing on the upper branch. Therefore, $g(\ell)$ attains its maximum either at $\ell = \ell_+(\gamma) = 1 + \sqrt{\gamma}$ or $\ell = \ell_1$. 

We show \eqref{eq:lemma_conclusion_equivalent} by verifying that $\nu_-^{1,*}$ is smaller than both $\frac{1}{g(\ell_1)}$ and $\frac{1}{1+\sqrt{\gamma}}$. Let $s_1 = s(\ell_1)$ and $c_1=c(\ell_1)$. By Lemma \ref{lem:defTD}, 
\[
\nu_+(\ell_1,\eta^*_1(\ell_1)) \nu_-(\ell_1,\eta^*_1(\ell_1)) = \frac{\eta^*_1(\ell_1)}{\ell_1} = \frac{c_1^2 + s_1^2 / \ell_1}{\ell_1 s_1^2 + c_1^2} .
\]
Therefore, 
\[
\nu_-(\ell_1,\eta^*_1(\ell_1)) = \frac{c_1^2 + s_1^2 / \ell_1}{\nu_+(\ell_1,\eta^*_1(\ell_1))}  \frac{1}{\ell_1 s_1^2 + c_1^2} .
\]
The proof of Lemma  \ref{lem:KrPlusLemma} 
shows that $\nu_+(\ell;\eta) \geq 1$ for $\eta \geq 1$. Hence 
\begin{eqnarray*}
\nu_+(\ell_1,\eta^*_1(\ell_1)) \geq 1 > c_1^2 + \frac{s_1^2}{\ell_1} 
\leftrightarrow 1 > \frac{c_1^2 + \frac{s_1^2}{\ell_1}}{\nu_+(\ell_1,\eta^*_1(\ell_1))}.
\end{eqnarray*}
Hence, 
\[
\nu_-(\ell_1,\eta^*_1(\ell_1)) = \frac{c_1^2 + s_1^2 / \ell_1}{\nu_+(\ell_1,\eta^*_1(\ell_1))}  \frac{1}{\ell_1 s_1^2 + c_1^2} < \frac{1}{\ell_1 s_1^2 + c_1^2} = \frac{1}{a \ell_1} .
\]
Therefore, $\nu_-^{1,*} <\frac{1}{g(\ell_1)}$. The fact that $\nu_-^{1,*} <\frac{1}{1+\sqrt{\gamma}}$ was proven earlier in Lemma \ref{lem:support0}. 
\end{proof}

\begin{lem}\label{solution}
Let $\ell_1 > \ell > 1$. Let $\nu_-^{1,*}:=\nu_-(\ell_1;\eta_1^*(\lambda(\ell_1),\gamma))$. Set 
\begin{equation} \label{eq:abdef}
a\equiv c^2/\ell + s^2,\;\;b\equiv s^2/\ell + c^2,
\end{equation}
where $c = c(\ell; \gamma)$ and $s= s(\ell;\gamma)$.
For 
\beq \label{eq:OptEtaAlt}
\eta \equiv \frac{(\nu_-^{1,*} - b)}{a-1/(\ell\nu_-^{1,*})} ,
\eeq
we  have $\nu_-(\ell,\eta)=\nu_-^{1,*}$. 
Moreover, $\eta < 1$ iff $\ell < \frac{1}{\nu_-^{1,*}}$.
\end{lem}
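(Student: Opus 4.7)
The plan is to derive formula \eqref{eq:OptEtaAlt} by imposing that $\nu := \nu_-^{1,*}$ be an eigenvalue of $A(\ell,\eta)$, then verify that $\nu$ is the \emph{smaller} of the two eigenvalues, and finally carry out a direct sign analysis for $\eta - 1$. Writing $T(\ell,\eta) = a\eta + b$ and $D(\ell,\eta) = \eta/\ell$ (Lemma \ref{lem:defTD}), the characteristic relation $\nu^2 - T\nu + D = 0$ becomes $\eta(a\nu - 1/\ell) = \nu^2 - b\nu$; dividing by $\nu > 0$ reproduces the claimed formula. Well-definedness of the quotient is supplied by Lemma \ref{lem:supportone}, which gives $\nu_-^{1,*} < 1/(a\ell)$, and hence $a - 1/(\ell\nu) < 0$.

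The technical heart is confirming that $\nu$ lies on the minus-branch. Since the two roots of the characteristic polynomial multiply to $D$, the companion eigenvalue is $\eta/(\ell\nu)$, so the condition $\nu = \nu_-(\ell,\eta)$ is equivalent to $\ell\nu^2 \leq \eta$. Substituting the derived formula and clearing the negative denominator $a - 1/(\ell\nu)$ reduces this to the quadratic inequality $\ell a\nu^2 - 2\nu + b \geq 0$ in $\nu$. Its discriminant is $4(1 - \ell ab)$, so it suffices to show $\ell ab \geq 1$. A direct expansion gives
\[
\ell a b = (c^2 + \ell s^2)(c^2 + s^2/\ell) = c^4 + s^4 + c^2 s^2(\ell + 1/\ell) \geq (c^2 + s^2)^2 = 1,
\]
using $\ell + 1/\ell \geq 2$; since the leading coefficient $\ell a > 0$, the quadratic in $\nu$ is then nonnegative everywhere, establishing the claim.

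For the sign statement, I will compute
\[
\eta - 1 = \frac{(\nu - b) - (a - 1/(\ell\nu))}{a - 1/(\ell\nu)} = \frac{\nu - (a+b) + 1/(\ell\nu)}{a - 1/(\ell\nu)}.
\]
The identity $a + b = 1 + 1/\ell$ (immediate from $c^2 + s^2 = 1$) lets me factor the numerator as $(\nu - 1)(1 - 1/(\ell\nu))$. Since $\nu = \nu_-^{1,*} < 1$ (from Lemma \ref{lem:support0} when $\ell_1 > 1 + \sqrt{\gamma}$, and from $\nu_-^{1,*} = 1/\ell_1 < 1$ in the complementary case where $\eta_1^*(\ell_1) = 1$), and the denominator is negative by Lemma \ref{lem:supportone}, the sign of $\eta - 1$ agrees with that of $1 - 1/(\ell\nu)$, which is negative exactly when $\ell\nu < 1$, i.e.\ when $\ell < 1/\nu_-^{1,*}$. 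The main obstacle is the minus-branch verification in the second step, but this collapses to the clean AM-GM bound $\ell ab \geq 1$.
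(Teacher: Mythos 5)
Your proof is correct, and it follows the same overall route as the paper for the formula derivation (impose $\nu^2 - T\nu + D = 0$, clear the denominator, invoke Lemma~\ref{lem:supportone} for well-definedness) and for the sign analysis (factor the numerator of $\eta - 1$ using $a + b = 1 + 1/\ell$; the paper factors the equivalent quadratic $(\nu - 1)(\nu - 1/\ell)$ rather than $(\nu-1)(1 - 1/(\ell\nu))$, but these differ only by the positive factor $\nu$).

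The genuine contribution is your minus-branch verification. The paper's proof derives the formula for $\eta$ by starting from $T - \sqrt{T^2 - 4D} = 2\nu_-^{1,*}$ and squaring; this is a necessity argument and, because squaring is not reversible, it only shows that $\nu_-^{1,*}$ is \emph{an} eigenvalue of $A(\ell,\eta)$, not that it is the \emph{smaller} one. Your argument closes that gap: you observe the companion eigenvalue is $D/\nu = \eta/(\ell\nu)$, reduce the branch condition $\ell\nu^2 \leq \eta$ (after clearing the negative denominator via Lemma~\ref{lem:supportone}) to the quadratic inequality $\ell a \nu^2 - 2\nu + b \geq 0$, and dispose of it entirely by the AM--GM identity $\ell a b = c^4 + s^4 + c^2 s^2(\ell + 1/\ell) \geq (c^2 + s^2)^2 = 1$, which forces the discriminant $4(1 - \ell a b)$ to be nonpositive while the leading coefficient $\ell a$ is positive. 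This is a clean and self-contained way to establish a fact the paper implicitly assumes. One minor point: you invoke $\nu_-^{1,*} < 1$; your cited justifications (Lemma~\ref{lem:support0} for $\ell_1 > 1 + \sqrt{\gamma}$, and $\nu_-^{1,*} = 1/\ell_1$ when $\eta_1^*(\ell_1) = 1$) do cover all $\ell_1 > 1$, even though those two ranges overlap rather than being complementary, so the argument is fine but the word ``complementary'' is slightly loose.
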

\begin{proof}
We directly solve for $\eta$ that satisfies
\begin{eqnarray*}
\nu_-(\ell,\eta)&=&\nu_-^{1,*}.
\end{eqnarray*}
Equivalently we want $T - \sqrt{T^2-4D} = 2\nu_-^{1,*}$; hence
\begin{eqnarray*}
T^2+4(\nu_-^{1,*})^2-4T\nu_-^{1,*}&=&T^2-4D,\\
(\nu_-^{1,*})^2-  (\eta \cdot a + b) \cdot \nu_-^{1,*}&=&-\frac{\eta}{\ell},\\
(\nu_-^{1,*})^2-b \cdot \nu_-^{1,*}&=&\eta \cdot (  a  \cdot\nu_-^{1,*}  -\frac{1}{\ell}).
\end{eqnarray*}
Supposing that $\nu_-^{1,*} \not = \frac{1}{a\ell}$, we obtain (\ref{eq:OptEtaAlt}).
Lemma \ref{lem:supportone} showed that $\nu_-^{1,*} < \frac{1}{a\ell}$, 
so (\ref{eq:OptEtaAlt}) holds under the stated assumptions. 
We have the equivalent conditions:
\begin{eqnarray*}
\eta < 1 &\Leftrightarrow&
(\nu_-^{1,*})(\nu_-^{1,*} - b) > (a\nu_-^{1,*}-\frac{1}{\ell})\hfill \quad \mbox{[since $(a\nu_-^{1,*}-\frac{1}{\ell})<0$]}\\ 
&\Leftrightarrow& (\nu_-^{1,*})^2 - (a + b)(\nu_-^{1,*}) + \frac{1}{\ell}>0 \\ 
&\Leftrightarrow& (\nu_-^{1,*})^2 - (1 + \frac{1}{\ell})(\nu_-^{1,*}) + \frac{1}{\ell} = (\nu_-^{1,*}-1) (\nu_-^{1,*}-1/\ell)>0 \\ 
&\Leftrightarrow&  \ell < \frac{1}{\nu_-^{1,*}}, \hfill \quad \mbox{[since $\nu_-^{1,*} < 1$]} .\\ 
\end{eqnarray*}
This proves our last claim. 
\end{proof}

\begin{lem} \label{lem:continuity}
$\ell \mapsto \eta^*_m(\ell; \ell_1 , \gamma)$ of 
Theorem \ref{thm:OptimalShrinkage} is continuous.
\end{lem}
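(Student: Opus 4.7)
The plan is to verify continuity piecewise and then glue at the threshold. By Theorem \ref{thm:OptimalShrinkage} part 4, we have $\ell_{\eta_m^*}^+ > \ell_+(\gamma) > 1$, so the domain $\ell \geq 1$ splits into the dead zone $[1, \ell_{\eta_m^*}^+]$ and the active region $(\ell_{\eta_m^*}^+, \infty)$. In the dead zone the shrinker is identically $1$, which is trivially continuous. On the active region, the shrinker is given by
\[
\eta_m^*(\lambda(\ell)) = \frac{\nu_-^{1,*} - b(\ell)}{a(\ell) - 1/(\ell \nu_-^{1,*})},
\]
with $a(\ell) = c^2(\ell)/\ell + s^2(\ell)$ and $b(\ell) = s^2(\ell)/\ell + c^2(\ell)$. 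Here $\nu_-^{1,*}$ is a constant depending only on $(\ell_1,\gamma)$.

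Next I would verify continuity inside the active region. By Theorem \ref{lem:spiked}, $c(\ell)$ and $s(\ell)$ are continuous on $\ell > \ell_+(\gamma)$ (and in fact $c \to 0$, $s \to 1$ as $\ell \downarrow \ell_+(\gamma)$), so $a(\ell)$ and $b(\ell)$ are continuous. The only possible obstruction to continuity of the ratio is vanishing of the denominator, but Lemma \ref{lem:supportone} gives $\nu_-^{1,*} < 1/(a(\ell)\,\ell)$ throughout $\ell \in (1,\ell_1]$, equivalently $a(\ell) - 1/(\ell \nu_-^{1,*}) < 0$, so the denominator is bounded away from zero on compact subsets and the ratio is continuous.

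The key remaining step is continuity at the threshold $\ell_* \equiv \ell_{\eta_m^*}^+ = 1/\nu_-^{1,*}$. Substituting $\nu_-^{1,*} = 1/\ell_*$ into the numerator gives
\[
\nu_-^{1,*} - b(\ell_*) = \tfrac{1}{\ell_*} - \tfrac{s^2}{\ell_*} - c^2 = \tfrac{c^2}{\ell_*} - c^2 = \tfrac{c^2(1-\ell_*)}{\ell_*},
\]
while the denominator becomes
\[
a(\ell_*) - \tfrac{1}{\ell_* \nu_-^{1,*}} = \tfrac{c^2}{\ell_*} + s^2 - 1 = \tfrac{c^2}{\ell_*} - c^2 = \tfrac{c^2(1-\ell_*)}{\ell_*}.
\]
Since $\ell_* > \ell_+(\gamma) > 1$, we have $c(\ell_*) \neq 0$ and $\ell_* \neq 1$, so the common factor is nonzero and the ratio equals $1$. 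Thus $\lim_{\ell \downarrow \ell_*}\eta_m^*(\lambda(\ell)) = 1$, which matches the value on the dead zone, establishing continuity at the boundary.

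The main (mild) obstacle is the explicit cancellation at the threshold: both the numerator and denominator vanish there, and one must verify that they vanish at the same rate. The computation above handles this cleanly because substituting $\nu_-^{1,*} = 1/\ell_*$ produces the same factor $c^2(1-\ell_*)/\ell_*$ in both, and the geometric condition $\ell_* > \ell_+(\gamma)$ guarantees $c(\ell_*) > 0$ so the cancellation is legitimate.
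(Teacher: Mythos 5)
Your proof is correct and takes a genuinely different route from the paper's. The paper invokes Lemma \ref{solution} to rewrite the case-split as $\eta_m^* = \max\bigl(1,\; (\nu_-^{1,*} - b)/(a - 1/(\ell\nu_-^{1,*}))\bigr)$, then argues the ratio is continuous throughout (numerator and denominator continuous, denominator nonvanishing by Lemma \ref{lem:supportone}) and concludes by continuity of $\max$. You instead analyze the two branches separately and glue them at $\ell_* = 1/\nu_-^{1,*}$ by directly computing that the active-branch formula evaluates to $1$ there. Both are valid; your version has the advantage of making the boundary value explicit, while the paper's max-representation is more compact and, importantly, the paper's argument simultaneously establishes continuity as a function of $(\ell, \ell_1, \gamma)$ jointly (via continuity of $\nu_-^{1,*}$ in $\ell_1$ and $\gamma$), which is what is actually needed later in Lemma \ref{lem:AsympMShrink}. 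Your proof, as written, establishes only continuity in $\ell$ for fixed $(\ell_1,\gamma)$, which is what the lemma literally states.

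One correction of exposition in your last paragraph: you write that ``both the numerator and denominator vanish there, and one must verify that they vanish at the same rate,'' as if the boundary were a $0/0$ indeterminate form requiring a rate comparison. Your own computation contradicts this: at $\ell = \ell_*$ both numerator and denominator equal the \emph{nonzero} quantity $c^2(1-\ell_*)/\ell_*$ (nonzero precisely because $\ell_* > \ell_+(\gamma) > 1$ forces $c(\ell_*) > 0$ and $\ell_* \neq 1$). The ratio equals $1$ by ordinary evaluation with a nonvanishing denominator, not by cancellation of a common zero. In fact, if they both vanished, Lemma \ref{lem:supportone}'s conclusion that the denominator is strictly bounded away from zero on $(1,\ell_1]$ would be violated. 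The computation and the conclusion are right; the framing as a vanishing-rate argument should be dropped.
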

\begin{proof}
By Lemma \ref{solution}, we can equivalently write 
\[
\eta^*_m = \max \bigg(1, \frac{(\nu_-^{1,*} - b)}{a-1/(\ell \nu_-^{1,*})}\bigg) ,
\]
where 
\[
a\equiv c^2/\ell + s^2,\;\;b\equiv s^2/\ell + c^2 .
\]
Since the max function is continuous, it suffices to show $\frac{(\nu_-^{1,*} - b)}{a-1/(\ell \nu_-^{1,*})}$ is continuous. \\
By Lemma \ref{lem:spiked}, we can see that $\lambda(\ell;\gamma)$, $c(\ell;\gamma)$, and $s(\ell;\gamma)$ are all 
continuous functions on the domain $ \{ 1 \leq \ell < \infty \} \times \{ 0 < \gamma < \infty\}$,
as are $a$ and $b$. 
Also, for $\ell \geq 1$ we have $c^2 + s^2 \ell \geq 1$.
We obtain:
\begin{enumerate}
\item $\eta^*_1(\ell_1, \gamma) \equiv \max(1, \frac{\ell_1c^2(\ell_1) + s^2(\ell_1)}{\ell_1  s^2(\ell_1) + c^2(\ell_1)})$ is a continuous function of $\ell_1$ and $\gamma$.
\item All entries of $A(\ell_1, \eta^*_1(\ell_1);\gamma)$ are continuous functions of $\ell_1$ and $\gamma$. 
Since $\lambda_{min}(\cdot)$ is a continuous function of the matrix entries, 
we conclude that $\nu_-^{1,*}$ is also a continuous function of $\ell_1$ and $\gamma$.
\end{enumerate}

Therefore, both the numerator and the denominator 
are continuous functions of the arguments. 
By Lemma \ref{lem:supportone}, for all valid $(\ell;  \ell_1,\gamma)$ the denominator never vanishes. 
The conclusion follows.
\end{proof}

\subsubsection*{Proof of Theorem \ref{thm:OptimalShrinkage}}
\begin{proof}
Assume the spike configuration $\ell_1 > \ell_2 > \dots > \ell_r > 1$. We
will construct the desired nonlinearity $\eta_m^*$. For large $\ell$ we  
use the one-spike optimal solution $\eta_1^*$.  
\[
\eta^*_m(\ell ; \ell_1, \gamma) = \eta^*_1(\ell;\gamma) , \qquad \ell \geq \ell_1,
\]
For smaller $\ell$, we show below how to construct $\eta_m^*$ so that
$\forall 1<\ell<\ell_1$
\begin{equation} \label{eq:optimalityconditions_low}
\nu_{-}(\ell,\eta^*_m(\ell)) \geq  \nu_{-}(\ell_1,\eta^*_1(\ell_1)),
\end{equation}
\begin{equation} \label{eq:optimalityconditions_up}
\nu_{+}(\ell,\eta^*_m(\ell)) \leq  \nu_{+}(\ell_1,\eta^*_1(\ell_1)).
\end{equation}
These properties entail
\begin{eqnarray}
\kappa(\Delta^a((\ell_i);(\eta^*_m))) &=& \frac{\max_i \nu_{+}(\ell_i,\eta^*_m(\ell_i))}{\min_i\nu_{-}(\ell_i,\eta^*_m(\ell_i))} \hfill \quad 
\mbox{[by the definition of $\eta^*_m$]} \nonumber \\
&=& \frac{\nu_{+}(\ell_1,\eta^*_m(\ell_1))}{\nu_{-}(\ell_1,\eta^*_m(\ell_1)} = \kappa^*_1(\ell_1) . \label{eq:optval}
\end{eqnarray}
Since for {\it any} nonlinearity $\eta$
\begin{eqnarray*}
\kappa(\Delta^a((\ell_i);(\eta))) &\geq& \frac{\nu_{+}(\ell_1,\eta(\ell_1))}{\nu_{-}(\ell_1,\eta(\ell_1))}  \hfill \quad 
\mbox{[by Theorem \ref{thm:spike_1}]} \\
&\geq& \kappa_1^*(\ell_1) ,
\end{eqnarray*}
property (\ref{eq:optval}) ensures that $\eta$ is (asymptotically) optimal. 
Consider the following candidate:
\[
    \eta_m^*(\lambda(\ell))  = \left \{ \begin{array}{ll}
                 \frac{(\nu_-^{1,*} - b)}{a -1/(\ell \cdot \nu_-^{1,*})}  & \ell >  \ell_{\eta_m^*}^+ \\
                 1           &  \ell \leq  \ell_{\eta_m^*}^+
                 \end{array}
                 \right . ,
\]
where $\ell^+_{\eta^*_m} = \frac{1}{\nu_-^{1,*}}$ and $\nu_-^{1,*} = \nu_-(\ell_1, \eta^*_1(\ell_1))$ and $a=a(\ell;\gamma)$ and $b = b(\ell; \gamma)$
are the functions defined e.g. in (\ref{eq:abdef}) and used frequently in earlier lemmas. By Lemma \ref{lem:supportone}, this function is well-defined for all $1<\ell<\ell_1$. By Lemma \ref{solution}, $\eta^*_m \geq 1$.

If $\ell_1 \leq \ell_1^+(\gamma)$ then $\eta^*_1(\ell_1) = 1$. However, $\eta_m^*(\ell_1)=\eta^*_1(\ell_1)=1$ 
which causes in this case $\eta^*_m(\ell) \equiv 1$ for $1 \leq \ell \leq \ell_1$. 
On the other hand, if $\ell_1 > \ell_1^+(\gamma)$, Lemma \ref{lem:support0} 
shows that $\ell_{\eta_m^*}^+ > 1 + \sqrt{\gamma}$.
So, in either case, we can see that $\eta^*_m(\cdot)$ collapses the vicinity of the bulk to 1.   

To finish the proof, we need to show that (\ref{eq:optimalityconditions_low}) 
and (\ref{eq:optimalityconditions_up}) hold. We consider two cases, depending on $\ell <>  \ell^+_{\eta^*_m}(\ell_1; \gamma)$: 
\begin{itemize}
\item[$\eta^*_m(\ell) = 1$ :] In this case, $\nu_{+}(\ell,\eta^*_m(\ell)) = 1 \leq \nu_+^{1,*}$. In addition, $\nu_{-}(\ell,\eta^*_m(\ell)) = \frac{1}{\ell} \geq \frac{1}{\ell^+_{\eta^*_m}} = \nu_-^{1,*}$. Therefore, (\ref{eq:optimalityconditions_low}) and (\ref{eq:optimalityconditions_up}) hold.

\item[$\eta^*_m(\ell) > 1$ :] By Lemma \ref{solution}, $\nu_{-}(\ell,\eta^*_m(\ell)) = \nu_-^{1,*}$
for $\ell^+_{\eta^*_m} \leq \ell \leq \ell_1$. 
We show $\nu_{+}(\ell,\eta^*_m(\ell)) \leq \nu_+^{1,*}$ over the same range of $\ell$,
arguing by contradiction. Suppose 
\begin{equation}
\label{false_assumption}
\nu_{+}(\ell,\eta^*_m(\ell)) > \nu_+^{1,*} ;
\end{equation}
Lemma \ref{lem:increasingGap} provides a function $\eta(\ell,\nu)$ so that
$\eta^+ \equiv \eta(\ell,\nu_+^{1,*})$ solves
$\nu_{+}(\ell,\eta^+) = \nu_+^{1,*}$. Since $ \Deta \nu_+  > 0$ (Lemma \ref{lem:increasingwrtEta}),  
(\ref{false_assumption}) would imply
\begin{equation}\label{false_result}
	\eta^+ < \eta^*_m(\ell).
\end{equation}
Assume either that $\gamma \geq 1$
or else that $0 < \gamma < 1$, in which case note that
by Lemma \ref{lem:increasingwrtELL},
$\nu_+^{1,*} \leq 1 + \sqrt{\frac{\gamma}{1+{\gamma}}} 
             < \frac{1}{1-\sqrt{\gamma}}$. 
 In either case, Lemma \ref{lem:EtaNuRatioNonincreasing} applies
and the ratio  $\ell \mapsto \eta(\ell,\nu_+^{1,*})/\ell$ 
is non-increasing.
Applying the final conclusion of 
Lemma \ref{lem:increasingGap}
with $\ell_2=\ell$ and $\ell_1$ as here,
and with $\nu_0 = \nu_+^{1,*}$,
we have  with $\eta_i = \eta(\ell_i,\nu_0)$ the inequality
\beq   \label{eq:falsestrict}
\nu_-(A(\ell_2,\eta_2)) > \nu_-(A(\ell_1,\eta_1)).
\eeq

Observe that 
\begin{eqnarray*}
\eta_1 &\equiv& \eta(\ell_1,\nu_0)\\ 
       &=& \eta(\ell_1,\nu_+^{1,*})\\
       &=& \eta_1^*(\ell_1),
\end{eqnarray*}
where the last step follows from the fact that
the relation $\nu \mapsto \eta(\ell_1,\nu)$ 
in one-one over the relevant range. 
Also:
\begin{eqnarray*}
\nu_-(A(\ell_1,\eta_1)) &=& \nu_-(A(\ell_1,\eta_1^*(\ell_1)))\\
&=& \nu_-^{1,*}\\
&=& \nu_-(A(\ell,\eta_m^*(\ell))).
\end{eqnarray*}
where the last  step uses
 the constancy of $\ell \mapsto \nu_-(A(\ell,\eta_m^*(\ell)))$
 for $\ell \in [\ell_m^+,\ell_1]$
 (Lemma \ref{solution}).
We may rewrite  (\ref{eq:falsestrict})  as:
\beq \label{eq:falseoutcome}
\nu_-(A(\ell, \eta^+)) > \nu_-(A(\ell,\eta_m^*(\ell))), \qquad \ell \in [\ell_m^+,\ell_1].
\eeq

By Lemma \ref{lem:increasingwrtEta}, $\Deta \nu_-  > 0$. 
Therefore, (\ref{eq:falseoutcome}) implies $\eta^+ > \eta^*_m(\ell)$. 
This contradicts (\ref{false_result});
hence, assumption (\ref{false_assumption}) fails. 
So (\ref{eq:optimalityconditions_up}) 
holds and  $\eta^*_m$ is optimal. 
\end{itemize}
\end{proof}

\subsubsection*{Proof of Lemma \ref{lem:AsympMShrink}}
\begin{proof} 
Put for short $\eta^*_i \equiv \eta^*_m(\lambda_i; \lambda_1, \gamma)$. 
Note that for $i > r$, $\eta^*_i = 1$. Observe that 
\begin{equation} \label{eq:max_decomp}
\footnotesize
\max_{1\leq i\leq \min(n,p_n)} |\eta_{i,n}^e - \eta_{i}^*| 
=  \max \bigg( \max_{1\leq i\leq r} |\eta_{i,n}^e - \eta_{i}^*|,\max_{(r+1)\leq i\leq \min(n,p_n)} |\eta_{i,n}^e - \eta_{i}^*| \bigg)
\end{equation}
Let's consider the left branch on the RHS of (\ref{eq:max_decomp}). 
We have $\eta^e_{i,n} = \eta^*_m(\lambda_{i,n}; \lambda_{1,n}, \frac{p_n}{n})$. 
We know that $\forall 1 \leq i \leq r$ $\lambda_{i,n} \xrightarrow{a.s} \lambda_{i}$ and $\frac{p_n}{n} \rightarrow \gamma$. 
Now observe that $\eta^*_m(\cdot)$ is a continuous function. Hence, we conclude that $\forall 1 \leq i \leq r$ (almost surely)
\[
\lim_{n\rightarrow \infty } \eta^e_{i,n} = \lim_{n\rightarrow \infty } \eta^*_m(\lambda_{i,n}; \lambda_{1,n}, \frac{p_n}{n}) = 
\eta^*_m(\lambda_{i}; \lambda_{1}, \gamma) =
\eta^*_{i}
\]
which proves that 
\[
\max_{1\leq i\leq r} |\eta_{i,n}^e - \eta_{i}^*| \xrightarrow{a.s} 0 ,
\]

Now, let's consider the right branch of (\ref{eq:max_decomp}). By Lemma \ref{lem:max_bulk}, as $n\rightarrow \infty$
\begin{equation} \label{eq:right_branch}
\max_{(r + 1) \leq i \leq \min(n, p_n)} \lambda_{i,n} \xrightarrow{a.s} (1+\sqrt{\gamma})^2 .
\end{equation}
Note that $\forall i > r$, $\eta_{i}^* = 1$. 
We are done if we can show that $\eta^e$ almost surely collapses \textbf{all} $\lambda_{i,n}$ for $i>r$. 
By \eqref{eq:right_branch}, the conclusion follows if we show $\eta^e$ almost surely collapses the vicinity of the bulk to $1$. \\
Consider the following two cases:
\begin{itemize}
\item $\ell_1 \leq 1 + \sqrt{\gamma}: $ In this case, the largest sample eigenvalue converges almost surely to $(1 + \sqrt{\gamma })^2 < \lambda_1^+(\gamma)$. If the largest eigenvalue of the sample covariance is smaller than $\lambda_1^+(\frac{p_n}{n})$, $\eta^e$ collapses all of the eigenvalues to $1$. Observe that as $\frac{p_n}{n} \rightarrow \gamma$, $\lambda_1^+(\frac{p_n}{n}) \rightarrow \lambda_1^+(\gamma)$. Therefore, $\eta^e$ collapses the vicinity of the bulk to one almost surely and
\[
\max_{(r + 1) \leq i \leq \min(n, p_n)} |\eta^e_{i, n} - 1| \xrightarrow{a.s} 0.
\]

\item $\ell_1 > 1 + \sqrt{\gamma}: $ Observe that by continuity of $\nu_-(\cdot)$, $\ell^+_{\eta^*_m}(\lambda_{1,n}) \xrightarrow{a.s} \ell^+_{\eta^*_m}(\lambda_{1})$. By Lemma \ref{lem:support0}, we have $\lambda^+_{\eta^*_m}(\lambda_{1}) > (1+\sqrt{\gamma})^2$. Therefore, $\eta^e$ almost surely collapses the vicinity of the bulk to 1 as $n \rightarrow \infty$.
\end{itemize}
By the analysis above, we conclude that 
\beq \label{eq:EtaConv}
\max_{1\leq i\leq \min(n,p_n)} |\eta_{i,n}^e - \eta_{i}^*|\xrightarrow{a.s} 0,
\eeq
which proves our first claim. 

Lemma \ref{lem:LocalLipschitzCond} showed that $\kappa(\Delta) = K(\Lambda(\Delta))$
where $K$ and $\Lambda$ are both locally Lipschitz functions, $\Lambda$ being Lipschitz with respect to operator norm. 
Hence $\Delta \mapsto \kappa(\Delta)$ is locally Lipschitz with respect to operator norm.
Put for short
\[
\Delta_{\eta_{n}}\equiv \Delta^e((\ell_i),(\eta_{i,n}^e);n,p_n), \qquad \Delta_{\eta^*} \equiv \Delta^e((\ell_i),(\eta_{i}^*);n,p_n),
\]
where
\[
\eta^* = diag(\eta^*_1,\dots, \eta^*_p),\qquad \eta^e = diag(\eta^e_{1,n},\dots, \eta^e_{p,n}).
\]
\begin{eqnarray} \label{eq:convergence_lambda_max}
\| \Delta_{\eta_{n}} - \Delta_{\eta^*} \|_{op} &=& \| \Sigma^{-\frac{1}{2}} \hat{\Sigma}(\eta^e-\eta^*) \Sigma^{-\frac{1}{2}}) \|_{op} \nonumber \\ 
&\leq& \lambda_{max} (\Sigma^{-\frac{1}{2}})^2 \max_{1\leq i\leq \min(n,p_n)} |\eta_{i,n}^e - \eta_{i}^*| \\
& = & \max_{1\leq i\leq \min(n,p_n)} |\eta_{i,n}^e - \eta_{i}^*| . \nonumber
\end{eqnarray}
We conclude from (\ref{eq:EtaConv}) that as $n \goto \infty$, $\kappa(\Delta_{\eta_{n}})  \goto \kappa(\Delta_{\eta^*})$ both in probability and almost surely.

\end{proof}

\subsection{Proofs for Section \ref{sec:MinimaxInterp}}
\begin{proof}

{\it Well-definedness of $\etammnl$.}
We begin with Part b, making some remarks about $\etammnl$. Part b of the theorem statement
claimed in passing that $a(\ell)\ell \neq 1/\nu_-^{\mmbp}$
over the range $\ell > \frac{1}{\nu_-^{\mmbp}}$. Now
(\ref{eq:g_ell}) gave the identity
\[
a(\ell) \ell = \frac{(1+\gamma) \elldot + 2 \gamma}{\elldot + \gamma}.
\]
Hence we must show
\[
 \frac{(1+\gamma) \elldot + 2 \gamma}{\elldot + \gamma} \neq \frac{1}{1 - \sqrt{\frac{\gamma}{1+\gamma}}}.
\]
Observe that 
\begin{eqnarray} \label{eq:denominator_bound}
\frac{1}{1 - \sqrt{\frac{\gamma}{1+\gamma}}} - \frac{(1+\gamma) \elldot + 2 \gamma}{\elldot + \gamma} 
&=& 1 + \gamma + \sqrt{\gamma^2 + \gamma} - \frac{(1+\gamma) \elldot + 2 \gamma}{\elldot + \gamma} \nonumber \\
&=& \frac{\gamma ^ 2 - \gamma + \sqrt{\gamma^2 + \gamma} (\elldot + \gamma)}{\elldot + \gamma} \nonumber\\
&=& \frac{\gamma ^ 2 - \gamma + (\sqrt{\gamma^2 + \gamma} - \sqrt{\gamma}) (\elldot + \gamma) + \sqrt{\gamma} (\elldot + \gamma)}{\elldot + \gamma} \nonumber \\
&\geq& \frac{\gamma ^ 2 - \gamma + (\sqrt{\gamma^2 + \gamma} - \sqrt{\gamma}) (\elldot + \gamma) + \sqrt{\gamma}^2}{\elldot + \gamma} \nonumber \\
&=& \frac{\gamma ^ 2 + (\sqrt{\gamma^2 + \gamma} - \sqrt{\gamma}) (\elldot + \gamma)}{\elldot + \gamma} \nonumber \\
&>& \sqrt{\gamma^2 + \gamma} - \sqrt{\gamma}.
\end{eqnarray}
Therefore, we indeed have the
advertised inequality.

{\it $\etammnl$ as the limit of $\eta^*_m$.} Part 3 of Theorem \ref{thm:OptimalShrinkage} gives an explicit formula
for $\eta_m^*(\ell;\ell_1)$ with two notable features.
First,  $\eta_m^*(\ell) = 1$ below a certain threshold 
$\ell < \ell_{\eta^*_m}^+(\ell_1)$.
Second, above this threshold, $\eta_m^*$ 
has the parametric form $\eta(\ell;v) = \frac{v-b}{a-1/(\ell \cdot v)}$
as a (very simple) ratio involving 
two given fixed functions $a=a(\ell) = c^2/\ell + s^2$ and $b=b(\ell)= s^2/\ell + c^2$
(by now familiar to us) and
involving a real parameter $v$. This  parameter,
controlling both the threshold and the
shape of $\eta$, is  $v= \nu_{-}^{1,*}(\ell_1)$ by the optimal
rule, and according to (\ref{eq:LimitNuEtaSingle}) tends to
a limit $\nu_{-}^{1,*}(\infty)$, say, as $\ell_1 \goto \infty$. 
This limit $\nu_{-}^{1,*}(\infty) \equiv \nu_-^{\mmbp} = 1 - \sqrt{\frac{\gamma}{\gamma+1}}$.

We showed above that for any $\ell \geq \frac{1}{\nu_-^{MM}}$, $a \neq 1/(\ell \nu_-^{\mmbp})$. Therefore, by examining the parametric form of $\eta(\ell; \nu)$ below and above its threshold, it immediately follows that $\eta(\ell; \nu)$ is continuous at $\nu = \nu_-^{MM}$ (for any $\ell \geq 1$). Hence, we conclude that $\forall \ell \geq 1$:
\begin{eqnarray}\label{eq:limit_etamm}
\lim_{\ell_1 \goto \infty} \eta_m^*(\ell; \ell_1) &=&  \lim_{\nu \goto \nu_-^{MM}} \eta(\ell; \nu) \nonumber \\
&=& \eta(\ell; \lim_{\nu \goto \nu_-^{MM}} \nu) \\
&=& \eta(\ell; \nu_-^{MM}) = \eta_{MM}(\ell) \nonumber
\end{eqnarray}

It remains to show the claim (\ref{eq:asyinequality}) in Part b
and also to show Parts a and c. All three tasks will follow 
from matching upper and lower bounds we establish next.

{\it A lower bound on all procedures.} 
Now $\kappa_1^*(\ell_1) \equiv \kappa_{1}^{*}(\ell_1;\gamma) $ is the exact optimal $\kappa$-loss \-- optimal across
all nonlinearities $\eta$ \-- at each specific $1$-spike configuration with
prescribed $\ell_1$. 
Hence no shrinker can guarantee better performance across {\it all} configurations than
$\sup_{\ell_1>1}  \kappa_{1}^{*}(\ell_1)$:
\beq \label{eq:lboundA}
   K_r^*(\eta) \geq \sup_{\ell_1>1}  \kappa_{1}^{*}(\ell_1), \qquad \forall \eta.
\eeq
We next evaluate the right side  lower bound, showing
\begin{eqnarray}
\sup_{\ell_1>1}  \kappa_{1}^{*}(\ell_1) &=& \lim_{\ell_1 \goto \infty }  \kappa_{1}^{*}(\ell_1) \label{eq:supeqlim} \\
&\equiv& \kappa_1^*(\infty) \nonumber\\
&=& \frac{1+ \sqrt{\frac{\gamma}{\gamma+1}}}{1 - \sqrt{\frac{\gamma}{\gamma+1}}} \label{eq:evalkappastar}.
\end{eqnarray}
To obtain evaluation (\ref{eq:supeqlim})-(\ref{eq:evalkappastar}), we first recall that 
\begin{eqnarray}
  \kappa_1^*(\ell_1) &=&  \frac{\nu_+(\ell_{1},\eta_{1}^{*}(\lambda(\ell_1)))}{\nu_-(\ell_{1},\eta_{1}^{*}(\lambda(\ell_1)))}  \nonumber\\
  &=& \frac{1 + \sqrt{1-\frac{4D}{T^2}}}{1 - \sqrt{1-\frac{4D}{T^2}}}\nonumber \\
  &=& \frac{1 + \sqrt{1-R(\ell_1)}}{1 - \sqrt{1-R(\ell_1)}}, \label{eq:evalRatio}
\end{eqnarray}
where in the first step we used the identities
\[
  \nu_\pm(\ell_{1},\eta_{1}^{*}(\lambda(\ell_1)))) = ({T} \pm \sqrt{T^2 - 4D})/2.
\]
with $D \equiv D(\ell_1) = \mbox{det}(A(\ell_1,\eta_1^*(\lambda(\ell_1)))$ 
and $T \equiv T(\ell_1) = \mbox{tr}(A(\ell_1,\eta_1^*(\lambda(\ell_1))))$.
The quantities $D$ and $T$ were studied before,
for example in Lemmas \ref{lem:increasingwrtELL}-\ref{lem:decreasingwrtELL},
and explicit formulas were given in terms of $\ell$ and $\gamma$.
Recall the quantity $R = \frac{4D}{T^2}$ studied in Lemma \ref{lem:decreasingRwrtELL},
where it was shown that $\ell \mapsto R(\ell)$ is decreasing.

The proof of Lemma \ref{lem:spikeFormulaEquivalent} pointed to 
certain monotonicity properties of $y \mapsto (1+y)/(1-y)$ and $r \mapsto \sqrt{1-r}$; 
combining these with monotonicity of $\ell \mapsto R(\ell)$ 
yields at once that  $\ell_1 \mapsto \kappa(A(\ell_1,\eta_1^*(\lambda(\ell_1))))$,  
is an increasing function of $\ell_1$. 
(\ref{eq:supeqlim}) follows. The evaluation (\ref{eq:evalkappastar}) flows
from combining (\ref{eq:evalRatio}) with the evaluation $R(\infty) =\frac{1}{\gamma+1}$ 
provided by Lemma \ref{lem:decreasingRwrtELL}.
Combining (\ref{eq:lboundA}), (\ref{eq:supeqlim}) and
(\ref{eq:evalkappastar}), we obtain the lower bound
\beq \label{eq:lboundB}
   K_r^*(\eta)  \equiv \sup_{\cL_r} \kappa((\ell_i),\eta)  \geq \frac{1+ \sqrt{\frac{\gamma}{\gamma+1}}}{1- \sqrt{\frac{\gamma}{\gamma+1}}}, \qquad \forall \eta.
\eeq

{\it An upper bound for $\etammnl$.} We next show the matching upper bound
\beq \label{eq:ubound}
   K_r^*(\etammnl)  \equiv \sup_{\cL_r} \kappa((\ell_i),\etammnl)  \leq \frac{1+ \sqrt{\frac{\gamma}{\gamma+1}}}{1- \sqrt{\frac{\gamma}{\gamma+1}}}, \qquad \forall \eta.
\eeq
which will complete the proof.

Recall that $\kappa((\ell_i); \eta)$ denotes the $\kappa$-loss with underlying spikes $(\ell_i)$
and shrinker $\eta$, and that $\etammnl = \eta_m^*(\cdot; \infty, \gamma)$. With $\ell_{r+1} \equiv1$,
\begin{eqnarray*}
    \kappa((\ell_{i})_{i=1}^r,\etammnl) &=& \frac{\max_{i=1}^{r+1} \nu_{+}(\ell_{i},\etammnl(\lambda(\ell_i)))}{\min_{i=1}^{r+1} \nu_{-}(\ell_{i},\etammnl(\lambda(\ell_i)))}.
\end{eqnarray*}
It follows that
\beq \label{eq:kappabnd}
\sup_{\cL_r} \kappa((\ell_i),\etammnl)  \leq \frac{\sup_{\ell > 1} \nu_{+}(\ell,\etammnl(\lambda(\ell)))}{\inf_{\ell>1} \nu_{-}(\ell_{1},\etammnl(\lambda(\ell)))} .
\eeq 

By Lemma \ref{lem:increasingwrtELL}, 
$\nu_{+}(\ell,\eta_m^*(\lambda(\ell), \ell_1)) \leq  1 + \sqrt{\frac{\gamma}{1+\gamma}}$.
Hence
\beq \label{eq:nuplusbnd}
   \sup_{\ell > 1} \nu_{+}(\ell,\etammnl(\lambda(\ell))) \leq 1 + \sqrt{\frac{\gamma}{1+\gamma}}.
\eeq
We next work on  the companion lower bound 
\beq \label{eq:numinusbnd}
  \inf_{\ell > 1} \nu_{-}(\ell,\etammnl(\lambda(\ell))) \geq 1 - \sqrt{\frac{\gamma}{1+\gamma}}.
\eeq 

For each pair $(\ell, \ell_1)$ with $\ell  < \ell_1$  we have, by construction of $\eta_m^*$,
\begin{eqnarray*}
\nu_{-}(\ell,\eta_m^*(\lambda(\ell); \ell_1)) &\geq& \nu_{-}(\ell_1,\eta_1^*(\lambda(\ell_1); \ell_1)).
\end{eqnarray*}

We can take the limit of both sides of the equation above and we have
\begin{eqnarray}\label{eq:nu_comp}
\lim_{\ell_1 \goto \infty} \nu_{-}(\ell,\eta_m^*(\lambda(\ell); \ell_1)) &\geq& \lim_{\ell_1 \goto \infty} \nu_{-}(\ell_1,\eta_1^*(\lambda(\ell_1); \ell_1)).
\end{eqnarray}

Now observe that as $\nu_{-}(\ell,\eta_m^*(\lambda(\ell); \ell_1))$ is a continuous function of $\eta_m^*(\lambda(\ell); \ell_1)$, we have:
\begin{eqnarray*}
\lim_{\ell_1 \goto \infty} \nu_{-}(\ell,\eta_m^*(\lambda(\ell); \ell_1)) &=& \nu_{-}(\ell, \lim_{\ell_1 \goto \infty} \eta_m^*(\lambda(\ell); \ell_1)) \\
&=& \nu_{-}(\ell, \eta_{MM}(\lambda(\ell))). \qquad \mbox{[by \eqref{eq:limit_etamm}]}
\end{eqnarray*}
Substituting this in the LHS of \eqref{eq:nu_comp} yields
\begin{eqnarray*}
\nu_{-}(\ell, \eta_{MM}(\lambda(\ell)))  &\geq& \lim_{\ell_1 \goto \infty} \nu_{-}(\ell_1,\eta_1^*(\lambda(\ell_1), \ell_1)) \\
&=& 1 - \sqrt{\frac{\gamma}{1+\gamma}}.
\end{eqnarray*}
Since this inequality holds for all $\ell$, we can take an infimum over all $\ell > 1$ of both sides and have
\begin{eqnarray*}
\inf_{\ell > 1} \nu_{-}(\ell, \eta_{MM}(\lambda(\ell)))  \geq 1 - \sqrt{\frac{\gamma}{1+\gamma}}.
\end{eqnarray*}
This proves \eqref{eq:numinusbnd}. The desired upper bound (\ref{eq:ubound}) follows from combining the last display, and
the earlier (\ref{eq:nuplusbnd}).

The matching upper and lower bounds 
(\ref{eq:lboundB})-(\ref{eq:ubound}) 
complete the proof of parts a and c  of the Theorem. Property (\ref{eq:asyinequality}) of part b 
is established by (\ref{eq:numinusbnd}). 
\end{proof}

\subsection*{Proof of Theorem \ref{thm:worst-case-forecast}}

\begin{lem} \label{lem:blockstilde} 
Under {\bf [Spike]} and {\bf [PGA]},
fix spike values $(\ell_i)_{i=1}^r$ and 
corresponding shrunken values $(\eta_i)$.
With $\hS = \hS(\eta)$,
let  $\tD_n$ denote the $p$-by-$p$ matrix:
\[ 
\tD_n =   \hS_n^{-1/2} \Sigma \hS_n^{-1/2}.
\]
Let $\teta_i = \frac{1}{\sqrt{\eta_i}} - 1$ and 
let $\tD^a( (\ell_i), (\teta_i)) $ denote the
deterministic block-diagonal $p \times p$ matrix  
\[
\tD^a = \left[\begin{array}{ccccc}
\tA(\ell_{1},\teta_1)\\
 & \tA(\ell_{2},\teta_2)\\
 &  & \tA(\ell_{3},\teta_3)\\
 &  &  & ...\\
 &  &  &  & \tA(\ell_{r},\teta_{r})
\end{array}\right]\varoplus I_{p-2r} ,
\]
where the 2-by-2 matrices $\tA(\ell,\teta) \equiv \tA(\ell,\teta; \gamma)$ are defined via:
\[
  \tA(\ell,\teta) = I + a_1 uu' + a_2 (ue' +eu')  + a_3 ee';
\]
here $u = (c,s)'$ as earlier,
with $c=c(\ell;\gamma)$ and $s =\sqrt{1-c^{2}}$. Also
$e = (1,0)'$, and
\[
     a_1(\ell,\eta)  = 2 \teta + \teta^2 + \teta^2 \elld c^2 ; \qquad a_2(\ell,\eta)  = \teta \elld c ; \qquad a_3 = \elld.
\]
We have, as $n \to \infty$, almost surely and in probability,
 the convergence in Frobenius norm:
\begin{equation} \label{eq:TDeltaConv}
   \| \bW'_n \tD_n \bW_n - \tD^a( (\ell_i) , (\eta_i) ) \|_F \goto 0,
\end{equation}
Here $ \bW_n$  is the basis matrix
constructed  in Lemma \ref{lem:blocks}.
\end{lem}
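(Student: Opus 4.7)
The plan is to mirror the two-part structure used for Lemma \ref{lem:blocks}, now exploiting the decompositions $\Sigma = I + \mathbf{U}_1 \Upsilon \mathbf{U}_1'$ with $\Upsilon = \mathrm{diag}(\ell_1-1,\dots,\ell_r-1)$, and $\hat{\Sigma}_n^{-1/2} = I + \mathbf{V}_1 \tilde{E} \mathbf{V}_1'$ with $\tilde{E} = \mathrm{diag}(\teta_1,\dots,\teta_r)$ (the latter uses that $\eta_i = 1$ for $i>r$, forcing $\teta_i=0$ there). Abbreviating $X = \mathbf{U}_1 \Upsilon \mathbf{U}_1'$ and $Y = \mathbf{V}_1 \tilde{E} \mathbf{V}_1'$, the first step is to expand
\[
\tD_n - I = (I+Y)(I+X)(I+Y) - I = X + 2Y + Y^2 + XY + YX + YXY,
\]
a sum of six terms whose $W$-basis representation I would analyze one at a time.

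The next step uses the construction of the $W$-basis from Lemma \ref{lem:blocks}. By construction, the columns of $\mathbf{W}_2$ span the orthocomplement of $\mathrm{span}(\mathbf{U}_1,\mathbf{V}_1)$, so $\mathbf{W}_2'\mathbf{U}_1 = 0$ and $\mathbf{W}_2'\mathbf{V}_1 = 0$ hold identically, independently of $n$. Consequently each of the six terms is supported on $\mathrm{span}(\mathbf{W}_1)$: the block $\mathbf{W}_2'(\tD_n - I)\mathbf{W}_2$ vanishes, the lower-right $(p-2r)\times(p-2r)$ block of $\bW'\tD_n\bW$ equals $I_{p-2r}$ deterministically, and the off-diagonal blocks also vanish identically. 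All randomness and all asymptotics therefore concentrate in the upper-left $2r\times 2r$ block $\mathbf{W}_1'\tD_n\mathbf{W}_1$, so Frobenius convergence of the full matrix reduces to Frobenius convergence of this single block.

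For the upper-left block I would invoke Theorem \ref{lem:spiked}, via the Gram--Schmidt construction of $W$, to obtain almost surely and in probability (in Frobenius norm)
\[
\mathbf{W}_1'\mathbf{U}_1 \to \Omega_U, \qquad \mathbf{W}_1'\mathbf{V}_1 \to \Omega_V, \qquad \mathbf{U}_1'\mathbf{V}_1 \to C,
\]
where $\Omega_U$ has $i$-th column equal to $e_{2i-1}$ (the standard basis vector in $\R^{2r}$), $\Omega_V$ has $i$-th column $c_i e_{2i-1} + s_i e_{2i}$ (with $c_i = c(\ell_i)$, $s_i = s(\ell_i)$), and $C = \mathrm{diag}(c_1,\dots,c_r)$. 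Substituting these limits into the six-term expansion yields a sum of matrices that are block-diagonal over the top $2r$ coordinates; the $i$-th $2\times 2$ block of the total limit works out to
\[
I_2 + (\ell_i-1)ee' + \bigl(2\teta_i + \teta_i^2 + (\ell_i-1)\teta_i^2 c_i^2\bigr) u_i u_i' + (\ell_i-1)\teta_i c_i (u_i e'+e u_i'),
\]
with $u_i = (c_i,s_i)'$ and $e = (1,0)'$, which matches $\tA(\ell_i,\teta_i)$ as defined in the lemma. The main bookkeeping obstacle is the trilinear term $YXY$, whose clean reduction hinges on the convergence $\mathbf{U}_1'\mathbf{V}_1 \to \mathrm{diag}(c_i)$ for distinct spikes -- i.e.\ the vanishing of off-diagonal entries asserted in the last display of Theorem \ref{lem:spiked}. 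Beyond this, convergence is routine: each term is a product of matrices whose Frobenius norms admit deterministic uniform bounds, and sub-multiplicativity transfers the convergence of the individual factors to the products.
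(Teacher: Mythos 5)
Your proposal is correct and follows essentially the same plan as the paper: work in the $W$-basis of Lemma \ref{lem:blocks}, show that all of $\tD_n - I$ is supported on $\mathrm{span}(\mathbf{W}_1)$, and reduce the Frobenius-convergence claim to the upper-left $2r\times 2r$ block, whose $2\times 2$ sub-blocks are then computed from the factored form $(I + \teta uu')(I+\elldot ee')(I+\teta uu')$. The paper's own proof is terser: it cites the fact that $\hS$ and $\Sigma$ are asymptotically simultaneously block-diagonalized by $\bW_n$ (a byproduct of the proof of Lemma \ref{lem:blocks}), observes that the asymptotic $2\times 2$ block of $\hS$ is $B(\ell,\eta) = I+(\eta-1)uu'$ so that $B^{-1/2} = I+\teta uu'$, and then writes each asymptotic block of $\tD$ as $B^{-1/2}\mathrm{diag}(\ell,1)B^{-1/2}$ directly. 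Your version instead decomposes $\hS_n^{-1/2}=I+\mathbf{V}_1\tilde E\mathbf{V}_1'$ exactly at the $p\times p$ level and takes limits of the six monomials separately, which sidesteps the (routine but unstated) appeal to continuity of $A\mapsto A^{-1/2}$ under Frobenius convergence and makes explicit where the off-diagonal convergence $\langle U_i,V_j\rangle\to 0$ is needed. Both arrive at the same coefficients $a_1,a_2,a_3$, so this is a more carefully written-out rendition of the same argument rather than a genuinely different one.
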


\begin{proof}
Recall that $\hS$ and $\Sigma$ are asymptotically simultaneously block-diagonalized by
basis matrix $\bW_n$. The asymptotic blocks of $\hS$ take the form
\[
B(\ell,\eta):=\left[\begin{array}{cc}
\eta c^{2}+s^{2} & (\eta-1)cs\\
(\eta-1)cs & c^{2}+\eta s^{2}
\end{array}\right] .
\]
Thus  an individual
$2 \times 2$ block $\tA$ has the form 
\begin{eqnarray*}
    \tA(\ell,\teta) &=&  B(\ell,\eta)^{-1/2} \cdot 
diag(\ell,1) 
 \cdot B(\ell,\eta)^{-1/2} , \\
                         &=&  ( I + \teta uu')  \cdot (I + \elld ee') \cdot (I + \teta uu') , 
\end{eqnarray*}
where $u = (c,s)'$ as earlier, is the top eigenvector of $A(\ell,\eta)$,
$e = (1,0)'$ and $\teta = \frac{1}{\sqrt{\eta}} - 1$. We can rewrite this as
\[
    \tA(\ell,\teta) = I + a_1 uu' + a_2 (ue' +eu')  + a_3 ee',
\]
where 
\[
     a_1 = 2 \teta + \teta^2 + \teta^2 \elld c^2 ; \qquad a_2 = \teta \elld c , \qquad a_3 = \elld.
\]
\end{proof}

\begin{proof}
{\bf (of Theorem \ref{thm:worst-case-forecast}).}
By Lemma \ref{lem:ReduceConditionNumber}, 
the ratio of  ideal oracle Sharpe ratio to the achieved Sharpe Ratio 
$\frac{\SR(\Sigma;\mu,\Sigma)}{\SR(\hat{\Sigma};\mu,\Sigma)}$
can be bounded by
\begin{eqnarray*}
\RSRG_n &=& \sup_\mu \frac{ \sqrt{\mu'\Sigma^{-1} \mu} \cdot \sqrt{\mu'\hS^{-1} \Sigma \hS^{-1}  \mu}}{ \sqrt{\mu' \hS^{-1} \mu}}    \\
          &=&  \sup_x \frac{\sqrt{x' \tD_n x} \cdot \sqrt{x'\tD_n^{-1}x}}{\Vert x\Vert_{2}^{2}}, \\
\end{eqnarray*}
where we used the change of variables $x = \hS^{-1/2} \mu$, 
and where $\tD = \hS^{-1/2} \Sigma \hS^{-1/2}$.
Underlying the Kantorovich inequality is the  fact that,
if $\tnu_{\pm,n}$ denote the extremal eigenvalues of $\tD_n$ and
$\tv_{\pm,n}$ the corresponding eigenvectors, then the maximizer $x$ for 
the last display is
\[
   x_{*,n} = (\tv_{+,n} + \tv_{-,n})/\sqrt{2},
\]
and the maximal value is 
\[
   \RSRG_n = \frac{1}{2} (2 + \frac{\tnu_{+,n}}{\tnu_{-,n}} + \frac{\tnu_{-,n}}{\tnu_{+,n}})^{1/2}.
\]
We will show that this tends to the limit:
\beq \label{eq:AsyRSRG}
  \RSRG_n \goto_{a.s.}  \RSRG^*(\ell_1;\gamma) \equiv \frac{1}{2} (2 + \kappa_1^*(\ell_1;\gamma) +  1/\kappa_1^*(\ell_1;\gamma) )^{1/2} .
\eeq
We will also directly construct an asymptotic optimizer $x^a_n$, obeying
\beq \label{eq:AsyRSRGOpt}
\frac{\sqrt{(x_n^{a})' \tD_n x_n^a} \cdot \sqrt{(x_n^a)'\tD_n^{-1}x_n^a}}{\|x_n^a \|_2^2}  \rightarrow_{a.s.}  \RSRG^*(\ell_1;\gamma) .
\eeq
It follows that an asymptotically least-favorable user forecast is given by $\mu_n^a = \hS^{1/2} x_n^a$.
The  construction makes  $x_n^a$ an explicit linear combination of
$U_{1},V_{1,n}$, the top theoretical and empirical eigenvectors.
Applying the matrix $\hS^{1/2}$ approximately preserves this property,
because $\hS$ is asymptotically block diagonal in the basis $\bW_n$.
Hence an asymptotically least-favorable forecast 
has the form $\mu_n^a = \alpha_1 U_{1} + \alpha_2 V_{1,n} + o_P(1)$,
with fixed coefficients $\alpha_1,\alpha_2$,
and the Theorem follows.

We recall two facts about $\Delta^a$. Lemma  \ref{lem:blocks},  
that using the basis matrix $\bW_n$ 
constructed we have
\beq \label{eq:DeltaConv}
   \| \bW'_n \Delta_n \bW_n - \Delta^a \|_F \goto 0, \qquad n \goto \infty ,
\eeq
where $\Delta^a$ is the block-structured asymptotic pivot.
The non-unit eigenvalues of $\Delta_n$ converge to those of $\Delta^a$.
We constructed $\eta^*$ the no-sticking-out condition, for $\ell \in [1,\ell_1]$:
\begin{eqnarray*}
\nu_+(A(\ell,\eta^*(\ell))) &\leq& \nu_+(A(\ell_1,\eta^*(\ell_1)))\\
\nu_-(A(\ell,\eta^*(\ell))) &\geq& \nu_-(A(\ell_1,\eta^*(\ell_1)));
\end{eqnarray*}
in this way the extreme eigenvalues of $\Delta^a$ coincide with those
of the 2-by-2 block $A(\ell_1,\eta^*(\ell_1))$. Letting $\nu_{\pm,n}$ denote the
extreme eigenvalues of $\Delta_n$, (\ref{eq:DeltaConv}) implies that
$\nu_{\pm,n} \goto_{a.s.} \nu_\pm(A(\ell_1,\eta^*(\ell_1))$ as $n \goto \infty$.

Using this we can settle  (\ref{eq:AsyRSRG}).
The eigenvalues of $\Delta_n =  \Sigma^{-1/2} \hS \Sigma^{-1/2}$
are reciprocal to those of $\tD_n$. 
Hence $\tnu_{+,n} \goto 1/\nu_-(A(\ell_1,\eta^*(\ell_1))$, $\tnu_{-,n} \goto 1/\nu_+(A(\ell_1,\eta^*(\ell_1))$.
It follows that $\RSRG_n \goto_{a.s.} \RSRG^*$ as $n \goto \infty$.

To establish (\ref{eq:AsyRSRGOpt}), 
we use Lemma \ref{lem:blockstilde} .
Let  $\tu_\pm \in \bR^2$  denote the top and bottom 
eigenvectors of $ \tA(\ell,\teta)$.
From the convergence (\ref{eq:TDeltaConv}) we have, almost surely as $n \goto \infty$,
\[
\{ \tnu_{+,n}, \tnu_{-,n} \}  \goto  \{ \tu_+' \tA(\ell_1,\teta_1)\tu_+,   (\tu_-)' \tA(\ell_1,\teta_1)\tu_- \},
\]
where $\eta_1 = \eta^*(\ell_1)$ and $\teta_1 = 1/\sqrt{\eta_1} -1$. 
Letting $\bW_{n,2}$ denote the sub matrix of the first two columns  of $\bW$,
we can define the approximate eigenvectors
\[
 \tv_{\pm,n}^a = \bW_{n,2} \tu_\pm,  
\]
and the approximate optimum
\[
    x_n^a = (\tv_{+,n}^a + \tv_{-,n}^a)/\sqrt{2}.
\]
From the convergence (\ref{eq:TDeltaConv}), $x_n^a  \tD_n x_n^a  \goto  y' \tA(\ell_1,\teta_1)y$, where $y=  ( \tu_+ +  \tu_-)/2$.
Since the eigenvalues of $\Delta^a$ are bounded below by 1, (\ref{eq:TDeltaConv}) also implies
 $x_n^a  \tD_n^{-1} x_n^a \goto  y \tA^{-1}(\ell_1,\teta_1)  y$.
(\ref{eq:AsyRSRGOpt}) follows. 
\end{proof}

\section{Details behind Performance Comparisons}

This appendix provides  details behind  our numerical calculations 
of maximum relative regret for Figures \eqref{fig:MaximumRegretVGammaSoftThresh} 
and \eqref{fig:MaximumKappaLossVGammaSoftThresh}. 
In this section, we focus on the $\kappa$-loss regret, 
as $\RSRG$-regret is computed similarly.

Fix $\gamma \in (0,\infty)$. Let $\cL(\ell)$ 
denote the class of all spike configurations \-- of any finite $r\geq 1$ \-- where $\ell_1 = \ell$. 
For any nonlinearity $\eta$, we have the following identity for the maximum $\kappa$-loss regret:
\begin{eqnarray} \label{eq:max_regret_computation}
\mbox{MaxReg}(\eta,\gamma) 
&=& \max_{\ell \in [1, \infty)} \max_{(\ell_i) \in \cL(\ell)} \mbox{Reg}[\eta,(\ell_i)] \nonumber \\
&=& \max_{\ell \in [1, \infty)} \max_{(\ell_i) \in \cL(\ell)} 100 \cdot  \left ( 1- \frac{\kappa[\eta^*,(\ell_i)]}{\kappa[\eta,(\ell_i)]} \right ) \nonumber \\
&=&\max_{\ell \in [1, \infty)} 100 \cdot  \left ( 1- \frac{\kappa_1^*(\ell;\gamma)}{\max_{(\ell_i) \in \cL(\ell)} \kappa[\eta,(\ell_i)]} \right ).
\end{eqnarray}

 Therefore, computation of maximum regret boils down to computing 
\begin{eqnarray} \label{eq:maxRegret_computation}
\max_{(\ell_i) \in \cL(\ell)} \kappa[\eta,(\ell_i)] &=& \frac{\max_{1 \leq \ell_0 \leq \ell} { \nu_{+}}(A(\ell_0, \eta(\ell_0)))}{\min_{1 \leq \ell_0 \leq \ell} { \nu_{-}}(A(\ell_0, \eta(\ell_0)))}.
\end{eqnarray}
The figures use a grid of $\ell$-values: 
$G \equiv \{1, 1.01, 1.02, \cdots, 99.99, 100, 10^5\}$. 
On each point {$ \ell \in G$}, we evaluate \eqref{eq:maxRegret_computation}. This provides us with a numerical value for
\begin{equation} \label{eq:maxRegret_computation2}
\max_{(\ell_i) \in \cL(\ell)} 100 \cdot  \left ( 1- \frac{\kappa_1^*(\ell;\gamma)}{\kappa[\eta,(\ell_i)]} \right )
\end{equation}
for each { $\ell \in G$}. 
We numerically evaluate $\mbox{MaxReg}(\eta,\gamma)$ by taking the maximum of \eqref{eq:maxRegret_computation} over { $\ell \in G$}. 
For Figures \eqref{fig:MaximumRegretVGammaSoftThresh} and \eqref{fig:MaximumKappaLossVGammaSoftThresh} we calculate the maximum regret curves by computing $\mbox{MaxReg}(\eta,\gamma)$ for $\gamma \in G' \equiv \{0.01, 0.03, \cdots, 2\}$.

\bibliographystyle{plain}
\bibliography{SpikedCovar}
\end{document}